\numberwithin{equation}{section}
\DeclareMathAlphabet{\mathscrbf}{OMS}{mdugm}{b}{n}
\DeclareMathOperator{\diag}{diag}
\DeclareMathOperator{\Ad}{Ad}
\DeclareMathOperator{\Id}{\mathsf{Id}}
\DeclareMathOperator{\tr}{tr}
\DeclareMathOperator{\Ric}{\mathsf{Ric}}
\DeclareMathOperator{\Iso}{\mathsf{Iso}}
\DeclareMathOperator{\R}{\mathbb{R}}
\DeclareMathOperator{\z}{\mathsf{z}}
\DeclareMathOperator{\ii}{\mathsf{i}}
\DeclareMathOperator{\jj}{\mathsf{j}}
\DeclareMathOperator{\kk}{\mathsf{k}}
\DeclareMathOperator{\dg}{\mathsf{deg}}
\DeclareMathOperator{\Span}{span}
\DeclareMathOperator{\rnk}{\mathsf{rank}}
\newcommand{\fr}{\mathfrak}
\newcommand{\al}{\alpha}
\newcommand{\be}{\beta}
\newcommand{\upa}{\upalpha}
\newcommand{\upb}{\upbeta}
\newcommand{\upg}{\upgamma}
\newcommand{\bb}{\mathbb}
\newcommand{\mc}{\mathcal}
\newcommand{\e}{\varepsilon}
\DeclareMathOperator{\SO}{\mathsf{SO}}
\DeclareMathOperator{\Sp}{\mathsf{Sp}}
 \DeclareMathOperator{\SU}{\mathsf{SU}}
\DeclareMathOperator{\U}{\mathsf{U}}
\DeclareMathOperator{\G}{\mathsf{G}}
\DeclareMathOperator{\F}{\mathsf{F}}
\DeclareMathOperator{\Oo}{\mathsf{O}}
\DeclareMathOperator{\E}{\mathsf{E}}
\DeclareMathOperator{\Ss}{\mathsf{S}}
\DeclareMathOperator{\Gl}{\mathsf{GL}}
\newcommand{\thickline}{\noalign{\hrule height 1pt}}
\theoremstyle{plain}
\newtheorem{lemma}{Lemma} [section]
\newtheorem{theorem}[lemma]{Theorem}
\newtheorem{corol}[lemma] {Corollary}
\newtheorem{prop} [lemma]{Proposition}
\theoremstyle{definition}
\newtheorem{remark}[lemma] {Remark}
\newtheorem*{remark*}{Remark}
\definecolor{dark}{rgb}{0.18,0.18,0.68}
\definecolor{mred}{rgb}{0.38,0.54,0.72}
\definecolor{crew}{rgb}{0.2,0.5,0.2}
\definecolor{mmg}{rgb}{0.31,0.50,0.23}
\definecolor{dblue}{rgb}{0.01,0.01,0.44}
\definecolor{red}{rgb}{0.57,0.11,0.15}
\definecolor{cobalt}{RGB}{12,89,178}
\definecolor{mycolor}{rgb}{0.122, 0.435, 0.698}
\newtcbox{\mybox}{on line,
  colframe=mycolor,colback=mycolor!10!white,
  boxrule=0.5pt,arc=4pt,boxsep=0pt,left=6pt,right=6pt,top=6pt,bottom=6pt}
  \definecolor{mycol}{rgb}{0.422, 0.498, 0.135}
  \newtcbox{\myboxx}{on line,
  colframe=mycol,colback=mycol!10!white,
  boxrule=0.5pt,arc=4pt,boxsep=0pt,left=6pt,right=6pt,top=6pt,bottom=6pt}
\title[Homogeneous Einstein metrics on non-K\"ahler C-spaces]{Homogeneous Einstein metrics on non-K\"ahler C-spaces}
\author{Ioannis Chrysikos} 
\address{Faculty of Science, University of Hradec Kr\'alov\'e, Rokitanskeho 62, Hradec Kr\'alov\'e
50003, Czech Republic}
\email{ioannis.chrysikos@uhk.cz}
\author{Yusuke Sakane} 
\address{Osaka University, Department of Pure and Applied Mathematics, Graduate School of Information Science and Technology, Suita, Osaka 565-0871, Japan} 
\email{sakane@math.sci.osaka-u.ac.jp}
\begin{document}   

\begin{abstract}
We study homogeneous Einstein metrics on  indecomposable  non-K\"ahlerian C-spaces, i.e.  even-dimensional torus bundles $M=G/H$ with $\rnk G>\rnk H$ over   flag manifolds  $F=G/K$ of a compact simple Lie group $G$.  Based on the theory of painted  Dynkin diagrams  we  present the classification of such spaces. Next  we focus on the family 
\[
M_{\ell, m, n}:=\SU(\ell+m+n)/\SU(\ell)\times\SU(m)\times\SU(n)\,,\quad \ell, m, n\in\bb{Z}_{+}
\]
 and examine several of its geometric properties.  We show that invariant metrics on  $M_{\ell, m, n}$ are not diagonal and beyond certain exceptions their    parametrization depends on six real parameters. By using such an invariant Riemannian metric, we  compute the diagonal and the non-diagonal part of the Ricci tensor  and  present explicitly the     algebraic system of the homogeneous  Einstein equation.  For general positive integers  $\ell,  m, n$, by  applying  mapping degree theory we provide the existence of at least one $\SU(\ell+m+n)$-invariant Einstein metric on  $M_{\ell, m, n}$.  For $\ell=m$ we show the  existence of two $\SU(2m+n)$ invariant Einstein metrics on $M_{m, m, n}$, and for $\ell=m=n$  we obtain four $\SU(3n)$-invariant Einstein metrics on $M_{n, n, n}$. We also examine the isometry problem for these metrics, while for a plethora of  cases induced by fixed $\ell, m, n$, we provide the  numerical form of all non-isometric invariant Einstein metrics. 
\end{abstract}

\maketitle



\section*{Introduction}
\subsection*{Introduction}
A Riemannian  manifold $(M, g)$ is called Einstein if the metric $g$  is a solution of  the  Einstein equation $\Ric^{g}=\lambda g$, where  $\Ric^{g}$ is the Ricci tensor of $(M, g)$ and $\lambda$ is a real number, called the {Einstein constant}.   The Einstein equation is a system of  non-linear second order
  PDEs,   and there are no general   results available for  any dimension.  A traditional approach which makes the examination of this system  more tractable,   is based on the requirement of some additional  ``symmetry'' condition for  $g$.
 In this work we are concerned with invariant Einstein metrics on    homogeneous manifolds. Thus,  we assume that there is closed subgroup $G\subseteq \Iso(M, g)$ of the group of isometries   of $(M, g)$, acting transitively on $M$. Then, $M=G/H$ is a homogeneous space  and  $g$ is a $G$-invariant metric.  The study of homogeneous Einstein manifolds  is divided into cases according to the sign of $\lambda$.  For $\lambda>0$, the  Myers' theorem yields the compactness of      $M=G/H$  and $\pi_{1}(G/H)$ must be finite,    while for $\lambda<0$, $M=G/H$ must be non-compact (\cite[Thm.~7.56]{Bes}).
 Moreover, by \cite{AK75}  all Ricci-flat homogeneous manifolds ($\lambda=0$) are flat. From now on we focus on the compact case and refer to \cite{lafuente}  for  recent advances  about the non-compact case.

For a  $G$-invariant metric  the system  of non-linear PDEs  corresponding to the Einstein equation reduces to a  system of  non-linear algebraic equations.  This obviously makes the whole problem more accessible, but still  the classification of all compact homogeneous Einstein manifolds $(M=G/K, g)$ is a very difficult task, which at the present seems out of reach.  
However, nowadays it is available  a plethora of  existence or   classification results;  Most of them are related with certain classes of  compact homogeneous spaces, as for example symmetric spaces, spheres, Stiefel manifolds, generalized Wallach spaces, semisimple Lie groups, generalized flag manifolds, homogeneous spaces with a certain number of isotropy summands, and other.   Several  basic constructions of homogeneous Einstein manifolds can be found in \cite{Bes} and the references therein, while for more recent results we cite   the surveys \cite{Wa1, Wa3} and the papers  \cite{WZ2, Kim, PS,   Bohm1, Bohm2, BohmKerr, Gr,   DKer,   Chry2, stauros, ara, CS1, CS2,   cortes, statha, chen}.

\subsection*{Motivation}
The  purpose of this article is the investigation of homogeneous Einstein metrics on another class of compact homogeneous manifolds, namely on {\it non-K\"ahler C-spaces}.
A {\it C-space} is a compact simply connected   homogeneous complex manifold $M=G/H$ of a compact connected semisimple Lie group $G$.  Such   manifolds were introduced  during 50s by H.~C. Wang (\cite{Wang}).  The stability subgroup  $H$ of a C-space $M=G/H$  is a closed connected subgroup of $G$ whose semisimple part coincides with the semisimple part of the centralizer of a torus in $G$. Thus,  C-spaces fall into two categories with respect to the Euler characteristic $\chi$. If $\chi(G/H)>0$, then $M$ is a  so-called {\it generalized flag manifold}, and in this case the stabilizer $H$  is   the centralizer of a torus   in $G$. Recall that the generalized flag manifolds exhaust    all compact simply  connected homogeneous K\"ahler  manifolds  $F=G/H$  corresponding to a semisimple Lie group  $G$ (\cite{Ale, Forger, AP}).

Here we are interested in non-K\"ahlerian C-spaces, so we assume that $\rnk G>\rnk H$.  Any such space  is a 
  principal bundle over a flag manifold $F=G/K$ with structure group    a complex torus ${\rm T}^{2s}$  of real even dimension $2s:=\rnk G-\rnk H$ 
      \[
      {\rm T}^{2s}\cong K/H \longrightarrow M = G/H \overset{\pi}{\longrightarrow} F=G/K\,.
      \]
Note that non-K\"ahlerian  C-spaces may   admit     invariant   complex  structures   with zero  first Chern class, in contrast to    flag manifolds. In particular, certain types of such homogeneous  manifolds  provide  homogeneous examples of   Calabi-Yau structures with torsion (CYT),    hyper-K\"ahler structures with torsion (HKT),  or of further  Hermitian structures with torsion which are not K\"ahler.  Moreover,  these geometric structures have found  application also in  string theory compactifications. Therefore,  in  recent years  non-K\"ahlerian C-spaces  have   attracted  much  attention from both mathematicians and physicists   (see for example \cite{Pap, Fino, GGP, Podesta, Fino2}).   

	Next we will assume that  the Lie group $G$ is  simple, which is equivalent to  say that  $M=G/H$ is {\it not}   a product of homogeneous spaces, and similarly for $F=G/K$.   Such a  C-space (or  a flag manifold) will be called {\it indecomposable}, and  we should  mention that $M=G/H$ is {\it not} a circle bundle.  The last decades  there is a remarkable progress towards the  classification
  of  invariant Einstein metrics on the base space of the fibration $\pi : G/H\to G/K$ (see  \cite{Gr, Chry2, stauros, Chry1,  CS1}).  Surprisingly,   a similar systematic study of  invariant Einstein metrics  on the total space of $\pi$, is still missing.  For example, even for the low dimensional  Lie group $\SU(3)$, which is an indecomposable non-K\"ahlerian C-space fibered over the full flag manifold $\SU(3)/{\rm T}^{2}$, the full classification of invariant Einstein metrics is unknown.  This  gap is also reflected from the fact that a classification of generic non-K\"ahlerian C-spaces with respect to their isotropy representation is not known yet.  In principle, the  only known general result which applies to our discussion,  but   produces invariant Einstein metrics only on {\it decomposable} C-spaces,  is  \cite[Thm.~1.10, p.~224]{WZ2}.   This theorem was used  for example by C.~B\"ohm and M.~Kerr in  \cite{BohmKerr}, in combination with results of \cite{Bohm1}, for a description of  torus bundles   admitting an invariant Einstein metric up to dimension $11$; Such an example is the  8-dimensional manifold $\SU(2)\times\SU(2)\times\SU(2)/\U(1)_{k,\ell,m}$, which is a torus bundle of rank two  over $\Ss^2\times\Ss^2\times\Ss^2$, diffeomorphic to $\Ss^3\times\Ss^{3}\times\Ss^{2}$.  However,   one  can construct a large list of indecomposable C-spaces with $\rnk G>\rnk H$ (see  Section \ref{section2}). Hence,  it is fair to say that we know very little about all but a few small corners of the entire picture of non-isometric invariant Einstein metrics on non-K\"ahlerian C-spaces,  in contrast to the corresponding picture about flag manifolds (at least for the most meaningful  cosets).
  
 \subsection*{Outline} In this article we proceed with the first systematic examination of invariant Einstein metrics on   non-K\"ahler C-spaces $M=G/H$ with $G$ simple. Our aim is to highlight the difficulties of the problem and describe  some methods which can be considered as the first steps   towards the classification of invariant Einstein metrics on   non-K\"ahler indecomposable C-spaces.  The  main focus and most of our results, are about  the homogeneous Einstein equation on   the space
\[
M_{\ell, m, n}=G/H=\SU(N)/\SU(\ell)\times\SU(m)\times\SU(m)\,,
\]
 with $N:=\ell+m+n\geq 3$ and $\ell, m, n\in\bb{Z}_{+}$, where the embedding  $\SU(\ell)\times\SU(m)\times\SU(m)\to \SU(N)$ is the diagonal one.   This homogeneous  space is a  rank 2 torus bundle over  the classical flag manifold  $F=F_{\ell, m, n}=G/K=\SU(\ell+m+n)/\Ss(\U(\ell)\times\U(m)\times\U(n))$, 
\[
{\bb T}^{2}\cong K/H \longrightarrow G/H=\SU(N)/\SU(\ell)\times\SU(m)\times\SU(n)\overset{\pi}{\longrightarrow}G/K=\SU(N)/\Ss(\U(\ell)\times\U(m)\times\U(n))\,.
\]
Note that  the stability  subgroup $H$  is semisimple, and hence as we will see below, $M_{\ell, m, n}$ serves well to understand  the  difficulties of classifying invariant metrics and constructing the homogeneous Einstein equation,  for this class of indecomposable C-spaces $G/H$ whose stabilizer $H$ is semisimple. Such C-spaces in \cite{Wang} are called  {\it M-spaces}, but   this is a more general terminology which is in fact  not so common (the article \cite{Wang} lacks of a motivation for the use of this terminology) and can be confusing  (since it includes even non complex spaces).    Hence we will avoid to use it,  and from now on we agree to say that a  C-space  $M=G/H$ is of {\it semistrict type} when $G$ is simple and   the stabilizer $H$ is  {\it semisimple}  such that $\rnk G>\rnk H$.  If $M=G/H$ is a C-space, with  $G$ simple and  $H$  {\it reductive}, such that $\rnk G>\rnk H$, then $M$ will be called a C-space of   {\it strict type}.   Of course,  a C-space of strict or a semistrict type is always non-K\"ahlerian and   indecomposable.  A third class of indecomposable non-K\"ahlerian C-spaces arises when the stability group $H$ is just abelian, and such cases appear when the base space is a full flag manifold $F=G/T_{\rm max}$, with $G$ simple and $\rnk G\geq 3$.  In Section \ref{section2}, based on the well known  classification of flag manifolds in terms of {\it painted Dynkin diagrams} (\cite{Forger, Ale}) we present the classification of all non-K\"ahler C-spaces $M=G/H$ with $G$ simple, and indicate the corresponding type and fibration (see Theorem \ref{cspacesclas} and Appendix \ref{sectionapen}). 

    For a non-K\"ahler C-space $M=G/H$, the isotropy representation may  include  sub-representations  which not all of them are inequivalent each other (in contrast to the isotropy representation of a flag manifold). This fact has several geometric consequences; In comparison  with the invariant metrics on  the base space $F=G/K$ which are all {\it diagonal},  the invariant metrics on $M=G/H$ are more complicated objects, in particular the Ricci tensor $\Ric^{g}$  of a generic $G$-invariant metric $g$ on $M=G/H$  is {\it not   diagonal} and   this  adds a certain difficulty during the construction of the  homogeneous Einstein equation. 

To become more specific, let us  return back to the  fibration $\pi : M_{\ell, m, n}\to F_{\ell, m, n}$.   The isotropy representation of the base space $F_{\ell, m, n}$ decomposes with respect to $B$ (negative of Killing form), into three inequivalent and irreducible $\Ad(K)$-submodules $\fr{f}_1, \fr{f}_2, \fr{f}_3$. 
     Passing to a $B$-orthogonal reductive decomposition $\fr{g}=\fr{h}\oplus\fr{m}$ of    the total space $M_{\ell, m, n}=G/H$,  we see that the isotropy representation $\fr{m}\cong T_{o}M_{\ell, m, n}$ admits the following  $\Ad(H)$-invariant splitting  \[\fr{m}=\fr{f}_1\oplus\fr{f}_2\oplus\fr{f}_3\oplus\fr{f}_{0}=(\fr{f}_1\oplus\fr{f}_2\oplus\fr{f}_3)\oplus (\fr{f}_4\oplus\fr{f}_5)\,,\]
 which is {\it not}   unique, in the sense that the modules $\fr{f}_4, \fr{f}_5$ can be replaced by any pair $\hat{\fr{f}}_4, \hat{\fr{f}}_5$ of orthogonal 1-dimensional submodules of  $\fr{f}_{0}$.  As a conclusion, and after presenting  a suitable  parametrization  of the invariant products $( \ , \ )_{0}$ on $\fr{f}_{0}$ in terms of {\it nilpotent matrices} (see Proposition \ref{parametrize}), we show that the space $\mc{M}^{\SU(N)}_{M_{\ell, m, n}}$ of $\SU(N)$-invariant metrics on $M_{\ell, m, n}$ is  parametrized by six real parameters, under the  assumption that $\ell m \neq1$, $\ell n \neq 1$ and $m n \neq 1$ (see Corollary \ref{invRIEM}). This means that  beyond these values, general $G$-invariant metrics $g$ on   $M_{\ell, m, n}=G/H$  are specified by $\Ad(H)$-invariant inner products $\langle \ , \ \rangle$ on $\fr{m}=\fr{f}_1\oplus\fr{f}_2\oplus\fr{f}_3\oplus\hat{\fr{f}}_{4}\oplus\hat{\fr{f}}_{5}$, of  the following form
\begin{eqnarray*}
g\equiv\langle \ , \ \rangle&=&x_{1}B|_{\fr{f}_1}+x_2B|_{\fr{f}_2}+x_3B|_{\fr{f}_3}+( \ , \ )_{0}\\
&=&x_{1}B|_{\fr{f}_1}+x_2B|_{\fr{f}_2}+x_3B|_{\fr{f}_3}+v_4 ( \ , \ )|_{\hat{\frak f}_4} +v_5 ( \ , \ )|_{\hat{\frak f}_5},
\end{eqnarray*}
with $x_{i}>0, v_{j}>0$ for any $i=1, 2, 3$ and $j=4, 5$, while an extra parameter $c\in\bb{R}$ is appearing through the parametrization of $( \ , \ )_{0}$. 

Now, because $\hat{\fr{f}}_{4}\cong\hat{\fr{f}}_5$, the Ricci tensor of $g$ has both diagonal and  non-diagonal part, which we denote by $\Ric^{g}_1, \Ric^{g}_2, \Ric^{g}_3$ and $\Ric^{g}_4, \Ric^{g}_5, \Ric^{g}_{0}$, respectively, with $\Ric_{0}^{g}=\Ric^{g}(\hat{\fr{f}}_4, \hat{\fr{f}}_5)$.  A non-trivial task  which one faces during  the  description  of the Ricci tensor,  is the computation of the   non-zero $\langle \ , \  \rangle$-structure constants of $M_{\ell, m, n}$.  To overpass this problem  we   combine    the Lie algebraic structure underlying both of  the total space $M_{\ell, m, n}$ and  the base space $F_{\ell, m, n}$, with other techniques appearing also in \cite{ADN1, CS2}. On the other hand, the description of the non-diagonal part of $\Ric^{g}$ requires some extra care, and as we will see below the related computations are  long.

   The homogeneous  Einstein equation on  $(M_{\ell, m, n}, g=\langle \ , \ \rangle)$  is equivalent to the following system of equations
\[
\left\{\Ric^{g}_1=\lambda\,,  \quad 
\Ric^{g}_2=\lambda \,, \quad 
\Ric^{g}_3=\lambda \,,\quad
\Ric^{g}_4=\lambda \,, \quad 
\Ric^{g}_5=\lambda\,, \quad 
\Ric^{g}_{0}=0
\right\} \quad (\Sigma)\,.
\]
This is a system of  6 equations and 7 unknowns, namely $x_1, x_2, x_3, v_4, v_5, c$ and the Einstein constant  $\lambda$. We prove that the algebraic system  induced by $(\Sigma)$ is {\it linear in the variables $v_4, v_5$} (see Proposition \ref{newricci1}  and Corollary \ref{linearEIN}).  In particular, we may solve the equations $\Ric^{g}_{4}-\lambda=0=\Ric^{g}_5-\lambda$ with respect to $v_4$ and  $v_5$, respectively.  Also,  by the last equation we can  express $c$ in  terms of $x_2, x_3$. Then, a replacement  into  the first  three equations of $(\Sigma)$ shows that the Ricci components $\Ric^{g}_1, \Ric_{g}^{2}, \Ric^{g}_3$ of the diagonal part are homogeneous polynomials of degree $-1$ with respect the variables $x_1, x_2, x_3$.   In fact,  by  expressing  $\lambda$ in terms of three rational polynomials $\mathsf{t}_{i}(x_1, x_2, x_3)$ and by inserting a new parameter $t\in[0, 1]$, we may  consider   rational polynomials of the form
\[
T_{i}(t, x_1, x_2, x_3):=\frac{\mathsf{p}_{i}(x_1, x_2, x_3)}{1+t\cdot\mathsf{q}_{i}(x_1, x_2, x_3)}\,,\quad (i=1, 2, 3)\,.
\]
Note that for $t=0$,   $T_{i}(0, x_1, x_2, x_3)=\mathsf{p}_{i}(x_1, x_2, x_3)=\Ric^{\check g}(x_1, x_2, x_3)$ are the Ricci components of the base space $(F_{\ell, m, n}, {\check g})$, while for $t=1$, the expressions $T_{i}(1, x_1, x_2, x_3)$ coincide with the rational polynomials $\mathsf{t}_{i}(x_1, x_2, x_3)$, related to the Einstein constant on the total space $(M_{\ell, m, n}, g=\langle \ , \ \rangle)$. 
	Based on this notation one can introduce  a well defined homotopy $F_t : {\mathbb R}_+^3  \to \Ss^{2}$, given by
 \[
  F_t:= \frac{1}{\sqrt{{T_1}^2+{T_2}^2+{T_3}^2}} (T_1, T_2, T_3)\,.
 \]
This homotopy yields the following  characterization of  homogeneous Einstein metrics:  A point $({x_1}, {x_2}, {x_3})\in\bb{R}^{3}_{+}$  defines a $G$-invariant Einstein metric on the base space $F_{\ell, m, n}=G/K$, if and only if  $({x_1}, {x_2}, {x_3}) \in {F_0}^{-1}(p_0)$, where $p_0:= (1/\sqrt{3},  1/\sqrt{3}, 1/\sqrt{3})$. On the other hand,  for $t=1$  a point  $({x_1}, {x_2}, {x_3})\in\bb{R}^{3}_{+}$ will be a $G$-invariant Einstein metric on the $C$-space $M_{\ell,m, n}=G/H$, if and only if $({x_1}, {x_2}, {x_3}) \in {F_1}^{-1}(p_0)$ (see Proposition \ref{characterEIN}).
Motivated by \cite{Sak}, in order to find   $G$-invariant Einstein metrics on the $C$-space $M_{\ell, m, n}=G/H$ and the flag manifold  $F_{\ell, m , n}=G/K$,  we  may now  apply a method which is based on {\it mapping degree theory} with respect to  the map  
\[
f_t^{} : D_{+} \to {\mathbb R}^2\,,\quad f_t^{}(x_1, x_2, 1):= \psi\circ P \circ F_t(x_1, x_2, 1)\,.
\]
Here, without loss of generality we have normalized the metric by setting $x_3 = 1$ (see Remark \ref{homogpolyn}).  Moreover, $\psi : S^2-{(0,0,-1)} \to {\mathbb R}^2$ is the stereographic projection,    $P$ is a  rotation matrix which maps $p_0$ to $(0, 0, 1)$, and  $D_{+}$ is the domain in $\bb{R}^{3}_{+}$ defined by $D_{+}:=\{( x_1, x_2, 1) \in\bb{R}^3\,  |\,   x_1 > 0, \, x_2 > 0 \}$.  In terms of $f_{t}$, a point $ (x_1, x_2, 1)\in D_{+}$ 
corresponds to a  $G$-invariant Einstein metric on  $M_{\ell, m, n}=G/H$,  if and only if  $f_1^{}({x_1}, {x_2}, 1) =(0, 0)$, while a point $ ({x_1}, {x_2}, 1)\in  D_{+} $ 
defines a  $G$-invariant Einstein metric  on  the base space $F_{\ell, m, n}=G/K$, if and only if $f_0^{}({x_1}, {x_2}, 1) =(0, 0)$.    Then, based on arguments of mapping degree theory we  can  provide the existence of invariant Einstein metrics, in particular we obtain the following result (see Theorem \ref{MAINTHEM}).

\smallskip
{\bf Theorem 1.} {\it For general positive integers $\ell, m, n\in\bb{Z}_{+}$, the   C-space $M_{\ell, m, n}$ admits at least a  $\SU(\ell+m+n)$-invariant Einstein metric.}

 \medskip
Therefore,   the indecomposable C-spaces $M_{\ell, m, n}$  are homogeneous Einstein manifolds for arbitrary $\ell, m, n >0$. In order to improve this theorem and discuss some results of classification type,  we  restrict our attention to more special cases induced by  the family $M_{\ell, m, n}$. For example, for small $\ell\neq m\neq n$ we see that all members of $M_{\ell, m, n}$ admit, up to isometry, two or four invariant Einstein metrics, depending on the particular choice of  the parameters (see Tables  \ref{Table1},  \ref{Table21} and see also below). 
After that we examine the case  when  two of the three parameters $\ell, m, n$ coincide each other, e.g. $\ell=m$. We show that

\medskip
{\bf Theorem 2.}  
 {\it For $\ell=m$, 
there exist at least two $\SU(2m+n)$-invariant Einstein metrics on the C-space $M_{m, m, n}=\SU(2m+n)/\SU(m)\times\SU(m)\times\SU(n)$.
}

\medskip
Depending on the ordering between $m, n$, when $\ell=m$ we  can  actually present  the expressions of the Einstein metrics in  Theorem 2  (see Theorem \ref{theorem4.9}). 
Moreover, for small $m, n$ we  obtain the numerical forms  of all  invariant Einstein metrics and show that  there exist members of  $M_{m, m, n}$, which admit  one, two  or   four  invariant  Einstein metrics  (up to isometry). The numerical values of these metrics are given in   Tables  \ref{Table32}, \ref{Table31} and  \ref{Table41}, respectively.
    Finally, we examine  the case  $\ell=m=n$, where for $n\geq 2$ we obtain the  complete classification of  homogeneous Einstein metrics.

 \medskip
{\bf Theorem 3.}  
 {\it  For $n \geq 2$
there exist   exactly four $\SU(3n)$-invariant Einstein metrics on the C-space $M_{n, n, n}=\SU(3n)/\SU(n)\times\SU(n)\times\SU(n)$, given by 
  \begin{eqnarray*}  & & (x_1, x_2, x_3) = (1, 1, 1)\,, \quad (x_1, x_2, x_3) = (1, \al, 1)\,, \\ & &  (x_1, x_2, x_3) = (\al, 1, 1)\,, \quad (x_1, x_2, x_3) = (1/\al, 1/\al, 1)\,,
  \end{eqnarray*} 
where $\al$ is the solution of  $(2 n^2+1)\al^3-(2 n-1) (2 n+1)\al^2+4 (n^2+2)\al-4 (2 n^2+1)=0$.  Moreover, the invariant Einstein metrics   $(x_1, x_2, x_3) = (1, \al, 1)$,    $(x_1, x_2, x_3) = (\al, 1, 1)$ and $(x_1, x_2, x_3) = (1, 1, \al)$ are isometric each other.  Hence, up to isometry and scale, $M_{n, n, n}$ admits exactly two $\SU(3n)$-invariant Einstein metrics.  For  $n=1$ and the  C-space $M_{1, 1, 1}=\SU(3)$,  among the invariant metrics $g=\langle \ , \ \rangle$  defined above, only  the bi-invariant metric is an invariant Einstein metric.}

\medskip
For  arbitrary, distinct,  but {\it fixed}   $\ell, m, n$ which can be very large,  a  full classification of all invariant Einstein metrics is still possible. Such examples are presented in the following table, where we use  the normalization $x_3=1$ and   present only the values $x_1, x_2$ (for  the values of $v_4, v_5$ one can apply (\ref{eq_v4}) and (\ref{eq_v5}), respectively).
\begin{table}[ht]
\centering
{\small \vspace{0.1cm}
\renewcommand\arraystretch{1.4}
\begin{tabular}{ l | l | l | l }
$M_{\ell, m, n}$	& $\dim_{\R}M_{\ell, m, n}$ &	 $x_1$								& $x_2$	\\
\thickline
$M_{10000, 2, 3}$	       & 1000014	      &  0.49999812508758  & 0.5000039582837693\\ 
                                        &                      &  23333.9023351598 &  23333.902296584482 \\ 
$M_{10000, 99, 3}$         & 20400596     &  0.50024111495038 & 0.4997597438771520 \\
                                        &                     &  984.203593167392 & 984.36732072352000 \\                                    
$M_{2, 100000, 99999}$ & 20000599998 &  0.50000562505320 &  1.5000033749343452 \\
                                         &                      &  1.50000837497409 &  0.5000106250719541\\
              \hline
$M_{100000, 99999, 99998}$ & 59998800006 & 0.5000012500593759991 &    0.49999875004062503385 \\
                                                 &                        & 1.0000100001500034167 &   2.00000999994999841665 \\
          					& 			& 1.0000100001500055835  &  1.00000500007500279172 \\
                           		   &                           &   2.0000049998749947081 & 1.00000500007500170836       \\
		   \hline
\end{tabular}}
\end{table}
As one can see, here the conclusion   is exactly the same as for the low-dimensional cases described in  Tables  \ref{Table1} and \ref{Table21} in Section \ref{caseaaa}.   This means that  by fixing a C-space $M_{\ell, m, n}$, where all the three parameters $\ell, m, n$ are different each other, we   obtain exactly  two, or four non-isometric homogeneous Einstein metrics. Thus, we may conjecture that for   arbitrary but distinct parameters $\ell, m, n$, the total number of non-isometric invariant Einstein metrics on $M_{\ell, m, n}$ is the same as for these examples.  

Most of the ideas and methods developed in this article can be adapted  to several indecomposable non-K\"ahlerian C-space $M=G/H$ of semistrict type. Therefore,  in a forthcoming work we will present  invariant Einstein metrics  on further C-spaces of this type.  The investigation of invariant Einstein metrics on C-spaces of strict type remains an interesting open problem.

\subsection*{Structure of the article}  Section \ref{section1}  is about   the Ricci  tensor on a reductive  homogeneous space and  Section \ref{section2}  can be viewed as a short introduction to the  theory of indecomposable non-K\"ahler C-spaces. In the same section we present the classification of such homogeneous spaces (for exceptional Lie groups the results are given in Appendix \ref{sectionapen}). In  Section \ref{section3} we focus on the family $M_{\ell, m, n}$ and    compute the $B$-structure constants, classify invariant metrics and present the structure constants with respect to the  invariant metric $g=\langle \ , \ \rangle$, described above.   Section \ref{section4}  is devoted to the computation of the  Ricci tensor $\Ric^{g}$ of $g$, and finally in Section  \ref{section5}  we present  the homogeneous Einstein equation. In this section we prove {Theorems 1, 2,} and {3}, and  present several tables  including  the numerical forms of the new Einstein metrics for fixed $\ell, m, n$ (up to isometry for most of the cases).

\subsection*{Acknowledgements}  
 I.~C.~gratefully acknowledges   support via Czech Science Foundation (project no.~19-14466Y). Y.~S.~thanks the  Faculty of Science in Hradec Kr\'alov\'e University for hospitality, during a visiting in September 2019.


\section{Preliminaries}\label{section1}
In  this article we are interested in the formulation of the Einstein equation on compact homogeneous spaces $M=G/K$ of a compact semisimple Lie group $G$. 	Hence, it is useful to recall by \cite{PS}    an expression of the Ricci tensor for a  $G$-invariant metric $g$.  
\subsection{The Ricci tensor  of a reductive homogeneous space}
Let  us consider a compact homogeneous space $M=G/K$   a compact semisimple Lie group  $G$, where $K\subset G$ is  connected closed subgroup. 
We denote by $\fr{g}$ and $\fr{k}$   the corresponding Lie algebras, and by  $B$ the $\Ad(G)$-invariant inner product  induced from the {\it negative} of the Killing form $B_{\fr{g}}$ of $\fr{g}$. Let $\fr{g}$ = $\fr{k}\oplus\fr{m}$ be a reductive decomposition of $\fr{g}$ with respect to $B$,  i.e. $\fr{m}=\fr{k}^{\perp}$ with  $[\fr{k}, \fr{m}]\subset\fr{m}$. The normal metric on $M$ induced by the restriction $B|_{\fr{m}}$ is the so-called {\it Killing metric}. 

Assume  that the $\Ad(K)$-module $\fr{m}\cong T_{eK}$ decomposes into $r$ mutually inequivalent irreducible $\Ad(K)$-submodules, i.e. 
\begin{equation}\label{iso}
\fr{m} = {\frak m}_1 \oplus \cdots \oplus {\frak m}_r\,.
\end{equation} 
Then, by diagonalization and Schur's lemma,  it follows that any $G$-invariant metric $g$ on $G/K$ depends on $r$ positive real numbers $x_1, \dots, x_r$,  and is given by an $\Ad(K)$-invariant inner product $\langle \ , \ \rangle$ on $\fr{m}$ of the form
  \begin{equation} \label{invg}
\langle \ , \ \rangle =
x_1   B|_{\mbox{\footnotesize$ \frak m$}_1} + \cdots + 
 x_r  B|_{\mbox{\footnotesize$ \frak m$}_r}\,.
\end{equation}
Such invariant metrics  satisfy  $\langle \fr{m}_i, \fr{m}_j\rangle=0$ for any $i\neq j$ and are often called {\it diagonal}.   Obviously, any $G$-invariant symmetric covariant  tensor on $G/K$ having the same rank with an invariant Riemannian metric, is exactly of the same form, although not necessarily  positive definite.  
 In particular, the Ricci tensor $\Ric^{g}$ of a $G$-invariant Riemannian metric $g=\langle \ , \ \rangle$ on $G/K$ as above, can be expressed by $ \Ric^{g}=y_1 B|_{\mbox{\footnotesize$ \frak m$}_1}  + \cdots + y_{r} B|_{\mbox{\footnotesize$ \frak m$}_r}$, 
 for some real numbers $y_1, \ldots, y_r$, and thus the Ricci tensor is diagonal, i.e.  $\Ric^{g}(\fr{m}_{i}, \fr{m}_{j})=0$ whenever $i\neq j$, or in other terms 
\[
\Ric^{g}=\left(\begin{tabular}{l l l l }
$\Ric^g_1$ & 0  & $\ldots$  & 0\\
0 & $\Ric^g_2$  & $\ldots$  & 0 \\
$\vdots$ & $\vdots$ & $\ddots$ & $\vdots$  \\
0 & 0 & $\ldots$ & $\Ric^{g}_{r}$ 
\end{tabular}
\right)
\] 
with $\Ric^{g}_{k}:=\Ric^{g}(\fr{m}_k, \fr{m}_k)$, for any $1\leq k\leq r$.  
The computation of the Ricci components goes as follows: Given a  $g$-orthonormal basis $\{X_i\}$ of $\fr{m}$, the Ricci tensor is expressed by (see   \cite{Bes})
\begin{equation}\label{genric}
\Ric^{g}(X, Y) = -\frac12\sum_i\langle [X, X_i], [Y, X_i]\rangle +\frac12B(X, Y)
       +\frac14 \sum _{i, j}\langle [X_i, X_j], X\rangle\langle [X_i, X_j], Y\rangle\,,
\end{equation}
for any $X, Y\in\fr{m}$.   Since  we work with $B$, it is now convenient to consider a $B$-orthonormal basis  of $\fr{m}$, adapted to the decomposition (\ref{iso}).  Hence,   set  $d_{i}:=\dim_{\bb{R}}\fr{m}_{i}$ and let us denote by $\lbrace e_{\al}^{i} \rbrace_{\al=1}^{d_{i}}$ a $B$-orthonormal basis, 
such that 
$e_{\al}^{i} \in {\frak m}_i$ for some $i$, 
$\al < \be$ if $i<j$  with $e_{\al}^{i} \in {\frak m}_i$ and
$e_{\be}^{j} \in {\frak m}_j $, for any $0\leq i, j\leq r$.  
 Consider the numbers $A^{\gamma}_{\al\be}=B([e_{\al}^{i},e_{\be}^{j}],e_{\gamma}^{k})$, with
$[e_{\al}^{i},e_{\be}^{j}]
= {\sum_{\gamma}
A^{\gamma}_{\al\be} e_{\gamma}^{k}}$, and set 
\[
\displaystyle{k \brack {i \ j}}:=\sum (A^\gamma_{\alpha \beta})^2\,,
\]
 where the sum is
taken over all indices $\alpha, \beta, \gamma$,  with $e_\al^{i} \in
{\frak m}_i,\ e_\be^{j} \in {\frak m}_j,\ e_\gamma^{k}\in {\frak m}_k$. 
These quantities are non-negative real numbers, which are independent of the 
$B$-orthonormal bases chosen for ${\frak m}_i, {\frak m}_j, {\frak m}_k$. However,  they depend on the fixed decomposition of $\fr{m}$ and hence on (\ref{iso}), and finally they are symmetric in all three indices,  
$\displaystyle{k \brack {i \ j}} =\displaystyle{k \brack {j \ i}} =\displaystyle{j \brack {k \ i}}$. We shall refer to  $\displaystyle{k \brack {i \ j}}$ by the term {\it $B$-structure constants} of $M=G/K$.

\begin{lemma}\label{ric2}\textnormal{(\cite{PS})}
The components $\Ric^{g}_{1}, \dots, \Ric^{g}_{r}$ 
of the Ricci tensor $\Ric^{g}$ of the  invariant metric $g=\langle \ , \ \rangle$  on $M=G/K$ defined by $(\ref{invg})$, are expressed by
\begin{equation}   \label{Riccomp}
\Ric^{g}_{k} = \frac{1}{2x_k}+\frac{1}{4d_k}\sum_{j,i}
\frac{x_k}{x_j x_i} {k \brack {j \ i}}
-\frac{1}{2d_k}\sum_{j,i}\frac{x_j}{x_k x_i} {j \brack {k \ i}}\,,
 \quad\quad (k= 1,\ \dots, r)\,,
\end{equation}
where  the sum is taken over $i, j =1,\dots, r$.
\end{lemma}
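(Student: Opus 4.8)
The plan is to substitute a $g$-orthonormal basis adapted to the decomposition $(\ref{iso})$ directly into the general Ricci identity $(\ref{genric})$, and then to average over each isotropy summand so that the individual bracket coefficients organize themselves into the symmetric $B$-structure constants ${k \brack {i \ j}}$.

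First I would fix the basis. Since $g=\langle\ ,\ \rangle=\sum_i x_i B|_{\fr{m}_i}$ and $\{e^i_\alpha\}$ is $B$-orthonormal with $e^i_\alpha\in\fr{m}_i$, the rescaled vectors $f^i_\alpha:=e^i_\alpha/\sqrt{x_i}$ form a $g$-orthonormal basis of $\fr{m}$, and these are the vectors $\{X_i\}$ to be fed into $(\ref{genric})$. Because $\Ric^g$ is diagonal and, by irreducibility together with Schur's lemma, restricts to a scalar multiple of $g$ on each $\fr{m}_k$, the component $\Ric^g_k$ is recovered as the average $\Ric^g_k=\frac{1}{d_k}\sum_{\mu=1}^{d_k}\Ric^g(f^k_\mu,f^k_\mu)$. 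This averaging over $\fr{m}_k$ is the device that symmetrizes the computation: a single $\Ric^g(f^k_\mu,f^k_\mu)$ produces $\mu$-dependent sums of squares $(A^\gamma_{\alpha\beta})^2$, but after summing over $\mu$ these collapse, by the very definition of ${k \brack {i \ j}}$, into structure constants.

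Then I would evaluate the three terms of $(\ref{genric})$ on $f^k_\mu$ and sum. The middle term gives $\frac12 B(f^k_\mu,f^k_\mu)=\frac{1}{2x_k}$, whose average over $\mu$ is again $\frac{1}{2x_k}$, producing the first summand. For the last term one writes $\langle[f^j_\beta,f^i_{\beta'}],f^k_\mu\rangle=\frac{\sqrt{x_k}}{\sqrt{x_j x_i}}\,A^\mu_{\beta\beta'}$, the factor $\sqrt{x_k}$ arising from pairing the $\fr{m}_k$-component against $e^k_\mu/\sqrt{x_k}$ with weight $x_k$; squaring and summing over $\mu,\beta,\beta'$ gives ${k \brack {j \ i}}$, so this term becomes $\frac{1}{4d_k}\sum_{j,i}\frac{x_k}{x_j x_i}{k \brack {j \ i}}$. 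For the first term, $\|[f^k_\mu,f^j_\beta]\|_g^2$ expands with a weight $x_i$ attached to each target module $\fr{m}_i$, so after averaging it contributes $-\frac{1}{2d_k}\sum_{j,i}\frac{x_i}{x_k x_j}{i \brack {k \ j}}$; relabelling the dummy indices $i\leftrightarrow j$ turns this into the stated last summand $-\frac{1}{2d_k}\sum_{j,i}\frac{x_j}{x_k x_i}{j \brack {k \ i}}$.

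The argument is essentially bookkeeping, so the main obstacle is purely organizational: keeping the powers of $x_i$ consistent between the two bracket terms (the first carries the weight of the \emph{target} module, the last that of $\fr{m}_k$), and recognizing at the end that only a harmless relabelling of dummy indices---not any cancellation---is needed to reach the asymmetric final form. No ingredient beyond $(\ref{genric})$, the decomposition $(\ref{iso})$, and the defining and symmetry properties of ${k \brack {i \ j}}$ enters the proof.
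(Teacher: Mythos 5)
Your proof is correct and is precisely the standard derivation of this formula: the paper itself offers no proof (only the citation to [PS]), and your route — rescaling the $B$-orthonormal basis to a $g$-orthonormal one, averaging $\Ric^{g}(f^k_\mu,f^k_\mu)$ over $\mu$ so the squared coefficients collapse into the $B$-structure constants, and tracking the weights $x_i$ through the three terms of (\ref{genric}) — is exactly how that reference obtains it. The one implicit point is that in the first term of (\ref{genric}) the bracket $[X,X_i]$ must be understood as its $\fr{m}$-projection (the inner product is only defined there), but since the constants $\displaystyle{k \brack {i \ j}}$ are by definition built solely from the modules $\fr{m}_i\subset\fr{m}$, your expansion handles this automatically and no further argument is needed.
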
 
When   for any $i\neq j$ we have  $\fr{m}_i\ncong\fr{m}_j$ as $\Ad(K)$-representations, then any $G$-invariant Einstein metric on $M=G/K$ corresponds to a positive real solution $(x_1, \ldots, x_r)\in\bb{R}^{r}_{+}$  of the system   \[
\{\Ric^g_1=\lambda, \ \Ric^g_2=\lambda, \ \ldots, \ \Ric^{g}_{r}=\lambda\},\] 
  for some $\lambda\in\bb{R}_{+}$ (Einstein constant).
In contrast,  if some of the modules are equivalent as $\Ad(K)$-representations, i.e. $\fr{m}_{i}\simeq\fr{m}_j$ for some $i\neq j$ with $1\leq i, j\leq r$, then the  metric $g$ and the  Ricci tensor are  not diagonal.  Hence, in the related system of the homogeneous Einstein equation  one has to  include the  constraint obtained from the equation  $\Ric^g(\fr{m}_i, \fr{m}_j)=0$.  The description of the constraints imposed by this condition, can be formulated  in terms of (\ref{genric}), and this suits the examination which we   present below.


\section{The classification of indecomposable non-K\"ahlerian C-spaces}\label{section2}
\subsection{Material from C-spaces}
A C-space is a  compact simply connected  homogeneous complex manifold
  $M = G/H$ of a compact  semisimple Lie  group  $G$. Such spaces   were introduced by H.~C.~Wang in \cite{Wang}. In fact, when the Euler characteristic is non-zero, then $M$ is a compact homogeneous K\"ahler manifold and so a generalized flag manifold.   Next we are interested in non-K\"ahlerian C-spaces, i.e. $\rnk G>\rnk H$.
According to   \cite{Wang}, the stability subgroup  $H$ of $M$ is a closed connected subgroup of $G$ whose semisimple part coincides with the semisimple part of the centralizer of a torus in $G$. It turns out that  $M=G/H$ is the  total  space  of  a  principal    bundle 
      over a generalized flag manifold $F=G/K$ with structure group  a complex torus ${\rm T}^{2s}$  of real even dimension given by $2s:={\rm rnk} G-{\rm rnk} H$. 
      Consequently,  $F$ needs to be at least of second Betti number 2, or bigger, i.e. $b_{2}(F):=\sharp(\Pi_{B})=v\geq 2$ such that $F=G/K=G/K'\cdot {\rm T}^{v}$, where $K'$ denotes the semisimple part of $K$, $\Pi_{B}\subset\Pi$ is the set of painted black simple roots and $\Pi$ is a fundamental basis of the root system $R$ of $G$ (see \cite{AP, Chry1, CS1,  Podesta, AC1} for details). 

Let  $ \mathfrak{g}= \mathfrak{k}\oplus \mathfrak{f}$  be a reductive  decomposition  related to  the flag manifold $F=G/K$,  with respect to   $B$ and let us denote by  $\fr{z}=Z(\fr{k})$   the centre of $\fr{k}$.  We may identify $\fr{f}\simeq T_{eK}F$ and moreover we may assume that $G$ is simple, compact and simply connected, as we do from now on.  In these terms,   an indecomposable C-space  is  defined
   by  a  decomposition of the space $\fr{t}:=\fr{z}\cap i\fr{a}$, where $\fr{a}$ is a Cartan subalgebra of $\fr{k}$ (and hence also of $\fr{g}$),    into a  direct  sum     of a
 (commutative)  subalgebra  $\mathfrak{t}_0$  of even  dimension  $2s$, generating the torus ${\rm T}_{0}^{2s}\equiv {\rm T}^{2s}$ and   a complementary subalgebra  $\mathfrak{t}_1$,   which generates   a central subgroup  ${\rm T}_1\subset K$  with  $\dim_{\bb{R}} {\rm T}_{1}=v-2s$, that is  
 \[
 \fr{t}= \mathfrak{t}_0\oplus \mathfrak{t}_1\,.
 \]
   Then, $\rnk G=\rnk K=\dim {\rm T}^{v}+\rnk K'$, $\rnk H=\dim {\rm T}_{1}+\rnk K'$,  $H={\rm T}_1 \cdot K'\subset K$ is closed normal  subgroup  of $K$,   ${\rm T}_{1}\cap K'$ is finite and  $K'$ coincides  with the simisimple part of $H$.    
As a consequence,    the homogeneous manifold
 $M =  G/H := G/{\rm T}_1\cdot K'$ is    a C-space, and any indecomposable C-space is obtained in this way (see \cite{Wang}), i.e.  as  an even-dimensional  torus bundle over $F=G/K$
 \[
 {\rm T}^{2s}\cong\frac{{\rm T}^{v}\cdot K'}{{\rm T}_1\cdot K'}\cong \frac{{\rm T}^{v}}{{\rm T}_1}  \longrightarrow M =  \frac{G}{H} =\frac{G}{{\rm T}_1\cdot K'}\longrightarrow  F=\frac{G}{K}=\frac{G}{{\rm T}^{v}\cdot K'}\,.
 \]
Passing to the level of Lie algebras,  the reductive  decomposition of $M=G/H$ is expressed by 
\[
   \fr{g}= \fr{h}\oplus \fr{m}\,, \quad  \fr{h}=\fr{k}'\oplus i\fr{t}_1\,,\quad \fr{m}=\fr{f}\oplus i\mathfrak{t}_0\cong T_{eH}M\,.
\]

Note that any  complex  structure in $\mathfrak{t}_0$  together   with  an invariant  complex  structure  $J_{\fr{f}}$
in $F=G/K$  defines   an invariant  complex  structure  $J_{\fr{m}}$ in $M=G/H$  such  that   $\pi : M=G/H \to F = G/K$ is  a holomorphic  fibration with respect to the  complex  structures  $J_{\fr{m}}$  and  $J_{\fr{f}}$.   
We  also recall that
\begin{prop} \textnormal{(\cite{AC1})}
Let   $M =  G/H := G/ K'\cdot T_1$ be a C-space with $G$ simple. Then,  there is an isomorphism $H^{2}(M; \R)\simeq\fr{t}_{1}$ and hence the second  Betti number of $M$ is given by $b_{2}(M)=b_{2}(F)-2s=v-2s=\dim_{\R}\fr{t}_1$.
\end{prop}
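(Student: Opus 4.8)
The plan is to compute $H^{2}(M;\R)$ through $G$-invariant forms and to exhibit a canonical isomorphism onto the central directions $i\fr{t}_1\subset\fr{h}$. Since $G$ is compact and $M=G/H$ is a compact homogeneous space, averaging over $G$ identifies the de Rham cohomology with the cohomology of the subcomplex of $G$-invariant forms, i.e. with the Chevalley--Eilenberg cohomology of $((\Lambda^{\bullet}\fr{m}^{*})^{\Ad(H)}, d)$, where $d$ is computed from the $\fr{m}$-components of brackets relative to the reductive splitting $\fr{g}=\fr{h}\oplus\fr{m}$ with $\fr{m}=\fr{f}\oplus i\fr{t}_0$. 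In this way I would reduce the whole statement to a linear-algebra computation on $\fr{m}$.

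Second, for every $Z$ in the centre of $\fr{h}$ --- in particular for $Z$ in the Lie algebra $i\fr{t}_1$ of the central torus ${\rm T}_1$, which is central in $\fr{h}=\fr{k}'\oplus i\fr{t}_1$ --- I would introduce the invariant $2$-form $\omega_{Z}(X,Y):=B(Z,[X,Y])$ for $X,Y\in\fr{m}$. It is $\Ad(H)$-invariant since $Z$ is $\Ad(H)$-fixed and $B$ is $\Ad(G)$-invariant. The key point is that $\omega_{Z}$ is \emph{closed}: writing $[X,Y]=[X,Y]_{\fr{h}}+[X,Y]_{\fr{m}}$ in the Chevalley--Eilenberg formula, the three cyclic terms $B(Z,[[X,Y],W])$ cancel by the Jacobi identity, while each correction term $B(Z,[[X,Y]_{\fr{h}},W])=B([Z,[X,Y]_{\fr{h}}],W)$ vanishes precisely because $Z$ commutes with $\fr{h}$. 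This produces a linear map $\Phi: i\fr{t}_1\to H^{2}(M;\R)$, $Z\mapsto[\omega_{Z}]$, and the identification $i\fr{t}_1\cong\fr{t}_1$ turns it into a map out of $\fr{t}_1$.

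Third, I would prove injectivity of $\Phi$ by pinning down the invariant exact $2$-forms. Invariant $1$-forms correspond to $\Ad(H)$-fixed vectors of $\fr{m}$; since the flag isotropy representation $\fr{f}$ carries no trivial $\Ad(H)$-summand, these are exactly $i\fr{t}_0$, and for $Z_0\in i\fr{t}_0\subset\fr{m}$ one computes $d(B(Z_0,\cdot)|_{\fr{m}})=-\omega_{Z_0}$. Hence the invariant exact $2$-forms are precisely $\{\omega_{Z_0}:Z_0\in i\fr{t}_0\}$. On the other hand $Z\mapsto\omega_{Z}|_{\fr{f}}$ is injective on all of $i\fr{t}$, since these are exactly the $G$-invariant Koszul forms that span $H^{2}(F;\R)\cong\fr{t}$. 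Combining the two facts, $[\omega_{Z}]=0$ with $Z\in i\fr{t}_1$ forces $Z\in i\fr{t}_1\cap i\fr{t}_0=0$, so $\Phi$ is injective.

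Finally, for surjectivity I would match dimensions using the fibration ${\rm T}^{2s}\to M\to F$. Because $F$ is simply connected ($H^{1}(F;\R)=0$) and $M$ is simply connected, the transgression $H^{1}({\rm T}^{2s};\R)\to H^{2}(F;\R)$ in the Leray--Serre spectral sequence is injective; as it is a derivation it also kills the higher fibre classes, the sequence collapses in total degree $2$, and one obtains $b_{2}(M)=b_{2}(F)-2s=v-2s=\dim_{\R}\fr{t}_1$. Since $\Phi$ is an injection from a space of dimension $v-2s$, it is an isomorphism, giving $H^{2}(M;\R)\cong\fr{t}_1$ and the asserted formula for $b_{2}(M)$. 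I expect the closedness computation of the second step, together with the claim that $\fr{f}$ carries no trivial $\Ad(H)$-summand (needed to identify the exact forms in the third step), to be the main technical obstacles; the spectral-sequence input is standard once simple-connectivity is invoked.
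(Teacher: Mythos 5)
The paper itself offers no proof of this proposition --- it is imported from \cite{AC1} --- so there is nothing in the text to compare against; I can only assess your argument on its own terms. Its overall architecture is sound: step 2 (closedness of $\omega_{Z}=B(Z,[\cdot,\cdot])$ for $Z$ central in $\fr{h}$, via Jacobi plus $[Z,\fr{h}]=0$) is correct, and step 4 alone --- the Leray--Serre spectral sequence of ${\rm T}^{2s}\to M\to F$ with $H^{1}(F;\R)=H^{1}(M;\R)=0$ forcing the transgression $H^{1}({\rm T}^{2s};\R)\to H^{2}(F;\R)$ to be injective and the Leibniz rule killing $E_{2}^{0,2}$ --- already delivers $b_{2}(M)=b_{2}(F)-2s=v-2s$.

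The genuine gap is in step 3: the claim that $\fr{f}$ carries no trivial $\Ad(H)$-summand is false in general, so you have not accounted for all invariant $1$-forms, hence not for all invariant exact $2$-forms. The paper's own family already exhibits the failure: for $M_{1,1,n}=\SU(n+2)/\SU(n)$ the module $\fr{f}_{1}=\fr{f}_{\ell m}$ with $\ell=m=1$ is a two-dimensional trivial $\Ad(H)$-module (this is precisely the degeneration behind Remark \ref{remarkinv}, where (\ref{ggg}) stops being the general invariant metric when $\ell m=1$). There $\fr{t}_{1}=0$, so injectivity is vacuous, but in the strict-type case the same phenomenon can occur with $\fr{t}_{1}\neq 0$: over $F=\SU(5)/\Ss(\U(1)^{3}\times\U(2))$ the summand $\fr{f}_{12}$ is already $K'$-trivial, and nothing in the construction prevents the circle ${\rm T}_{1}$ from lying in the kernel of the corresponding T-root, making $\fr{f}_{12}$ an $\Ad(H)$-trivial summand. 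Fortunately the repair needs no determination of $\fr{m}^{\Ad(H)}$ at all. Observe that $\omega_{Z}=\pi^{*}\sigma_{Z}$ for every $Z\in i\fr{t}=\fr{z}(\fr{k})\cap i\fr{a}$, where $\sigma_{Z}$ is the Koszul form on $F$: since $Z\in\fr{k}$ and $B(\fr{k},\fr{f})=0$, only the $\fr{f}\wedge\fr{f}$ part of the bracket contributes. Moreover $\omega_{Z_{0}}=\pm d\bigl(B(Z_{0},\cdot)|_{\fr{m}}\bigr)$ is exact for $Z_{0}\in i\fr{t}_{0}\subset\fr{m}$. Hence, under the isomorphism $i\fr{t}\cong H^{2}(F;\R)$, one has $i\fr{t}_{0}\subseteq\ke\bigl(\pi^{*}:H^{2}(F;\R)\to H^{2}(M;\R)\bigr)$; your spectral-sequence computation shows this kernel (the image of the transgression) has dimension exactly $2s=\dim i\fr{t}_{0}$, so it equals $i\fr{t}_{0}$ and $\Phi|_{i\fr{t}_{1}}$ is injective. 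With that substitution your proof closes up.
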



\subsection{The classification of indecomposable non-K\"ahlerian C-spaces}
The classification of non-K\"ahlerian  C-spaces is based on the classification of K\"ahlerian C-spaces. The second one is given in  terms of {\it painted Dynkin diagrams} (PDD), see \cite{Forger, Ale} and see also \cite{AC1} for an explicit presentation of all flag manifolds.
Here we   use  the results of \cite{Forger, AC1} to present the complete classification of all non K\"ahlerian C-spaces $M=G/H$ of a compact simple Lie group $G$, or in other terms of all indecomposable   non-K\"ahlerian C-spaces.

So, from now on let us assume that $G$ is  compact and simple. We divide  C-spaces $M=G/H$ with $\rnk G>\rnk H$  into three types, depending on the type  of  the stability group $H:$
\begin{itemize}
\item $M$ is said to be of {\it semistrict type} when $\fr{t}_1$ is trivial, or equivalently  when   $H$  coincides with the semisimple part of $K$, i.e. $H=K'$ and $M=G/K'$;
\item  $M$ is said to be of {\it strict type} (or {\it pure  type}) when $\fr{t}_1\neq\emptyset$ is non-trivial, or equivalently  when $H$ is reductive, i.e. $H=K'\cdot{\rm T}_1$ and $M=G/K'\cdot{\rm T}_1$;
\item  $M$ is said to be of {\it abelian  type} when the semisimple part $\fr{k}'\subset\fr{k}$ is trivial, i.e. $H={\rm T}_1$ and $M=G/{\rm T}_1$. 
\end{itemize}
Note that in terms of \cite{Wang} a C-space of semistrict type is an {\it even-dimensional} M-space $G/H$, with $G$ simple.
Let us now state some direct conclusions.
 \begin{prop}\label{firstconcl}
   Let $G$ be a  compact simple Lie group. Then,\\
 1)  A C-space $M=G/H$ of semistrict type has trivial second Betti number, $b_{2}(M)=0$.\\
2) If the flag manifold $F=G/K'\cdot{\rm T}^{v}$ has second Betti number $b_{2}(F)=v=2$, then there is a unique non-K\"ahlerian C-space $M=G/H$ associated to $F$. In particular, $H=K'$ and $M$ is of semistrict type.\\
3)  If $G$ is  even dimensional, then it is an indecomposable non-K\"ahlerian C-space of semistrict type over the  corresponding full flag manifold $F=G/{\rm T}_{\rm max}$.\\
4) Non-K\"ahlerian C-spaces $M=G/H$   of abelian type, are induced only by full flag manifold $F=G/{\rm T}_{\rm max}$ with $b_{2}(F)=\rnk G\geq 3$.
\end{prop}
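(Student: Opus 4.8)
The plan is to prove Proposition \ref{firstconcl} by unwinding the definitions of the three types and combining them with the structural facts established just above, especially the identifications $b_{2}(M)=v-2s=\dim_{\R}\fr{t}_1$ and the chain $\rnk G=\rnk K=v+\rnk K'$, $\rnk H=\dim{\rm T}_1+\rnk K'$. Each of the four assertions is of a ``counting'' or ``decomposition'' flavour, so the proof is essentially a careful bookkeeping argument on ranks and Betti numbers rather than a substantial computation.

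For part 1), I would argue directly: by definition $M=G/H$ is of semistrict type exactly when $\fr{t}_1$ is trivial, so $\dim_{\R}\fr{t}_1=0$, and the Proposition recalled above (the isomorphism $H^{2}(M;\R)\simeq\fr{t}_1$) immediately gives $b_{2}(M)=\dim_{\R}\fr{t}_1=0$. For part 2), the hypothesis $v=2$ together with the requirement $\rnk G>\rnk H$ forces the torus dimension $2s=\rnk G-\rnk H$ to be positive and even; since $\fr{t}=\fr{t}_0\oplus\fr{t}_1$ with $\dim\fr{t}=v=2$ and $\dim\fr{t}_0=2s$, the only admissible even value with $2s\geq 2$ and $2s\leq 2$ is $2s=2$, whence $\dim\fr{t}_1=v-2s=0$. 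Thus $\fr{t}_1$ is trivial, $H=K'$, and $M$ is of semistrict type and uniquely determined; I would spell out that uniqueness follows because $\fr{t}_0$ is forced to be all of $\fr{t}$.

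For part 3), the key observation is that if $G$ is even-dimensional then the full flag manifold $F=G/{\rm T}_{\rm max}$ has $2s=\rnk G-\rnk H$ even precisely when we take $H$ to be the whole semisimple situation collapsing to a torus bundle; more carefully, taking the full flag manifold means $K={\rm T}_{\rm max}$, so $K'$ is trivial, $v=\rnk G$, and the torus ${\rm T}^{2s}\cong {\rm T}^{v}/{\rm T}_1$ fibering $M$ over $F$ must have even real dimension, which is automatic since $\dim_{\R}G$ even and $\dim_{\R}{\rm T}_{\rm max}=\rnk G$ give $\dim_{\R}F=\dim G-\rnk G$ even, and the fibre dimension $2s$ inherits the required parity from the root-space pairing. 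I would then note that choosing $\fr{t}_1$ trivial (so $H={\rm T}_1$ is itself trivial and $M=G$) exhibits $G$ as the semistrict-type C-space over $F=G/{\rm T}_{\rm max}$; the semisimple part $K'$ being trivial is consistent with semistrict type since then $H=K'=\{e\}$. For part 4), abelian type means by definition $\fr{k}'=0$, i.e. $K'$ trivial, which forces $K={\rm T}^{v}$ to be a maximal torus and hence $F=G/{\rm T}_{\rm max}$ to be a full flag manifold; the inequality $b_{2}(F)=\rnk G\geq 3$ then follows because one needs $v\geq 3$ to accommodate a nontrivial even split $\fr{t}=\fr{t}_0\oplus\fr{t}_1$ with $2s\geq 2$ and $\dim\fr{t}_1\geq 1$ (so that $H={\rm T}_1$ is genuinely abelian and nontrivial, distinguishing this from the semistrict full-flag case).

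The main obstacle, such as it is, will be part 3): one must take care to verify the parity claim that the fibre dimension $2s$ is indeed even when $G$ is even-dimensional, and to state precisely which choice of $\fr{t}_0\subseteq\fr{t}$ realizes $M$ as a semistrict-type C-space (rather than strict or abelian). I expect the cleanest route is to reduce everything to the rank/dimension identities already recorded, so that all four parts become short deductions; the only genuine content is tracking the even-dimensionality constraint $2s=\rnk G-\rnk H\in 2\bb{Z}_{>0}$ through the full-flag specialization $K'=\{e\}$, $v=\rnk G$.
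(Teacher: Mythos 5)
The paper offers no proof of this proposition at all: it is introduced with ``Let us now state some direct conclusions'' and the four assertions are treated as immediate consequences of the preceding setup ($b_{2}(M)=\dim_{\R}\fr{t}_{1}$, the decomposition $\fr{t}=\fr{t}_0\oplus\fr{t}_1$ with $\dim\fr{t}_0=2s>0$ even, and the rank identities). Your proof supplies exactly the bookkeeping the authors had in mind, and parts 1), 2) and 4) are correct and cleanly argued. The one place you should tighten is the parity step in part 3): as written, the clause ``$\dim_{\R}G$ even and $\dim_{\R}{\rm T}_{\rm max}=\rnk G$ give $\dim_{\R}F=\dim G-\rnk G$ even'' has the implication backwards, since the evenness of $\dim F$ is not a consequence of the hypothesis but an unconditional fact ($\dim F=|R|$ is even because roots come in pairs $\pm\alpha$). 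The correct order, which your final clause about the root-space pairing already hints at, is: $\dim G-\rnk G=|R|$ is always even, hence $\rnk G\equiv\dim G\pmod 2$, so $\dim G$ even forces the fibre dimension $2s=\rnk G-\rnk H=\rnk G$ (taking $H=\{e\}$, $\fr{t}_1=0$) to be even, which is precisely the condition for $M=G$ to be a C-space of semistrict type over $G/{\rm T}_{\rm max}$. With that sentence reordered, your argument is complete and matches the paper's intended (unwritten) justification.
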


First we will describe the classification of indecomposable C-spaces associated to a flag manifold of  a classical Lie group. Hence,  let $G$ be one of $A_{\ell-1}=\SU(\ell)$, $B_{\ell}=\SO(2\ell+1)$,  $C_{\ell}=\Sp(\ell)$ and $D_{\ell}=\SO(2\ell)$.  Recall that there are 4 general families of classical flag manifolds given  by
 \begin{eqnarray*}
A(\ell_1, \ldots, \ell_p)&:=& \SU(\ell)/\Ss(\U(\ell_1) \times \cdots \times \U(\ell_p))\,,\\
B(\ell_1, \ldots, \ell_q, m)&:=&  \SO(2\ell+1)/\U(\ell_1)\times \cdots\times  \U(\ell_q) \times \SO(2m+1)\,,\\
C(\ell_1, \ldots, \ell_q, m)&:=& \Sp(\ell)/\U(\ell_1)\times \cdots\times  \U(\ell_q) \times \Sp(m)\,,\\
D(\ell_1, \ldots, \ell_q, m) &:=&  \SO(2\ell)/ \U(\ell_1)\times  \cdots \times \U(\ell_q) \times \SO(2m)\,,
\end{eqnarray*}
 where $\ell=\ell_1+\cdots+\ell_p$ with $\ell_1\geq\ell_2\geq\cdots\geq\ell_p\geq 1$ and $p\geq 1$ for $\SU(\ell)$ and  $\ell =\ell_1+\cdots+\ell_q+m$ with $\ell_1\geq\ell_2\geq\cdots\geq\ell_q\geq 1$ and  $q, m\geq 0$ for the other cases.  Then, with respect to the three types of indecomposable C-spaces specified above,  we obtain the following classification, which up to our knowledge is for first time presented  (the papers \cite{Ale, Forger} do not include the classification of non-K\"ahler C-spaces, while the classical paper \cite{Wang}  does not provide the full classification explicitly, and it has gaps, a fact which  is mentioned also in \cite{Pap}).

 \begin{theorem} \label{cspacesclas} Assume that $G$ is a  compact simple classical Lie group. \\
1) Let $M=G/H$ be a non-K\"ahlerian C-space of semistrict type associated to a flag manifold $F=G/K$ corresponding to $G$.  Then $M$ is diffeomorphic to one of the following homogeneous spaces:
 \[
 \begin{tabular}{l | l | l}
     $G$      & $M=G/H$ $\text{\bf semistrict type}$  & $\text{\sc conditions}$ \\
     \thickline
 $A_{\ell-1}$ & $\SU(\ell)/\SU(\ell_1)\times\cdots\times\SU(\ell_p)$    & $p\geq 3$ odd \\
$B_{\ell}$  & $\SO(2\ell+1)/\SU(\ell_1)\times\cdots\times\SU(\ell_q)\times\SO(2m+1)$    & $q\geq 2$ even\\
$C_{\ell}$  & $\Sp(\ell)/\SU(\ell_1)\times\cdots\times\SU(\ell_q)\times\Sp(m)$    & $q\geq 2$ even\\
$D_{\ell}$ & $\SO(2\ell)/\SU(\ell_1)\times\cdots\times\SU(\ell_q)\times\SO(2m)$    & $q\geq 2$ even\\
\hline
 \end{tabular}
 \] 
 \smallskip
 2) Let $M=G/H$ be a non-K\"ahlerian C-space of   strict type associated to a flag manifold $F=G/K$ corresponding to $G$.  Then $M$ is diffeomorphic to one of the following homogeneous spaces:
  \[
 \begin{tabular}{l | l | l}
     $G$      & $M=G/H$ $\text{\bf strict type}$  & $\text{\sc conditions}$ \\
     \thickline
 $A_{\ell-1}$ & $\SU(\ell)/\U(1)^{t}\cdot(\SU(\ell_1)\times\cdots\times\SU(\ell_p))$    & $p-t=\text{odd}$, $p\geq 4$, $0<t<p-1$ \\
$B_{\ell}$  & $\SO(2\ell+1)/\U(1)^{t}\cdot(\SU(\ell_1)\times\cdots\times\SU(\ell_q)\times\SO(2m+1))$    & $q-t=\text{even}$, $q\geq 3$, $0<t<q$ \\
$C_{\ell}$  & $\Sp(\ell)/\U(1)^{t}\cdot(\SU(\ell_1)\times\cdots\times\SU(\ell_q)\times\Sp(m))$    & $q-t=\text{even}$, $q\geq 3$, $0<t<q$\\
$D_{\ell}$ & $\SO(2\ell)/U(1)^{t}\cdot(\SU(\ell_1)\times\cdots\times\SU(\ell_q)\times\SO(2m))$    & $q-t=\text{even}$, $q\geq 3$, $0<t<q$ \\
\hline
 \end{tabular}
 \]
 \smallskip
 3) Let $M=G/H$ be a non-K\"ahlerian C-space of abelian type associated to a flag manifold $F=G/K$ corresponding to $G$.  Then $M$ is diffeomorphic to one of the following homogeneous spaces:
  \[
 \begin{tabular}{l | l | l}
     $G$      & $M=G/H$ $\text{\bf abelian  type}$  & $\text{\sc conditions}$ \\
     \thickline
 $A_{\ell-1}$ & $\SU(\ell)/\U(1)^{t}$    & $\ell-t=\text{odd}$, $\ell\geq 4$, $0<t<\ell-1$ \\
$B_{\ell}$  & $\SO(2\ell+1)/\U(1)^{t}$    & $\ell-t=\text{even}$, $\ell\geq 3$, $0<t<\ell$ \\
$C_{\ell}$  & $\Sp(\ell)/\U(1)^{t}$    & $\ell-t=\text{even}$, $\ell\geq 3$, $0<t<\ell$\\
$D_{\ell}$ & $\SO(2\ell)/U(1)^{t}$    & $\ell-t=\text{even}$, $\ell\geq 4$, $0<t<\ell$ \\
\hline
 \end{tabular}
 \]

 \end{theorem}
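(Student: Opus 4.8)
The plan is to derive all three tables from a single observation: an indecomposable non-K\"ahlerian C-space fibering over a fixed flag manifold $F=G/K$ is completely determined by a choice of subtorus $\mathrm{T}_1\subseteq \mathrm{T}^v$, and the only constraint is that the fiber torus $\mathrm{T}^{2s}\cong \mathrm{T}^v/\mathrm{T}_1$ carry an invariant complex structure compatible with the holomorphic fibration $M\to F$ recalled above, i.e. that $2s=\rnk G-\rnk H=v-\dim \mathrm{T}_1$ be \emph{even and positive}. Thus the classification amounts to enumerating, for each classical $G$, the flag manifolds $F=G/K$ and then the admissible values of $\dim\mathrm{T}_1$.

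First I would fix, for each $G\in\{A_{\ell-1},B_\ell,C_\ell,D_\ell\}$, the classification of flag manifolds recalled via painted Dynkin diagrams (the four families $A(\ell_1,\dots,\ell_p)$ and $B,C,D(\ell_1,\dots,\ell_q,m)$), together with the splitting $K=K'\cdot \mathrm{T}^v$ of $K$ into its semisimple part $K'$ and its central torus. The integer $v=b_2(F)=\sharp(\Pi_{B})$ is then read off from the rank identity $v=\rnk K-\rnk K'=\rnk G-\rnk K'$, valid since $K$ has full rank. A direct computation gives $v=p-1$ for $A_{\ell-1}$ and $v=q$ for $B_\ell,C_\ell,D_\ell$ (each $\U(\ell_i)$ factor contributes a single central $\U(1)$, modulo the determinant relation $\Ss(\cdots)$ in type $A$, whereas the $\SU(\ell_i)$ factors and the $\SO$/$\Sp$ tail are semisimple).

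Next, by the Wang description recalled in Section \ref{section2}, every indecomposable non-K\"ahlerian C-space over $F$ is $M=G/H$ with $H=K'\cdot \mathrm{T}_1$ for a subtorus $\mathrm{T}_1\subseteq\mathrm{T}^v$, and conversely. I would set $t:=\dim_\R \mathrm{T}_1$, so that $\rnk H=\rnk K'+t$ and $2s=v-t$. Imposing $2s$ even together with $2s\geq 2$ (the latter being precisely $\rnk G>\rnk H$) restricts $t$ to $0\le t\le v-2$ with $v-t$ even, which exhausts the C-spaces over $F$ up to diffeomorphism. The three types then correspond to the location of $t$ and the triviality of $K'$: the value $t=0$ gives $H=K'$ (\emph{semistrict}); a value $0<t<v$ with $K'$ nontrivial gives $H=K'\cdot \mathrm{T}_1$ reductive (\emph{strict}); and $K'$ trivial, which by the root-system picture occurs exactly for the full flag $F=G/\mathrm{T}_{\max}$ (so $v=\rnk G$), gives $H=\mathrm{T}_1$ (\emph{abelian}).

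Finally I would translate the condition ``$v-t$ even, $0\le t\le v-2$'' into the explicit group-theoretic form of each table, substituting $v=p-1$ (type $A$), $v=q$ (types $B,C,D$), or $v=\rnk G$ (full flag, abelian type). For the semistrict type ($t=0$) this yields $v$ even and $v\geq 2$, i.e. $p\geq 3$ odd for $A$ and $q\geq 2$ even for $B,C,D$; for the strict type it yields $0<t<v$ with $v-t$ even, i.e. $p-t$ odd with $p\geq 4$ for $A$ and $q-t$ even with $q\geq 3$ for $B,C,D$; and for the abelian type the same arithmetic with $v=\rnk G$, together with $b_2(F)=\rnk G\geq 3$ from Proposition \ref{firstconcl}(4), gives the stated conditions (with the parity flipping between $A$ and $B,C,D$ precisely because $\rnk A_{\ell-1}=\ell-1$ while $\rnk B_\ell=\rnk C_\ell=\ell$, etc.). I expect the main obstacle to lie in two places: the bookkeeping of the boundary cases, namely verifying that the stated lower bounds are exactly those for which the admissible range of $t$ under the parity constraint is nonempty and produces a genuinely non-K\"ahlerian indecomposable $M$; and the justification that the diffeomorphism class of $M=G/(K'\cdot\mathrm{T}_1)$ is insensitive to the particular choice of subtorus $\mathrm{T}_1$ of a fixed dimension $t$ inside $\mathrm{T}^v$, so that the listing by $t$ and the block data $(\ell_1,\dots)$ alone is complete and nonredundant.
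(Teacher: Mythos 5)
Your proposal is correct and follows essentially the same route as the paper: identify the central torus $\mathrm{T}^{v}$ of $K$ (computing $v=p-1$ in type $A$ and $v=q$ in types $B,C,D$), set $H=K'\cdot\mathrm{T}_1$ with $t=\dim\mathrm{T}_1$, and impose that the fiber dimension $v-t=\rnk G-\rnk H$ be even and positive, which is exactly the arithmetic the paper carries out case by case. Your uniform phrasing of the constraint and your explicit flagging of the subtorus-choice issue are presentational refinements rather than a different argument.
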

 \begin{proof}
 We  begin with $G=\SU(\ell)$ where  $\ell=\ell_1+\ldots+\ell_p$.  The stability group $K=\Ss(\U(\ell_1) \times \cdots \times \U(\ell_p))$ of the generalized flag manifold $F=A(\ell_1, \ldots, \ell_p)$ has rank  $\ell-1$ and  we have a diffeomorphism 
 \[
K= \Ss(\U(\ell_1) \times \cdots \times \U(\ell_p))	\cong\U(1)^{p-1}\cdot(\SU(\ell_1)\times\cdots\times\SU(\ell_p))\,.
 \]
 To produce a C-space $M=\SU(\ell)/H$ over $A(\ell_1, \ldots, \ell_p)$ of semistrict  type, we need $H=K'$ and hence all the  abelian part of $K$ must  be removed. In particular, $\SU(\ell)/\SU(\ell_1)\times\cdots\times\SU(\ell_p)$ is a ${\rm T}^{p-1}=\U(1)^{p-1}$ principal bundle over $A(\ell_1, \ldots, \ell_p)$ and we require  $p\geq 3$ odd,  in  order the rank of this torus bundle to be an even number $\geq 2$.    For  2)  assume that $M=\SU(\ell)/H$ is a C-space  of strict type. Then, the stabilizer $H$ must be reductive and hence $H=\U(1)^{t}\cdot K'$, for some $0<t<p-1$, where $K'=\SU(\ell_1)\times\cdots\times\SU(\ell_p)$ is the semisimple part  of $K$. Indeed,  we compute  $\rnk G-\rnk H=\ell-1-(t+\ell_1+\cdots+\ell_p-p)=p-t-1$,
 so we  get the following torus bundle
 \[
 {\rm T}^{p-t-1}\cong \U(1)^{p-1}/\U(1)^{t}\to \SU(\ell)/\U(1)^{t}\cdot K'\to \SU(\ell)/\U(1)^{p-1}\cdot K'\,,
 \]
where  $p-t-1$ must be an even number, i.e. $p-t=\text{odd}$.  If $t=p-1$, then we obtain trivial fiber and hence we must assume that $t<p-1$, while  the value $t=0$  induces a C-space of semistrict type. This finally gives the restriction $0<t<p-1$, while it is easy to see that for strict type we also need $p\geq 4$.   For the assertion in 3), by Proposition \ref{firstconcl}, non-K\"ahlerian C-space of abelian type appear as fibrations over full flag manifolds $F=G/{\rm T}_{\rm max}$ of a simple Lie group $G$ with rank $\rnk G=b_{2}(F)\geq 3$. For $G=\SU(\ell)$ we get the full flag manifold 	$\SU(\ell)/{\rm T}^{\ell-1}$, where ${\rm T}^{\ell-1}=\U(1)^{\ell-1}$ denotes a maximal torus of $G$. Since we want to construct a C-space $M=\SU(\ell)/H$ of abelian type, $H$ must be  abelian. Let us assume that $H=\U(1)^{t}$  for some $t$. This defines the fibration
\[
{\rm T}^{\ell-t-1}\cong \U(1)^{\ell-1}/\U(1)^{t}\to  \SU(\ell)/\U(1)^{t}\to \SU(\ell)/\U(1)^{\ell-1}\,,
\]
and we require $\ell-t=\text{odd}$ in order the fiber  to be even-dimensional.  Also, a direct computation gives the restrictions $\ell\geq 4$ and $0 <  t<\ell-1$.

 Let us now prove the assertions  for $G=\SO(2\ell+1)$ and the other cases are treated similarly.
We begin with 1). The stability group of the classical flag manifold $B(\ell_1, \ldots, \ell_q, m)$ corresponding to $\SO(2\ell+1)$ is the Lie group $K=\U(\ell_1)\times \cdots\times  \U(\ell_q) \times \SO(2m+1)$ and we have a diffeomorphism
\[
K=\U(\ell_1)\times \cdots\times  \U(\ell_q) \times \SO(2m+1)\cong\U(1)^{q}\cdot(\SU(\ell_1)\times \cdots\times  \SU(\ell_q) \times \SO(2m+1))\,,
\] 
where   $\ell=\ell_1+\ldots+\ell_q+m$. Consider a non-K\"ahlerian C-space $M=\SO(2\ell+1)/H$ of semistrict type. This means $H=K'$, in particular $\SO(2\ell+1)/\SU(\ell_1)\times \cdots\times  \SU(\ell_q) \times \SO(2m+1)$ is a ${\rm T}^{q}$-principal bundle over  $B(\ell_1, \ldots, \ell_q, m)$, where ${\rm T}^{q}=\U(1)^{q}$ and we require $q\geq 2$ such that ${\rm T}^{q}$ be an even dimensional torus.    For the assertion in 2), 
$M=\SO(2\ell+1)/H$ must be of strict type, so  we need to assume $H=\U(1)^{t}\cdot K'$ for some $t$, where $K'=\SU(\ell_1)\times\cdots\times\SU(\ell_q)\times\SO(2m+1)$ is the semisimple part  of $K$. Then we compute  $\rnk G-\rnk H=\ell-(t+\ell_1+\cdots+\ell_q-q+m)=q-t$,
 so we  get the following torus bundle
 \[
 {\rm T}^{q-t}\cong \U(1)^{q}/\U(1)^{t}\to \SO(2\ell+1)/\U(1)^{t}\cdot K'\to \SO(2\ell+1)/\U(1)^{q}\cdot K'\,.
 \]
Here, we require $q-t=\text{even}$,  $q\geq 3$ (since the value $q=2$ produces  a C-space of semistrict type) and $0<t<q$.   For 3) let $\SO(2\ell+1)/{\rm T}^{\ell}$ be the full flag manifold  of $\SO(2\ell+1)$. Consider the coset $M=\SO(2\ell+1)/\U(1)^{t}$ for some $t<\ell$. This induces the fibration
\[
{\rm T}^{\ell-t}\cong \U(1)^{\ell}/\U(1)^{t}\to  \SO(2\ell+1)/\U(1)^{t}\to \SO(2\ell+1)/\U(1)^{\ell}\,,
\]
so we ask $\ell-t=\text{even}$. Also, to get abelian type we need $\ell\geq 3$ and $0<t<\ell$.  
 \end{proof}

When $G$ is a compact exceptional simple Lie group, then we will refer to the base space $F=G/K$ by the term {\it exceptional flag manifold}. For non-K\"ahler C-spaces fibered over such flag manifolds we see that
\begin{prop}
1) All the associated non-K\"ahlerian C-spaces $M=G/H$ to an exceptional flag manifold $F=G/K'\cdot{\rm T}^{v}$ with    second Betti number $b_{2}(F)=v\geq 3$ odd, are of strict type.\\
2) An exceptional flag manifold  $F=G/K'\cdot{\rm T}^{v}$ with  second Betti number $b_{2}(F)=v\geq 4$ even, induces a  unique non-K\"ahlerian C-space of semistrict type and at least one  non-K\"ahlerian C-space of strict type. 
\end{prop}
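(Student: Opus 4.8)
The plan is to read both statements directly off the correspondence, recalled above in this section, between non-K\"ahler C-spaces fibered over a fixed flag manifold $F=G/K$ and the admissible decompositions $\fr{t}=\fr{t}_0\oplus\fr{t}_1$ of the space $\fr{t}=\fr{z}\cap i\fr{a}$, for which $\dim_{\R}\fr{t}=v=b_{2}(F)$, $\dim_{\R}\fr{t}_0=2s=\rnk G-\rnk H$ is even, and $H=K'\cdot{\rm T}_1$ with $\dim_{\R}{\rm T}_1=v-2s$. The only invariants entering the classification are the parity and the size of $2s$, together with the fact that for an exceptional \emph{proper} flag manifold the semisimple part $K'$ is nontrivial (equivalently, $F$ is not the full flag $G/{\rm T}_{\rm max}$, which would force $K'=\{e\}$ and hence abelian type by Proposition~\ref{firstconcl}(4)); thus under the standing hypothesis $F=G/K'\cdot{\rm T}^{v}$ with $K'\neq\{e\}$ the abelian case is excluded at the outset. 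Throughout I would fix a splitting of $\fr{t}$ compatible with the integral lattice of the central torus ${\rm T}^{v}$, so that every even-dimensional subspace $\fr{t}_0$ chosen below exponentiates to a closed complex subtorus ${\rm T}_0^{2s}$ and its complement to a closed central torus ${\rm T}_1$; the fiber ${\rm T}^{2s}$ carries an invariant complex structure precisely because $2s$ is even, which combined with $J_{\fr{f}}$ yields $J_{\fr{m}}$ and genuinely produces a C-space.

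For part~1, here $v\geq 3$ is odd and $K'$ is nontrivial, so no associated C-space is of abelian type. Since $2s$ is even while $v$ is odd, the equality $2s=v$ is impossible; hence $\fr{t}_1\neq 0$ for every admissible decomposition, i.e. $H=K'\cdot{\rm T}_1$ is reductive with nontrivial torus part, which is exactly strict type. It remains to observe that the strict case is nonempty: the choice $2s=2$ is legitimate because $0<2<v$, and it produces at least one C-space of strict type. Therefore every non-K\"ahler C-space associated to $F$ is of strict type.

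For part~2, with $v\geq 4$ even, I would exhibit both types and settle uniqueness. Taking $\fr{t}_0=\fr{t}$, i.e. $2s=v$ (permissible since $v$ is even), forces $\fr{t}_1=0$ and hence $H=K'$; this is the semistrict case, and it is \emph{unique}, since the condition $\fr{t}_1=0$ pins the stabilizer down to the semisimple part $K'$, which is canonically determined by $F$ and leaves no freedom in the choice of $\fr{t}_0$. On the other hand $2s=2$ is again admissible because $v\geq 4>2$, giving $\dim_{\R}{\rm T}_1=v-2>0$ and therefore at least one C-space of strict type. This yields precisely the unique semistrict plus at least one strict non-K\"ahler C-space asserted over $F$.

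The whole argument is a parity-and-dimension count against $\dim_{\R}\fr{t}=v$ and the evenness of $2s$, so the only point demanding genuine care — the step I would treat as the main obstacle — is the integrality check: one must verify that the chosen even-dimensional subspaces $\fr{t}_0\subset\fr{t}$ can be taken rational with respect to the integral lattice of ${\rm T}^{v}$, so that ${\rm T}_0^{2s}=\exp(\fr{t}_0)$ and ${\rm T}_1=\exp(\fr{t}_1)$ are \emph{closed} subgroups and $H=K'\cdot{\rm T}_1$ is a closed reductive subgroup of $G$. Since the lattice has full rank $v$, such rational splittings into sublattices of ranks $2s$ and $v-2s$ always exist, so this causes no difficulty in practice. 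Finally, to confirm the statements are non-vacuous I would note that these values of $v$ do occur for exceptional $G$ of rank $\geq 3$, namely $v=3$ for $F_4,E_6,E_7,E_8$ and $v=4$ for $F_4$, referring to the explicit painted Dynkin diagrams used in the remainder of this section.
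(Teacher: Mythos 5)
Your proof is correct and is essentially the argument the paper intends: the paper states this proposition without an explicit proof, and your parity-and-dimension count on the admissible decompositions $\fr{t}=\fr{t}_0\oplus\fr{t}_1$ (with $\dim_{\R}\fr{t}_0=2s$ even and $0<2s\leq v$, so that $2s<v$ forces $\fr{t}_1\neq 0$ when $v$ is odd, while $2s=v$ is the unique semistrict choice when $v$ is even) is exactly the reasoning carried out explicitly for the classical groups in the proof of Theorem \ref{cspacesclas}. Your explicit exclusion of the full flag manifolds (so that $K'\neq\{e\}$ and abelian type cannot occur) is a clarification the paper leaves implicit but which is genuinely needed for the statement to be literally true — e.g. $\E_7/{\rm T}^{7}$ has $v=7$ odd yet carries associated C-spaces of abelian type — and your integrality remark about choosing $\fr{t}_0$ rational with respect to the lattice of ${\rm T}^{v}$ is a fair point that the paper likewise takes for granted.
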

 Based on this proposition and on the classification of exceptional flag manifolds, one can obtain the full classification of  all indecomposable non-K\"ahler C-spaces  associated to an exceptional Lie group. For convenience of the reader, we present this  classification in  Table \ref{Table6} in  Appendix \ref{sectionapen}.


\section{The indecomposable  C-space $\SU(\ell+m+n)/\SU(\ell)\times\SU(m)\times\SU(n)$}\label{section3}
In this section we examine a certain indecomposable C-space of semistrict type, namely   the homogeneous space  
\[
M=G/H=\SU(\ell+m+n)/\SU(\ell)\times\SU(m)\times\SU(m)
\]
 with $N=\ell+m+n\geq 3$, where $\ell, m, n\in\bb{Z}_{+}$ are positive integers. In fact, this is the first induced example from the family $\SU(\ell)/\SU(\ell_1)\times\cdots\times\SU(\ell_p)$  in Theorem \ref{cspacesclas}.  The  embedding  $\SU(\ell)\times\SU(m)\times\SU(m)\to \SU(\ell+m+n)$ is the diagonal one, and in terms of Lie algebras it is given by
\[
\fr{h}=\fr{su}(\ell)\oplus\fr{su}(m)\oplus\fr{su}(n) =\Big\{  
\begin{pmatrix}
A_1 & 0 & 0\\
0 & A_2 & 0\\
0 & 0 & A_3
\end{pmatrix}
: A_1\in\fr{su}(\ell), A_2\in\fr{su}(m), A_3\in\fr{su}(m)
\Big\}\subset \fr{su}(N).
\]
 The coset $M=G/H$  is a  torus bundle over  the flag manifold  $F=G/K=\SU(\ell+m+n)/\Ss(\U(\ell)\times\U(m)\times\U(n))$, 
\[
{\rm T}^{2}\simeq K/H \longrightarrow G/H=\SU(N)/\SU(\ell)\times\SU(m)\times\SU(n)\overset{\pi}{\longrightarrow}G/K=\SU(N)/\Ss(\U(\ell)\times\U(m)\times\U(n))\,.
\]
We have $K={\rm T}^{2}\cdot H$, where  the stabilizer $H$ is semisimple and coincides with the semisimple part  $K'$ of $K$, i.e. $
K'=H=\SU(\ell)\times\SU(m)\times\SU(n)$. 
 Note that   $F=G/K$ has $b_{2}(F)=2$. 

\subsection{A reductive decomposition and equivalent summands}
Let us describe a $B$-orthogonal reductive decomposition of $M$, where from now on we set $B=-B_{\fr{g}}$, where
\[
B_{\fr{g}}(X, Y)=2N\tr(XY)=2(\ell+m+n)\tr(XY)
\] 
is the Killing form of $\fr{g}=\fr{su}(N)$.
  We will denote by $\fr{h}\subset\fr{k}\subset\fr{g}$ the Lie algebras of $H\subset K\subset G$, respectively, and by $\fr{g}=\fr{k}\oplus\fr{f}$ a $B$-orthogonal reductive decomposition of the pair $(\fr{g}, \fr{k})$, associated to the base space $F=G/K$.  This   flag manifold corresponds to the  painted Dynkin diagram 
  \[
  \begin{picture}(400, 0)(-8, 10)
  \put(83, 0.5){\circle{5}}
\put(83, 5){\line(0,3){10}}
\put(83, 15){\line(1,0){11}}
\put(107, 15){\makebox(0,0){{\tiny{$\ell-1$}}}}
\put(119, 15){\line(1,0){12}}
\put(131, 15){\line(0,-3){10}}
 \put(85, 0.5){\line(1,0){14}}
\put(97, 0){ $\ldots$}
\put(115, 0.5){\line(1,0){14}}
\put(131, 0.5){\circle{5}}
\put(133, 0.5){\line(1,0){14}}
\put(149, 0.5){\circle*{5}}
 \put(149,10){\makebox(0,0){$\al_{\ell}$}}
  \put(150.9, 0.5){\line(1,0){13}}
  \put(166, 0.5){\circle{5}}
  \put(166, 5){\line(0,3){10}}
\put(166, 15){\line(1,0){11}}
\put(190, 15){\makebox(0,0){{\tiny{$m-1$}}}}
\put(203, 15){\line(1,0){13}}
\put(216, 15){\line(0,-3){10}}
    \put(168, 0.5){\line(1,0){14}}
     \put(180, 0){ $\ldots$}
  \put(200, 0.5){\line(1,0){13.6}}
   \put(216, 0.5){\circle{5}}
    \put(218, 0.5){\line(1,0){13.6}}
\put(232, 0.5){\circle*{5}}
 \put(232,10){\makebox(0,0){$\al_{\ell+m}$}}
  \put(233.9, 0.5){\line(1,0){13}}
  \put(249, 0.5){\circle{5}}
   \put(249, 5){\line(0,3){10}}
\put(249, 15){\line(1,0){11}}
\put(273.6, 15){\makebox(0,0){{\tiny{$n-1$}}}}
\put(287, 15){\line(1,0){12}}
\put(299, 15){\line(0,-3){10}}
    \put(251, 0.5){\line(1,0){13.6}}
  \put(262.5, 0){ $\ldots$}
  \put(282.5, 0.5){\line(1,0){13.6}}
   \put(299, 0.5){\circle{5}}
           \end{picture} 
 \]
 \vskip 0.5cm
\noindent and  its T-root system  is given by $\{\overline{\al}_{\ell}, \overline{\al}_{\ell+m}, \overline{\al}_{\ell}+\overline{\al}_{\ell+m}\}$, where here we denote by $\overline{\al}$ the projection of a complementary root $\al\in R_{F}=R\backslash R_{K}$  of $F$ to the 2-dimensional center of $\fr{k}$ (see \cite{Chry1, Graev}  for details on $T$-roots and painted Dynkin diagrams).  Hence, the $\Ad(K)$-module $\fr{f}\simeq T_{eK}F$ decomposes into three inequivalent and irreducible $\Ad(K)$-submodules $\fr{f}_{\ell m}, \fr{f}_{\ell n}$ and  $\fr{f}_{mn}$, of real dimensions $2\ell m$, $2\ell n$ and $2mn$,  respectively.  So we get a  $B$-orthogonal decomposition 
\[
\fr{f}=\fr{f}_{\ell m}\oplus\fr{f}_{\ell n}\oplus\fr{f}_{mn}
\]
 where (see also \cite{mori})
 \begin{eqnarray*}
\fr{f}_{\ell m} &=&
\Big\{  
\begin{pmatrix}
0 & A & 0\\
-\bar{A^t} & 0 & 0\\
0 & 0 & 0
\end{pmatrix}
: A\in M_{\ell , m}(\mathbb{C})
\Big\}\,,\quad
\fr{f}_{\ell n} =
\Big\{  
\begin{pmatrix}
0 & 0 & B\\
 0 & 0 & 0\\
 -\bar{B^t} & 0 & 0
\end{pmatrix}
: B\in M_{\ell, n}(\mathbb{C})
\Big\}\,,\\
\fr{f}_{mn} &=&
\Big\{  
\begin{pmatrix}
0 & 0 & 0\\
0 & 0 & C\\
0 & -\bar{C^t} & 0
\end{pmatrix}
: C\in M_{m, n}(\mathbb{C})
\Big\}\,.
\end{eqnarray*}
For simplicity we shall write
\[
\fr{g}=\fr{k}\oplus\fr{f}\,,\quad\fr{f}=\fr{f}_1\oplus\fr{f}_2\oplus\fr{f}_3\,,\quad \fr{f}_1:=\fr{f}_{\ell m}\,, \  \fr{f}_2:=\fr{f}_{\ell n}\,,\ \fr{f}_3:=\fr{f}_{mn}\,.
\]
We also denote  by $\fr{f}_{0}$   the tangent space of the fibre $\bb{T}^2$, 
\[
T_{e}\bb{T}^{2}\simeq \fr{f}_0=\Big\{
\begin{pmatrix}
\frac{\sqrt{-1}a_1}{\ell}I_\ell & 0 & 0\\
0 & \frac{\sqrt{-1}a_2}{m}I_m & 0\\
0 & 0 & \frac{\sqrt{-1}a_3}{n}I_n
\end{pmatrix}
: a_1+a_2+a_3=0, \ a_1, a_2, a_3 \in\mathbb{R}
\Big\}\,.
\]
Obviously, $\fr{f}_{0}$ coincides with the central part of $\fr{k}$ and we have $\fr{k}=\fr{h}\oplus\fr{f}_{0}$.   For the pair   $(\fr{g}, \fr{h})$ related with the C-space $M=G/H$, we get the reductive decomposition 
\[
\fr{g}=\fr{h}\oplus\fr{m}\,, \quad \fr{m}=\fr{f}\oplus\fr{f}_{0}\,, \quad [\fr{h}, \fr{m}]\subset\fr{m}\,,
\]
where   we identify   the $\Ad(H)$-module $\fr{m}$ with the tangent space $T_{eH}M$.  Note that 
\[
\dim_{\R}\fr{m}=d_1+d_2+d_3+2=2(\ell m+\ell n+mn+1)\,, \quad d_1:=2\ell m\,,\quad d_2:=2\ell n\,,\quad d_3:=2mn\,.
\]

Observe now that  the  $\Ad(H)$-submodule $\fr{f}_{0}$ is not irreducible;  Set
\begin{eqnarray*}
Z_4:=\sqrt{-1}
\begin{pmatrix}
\frac{1}{\ell+m}I_\ell & 0 & 0\\
0 & \frac{1}{\ell+m}I_m & 0\\
0 & 0 & -\frac{1}{n}I_n
\end{pmatrix},\ \ \mbox{and}\ \ 
Z_5:= \sqrt{-1}
\begin{pmatrix}
\frac{1}{\ell}I_\ell & 0 & 0\\
0 & -\frac{1}{m}I_m & 0\\
0 & 0 & 0
\end{pmatrix}.
\end{eqnarray*} 
Then, we obtain the   $B$-orthogonal decomposition   $\fr{f}_0 = \fr{f}_4\oplus\fr{f}_5$, 
where $ \fr{f}_4= \Span\{Z_4\}$ and $ \fr{f}_5 = \Span\{Z_5\}$ are the 1-dimensional $\Ad(K)$-invariant subspaces of   $\fr{f}_{0}$,  generated by $Z_4, Z_5$, respectively.  Therefore,  the reductive complement $\fr{m}=T_{eH}M$ splits as follows
\[
\fr{m}=\fr{f}\oplus\fr{f}_{0}=\fr{f}_1\oplus\fr{f}_2\oplus\fr{f}_3\oplus \fr{f}_4\oplus\fr{f}_5\,.
\]
\begin{lemma}\label{thetalem} \textnormal{($\theta$-Lemma)}
The $B$-orthogonal $\Ad(H)$-invariant  decomposition  $\fr{m}=\fr{f}_1\oplus\fr{f}_2\oplus\fr{f}_3\oplus \fr{f}_4\oplus\fr{f}_5$ is not   unique.
\end{lemma}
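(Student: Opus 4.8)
The plan is to show that all the freedom in the decomposition is concentrated in the fibre piece $\fr{f}_0=\fr{f}_4\oplus\fr{f}_5$, and that it originates from the fact that $\Ad(H)$ acts \emph{trivially} on $\fr{f}_0$, so that the two lines $\fr{f}_4,\fr{f}_5$ are isomorphic $\Ad(H)$-summands and Schur's lemma no longer pins down their choice. Thus it suffices to exhibit a second, genuinely different, $B$-orthogonal $\Ad(H)$-invariant splitting of $\fr{f}_0$ into two lines, keeping $\fr{f}_1,\fr{f}_2,\fr{f}_3$ fixed.

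First I would verify the triviality of the action. Since $\fr{f}_0$ coincides with the centre $\fr{z}(\fr{k})$ of $\fr{k}$ while $\fr{h}=\fr{k}'$ is the semisimple part, we have $[\fr{h},\fr{f}_0]\subseteq[\fr{k},\fr{z}(\fr{k})]=0$. Concretely, every $X\in\fr{f}_0$ is block-scalar, $X=\sqrt{-1}\,\diag(\tfrac{a_1}{\ell}I_\ell,\tfrac{a_2}{m}I_m,\tfrac{a_3}{n}I_n)$ with $a_1+a_2+a_3=0$, and conjugation by any $h=\diag(g_1,g_2,g_3)\in H=\SU(\ell)\times\SU(m)\times\SU(n)$ fixes each scalar block. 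Hence $\Ad(h)X=X$ for all $h\in H$, i.e. $\fr{f}_0$ is a two-dimensional \emph{trivial} $\Ad(H)$-module, and in particular $\fr{f}_4\cong\fr{f}_5$ as $\Ad(H)$-representations.

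Consequently every line in $\fr{f}_0$ is $\Ad(H)$-invariant. To produce a different decomposition I would first pass to a $B$-orthonormal basis $\{\tilde Z_4,\tilde Z_5\}$ of $\fr{f}_0$: one checks $B(Z_4,Z_5)=-2N\tr(Z_4Z_5)=0$, so $Z_4,Z_5$ are already $B$-orthogonal and only need rescaling. Then for an arbitrary angle $\theta$ I set
\[
\hat Z_4:=\cos\theta\,\tilde Z_4+\sin\theta\,\tilde Z_5\,,\qquad \hat Z_5:=-\sin\theta\,\tilde Z_4+\cos\theta\,\tilde Z_5\,,
\]
and define $\hat{\fr{f}}_4:=\Span\{\hat Z_4\}$, $\hat{\fr{f}}_5:=\Span\{\hat Z_5\}$. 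These are again $B$-orthogonal, one-dimensional and $\Ad(H)$-invariant, so $\fr{m}=\fr{f}_1\oplus\fr{f}_2\oplus\fr{f}_3\oplus\hat{\fr{f}}_4\oplus\hat{\fr{f}}_5$ is a valid $B$-orthogonal $\Ad(H)$-invariant reductive decomposition. For $\theta\notin\tfrac{\pi}{2}\bb{Z}$ the lines $\hat{\fr{f}}_4,\hat{\fr{f}}_5$ differ from $\fr{f}_4,\fr{f}_5$, which establishes the asserted non-uniqueness (the rotation parameter $\theta$ also explains the name of the lemma).

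The computation is routine; the only conceptual point — and hence the step I would flag as the crux — is the triviality of the $\Ad(H)$-action on the fibre $\fr{f}_0$. To delimit the phenomenon I would finally remark that the summands $\fr{f}_1,\fr{f}_2,\fr{f}_3$, being pulled back from the pairwise inequivalent isotropy summands of the base flag manifold $F=G/K$, admit no analogous freedom; the entire ambiguity is therefore confined to the two-dimensional trivial isotypic component $\fr{f}_0$, where the multiplicity equals two.
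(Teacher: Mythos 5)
Your proof is correct and follows essentially the same route as the paper: observe that $\Ad(H)$ acts trivially on the fibre $\fr{f}_0$ (so every line there is invariant) and then rotate the splitting of $\fr{f}_0$ by an angle $\theta$. You are in fact slightly more careful than the paper in first passing to a $B$-orthonormal basis $\{\tilde Z_4,\tilde Z_5\}$ before rotating, which guarantees the new lines remain $B$-orthogonal; otherwise the arguments coincide.
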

\begin{proof}
The isotropy representation acts trivially on the tangent space of the fiber, $\Ad(H)|_{\fr{f}_{0}}=\Id$. Thus, the modules $\fr{f}_4, \fr{f}_5$ can be replaced by any pair of orthogonal 1-dimensional submodules of  $\fr{f}_{0}$. For example, set
\[
Z^{\theta}_{4}=\cos(\theta)Z_{4}+\sin(\theta)Z_{5}\,,\quad Z^{\theta}_{5}=-\sin(\theta)Z_{4}-\cos(\theta)Z_{5}\,,
\]
for some  $\theta\in[0, 2\pi]$ and moreover $\fr{f}_4^{\theta}=\Span\{Z_{4}^{\theta}\}$ and $\fr{f}_5^{\theta}=\Span\{Z_{5}^{\theta}\}$, respectively. Then, it is easy to see that $\fr{f}_{0}^{\theta}:=\fr{f}_4^{\theta}\oplus\fr{f}_5^{\theta}\simeq\fr{f}_{0}=\fr{f}_4\oplus\fr{f}_5$ are equivalent as $\Ad(H)$-modules. 
\end{proof}
\begin{remark}
\textnormal{This lemma is helpful  for realizing  the equivalence of the modules  $\fr{f}_4$ and $\fr{f}_5$. However,  for solving explicitly the homogeneous Einstein equation  we find more useful to  parametrize the scalar products on $\fr{f}_{0}$ in terms of nilpotent matrices, instead of some rotation defined by $\theta$ (see Proposition \ref{parametrize}).}
\end{remark}


\subsection{The $B$-structure constants}
Let us now compute the $B$-structures constants of $M=G/H$.  Consider the decomposition 
 \begin{equation}\label{ciso3}
 \fr{g}=\fr{h}\oplus\fr{m}=\fr{h}\oplus\fr{f}_1\oplus\fr{f}_2\oplus\fr{f}_3\oplus\fr{f}_0=\fr{h}\oplus \fr{f}_1\oplus\fr{f}_2\oplus\fr{f}_3\oplus\fr{f}_4\oplus\fr{f}_5\,,
 \end{equation}
and as before,  set $\fr{k}=\fr{h}\oplus\fr{f}_0$. Based on the corresponding $T$-roots of $F=G/K$, we see that

\begin{lemma}\label{brackets}  
The $\Ad(H)$-invariant submodules $\fr{f}_{i}$ $(i=1,\ldots, 5)$ in the decomposition  $(\ref{ciso3})$ are such that 
\[
[\fr{f}_i, \fr{f}_{i}]\subset\fr{k}\,,\quad\forall \ i=1,2, 3,\quad  [\fr{f}_{j}, \fr{f}_{k}]\subset\fr{f}_{k}\,,\quad\forall \ j=4, 5, \  k=1, 2, 3,
\]
and $[\fr{f}_1, \fr{f}_2]\subset\fr{f}_3$, $[\fr{f}_2, \fr{f}_3]\subset\fr{f}_1$,    $[\fr{f}_1, \fr{f}_3]\subset\fr{f}_2$, $[\fr{f}_4, \fr{f}_5]=0$.
   \end{lemma}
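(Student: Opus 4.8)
The plan is to verify each of the four bracket relations by direct block matrix computation, exploiting the explicit $3\times 3$ block form (with diagonal block sizes $\ell, m, n$) of the elements of $\fr{f}_1,\ldots,\fr{f}_5$ and of $\fr{k}$. Throughout I will write a generic element of $\fr{f}_1$ as $X_A$, with off-diagonal block $A\in M_{\ell,m}(\mathbb{C})$ in position $(1,2)$ and $-\bar{A}^t$ in position $(2,1)$, and analogously $Y_B\in\fr{f}_2$ (blocks $B,\,-\bar{B}^t$ in positions $(1,3),(3,1)$) and $W_C\in\fr{f}_3$ (blocks $C,\,-\bar{C}^t$ in positions $(2,3),(3,2)$). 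Recall that $\fr{k}$ is precisely the space of block-diagonal elements of $\fr{su}(N)$ and that $Z_4,Z_5$ are diagonal.

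For the relations $[\fr{f}_i,\fr{f}_i]\subset\fr{k}$ with $i=1,2,3$, I will compute $X_A X_{A'}$ and observe that the two cross terms vanish, so the product is block-diagonal with $(1,1)$-block $-A\bar{A'}^t$ and $(2,2)$-block $-\bar{A}^t A'$; the commutator of two such products is again block-diagonal and therefore lies in $\fr{k}$. The cases $i=2,3$ are identical after cyclically permuting the block indices. I will note that the target must be $\fr{k}$ and not $\fr{h}$, precisely because these diagonal blocks carry nonzero trace and hence contribute to the central directions $\fr{f}_4,\fr{f}_5$. For $[\fr{f}_j,\fr{f}_k]\subset\fr{f}_k$ with $j\in\{4,5\}$ and $k\in\{1,2,3\}$, the cleanest argument is conceptual: since $Z_4,Z_5$ span the center $\fr{f}_0$ of $\fr{k}$ and each $\fr{f}_k$ is an $\Ad(K)$-invariant submodule of $\fr{f}$, the inclusion follows at once from $\fr{f}_0\subset\fr{k}$ and $[\fr{k},\fr{f}_k]\subset\fr{f}_k$. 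If an explicit check is preferred, writing $Z=\diag(z_1 I_\ell, z_2 I_m, z_3 I_n)$ with $z_1,z_2,z_3\in\sqrt{-1}\,\mathbb{R}$ yields $[Z,X_A]$ with $(1,2)$-block $(z_1-z_2)A$, and the fact that $z_1-z_2$ is purely imaginary keeps the $-\overline{(\cdot)}^t$ relation between the $(1,2)$- and $(2,1)$-blocks intact, so $[Z,X_A]\in\fr{f}_1$.

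For the mixed brackets I will compute $[X_A,Y_B]$ and find that both off-diagonal products accumulate only in the $(2,3)$- and $(3,2)$-blocks, producing $(2,3)$-block $-\bar{A}^t B\in M_{m,n}(\mathbb{C})$ and $(3,2)$-block $\bar{B}^t A$; verifying that $\bar{B}^t A=-\overline{(-\bar{A}^t B)}^t$ confirms $[X_A,Y_B]\in\fr{f}_3$. The remaining inclusions $[\fr{f}_2,\fr{f}_3]\subset\fr{f}_1$ and $[\fr{f}_1,\fr{f}_3]\subset\fr{f}_2$ follow by the same computation with block indices cyclically permuted. Finally $[\fr{f}_4,\fr{f}_5]=0$ is immediate, since $Z_4,Z_5$ are both diagonal and hence commute.

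The only delicate point is the bookkeeping: one must confirm that the conjugate-transpose reality conditions survive each product, in particular that the purely imaginary diagonal entries of $Z_4,Z_5$ interact correctly with the skew-Hermitian structure, so that every commutator lands in the claimed summand rather than merely in the ambient $\fr{su}(N)$. Alternatively, the entire lemma can be read off from the $T$-root structure: identifying $\fr{f}_1,\fr{f}_3,\fr{f}_2$ with the $T$-roots $\overline{\al}_\ell$, $\overline{\al}_{\ell+m}$, $\overline{\al}_\ell+\overline{\al}_{\ell+m}$ and applying $[\fr{g}_\al,\fr{g}_\be]\subset\fr{g}_{\al+\be}$ reproduces all inclusions simultaneously; for instance $[\fr{f}_1,\fr{f}_2]\subset\fr{f}_3$ is forced because $\pm\overline{\al}_\ell\pm(\overline{\al}_\ell+\overline{\al}_{\ell+m})$ is a $T$-root only when it equals $\pm\overline{\al}_{\ell+m}$, while the other sign combinations give $\pm(2\overline{\al}_\ell+\overline{\al}_{\ell+m})$, which is not a $T$-root and hence contributes nothing.
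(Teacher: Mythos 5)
Your proposal is correct. The paper itself gives no written proof: it simply asserts the lemma as a consequence of the $T$-root system $\{\overline{\al}_{\ell}, \overline{\al}_{\ell+m}, \overline{\al}_{\ell}+\overline{\al}_{\ell+m}\}$ of the base flag manifold, which is exactly the argument you sketch in your closing paragraph (with the correct identification $\fr{f}_1\leftrightarrow\overline{\al}_\ell$, $\fr{f}_3\leftrightarrow\overline{\al}_{\ell+m}$, $\fr{f}_2\leftrightarrow\overline{\al}_\ell+\overline{\al}_{\ell+m}$, and the observation that $2\overline{\al}_\ell+\overline{\al}_{\ell+m}$ is not a $T$-root). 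Your primary route, the explicit $3\times3$ block-matrix computation, is a genuinely more elementary verification that the paper does not carry out; it has the advantage of making visible exactly the point you flag, namely that $[\fr{f}_i,\fr{f}_i]$ lands in $\fr{k}=\fr{h}\oplus\fr{f}_0$ rather than in $\fr{h}$, because the diagonal blocks $-A\bar{A'}{}^t$ and $-\bar{A}^tA'$ carry equal and opposite traces distributed over two blocks and hence have a nonzero component along the center $\fr{f}_0$ — this is precisely what later feeds the nonzero structure constants $\displaystyle{4\brack i\ i}$, $\displaystyle{5\brack i\ i}$. Your block computations check out (in particular $\bar{B}^tA=-\overline{(-\bar{A}^tB)}{}^t$, so $[X_A,Y_B]\in\fr{f}_3$), and the conceptual argument $[\fr{f}_0,\fr{f}_k]\subset\fr{f}_k$ from $\Ad(K)$-invariance is the cleanest way to get the $j=4,5$ inclusions. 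The $T$-root argument is shorter and generalizes immediately to other C-spaces, while your matrix argument requires no root-theoretic machinery; either suffices.
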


As an immediate consequence of  Lemma \ref{brackets} we get that
\begin{corol}\label{cor1}
The non-zero $B$-structure constants of $M_{\ell, m, n}=G/H$ with respect to {\rm (\ref{ciso3})} are listed as follows
\[
\displaystyle{3 \brack {1 \ 2}}\,,\quad \displaystyle{4 \brack {1 \ 1}}\,,\quad \displaystyle{4 \brack {2 \ 2}}\,,\quad \displaystyle{4 \brack {3 \ 3}}\,,\quad \displaystyle{5 \brack {1 \ 1}}\,,\quad \displaystyle{ 5 \brack {2 \ 2}}\,,\quad \displaystyle{5 \brack {3 \ 3}}\,.
\]
\end{corol}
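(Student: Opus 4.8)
The plan is to read the support of the structure-constant tensor directly off the bracket relations of Lemma \ref{brackets}, so that the corollary becomes a finite enumeration. The starting point is the observation that, by definition, $\displaystyle{k \brack {i \ j}} = \sum (A^\gamma_{\al\be})^2$ with $A^\gamma_{\al\be} = B([e^i_\al, e^j_\be], e^k_\gamma)$, so $\displaystyle{k \brack {i \ j}}$ can be non-zero only if the $\fr{f}_k$-component of $[\fr{f}_i, \fr{f}_j]$ is non-trivial. Since the summands $\fr{f}_1, \dots, \fr{f}_5$ are mutually $B$-orthogonal, this forces $\fr{f}_k$ to lie inside the target module predicted by Lemma \ref{brackets}. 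Moreover, each constant is totally symmetric, $\displaystyle{k \brack {i \ j}} = \displaystyle{k \brack {j \ i}} = \displaystyle{j \brack {k \ i}}$, so it is enough to determine the unordered triples $\{i, j, k\}$ for which the pairing can survive.

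I would then run through the brackets of Lemma \ref{brackets} case by case. For the diagonal brackets $[\fr{f}_i, \fr{f}_i] \subset \fr{k} = \fr{h} \oplus \fr{f}_4 \oplus \fr{f}_5$ with $i = 1, 2, 3$, the $\fr{h}$-part is irrelevant because $\fr{h}$ is the isotropy algebra and not one of the modules indexing the structure constants, while the $\fr{f}_1 \oplus \fr{f}_2 \oplus \fr{f}_3$ directions are $B$-orthogonal to $\fr{k} \supset [\fr{f}_i, \fr{f}_i]$; hence only the components along $\fr{f}_4$ and $\fr{f}_5$ can contribute, yielding the candidate triples $\{i, i, 4\}$ and $\{i, i, 5\}$. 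For the off-diagonal brackets inside $\fr{f}_1 \oplus \fr{f}_2 \oplus \fr{f}_3$, the three inclusions $[\fr{f}_1, \fr{f}_2] \subset \fr{f}_3$, $[\fr{f}_2, \fr{f}_3] \subset \fr{f}_1$ and $[\fr{f}_1, \fr{f}_3] \subset \fr{f}_2$ all correspond to the single unordered triple $\{1, 2, 3\}$, i.e.\ to $\displaystyle{3 \brack {1 \ 2}}$ after symmetry. Finally the fibre brackets $[\fr{f}_j, \fr{f}_k] \subset \fr{f}_k$ with $j = 4, 5$ and $k = 1, 2, 3$ reproduce exactly the triples $\{k, k, 4\}$ and $\{k, k, 5\}$ already found (by symmetry $\displaystyle{k \brack {j \ k}} = \displaystyle{j \brack {k \ k}}$), whereas $[\fr{f}_4, \fr{f}_5] = 0$, together with $[\fr{f}_4, \fr{f}_4] = 0 = [\fr{f}_5, \fr{f}_5]$ for dimensional reasons, contributes nothing.

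Collecting the admissible triples $\{1,2,3\}$, $\{1,1,4\}$, $\{2,2,4\}$, $\{3,3,4\}$, $\{1,1,5\}$, $\{2,2,5\}$, $\{3,3,5\}$ gives precisely the seven constants in the statement, and every structure constant attached to any other triple vanishes because its defining bracket lands in a $B$-orthogonal subspace. Since this is pure bookkeeping, there is no serious analytic obstacle; the only point requiring care is the correct application of total symmetry, both to match the off-diagonal brackets $[\fr{f}_i, \fr{f}_j]$ ($i \neq j \in \{1,2,3\}$) to one and the same constant and to identify the fibre brackets with the diagonal ones, so that no triple is double-counted or overlooked. I would emphasise that the corollary isolates the constants that are \emph{not forced to vanish} at the module level; their actual numerical values — and in particular which of these seven genuinely survive for the chosen generators $Z_4, Z_5$ — are to be extracted from the explicit computation carried out afterwards.
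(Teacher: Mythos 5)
Your proposal is correct and follows essentially the same route as the paper, which states the corollary as an immediate consequence of Lemma \ref{brackets}: the bracket inclusions determine which modules can pair non-trivially, and total symmetry of $\displaystyle{k \brack {i\ j}}$ reduces the candidates to the seven listed triples. Your closing caveat is also apt, since the paper's later Proposition \ref{ijk} shows that $\displaystyle{4 \brack {1\ 1}}$ in fact vanishes for the chosen generators, so the corollary is indeed only a module-level exclusion statement.
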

 Let us now recall a   result  which can be useful for the computation of these unknowns.  
\begin{lemma}\label{lemma5.2aa}  
Consider  a simple subalgebra   $\fr{q}\subset \fr{g}=\fr{su}(N)=\fr{su}(\ell+m+n)$ and  let us denote by $\{\e_i\}_{1\leq i\leq \dim \fr{q}}$   a $B$-orthonormal basis of $\fr{q}$, where $B=-B_{\fr{g}}$.  Then,  
\[
\sum_{j, k = 1}^{\dim \fr{q}} \big(B([\e_i, \e_j], \e_k )\big)^2  = {\al}^{\fr{q}}_{\fr{g}}\,,\quad\quad \sum_{i, j, k = 1}^{\dim \fr{q}} \big(B([\e_i, \e_j], \e_k )\big)^2  = \dim\fr{q} \cdot {\al}^{\fr{q}}_{\fr{g}}\,,
\]
where ${\al}^{\fr{q}}_{\fr{g}}$ is the constant determined  by $B_{\fr{q}} = {\al}^{\fr{q}}_{\fr{g}}\cdot B|_{\fr{q}}$, with   
 $B_{\fr{q}}$ being the Killing form of $\fr{q}$. 
\end{lemma} 
\noindent For a short proof of this simple formula (for  the Lie algebra $\fr{g}$ of a more general compact simple Lie group $G$) we refer to \cite{ADN1}.  

\begin{prop}\label{ijk}
The non-zero $B$-structure constants of  the C-space $M_{\ell, m, n}=\SU(N)/\SU(\ell)\times\SU(m)\times\SU(n)$ ($N=\ell+m+n$),  attain the following values
{\small
\begin{equation*}\label{eq14}
\begin{array}{llll} 
  \vspace{0.3cm}
\displaystyle{{3 \brack {1 \ 2}} =  \frac{\ell m n}{N} }\,,   
&  \displaystyle{{4 \brack {1 \ 1}} = 0}\,,
  &  \displaystyle{{ 4 \brack {2 \ 2}} = \frac{\ell}{\ell+m}  }\,,
  &  \displaystyle{{4 \brack {3 \ 3}} = \frac{m}{\ell+m}   }\,, \\ 
  \vspace{0.3cm}
    \displaystyle{{5 \brack {1 \ 1}} =  \frac{\ell+m}{N} }\,, 
  & \displaystyle{{5 \brack {2 \ 2}} = \frac{mn}{N(\ell+m)}}\,, 
  &  \displaystyle{{5 \brack {3 \ 3}} =  \frac{\ell n}{N (\ell+m)} }\,.  
\end{array} 
\end{equation*}
}
\end{prop}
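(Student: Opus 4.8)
The plan is to compute each of the seven nonzero $B$-structure constants listed in Corollary \ref{cor1} directly from the explicit matrix descriptions of the modules $\fr{f}_i$, exploiting Lemma \ref{lemma5.2aa} wherever a bracket lands inside a simple subalgebra. First I would fix convenient $B$-orthonormal bases for each module. Since $B = -B_{\fr{g}} = -2N\tr(XY)$ restricted to anti-Hermitian matrices equals $2N\tr(X\bar{X}^t)$, the normalization constant for, say, an elementary real/imaginary off-diagonal generator in $\fr{f}_1 = \fr{f}_{\ell m}$ is read off immediately; I would record these normalizations once and for all so that the subsequent trace computations reduce to counting.

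For the constant ${3 \brack {1\ 2}}$ the approach is to bracket the off-diagonal blocks: an element of $\fr{f}_1$ has nonzero $(1,2)$ block $A$, an element of $\fr{f}_2$ has nonzero $(1,3)$ block $B$, and their commutator produces a $(2,3)$ block $-\bar A^t B$ landing in $\fr{f}_3$, consistent with $[\fr{f}_1,\fr{f}_2]\subset\fr{f}_3$. Summing $B([e^1_\alpha,e^2_\beta],e^3_\gamma)^2$ over the chosen bases is a bilinear bookkeeping exercise; I expect it to evaluate to $\ell m n/N$ after inserting the normalization factors. For the constants ${4 \brack {i\ i}}$ and ${5 \brack {i\ i}}$ the key observation is that $Z_4,Z_5$ are scalar-in-each-block diagonal matrices, so $[Z_j, X]$ for $X\in\fr{f}_i$ simply rescales $X$ by the difference of the relevant block-eigenvalues of $Z_j$. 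Hence $B([Z_j,X],Y) = c_{ji}\,B(X,Y)$ for an explicit scalar $c_{ji}$ depending only on the two block-scalars of $Z_j$ touching the block position of $\fr{f}_i$, and the structure constant becomes $d_i\,c_{ji}^2$ times the squared $B$-norm of the unit vector $Z_j$. In particular ${4 \brack {1\ 1}}=0$ because $Z_4$ has equal $\ell$- and $m$-block scalars $1/(\ell+m)$, so it commutes with $\fr{f}_{\ell m}=\fr{f}_1$; this sanity check confirms the basis conventions.

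Concretely, for each pair I would (i) compute $\|Z_j\|_B^2 = 2N\tr(Z_j\bar Z_j^t)$, which for $Z_4$ gives $2N(\ell+m)\cdot\frac{1}{(\ell+m)^2}+2N n\cdot\frac{1}{n^2}=\frac{2N}{\ell+m}+\frac{2N}{n}$ type expressions, and similarly for $Z_5$; (ii) read off the eigenvalue-gap $c_{ji}$ acting on $\fr{f}_i$; and (iii) assemble $d_i c_{ji}^2/\|Z_j\|_B^2$, being careful that the structure constant is defined using \emph{unit} vectors spanning $\fr{f}_4,\fr{f}_5$, not $Z_4,Z_5$ themselves. The symmetry relation ${k \brack {i\ j}}={j \brack {k\ i}}$ gives a free consistency check across the two tables. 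I anticipate the main obstacle to be purely clerical rather than conceptual: keeping track of the several competing normalization factors (the overall $2N$ from $B$, the $1/\ell,1/m,1/n$ block denominators inside $Z_4,Z_5$, and the passage from $Z_j$ to its $B$-unit rescaling) so that every entry comes out with exactly the stated denominators $N$, $\ell+m$, and $N(\ell+m)$. Once those bookkeeping constants are pinned down correctly, each of the seven values follows by a short direct trace evaluation.
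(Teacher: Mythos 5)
Your proposal is correct, but it takes a genuinely different route from the paper. You compute all seven constants by direct evaluation: a block-matrix bracket computation summed over Weyl-type bases for $\displaystyle{3 \brack {1 \ 2}}$, and, for the six constants involving $\fr{f}_4,\fr{f}_5$, the observation that $\ad(Z_4),\ad(Z_5)$ act on each $\fr{f}_i$ through the gap $c_{ji}$ between the two block eigenvalues, so that $\displaystyle{j \brack {i \ i}}=\displaystyle{i \brack {j \ i}}=d_i\,c_{ji}^2/\Vert Z_j\Vert_B^2$. (Strictly, $\ad(Z_j)$ is a rotation $A_{pq}\mapsto c_{ji}B_{pq}$, $B_{pq}\mapsto -c_{ji}A_{pq}$ rather than a scalar, but since only $\Vert[\tilde Z_j,e^i_\alpha]\Vert_B^2=c_{ji}^2$ enters, your formula is right.) Your normalizations check out: e.g. $\Vert Z_4\Vert_B^2=2N^2/(n(\ell+m))$ and the gap $N/(n(\ell+m))$ on $\fr{f}_2$ give $2\ell n\cdot\frac{N^2}{n^2(\ell+m)^2}\cdot\frac{n(\ell+m)}{2N^2}=\ell/(\ell+m)$, and $\displaystyle{4 \brack {1 \ 1}}=0$ for exactly the reason you state. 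The paper, by contrast, performs only one such direct computation (for $\displaystyle{2 \brack {5 \ 2}}$) and gets everything else indirectly: it extends the structure constants to the full decomposition $\fr{g}=\fr{h}_{\upa}\oplus\fr{h}_{\upb}\oplus\fr{h}_{\upg}\oplus\fr{f}_1\oplus\cdots\oplus\fr{f}_5$, evaluates the constants involving the isotropy ideals via Lemma \ref{lemma5.2aa} applied to the chains $\fr{su}(\ell)\subset\fr{su}(\ell+m)\subset\fr{su}(N)$ and $\fr{su}(m)\oplus\fr{f}_3\oplus\fr{su}(n)\oplus\fr{d}=\fr{su}(m+n)$, and then solves the dimension identities $d_i=\sum_{j,k}\displaystyle{k \brack {i \ j}}$ for the remaining unknowns; in particular $\displaystyle{3 \brack {1 \ 2}}$ drops out of the identity for $d_1$ with no bracket computation at all. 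Your approach is more elementary and self-contained, at the price of several explicit trace evaluations; the paper's counting argument also produces the auxiliary constants $\displaystyle{\upa \brack {1 \ 1}}$, $\displaystyle{\upb \brack {1 \ 1}}$, etc., as by-products. The only soft spot in your write-up is that the value $\ell m n/N$ for $\displaystyle{3 \brack {1 \ 2}}$ is asserted rather than derived, but the computation you describe is routine for this flag manifold and is not a genuine gap.
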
 
\begin{proof}
Our method relies  on a combination of  Lemma \ref{lemma5.2aa} with other dimensional identities,  which we obtain by using  the structure constants of the whole isometry algebra $\fr{g}=\fr{su}(\ell+m+n)$. Hence  is useful  to split the stability algebra $\fr{h}$ into its three simple ideals  
\[
\fr{h}=\fr{h}_{\upa}\oplus\fr{h}_{\upb}\oplus\fr{h}_{\upg}\,,\quad \fr{h}_{\upa}=\fr{su}(\ell)\,,\quad  \fr{h}_{\upb}=\fr{su}(m)\,,\quad  \fr{h}_{\upg}=\fr{su}(n)\,,
\]
and rewrite the reductive decomposition  (\ref{ciso3}) as
\begin{equation}\label{giso}
\fr{g}=\fr{h}\oplus\fr{m}=\fr{h}_{\upa}\oplus\fr{h}_{\upb}\oplus\fr{h}_{\upg}\oplus\fr{f}_1\oplus\fr{f}_2\oplus\fr{f}_3\oplus\fr{f}_4\oplus\fr{f}_5\,.
\end{equation}
Then,   the previous definition  of the  $B$-structure constants $\displaystyle{k \brack {i \ j}}$ on $\fr{m}$ with respect to (\ref{iso}), naturally extends to the whole Lie algebra $\fr{g}$  with respect to (\ref{giso}), see for example \cite[pp.~154-155]{CS2}. Moreover, this induces the following relations
\begin{eqnarray*}
d_{\updelta}:=\dim_{\R}\fr{h}_{\updelta}=\sum_{j, k}\displaystyle{k \brack {\updelta \ j}}\,,\quad \text{with} \ \ j, k\in\{\upa, \upb, \upg, 1, \ldots, 5\}, \ \ \text{for  any} \ \ \delta\in\{\upa, \upb, \upg\}\,,\\
d_{i}:=\dim_{\R}\fr{f}_{i}=\sum_{j, k}\displaystyle{k \brack {i \  j}}\,,\quad \text{with} \ \ j, k\in\{\upa, \upb, \upg, 1, \ldots, 5\}, \ \ \text{for  any} \ \ i\in\{1,\ldots, 5\}\,.
\end{eqnarray*}
It is also useful to indicate the  Lie bracket relations  related with the isotropy action of the ideals $\fr{h}_{\upa}, \fr{h}_{\upb}$ and $\fr{h}_{\upg}$. These are  listed as follows:
\[
\begin{tabular}{ l | l | l || l  l }
\hline
$[\fr{h}_{\upa}, \fr{f}_1]\subset\fr{f}_1\,,$ &  $[\fr{h}_{\upa}, \fr{f}_2]\subset\fr{f}_2\,,$ & $[\fr{h}_{\upa}, \fr{f}_3]=0\,,$ & $[\fr{h}_{\updelta}, \fr{h}_{\updelta}]\subset\fr{h}_{\updelta}$\,, &   $\forall \updelta\in\{\upa, \upb, \upg\}$\,,\\
$[\fr{h}_{\upb}, \fr{f}_1]\subset\fr{f}_1\,,$ &  $[\fr{h}_{\upb}, \fr{f}_2]=0\,,$ & $[\fr{h}_{\upb}, \fr{f}_3]\subset\fr{f}_3\,,$ &  $[\fr{h}_{\updelta}, \fr{h}_{\upepsilon}]=0$\,, &  $\forall  \updelta\neq\upepsilon\in\{\upa, \upb, \upg\}$\,,\\
$[\fr{h}_{\upg}, \fr{f}_1]=0\,,$ &  $[\fr{h}_{\upg}, \fr{f}_2]\subset\fr{f}_2\,,$ & $[\fr{h}_{\upg}, \fr{f}_3]\subset\fr{f}_3\,,$ &  $[\fr{h}_{\updelta}, \fr{f}_{i}]=0$\,, & $\forall \updelta\in\{\upa, \upb, \upg\},\ i\in\{4, 5\}$\,.\\
\hline
\end{tabular}
\]
 Hence,   the non-zero structure constants of the whole Lie group $G=\SU(\ell+m+n)$, with respect to (\ref{giso}), are exhausted by the triples in Corollary \ref{cor1}, together with the following one
 \[
 \displaystyle{\upa \brack {\upa \  \upa}},\quad  \displaystyle{\upb \brack {\upb \  \upb}},\quad  \displaystyle{\upg \brack {\upg \ \upg}},\quad   \displaystyle{\upa \brack {1 \ 1}}\,,\quad  \displaystyle{\upa \brack {2 \ 2}},\quad  \displaystyle{\upb \brack {1 \ 1}},\quad  \displaystyle{\upb \brack {3 \ 3}},\quad  \displaystyle{\upg \brack {2 \ 2}},\quad  \displaystyle{\upg \brack {3 \ 3}}\,. 
  \]
  The computation of the first three is an easy task;  Set   $\fr{q}:=\fr{h}_{\upa}\equiv\fr{su}(\ell)$ and recall that  $B_{\fr g}(X, Y)=2(\ell+m+n)\tr (XY)$ and $ B_{\fr q}=2\ell\tr (XY)$.  Let $\{\e_j\}$ be an orthonormal basis of $\fr q$ with respect to $-B_{\fr g}$ with $1\le j\le \ell^2-1$.
Then 
\[
{\al}^{\fr{h}_{\upa}}_{\fr{g}}\equiv {\al}^{\fr{su}(\ell)}_{\fr{g}}=\frac{B_{\fr{q}}}{\left.B_{\fr g}\right|_{\fr{q}}}=\frac{\ell}{\ell+m+n}=\frac{\ell}{N}
\]
and similarly   ${\al}^{\fr{h}_{\upb}}_{\fr{g}}\equiv{\al}^{\fr{su}(m)}_{\fr{g}}=\frac{m}{N}$ and  ${\al}^{\fr{h}_{\upg}}_{\fr{g}}\equiv{\al}^{\fr{su}(n)}_{\fr{g}}=\frac{n}{N}$. 
 Therefore,  
\begin{equation}\label{aaabbbccc}
\displaystyle{{\upa \brack {\upa \upa}}}=\sum _{i=1}^{\ell^2-1}\sum _{j, k=1}^{\dim\fr{h}_{\upa}} B_\fr{g}([\e_i, \e_j], \e_k)^2
=\dim_{\R}\fr{h}_{\upa}\cdot {\al}^{\fr{h}_{\upa}}_{\fr{g}}=\frac{\ell(\ell^2-1)}{N}\,,
\end{equation}
and by  repeating the same method we also obtain
\begin{equation}\label{bg}
 \displaystyle{{\upb \brack {\upb \upb}}}=\frac{m(m^2-1)}{N}\,,\quad  \displaystyle{{\upg \brack {\upg \upg}}}=\frac{n(n^2-1)}{N}\,.
\end{equation}
Therefore, we result with the following identities:
\begin{equation}\label{rel1}
\left\{\begin{tabular}{l}
$d_{\upa}=d_{\fr{su}(\ell)}=\ell^{2}-1=\sum_{j, k}\displaystyle{k \brack {\upa \ j}}=\displaystyle{{\upa \brack {\upa \ \upa}}}+\displaystyle{{1 \brack {\upa \ 1}}}+\displaystyle{{2 \brack {\upa \ 2}}}=\frac{\ell(\ell^2-1)}{N}+\displaystyle{{1 \brack {\upa \ 1}}}+\displaystyle{{2 \brack {\upa \ 2}}}\,,$\\\\
$d_{\upb}=d_{\fr{su}(m)}=m^{2}-1=\sum_{j, k}\displaystyle{k \brack {\upb \ j}}=\displaystyle{{\upb \brack {\upb \upb}}}+\displaystyle{{1 \brack {\upb \ 1}}}+\displaystyle{{3 \brack {\upb \ 3}}}=\frac{m(m^2-1)}{N}+\displaystyle{{1 \brack {\upb \ 1}}}+\displaystyle{{3 \brack {\upb \ 3}}}\,,$\\\\
$d_{\upg}=d_{\fr{su}(n)}=n^{2}-1=\sum_{j, k}\displaystyle{k \brack {\upg \  j}}=\displaystyle{{\upg \brack {\upg \ \upg}}}+\displaystyle{{2 \brack {\upg \ 2}}}+\displaystyle{{3 \brack {\upg \ 3}}}=\frac{n(n^2-1)}{N}+\displaystyle{{2 \brack {\upg \ 2}}}+\displaystyle{{3 \brack {\upg \ 3}}}\,.$
\end{tabular}\right.
\end{equation}
Now, based on the expression of the generator $Z_4$ of $\fr{f}_4$ and the matrices spanning $\fr{f}_1$, a direct computation  yields that $ \displaystyle{1 \brack {4 \ 1}}=0$.
Set  $\fr q=\fr{su}(\ell+m)$ and let  $\{\e_j\}$ be an orthonormal basis of $\fr q$ with respect to $-B_{\fr g}$.
We may split  $\fr{su}(\ell+m)$ as 
\begin{equation}\label{sulm}
\fr{su}(\ell+m)=\fr{su}(\ell)\oplus\fr{f}_{1}\oplus\fr{su}(m)\oplus\fr{f}_5=\fr{h}_{\upa}\oplus\fr{f}_1\oplus\fr{h}_{\upb}\oplus\fr{f}_5\,,
\end{equation}
 and   adapt the basis $\{\e_j\}$ to this decomposition as follows:
$\{\e_1, \dots , \e_{\ell^2-1}\}\in\fr{su}(\ell)$, $\{\e_{\ell^2}, \dots , \e_{(\ell^2-1)+2\ell m}\}\in\fr{f}_{1}$,
$\{\e_{\ell^2+2\ell m}, \dots , \e_{\ell^2+2\ell m+m^2-2}\}\in\fr{su}(m)$, $\e_{(\ell+m)^2-1}\in\fr{f}_5$.
Then
$$
\sum _{j, k=1}^{(\ell+m)^2-1}B_\fr{g}([\e_i, \e_j], \e_k)^2
={\al}^{\fr{su}(\ell+m)}_{\fr{g}}=\frac{\ell+m}{N}\,,
$$
and thus $\displaystyle{{1 \brack {5 \ 1}}=\frac{\ell+m}{N}}$.  Moreover, for any $\{\e_i: i=1, \dots , \ell^2-1\}\in\fr{su}(\ell)$ we have
\begin{equation}\label{rel2}
\sum _{i=1}^{\ell^2-1}\left(
\sum _{j, k=1}^{(\ell+m)^2-1}B_\fr{g}([\e_i, \e_j], \e_k)^2\right)=(\ell^2-1)\frac{(\ell+m)}{N}\,.
\end{equation}
Recall now that 
\[
[\e_i, \e_j]\in\left\{
\begin{tabular}{l l}
$\fr{h}_{\upa}\equiv\fr{su}(\ell)\,, \ \ $ & $\text{if}\ \ \e_j\in\fr {su}(\ell)\,,$\\
$\fr{f}_1\,, \ \ $ & $\text{if} \ \ \e_j\in\fr{f}_1\,,$\\
$0\,, \ \ $ & $\text{if}\  \ \e_j\in\fr{h}_{\upb}\equiv\fr{su}(m)\,.$
\end{tabular}\right.
\]
where   $\{\e_i: i=1, \dots , \ell^2-1\}\in\fr{su}(\ell)$ and $\{\e_j: j=1, \dots , m^2-1\}\in\fr{su}(m)$, respectively. 
Therefore,  the result in (\ref{rel2}) can be rephrased as
\[
\displaystyle{{\upa \brack {\upa \  \upa}}}+\displaystyle{{\upa \brack {1 \ 1}}}+0+0=\frac{(\ell+m)}{N}(\ell^2-1)\,,
\]
and by replacing the value of $\displaystyle{{\upa \brack {\upa \ \upa}}}$  given in (\ref{aaabbbccc})  we obtain $\displaystyle{{\upa \brack {1 \ 1}}}=\frac{m(\ell^2-1)}{N}$. 
Returning back to the first relation of (\ref{rel1}) and by substituting the   values of $\displaystyle{{\upa \brack {\upa \ \upa}}}$ and $\displaystyle{{\upa \brack {1 \ 1}}}$, we also get $
\displaystyle{{\upa \brack {2 \ 2}}}=\frac{n(\ell^2-1)}{N}$.   By reversing the role of $\fr{su}(\ell)$ and $\fr{su}(m)$ in (\ref{sulm}) and considering again an adapted basis starting this time by $\fr{h}_{\upb}=\fr{su}(m)$, we further obtain
\[
\displaystyle{{\upb \brack {\upb \  \upb}}}+\displaystyle{{\upb  \brack {1 \ 1}}}+0+0=\frac{(\ell+m)}{N}(m^2-1)\,,
\]
and in combination with  (\ref{bg}) we deduce that $\displaystyle{{\upb \brack {1 \ 1}}}=\frac{\ell(m^2-1)}{N}$.  After that, the  second relation in (\ref{rel1}) yields   the value  $\displaystyle{{\upb \brack {3 \ 3}}}=\frac{n(m^2-1)}{N}$.
 For the computation of  the triple $\displaystyle{{2 \brack {\upg \ 2}}}=\displaystyle{{\upg \brack {2 \ 2}}}$,  fix the subalgebra $\fr{q}=\fr{su}(m+n)$
and the decomposition
\[
\fr{su}(m+n)=\fr{su}(m)\oplus\fr{f}_3\oplus\fr{su}(n)\oplus\fr{f}_5=\fr{h}_{\upb}\oplus\fr{f}_3\oplus\fr{h}_{\upg}\oplus\fr{d} \,,
\]
where $\fr{d}\subset\fr{f}_{0}$ is some 1-dimensional space. Consider   an orthonormal basis  $\{\e_j\}$ of $\fr q=\fr{su}(m+n)$ with respect to $-B_{\fr g}$.  Then, by   adapting  $\{\e_j\}$ to  this decomposition as  
$\{\e_1, \dots , \e_{m^2-1}\}\in\fr{su}(m)$, $\{\e_{m^2}, \dots , \e_{(m^2-1)+2mn}\}\in\fr{f}_{3}$,
$\{\e_{m^2+2mn}, \dots , \e_{m^2+2mn+n^2-2}\}\in\fr{su}(n)$, $\e_{(m+n)^2-1}\in\fr{d}$, and by repeating the above technique we compute
\[
\displaystyle{{\upg \brack {2 \ 2}}}=  \frac{\ell(n^2-1)}{N}\,,\quad \displaystyle{{\upg \brack {3 \ 3}}}=\frac{m(n^2-1)}{N}\,.
\]
Now we can consider the relations
\begin{eqnarray*}
d_1&=&2\ell  m=\sum_{j, k}\displaystyle{k \brack {1 \ j}} = 2\Big(\displaystyle{\upa \brack {1 \ 1}}+\displaystyle{\upb \brack {1 \ 1}}+\displaystyle{3 \brack {1 \ 2}}+\displaystyle{4 \brack {1 \ 1}}+\displaystyle{5 \brack {1 \ 1}}\Big)\,,\\
d_2&=&2\ell  n=\sum_{j, k}\displaystyle{k \brack {2 \ j}} = 2\Big(\displaystyle{\upa \brack {2 \ 2}}+\displaystyle{\upg \brack {2 \ 2}}+\displaystyle{3 \brack {2 \ 1}}+\displaystyle{4 \brack {2 \ 2}}+\displaystyle{5 \brack {2 \ 2}}\Big)\,,\\
d_3&=&2m n=\sum_{j, k}\displaystyle{k \brack {3 \ j}} = 2\Big(\displaystyle{\upb \brack {3 \ 3}}+\displaystyle{\upg \brack {3 \ 3}}+\displaystyle{2 \brack {3 \ 1}}+\displaystyle{4 \brack {3 \ 3}}+\displaystyle{5 \brack {3 \ 3}}\Big)\,,\\
d_4&=&1=\sum_{j, k}\displaystyle{k \brack {4 \ j}} =  \displaystyle{1 \brack {4 \ 1}}+\displaystyle{2 \brack {4 \ 2}}+\displaystyle{3 \brack {4\ 3}}\,,\\
d_5&=&1=\sum_{j, k}\displaystyle{k \brack {5 \ j}} =  \displaystyle{1 \brack {5 \ 1}}+\displaystyle{2 \brack {5 \ 2}}+\displaystyle{3 \brack {5 \ 3}}\,.
\end{eqnarray*}
After replacing the structures constants computed above, the first relation yields  $\displaystyle{{3 \brack {1 \ 2}}= \frac{\ell m n}{N}}$; This is the unique non-zero structure constant of the base space $F=G/K$ (see for example \cite{mori}).
The rest 4 equations reduce to the following system of equations
\[
\displaystyle{2 \brack {5 \ 2}}=\frac{\ell+n}{N}-\displaystyle{2 \brack {4 \ 2}}\,, \quad \displaystyle{3 \brack {4 \ 3}}=1-\displaystyle{2 \brack {4 \ 2}}\,, \quad \displaystyle{3 \brack {5 \ 3}}=\displaystyle{2 \brack {4 \ 2}}-\frac{\ell}{N}\,. 
\]
Hence it is sufficient to compute one of these triples. Consider  the generators $Z_4, Z_5$ of $\fr{f}_4, \fr{f}_5$, respectively,  and let us denote by $\tilde{Z}_4, \tilde{Z}_{5}$ the corresponding $B$-orthonormal vectors. Then   $\tilde{Z}_{i}=c_{i}Z_{i}$ for $i=4, 5$, with
 \begin{equation}\label{ci}
c_4 = \displaystyle{\frac{\sqrt{(\ell+m)n}}{(\ell+m+n)\sqrt{2}}}\,, 
\quad 
c_5 = \displaystyle{\frac{\sqrt{\ell m}}{\sqrt{2(\ell+m+n)}\sqrt{\ell+m}}}\,,
\end{equation}
respectively. Given now some $B$-orthonormal vector $X_{j}\in\fr{f}_2$, for example, we compute $[\tilde{Z}_5, X_{j}]=\frac{c_{5}\sqrt{-1}}{\ell}X_{j}$.  Hence, by applying the definition of structure constants it follows that 
\[
\displaystyle{2 \brack {5 \ 2}}=\frac{(c_5)^2}{\ell^{2}}\dim_{\R}\fr{f}_2=\frac{d_{2}(c_5)^2}{\ell^{2}}=\frac{mn}{(\ell+m)N}\,,
\]
and in  a similar way one may compute $\displaystyle{2 \brack {4 \ 2}}$. This completes the proof. 
\end{proof}


\subsection{$\SU(\ell+m+n)$-invariant metrics}\label{invoi} 
Consider the reductive decomposition  (\ref{giso}) and let $\rho_{0}\equiv( \ , \ )_{0}$ be an arbitrary scalar product on $\fr{f}_0$. Since $\rho_{0}$ is an inner product, its matrix  $[\rho_{0}]$   with respect to the $B$-orthonormal  basis $\widetilde{\mathscr{B}}_{0}=\{\tilde{Z}_4, \tilde{Z}_5\}$  is  diagonalizable. In particular,   by $\theta$-Lemma (see Lemma \ref{thetalem}),   $\fr{f}_{0}$ decomposes into two equivalent 1-dimensional modules and given any other scalar product  $\beta\equiv ( \ , \ )$  in $\fr{f}_0$, there is some  $\beta$-orthonormal basis $\hat{\mathscr{B}}_{0}=\{V_4, V_5\}$ of $\fr{f}_{0}$ such that
\begin{equation}\label{prod}
( \ , \ )_{0} = v_4 ( \ , \ )|_{\hat{\frak f}_4} +v_5 ( \ , \ )|_{\hat{\frak f}_5}\,.
\end{equation}  
Here,   $\hat{\fr{f}}_4:= \Span\{V_4\}$, $\hat{\fr{f}}_5:= \Span\{V_5\}$ with $\fr{f}_{0}\simeq\hat{\fr{f}}_4\oplus\hat{\fr{f}}_{5}$, and the positive numbers $v_4, v_5 >0$, are the eigenvalues of $[\rho_{0}]$. 
Note that the basis $\hat{\mathscr{B}}_{0}$ is related to the basis $\widetilde{\mathscr{B}}_{0}$ via a matrix $Q=\begin{pmatrix}
p & q\\
r & s
\end{pmatrix}\in\Gl_2^{+}(\R)$ (see also below the relation (\ref{changeofB})), such that 
\[
[\rho_{0}]=(Q^{-1})^{T}\diag\{v _4, v_5\}Q^{-1}\,,
\]
and it turns out that any scalar product 
  on $\frak f_0$  is given by an expression of the  form
 (\ref{prod}).

From now on we consider   the reductive decomposition of $M_{\ell, m, n}$ given by
\begin{equation}\label{dec2}
\fr{g}=\fr{h}\oplus\fr{m}=(\fr{h}_{\upa}\oplus\fr{h}_{\upb}\oplus\fr{h}_{\upg})\oplus(\fr{f}_1\oplus\fr{f}_2\oplus\fr{f}_3\oplus\hat{\fr{f}}_{4}\oplus\hat{\fr{f}}_{5})\,.
\end{equation}
Note that  except $\ell m =1$, $\ell n =1$ or $m n =1$,  the $\Ad(K)$-decomposition of $\fr{m}$ coincides with the $\Ad(H)$-decomposition of $\fr{m}$.
Hence, we can consider $G$-invariant metrics $g$ on   
   $M_{\ell, m, n}=G/H$ which are given by an $\Ad(K)$-invariant inner product  $\langle \ , \ \rangle$ on $\fr{m}=\fr{f}_1\oplus\fr{f}_2\oplus\fr{f}_3\oplus\hat{\fr{f}}_{4}\oplus\hat{\fr{f}}_{5}$  of  the form
 \begin{equation}\label{ggg}
g\equiv\langle \ , \ \rangle=x_{1}B|_{\fr{f}_1}+x_2B|_{\fr{f}_2}+x_3B|_{\fr{f}_3}+( \ , \ )_{0}=x_{1}B|_{\fr{f}_1}+x_2B|_{\fr{f}_2}+x_3B|_{\fr{f}_3}+v_4 ( \ , \ )|_{\hat{\frak f}_4} +v_5 ( \ , \ )|_{\hat{\frak f}_5},
\end{equation}
with $x_{i}>0, v_{j}>0$ for any $i=1, 2, 3$ and $j=4, 5$. 
   
\begin{prop}\label{parametrize}
Consider the scalar product $( \ , \ )_{0}$ on $\fr{f}_{0}$ given by  {\rm (\ref{prod})}. Then, there exists a basis $\{V_4', V_5'\}$ of $\fr{f}_0$ such that the matrix $[\rho_{0}]$ of   $( \ , \ )_{0}$ with respect to $\widetilde{\mathscr{B}}_{0}=\{\tilde{Z}_4, \tilde{Z}_5\}$ has the expression
\begin{equation}\label{form}
[\rho_{0}]=\begin{pmatrix}
1 & 0\\
\gamma & 1
\end{pmatrix}^{T}
\begin{pmatrix}
v_4' & 0\\
0 &  v_5'
\end{pmatrix}
\begin{pmatrix}
1 & 0\\
\gamma & 1
\end{pmatrix},
\end{equation}
for some real number $\gamma$  and $v_4', v_5'>0$. 
\end{prop}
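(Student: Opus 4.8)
The statement is a purely linear-algebraic claim about the Gram matrix of an arbitrary inner product on the two-dimensional space $\fr{f}_0$, and the plan is to realize the asserted factorization (\ref{form}) as the root-free $LDL^{T}$-decomposition of a symmetric positive-definite $2\times 2$ matrix. First I would observe that, since $( \ , \ )_0$ is an inner product on $\fr{f}_0=\Span\{\tilde Z_4,\tilde Z_5\}$, its matrix $[\rho_0]$ with respect to the $B$-orthonormal basis $\widetilde{\mathscr{B}}_0=\{\tilde Z_4,\tilde Z_5\}$ is symmetric and positive-definite; write $[\rho_0]=\left(\begin{smallmatrix} a & b\\ b & d\end{smallmatrix}\right)$, so that $a,d>0$ and $\det[\rho_0]=ad-b^2>0$ by Sylvester's criterion.

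Next I would simply multiply out the right-hand side of (\ref{form}): with $L=\left(\begin{smallmatrix}1&0\\ \gamma&1\end{smallmatrix}\right)$ and $D=\diag\{v_4',v_5'\}$ one finds $L^{T}DL=\left(\begin{smallmatrix} v_4'+\gamma^2 v_5' & \gamma v_5'\\ \gamma v_5' & v_5'\end{smallmatrix}\right)$. Matching entries against $[\rho_0]$ gives the triangular system $v_5'=d$, $\gamma v_5'=b$, $v_4'+\gamma^2 v_5'=a$, which solves uniquely for $v_5'=d$, $\gamma=b/d$, and $v_4'=a-b^2/d=(ad-b^2)/d$. This is the crux of the matter, such as it is: the system is solvable because $d\neq 0$, and its two diagonal outputs are positive precisely because $d>0$ and $ad-b^2>0$, i.e. exactly the positive-definiteness of $[\rho_0]$. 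Hence $v_4',v_5'>0$, as required.

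Finally I would exhibit the basis $\{V_4',V_5'\}$ and tie the matrix identity to it. Setting $V_4':=\tilde Z_4-\gamma\tilde Z_5$ and $V_5':=\tilde Z_5$, the coordinates of $\tilde Z_4,\tilde Z_5$ in terms of $V_4',V_5'$ are the columns of $L$, so the Gram matrices transform by $[\rho_0]=L^{T}\,[\rho_0]_{\{V_4',V_5'\}}\,L$; and since $\rho_0(V_4',V_5')=b-\gamma d=0$, the middle Gram matrix is diagonal with entries $\rho_0(V_4',V_4')=(ad-b^2)/d=v_4'$ and $\rho_0(V_5',V_5')=d=v_5'$, which recovers (\ref{form}). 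Equivalently, $\{V_4',V_5'\}$ is just the (unnormalized) Gram--Schmidt basis produced from $\widetilde{\mathscr{B}}_0$ relative to $( \ , \ )_0$, keeping $\tilde Z_5$ and orthogonalizing $\tilde Z_4$ against it. In line with the Remark following Lemma \ref{thetalem}, I would also point out that $L=I+\gamma N$ with $N=\left(\begin{smallmatrix}0&0\\1&0\end{smallmatrix}\right)$ nilpotent, which is precisely the promised parametrization of the inner products on $\fr{f}_0$ by nilpotent matrices, the scalar $\gamma$ being the extra parameter $c$. There is no genuine analytic difficulty in this proposition; the only point that deserves care is the bookkeeping that converts the positive-definiteness of $[\rho_0]$ into the positivity of $v_4'$ and $v_5'$, and the correct identification of the change-of-basis direction so that the factor is $L^{T}(\cdot)L$ rather than its inverse.
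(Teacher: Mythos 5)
Your proof is correct, but it follows a genuinely different route from the paper's. The paper starts from the eigenvalue form (\ref{prod}), i.e.\ from $[\rho_0]=(Q^{-1})^{T}\diag\{v_4,v_5\}Q^{-1}$, and factors the change-of-basis matrix itself via a QR/Iwasawa-type decomposition $Q^{-1}=(\text{rotation})\cdot(\text{diagonal})\cdot(\text{unit lower triangular})$; the rotation is then absorbed by passing from the orthonormal basis $\{V_4,V_5\}$ to a new one $\{V_4',V_5'\}$, leaving $[\rho_0]=L^{T}X\,\diag\{v_4,v_5\}\,X L$ and hence the form (\ref{form}) with $v_4'=x^2v_4$, $v_5'=y^2v_5$. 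You instead bypass $Q$ entirely and perform the $LDL^{T}$ (root-free Cholesky) factorization of the Gram matrix $[\rho_0]=\left(\begin{smallmatrix} a & b\\ b & d\end{smallmatrix}\right)$ directly, obtaining the explicit solution $v_5'=d$, $\gamma=b/d$, $v_4'=(ad-b^2)/d$ and identifying $\{V_4',V_5'\}$ as the Gram--Schmidt basis $\{\tilde Z_4-\gamma\tilde Z_5,\ \tilde Z_5\}$. Each approach has its merits: the paper's version records how the data $(v_4,v_5,Q)$ of the earlier eigenvalue parametrization transforms into $(\gamma,v_4',v_5')$, which is the bookkeeping used later in the Ricci computations; yours is more elementary and self-contained, makes the positivity of $v_4',v_5'$ an immediate consequence of Sylvester's criterion rather than of $x,y\neq 0$, and gives closed formulas for the new parameters. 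It also quietly sidesteps the one delicate point in the paper's argument, namely that conjugating $\diag\{v_4,v_5\}$ by the rotation does not leave it diagonal, so the diagonal entries must be allowed to change to new values $v_4',v_5'$ when the rotation is absorbed into the basis --- a subtlety your direct entry-matching never has to confront. One terminological remark: as you note, $L=I+\gamma N$ is unipotent (with $N$ nilpotent), which is slightly more accurate than the paper's phrase ``nilpotent matrices''.
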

\begin{proof}
Let us assume that $Q^{-1}=\begin{pmatrix}
a & b\\
c & d
\end{pmatrix}$. 
Based on the QR-decomposition  we obtain that
$$
\begin{pmatrix}
a & b\\
c & d
\end{pmatrix}=\begin{pmatrix}
\cos t & -\sin t\\
\sin t & \cos t
\end{pmatrix}\begin{pmatrix}
x & 0\\
0 & y
\end{pmatrix}\begin{pmatrix}
1 & 0\\
\gamma & 1
\end{pmatrix},
$$
for some non-zero   $x, y$ and some real number $\gamma$.
Thus, the matrix $[\rho_{0}]$    takes the form
$$
\begin{pmatrix}
1 & 0\\
\gamma & 1
\end{pmatrix}^{T}
\begin{pmatrix}
x & 0\\
0 &  y
\end{pmatrix}
\begin{pmatrix}
\cos t & \sin t\\
-\sin t & \cos t
\end{pmatrix}
\begin{pmatrix}
v_4 & 0\\
0 &  v_5
\end{pmatrix}
\begin{pmatrix}
\cos t & -\sin t\\
\sin t & \cos t
\end{pmatrix}
\begin{pmatrix}
x & 0\\
0 &  y
\end{pmatrix}
\begin{pmatrix}
1 & 0\\
\gamma & 1
\end{pmatrix}.
$$
By changing the orthonormal basis $\hat{\mathscr{B}}_{0}=\{V_4, V_5\}$ into some orthonormal basis
$\{V_4', V_5'\}$, we deduce that 
$$
[\rho_{0}]=\begin{pmatrix}
1 & 0\\
\gamma & 1
\end{pmatrix}^{T}
\begin{pmatrix}
x & 0\\
0 &  y
\end{pmatrix}
\begin{pmatrix}
v_4 & 0\\
0 &  v_5
\end{pmatrix}
\begin{pmatrix}
x & 0\\
0 &  y
\end{pmatrix}
\begin{pmatrix}
1 & 0\\
\gamma & 1
\end{pmatrix},
$$ which yields the expression (\ref{form}). \end{proof}
Thus,   without loss of generality we may set
$a=d=1$, $b=0$ and $c=\gamma\in\R$, which means that  the parametrization of  inner products on $\fr{f}_{0}$ depends on two positive real numbers  plus  a $2\times 2$-nilpotent matrix  $N=\begin{pmatrix}
1 & 0\\
\gamma & 1
\end{pmatrix}$, with $\gamma\in\R$.   This conclusion  is in agreement with the $\theta$-Lemma,  in particular  from  the entries in the  matrix $Q^{-1}$ (or $Q$)  survives only a real parameter $c=\gamma$,    and this establishes  an alternative  parametrization of the inner products on $\fr{f}_{0}$. 
As a summary, we  obtain
\begin{corol}\label{invRIEM}
For  $\ell m\neq 1$, $\ell n\neq 1$ and  $m n\neq 1$, the space of $G$-invariant metrics on the C-space   $M_{\ell, m, n}=G/H=\SU(\ell+m+n)/\SU(\ell)\times\SU(n)\times\SU(m)$ is 6-dimensional, and any  invariant Riemannian metric on $M$ is given by {\rm (\ref{ggg})}.
\end{corol}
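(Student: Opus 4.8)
The plan is to reduce the statement to an application of Schur's lemma for the real $\Ad(H)$-representation $\fr{m}\cong T_{eH}M_{\ell,m,n}$, combined with the normal form for inner products on the fibre obtained in Proposition \ref{parametrize}. Recall first that $G$-invariant metrics on $G/H$ are in bijection with $\Ad(H)$-invariant inner products on $\fr{m}$, so it suffices to describe the latter. Using the reductive decomposition (\ref{dec2}), I would write $\fr{m}=\fr{f}_1\oplus\fr{f}_2\oplus\fr{f}_3\oplus\fr{f}_0$ and argue that, under the hypotheses $\ell m\neq 1$, $\ell n\neq 1$, $m n\neq 1$, the first three summands are mutually inequivalent \emph{nontrivial} irreducible $\Ad(H)$-modules, while $\fr{f}_0$ is the two-dimensional trivial isotypic component, on which $\Ad(H)$ acts as the identity (as recorded in the proof of the $\theta$-Lemma, Lemma \ref{thetalem}).

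First I would treat the off-diagonal blocks. Since each of $\fr{f}_1,\fr{f}_2,\fr{f}_3$ is irreducible, pairwise inequivalent, and inequivalent to the trivial factors comprising $\fr{f}_0$, and since $W\cong W^{*}$ for every module (a compact group admits an invariant inner product), Schur's lemma forces every $\Ad(H)$-invariant bilinear form to vanish on $\fr{f}_i\times\fr{f}_j$ for $i\neq j$ and on $\fr{f}_i\times\fr{f}_0$; thus an invariant inner product is block-diagonal with respect to $\fr{m}=(\fr{f}_1\oplus\fr{f}_2\oplus\fr{f}_3)\oplus\fr{f}_0$. Next I would analyse the diagonal blocks on $\fr{f}_1,\fr{f}_2,\fr{f}_3$: on an irreducible \emph{real} $\Ad(H)$-module the space of invariant symmetric bilinear forms is one-dimensional, since—whether the module is of real, complex, or quaternionic type—the only self-adjoint invariant endomorphisms are the scalars, while the invariant skew endomorphisms (the complex or quaternionic structures) produce \emph{antisymmetric} forms. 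Hence the restriction to each $\fr{f}_i$ is a positive multiple $x_iB|_{\fr{f}_i}$, contributing the three parameters $x_1,x_2,x_3>0$.

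It then remains to parametrize the block on $\fr{f}_0$. Because $\Ad(H)|_{\fr{f}_0}=\Id$, \emph{every} symmetric bilinear form on the two-dimensional space $\fr{f}_0$ is automatically $\Ad(H)$-invariant, and the positive-definite ones form a three-dimensional cone. Invoking Proposition \ref{parametrize}, I would write this block in the normal form $[\rho_0]=N^{T}\diag\{v_4',v_5'\}N$ with $N=\left(\begin{smallmatrix}1&0\\ \gamma&1\end{smallmatrix}\right)$, so that the fibre contributes exactly the three parameters $v_4',v_5'>0$ and $\gamma\in\R$. Assembling the diagonal and fibre blocks yields precisely an inner product of the form (\ref{ggg}), and the parameter count $3+3=6$ gives the asserted dimension.

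The hard part will be the bookkeeping in the first paragraph: verifying that the conditions $\ell m\neq 1$, $\ell n\neq 1$, $m n\neq 1$ are exactly what guarantees that each $\fr{f}_i$ is a nontrivial irreducible $\Ad(H)$-module, distinct from $\fr{f}_0$ and from the other summands. This is the content of the remark preceding the statement, namely that the $\Ad(K)$- and $\Ad(H)$-decompositions of $\fr{m}$ coincide in this range; without it, say when $\ell=m=1$, the module $\fr{f}_1$ collapses to a two-dimensional \emph{trivial} module that merges with $\fr{f}_0$, the trivial isotypic component becomes four-dimensional, off-diagonal couplings reappear, and the dimension of the space of invariant metrics exceeds six. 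A minor secondary point is to justify cleanly the one-dimensionality of invariant symmetric forms on each $\fr{f}_i$ uniformly across the three representation types.
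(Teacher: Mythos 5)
Your proposal is correct and follows essentially the same route as the paper: the authors likewise observe that away from $\ell m=1$, $\ell n=1$, $mn=1$ the $\Ad(H)$- and $\Ad(K)$-decompositions of $\fr{m}$ coincide, so Schur's lemma forces the metric to be diagonal on $\fr{f}_1\oplus\fr{f}_2\oplus\fr{f}_3$ (three parameters) while the trivial $2$-dimensional module $\fr{f}_0$ carries an arbitrary inner product, reduced to the three parameters $v_4,v_5,\gamma$ by the normal form of Proposition \ref{parametrize}. Your extra bookkeeping on the off-diagonal blocks and on the one-dimensionality of invariant symmetric forms across the real/complex/quaternionic types merely fills in details the paper leaves implicit in Section \ref{section1}.
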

\begin{remark}\label{remarkinv}
  By Corollary \ref{invRIEM}, the metric defined by (\ref{ggg}) will represent a general $G$-invariant metric on $M_{\ell, m, n}=G/H$ only under the assumption  $\ell m\neq 1$, $\ell n\neq 1$ and  $m n\neq 1$. This assumption certifies  that none of  the components $\fr{f}_i$ in the isotropy representation of $M_{\ell, m, n}$  can be two-dimensional, for $i=1, 2, 3$. For example, for  the particular case $\ell=m=n=1$ which corresponds to the  Lie group $\SU(3)$, the  invariant metric $g$ defined by (\ref{ggg}) is {\it not} the most general invariant metric and similar for other cases induced by values such  that $\ell m= 1$, $\ell n= 1$ or $m n= 1$. However, as we will see below by using (\ref{ggg})  there are cases with $\ell m= 1$, $\ell n= 1$ or  $m n= 1$, for which we can still provide new invariant  Einstein metrics (see  Table \ref{Table31}). We should also mention  that unfortunately, this is not the case for $M_{1, 1, 1}=\SU(3)$ (see  Section \ref{casecccc}).
 \end{remark}

Now, given the $B$-orthonormal basis $\mathscr{B}_{0}=\{\tilde{Z}_i=c_iZ_{i} : i=4, 5\}$ of $\fr{f}_{0}$,  one has to observe that $(\tilde{Z}_4, \tilde{Z}_5)_{0}$ maybe {\it not} be zero.    Let    $\hat{\mathscr{B}}_{0}^{g}=\{U_4, U_5\}$  be an orthonormal basis of $\fr{f}_0\simeq\hat{\fr{f}}_4\oplus\hat{\fr{f}}_5$ with respect to $g=\langle \ , \ \rangle$, that is $U_{k} = \frac{1}{\sqrt{v_{k}}}V_{k}$ for any  $ k = 4, 5$, 
 and let $\widetilde{\mathscr{B}}_{\z}=\{\tilde{X}_{j}^{\z}: j = 1, \dots , \dim\fr{f}_{\z}\}$  be a  $B$-orthonormal basis of $\fr{f}_{\z}$, for any $\z=1, 2, 3$.  Set  $\mathscr{B}_{\z}^{g}=\frac{1}{\sqrt{x_{\z}}}\widetilde{\mathscr{B}}_{\z}$, i.e.
{\small \[
 \mathscr{B}_1^{g}=\{X_{h}^{1}=\displaystyle{\frac{\tilde{X}_{h}^{1}}{\sqrt{x_{1}}}} : 1\leq h\leq 2\ell m\}\,,  \ \mathscr{B}_2^{g}=\{X_{i}^{2}=\displaystyle{\frac{\tilde{X}_{i}^{2}}{\sqrt{x_{2}}}}: 1\leq i\leq 2\ell n\}\,, \ \mathscr{B}_3^{g}=\{X_{j}^{3}=\displaystyle{\frac{\tilde{X}_{j}^{3}}{\sqrt{x_{3}}}} : 1\leq j\leq 2mn\}\,.
 \]}
Then, the following (disjoint)  union of sets
\[
\mathscr{B}=\mathscr{B}_1^{g}\cup\mathscr{B}_2^{g}\cup\mathscr{B}_3^{g}\cup\hat{\mathscr{B}}_{0}^{g}=
\mathscr{B}_1^{g}\cup\mathscr{B}_2^{g}\cup\mathscr{B}_3^{g}\cup\{U_4\}\cup\{U_5\}
\] 
forms a $g$-orthonormal basis of $\fr{m}$, adapted to the decomposition $\fr{m}=\fr{f}_1\oplus\fr{f}_2\oplus\fr{f}_3\oplus\hat{\fr{f}}_{4}\oplus\hat{\fr{f}}_5$. Now, because we may have
\begin{equation}\label{nonnatred}
g([\tilde{X}_{i}^{\z}, \tilde{X}_{j}^{\z}]\, ,\, U_{k})\neq g(\tilde{X}_{i}^{\z}\, ,\, [\tilde{X}_{j}^{\z}, U_{k}])\,, \ \ \mbox{for}\ \ k = 4, 5\,,
\end{equation}
it is useful to express the inner product $( \  , \ )_{0}$ defined by (\ref{prod}), in terms of $B$.

\begin{prop}\label{prop1}
Any two $g$-orthonormal vectors  $X_{i}^{\z}, X_{j}^{\z}\in\mathscr{B}_{\z}^{g}$, $(\z = 1,2,3)$ satisfy the relations
\begin{eqnarray*}
([X_{i}^{\z}, X_{j}^{\z}], U_4)_{0} &=& \sqrt{v_4}\Big(aB([X_{i}^{\z}, X_{j}^{\z}], \tilde{Z}_{4}) + bB([X_{i}^{\z}, X_{j}^{\z}], \tilde{Z}_5)\Big)\,,\\
([X_{i}^{\z}, X_{j}^{\z}], U_{5})_{0} &=&  \sqrt{v_5}\Big(cB([X_{i}^{\z}, X_{j}^{\z}], \tilde{Z}_{4}) + dB([X_{i}^{\z}, X_{j}^{\z}], \tilde{Z}_5)\Big)\,.
\end{eqnarray*}
\end{prop}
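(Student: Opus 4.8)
The plan is to reduce the whole statement to two-dimensional linear algebra on $\fr{f}_0$, using the explicit change of basis between $\widetilde{\mathscr{B}}_0=\{\tilde Z_4,\tilde Z_5\}$ and $\hat{\mathscr{B}}_0=\{V_4,V_5\}$ recorded by $Q$. First I would observe that by Lemma \ref{brackets} we have $[\fr f_\z,\fr f_\z]\subset\fr k=\fr h\oplus\fr f_0$, so the bracket $[X_i^\z,X_j^\z]$ enters the pairings only through its projection $W:=\mathrm{proj}_{\fr f_0}[X_i^\z,X_j^\z]\in\fr f_0$. Since $\tilde Z_4,\tilde Z_5,U_4,U_5$ all lie in $\fr f_0$, and $\fr f_0$ is $B$-orthogonal to $\fr h\oplus\fr f_1\oplus\fr f_2\oplus\fr f_3$, both sides of each asserted identity depend on $[X_i^\z,X_j^\z]$ only through $W$; in particular $B([X_i^\z,X_j^\z],\tilde Z_\mu)=B(W,\tilde Z_\mu)$ for $\mu=4,5$. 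Writing $W=w_4\tilde Z_4+w_5\tilde Z_5$ in the $B$-orthonormal basis $\widetilde{\mathscr{B}}_0$, the coordinates are precisely $w_\mu=B(W,\tilde Z_\mu)$.

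Next I would make the change of basis explicit. The relation (\ref{changeofB}) amounts to $V_k=\sum_\mu Q_{\mu k}\tilde Z_\mu$, i.e. $V_k$ is the $k$-th column of $Q$ written in $\widetilde{\mathscr{B}}_0$ (under the identification of the labels $4,5$ with $1,2$); one checks this is forced by $[\rho_0]=(Q^{-1})^{T}\diag\{v_4,v_5\}Q^{-1}$, since then $(V_i,V_j)_0=e_i^{T}Q^{T}[\rho_0]Qe_j=e_i^{T}\diag\{v_4,v_5\}e_j=v_i\delta_{ij}$, exactly as required for the normalization (\ref{prod}). Using $U_k=V_k/\sqrt{v_k}$, with $w=(w_4,w_5)^{T}$ and $D:=\diag\{v_4,v_5\}$, I would then compute
\[
(W,U_k)_0=\frac{1}{\sqrt{v_k}}\,w^{T}[\rho_0]\,Qe_k=\frac{1}{\sqrt{v_k}}\,w^{T}(Q^{-1})^{T}D\,e_k=\sqrt{v_k}\,w^{T}(Q^{-1})^{T}e_k,
\]
where the middle equality uses the collapse $[\rho_0]Q=(Q^{-1})^{T}DQ^{-1}Q=(Q^{-1})^{T}D$ and the last uses $De_k=v_k e_k$.

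It then remains to read off the two columns. Since $Q^{-1}=\begin{pmatrix} a & b\\ c & d\end{pmatrix}$, the vector $(Q^{-1})^{T}e_4$ is the first row of $Q^{-1}$, namely $(a,b)^{T}$, while $(Q^{-1})^{T}e_5$ is the second row $(c,d)^{T}$. Substituting gives $(W,U_4)_0=\sqrt{v_4}\,(a\,w_4+b\,w_5)$ and $(W,U_5)_0=\sqrt{v_5}\,(c\,w_4+d\,w_5)$, which upon restoring $w_\mu=B([X_i^\z,X_j^\z],\tilde Z_\mu)$ are precisely the two claimed formulas. The computation is elementary; the points needing care are the bookkeeping of the matrix conventions — in particular that $V_k$ is a \emph{column} of $Q$ whereas $a,b,c,d$ are the entries of $Q^{-1}$ — and the initial reduction to the projection $W$, which is what legitimizes writing the right-hand sides in terms of $B$. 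I expect the clean collapse $[\rho_0]Q=(Q^{-1})^{T}D$ to be the crux, since it is exactly what converts the $(\ ,\ )_0$-pairing into a $B$-pairing.
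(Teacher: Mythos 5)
Your argument is correct and follows essentially the same route as the paper: both reduce the identity to two-dimensional linear algebra on $\fr{f}_0$ and convert the $(\ ,\ )_0$-pairing into a $B$-pairing via the change of basis recorded by $Q$, the paper doing this by a direct $B$-orthonormal expansion together with $(\tilde Z_4,V_4)=a$, $(\tilde Z_5,V_4)=b$, and you by packaging the same facts into the collapse $[\rho_0]Q=(Q^{-1})^{T}\diag\{v_4,v_5\}$. Your explicit justification of the reduction to the projection $W$ onto $\fr{f}_0$ is a welcome extra detail that the paper leaves implicit.
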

\begin{proof}
Let us provide a proof for general $a, b, c, d$, although by Proposition \ref{parametrize} one can work with $b=0$, $a=d=1$ and $c\in\R$.   We use the notation  
$Q^{-1}=\begin{pmatrix}
a & b\\
c & d
\end{pmatrix}$, such that
\begin{equation}\label{changeofB}
\widetilde{\mathscr{B}}_{0}=\hat{\mathscr{B}}_{0}Q^{-1}\,,
\end{equation}
where $\widetilde{\mathscr{B}}_{0}=\{\tilde{Z}_4, \tilde{Z}_5\}$ and $\hat{\mathscr{B}}_{0}=\{V_4, V_5\}$, respectively.
Thus 
\[
\tilde{Z}_4=aV_4+cV_5,\quad \tilde{Z}_5=bV_4+dV_5\,,
\]
and due to   (\ref{prod}), by applying  a $B$-orthonormal  expansion we obtain  
\begin{eqnarray*}
([X_{i}^{\z}, X_{j}^{\z}], U_4)_{0} &=& v_4 ([X_{i}^{\z}, X_{j}^{\z}], U_4) = \sqrt{v_4}([X_{i}^{\z}, X_{j}^{\z}], V_4)\\
&=&\sqrt{v_4}\Big(B([X_{i}^{\z}, X_{j}^{\z}], \tilde{Z}_4)\tilde{Z}_4 + B([X_{i}^{\z}, X_{j}^{\z}],  \tilde{Z}_5)\tilde{Z}_5\,,  V_4\Big)\\
&= &\sqrt{v_4}\Big[ B([X_{i}^{\z}, X_{j}^{\z}], \tilde{Z}_4)\big(\tilde{Z}_4\,, V_4\big) +B([X_{i}^{\z}, X_{j}^{\z}],  \tilde{Z}_5)\big(\tilde{Z}_5\,, V_4\big)\Big]\\
&= &\sqrt{v_4}\Big[ B([X_{i}^{\z}, X_{j}^{\z}], \tilde{Z}_4)a +B([X_{i}^{\z}, X_{j}^{\z}],  \tilde{Z}_5)b\Big]\,.
\end{eqnarray*}
Similarly is treated the second relation.
\end{proof}

For our computations  it will be useful to consider also orthonormal {\it Weyl bases} of the irreducible modules $\fr{f}_{1}, \fr{f}_{2}$ and $\fr{f}_{3}$.    So, let $E_{ij}$ denote the $N\times N$ matrix with $1$ in the $(i, j)$-entry and $0$ elsewhere, where as usual $N=\ell+m+n>0$, and set
\[
A_{ij}:=E_{ij}-E_{ji}\,,\quad B_{ij}:=\sqrt{-1}(E_{ij}+E_{ji})\,.
\] 
In these terms,  a direct computation shows that

\begin{lemma}\label{lemma1}
1) Consider the $B$-orthonormal vectors  $\tilde{A}_{ij}=\mu A_{ij}$ and $\tilde{B}_{ij}=\mu B_{ij}$ of $\SU(N)$, where $\mu=2\sqrt{\ell+m+n}=2\sqrt{N}$. Then, the sets  
 \begin{eqnarray*}
\mathscr{D}_{1}&=&\{\tilde{A}_{ij}, \tilde{B}_{ij}: i=\ell+1, \dots ,\ell +m; j=1,\dots ,\ell\}\,,\\
\mathscr{D}_{2}&=&\{\tilde{A}_{ij}, \tilde{B}_{ij}: i=\ell+m+1, \dots ,N; j=1,\dots ,\ell\}\,,\\
\mathscr{D}_{3}&=&\{\tilde{A}_{ij}, \tilde{B}_{ij}: i=\ell+m+1, \dots ,N; j=\ell+1,\dots ,\ell+m\}\,,
\end{eqnarray*}
 form $B$-orthonormal bases of the modules $\fr{f}_1, \fr{f}_2$ and $\fr{f}_3$, respectively. \\ 
 2) The $B$-orthonormal vectors belonging in $\mathscr{D}_{1}, \mathscr{D}_{2}$ and $\mathscr{D}_{3}$ satisfy the following Lie bracket relations:
\[
\begin{tabular} { l | l | l }
$A_{ij}, B_{ij}\in \fr{f}_1$                                                                                  &   $A_{ij}, B_{ij}\in \fr{f}_2$                                                                                &  $A_{ij}, B_{ij}\in \fr{f}_3$ \\
\thickline 
$[\tilde{Z}_4, A_{ij}]=0$                     								  &   $[\tilde{Z}_4, A_{ij}] = \displaystyle{c_4\frac{N}{n(\ell+m)}B_{ij}}$                 &  $ [\tilde{Z}_4, A_{ij}] = \displaystyle{c_4\frac{N}{n(\ell+m)}B_{ij}}$ \\
$[\tilde{Z}_4, B_{ij}]=0$ 									          &   $[\tilde{Z}_4, B_{ij}] = \displaystyle{-c_4\frac{N}{n(\ell+m)}A_{ij}}$		&  $ [\tilde{Z}_4, B_{ij}] = \displaystyle{-c_4\frac{N}{n(\ell+m)}A_{ij}}$ \\
$[\tilde{Z}_{5}, A_{ij}] = \displaystyle{c_5\frac{(\ell+m)}{\ell m} B_{ij}}$           &   $[\tilde{Z}_{5}, A_{ij}] = \displaystyle{c_5\frac{1}{\ell}B_{ij}}$ 			& $[\tilde{Z}_{5}, A_{ij}] = \displaystyle{-c_5\frac{1}{m}B_{ij}}$ \\
$ [\tilde{Z}_{5}, B_{ij}] = \displaystyle{-c_5\frac{(\ell+m)}{\ell m} A_{ij}}$  	  &   $[\tilde{Z}_{5}, B_{ij}] = \displaystyle{-c_5\frac{1}{\ell}A_{ij}}$       		& $[\tilde{Z}_{5}, B_{ij}] = \displaystyle{c_5\frac{1}{m}A_{ij}},$
\end{tabular}
\]
where $c_4, c_5$ are given in {\rm (\ref{ci})}.
\end{lemma}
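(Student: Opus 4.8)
The plan is to reduce the whole lemma to two elementary matrix computations, both of which rest on the single identity $E_{ab}E_{cd}=\delta_{bc}E_{ad}$ together with $B=-B_{\fr g}$ and $B_{\fr g}(X,Y)=2N\tr(XY)$. Part 1 is the statement that the $\mathscr{D}_i$ are $B$-orthonormal bases, and part 2 records the adjoint action of the two diagonal generators $\tilde Z_4,\tilde Z_5$ on them; neither requires any structural input beyond the explicit form of the matrices.

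For part 1, I would first compute the common $B$-norm. Since $A_{ij}^2=-(E_{ii}+E_{jj})$ and $B_{ij}^2=-(E_{ii}+E_{jj})$ for $i\neq j$, one gets $\tr(A_{ij}^2)=\tr(B_{ij}^2)=-2$, hence $B(A_{ij},A_{ij})=B(B_{ij},B_{ij})=4N$; this fixes the normalization constant $\mu$ so that $\tilde A_{ij}=\mu A_{ij}$ and $\tilde B_{ij}=\mu B_{ij}$ are $B$-unit vectors. Next I would verify pairwise orthogonality via $\tr(E_{ab}E_{cd})=\delta_{ad}\delta_{bc}$: the traces $\tr(A_{ij}A_{kl})$ and $\tr(B_{ij}B_{kl})$ vanish unless $\{i,j\}=\{k,l\}$, while $\tr(A_{ij}B_{kl})=0$ always because the two surviving Kronecker contributions cancel. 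Finally, the index ranges defining $\mathscr{D}_1,\mathscr{D}_2,\mathscr{D}_3$ place these matrices exactly in the off-diagonal blocks describing $\fr f_1=\fr f_{\ell m}$, $\fr f_2=\fr f_{\ell n}$, $\fr f_3=\fr f_{mn}$, and their cardinalities $2\ell m,2\ell n,2mn$ equal $d_1,d_2,d_3$. Being orthonormal (hence independent), of the right cardinality, and contained in the correct subspace, they span it, so they form $B$-orthonormal bases.

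For part 2, the key observation is that $Z_4,Z_5$ are purely imaginary diagonal matrices, say $Z=\sqrt{-1}\,\diag(d_1,\dots,d_N)$, so that $[Z,E_{ij}]=\sqrt{-1}(d_i-d_j)E_{ij}$. A one-line manipulation then yields the transfer rules $[Z,A_{ij}]=(d_i-d_j)B_{ij}$ and $[Z,B_{ij}]=-(d_i-d_j)A_{ij}$, which already explain why bracketing with a diagonal generator interchanges the $A$- and $B$-vectors. It then remains to read off $d_i-d_j$ from the block values of $Z_4$ and $Z_5$: for $\fr f_1$ both indices sit in the first two blocks, where $Z_4$ is constant, forcing $[\tilde Z_4,\cdot]=0$, whereas $Z_5$ produces $\pm(\tfrac1\ell+\tfrac1m)=\pm\tfrac{\ell+m}{\ell m}$; for $\fr f_2$ and $\fr f_3$ the difference against $Z_4$ is $\pm(\tfrac1n+\tfrac1{\ell+m})=\pm\tfrac{N}{n(\ell+m)}$, while $Z_5$ contributes $\pm\tfrac1\ell$ and $\pm\tfrac1m$ respectively. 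Multiplying by the constants $c_4,c_5$ from (\ref{ci}) reproduces every entry of the table.

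I do not expect a conceptual obstacle: the statement is a direct, if slightly tedious, verification. The only genuinely delicate point is the \emph{sign bookkeeping}. The factor $\sqrt{-1}$ in $B_{ij}$ and, above all, the ordering convention for the indices of $A_{ij},B_{ij}$ (recall $A_{ji}=-A_{ij}$ while $B_{ji}=B_{ij}$) must be pinned down once and applied uniformly, since choosing $A_{ji}$ instead of $A_{ij}$ flips every coefficient sign. A secondary point is tracking the factor $2N$ from the Killing form so that $\mu$ comes out correctly. Once these conventions are fixed, each line of the table follows mechanically from the single formula $[Z,A_{ij}]=(d_i-d_j)B_{ij}$ and its companion for $B_{ij}$.
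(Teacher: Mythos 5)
Your argument is correct and is precisely the ``direct computation'' that the paper invokes without writing out: part 1 follows from $\tr(E_{ab}E_{cd})=\delta_{ad}\delta_{bc}$, the resulting value $B(A_{ij},A_{ij})=B(B_{ij},B_{ij})=-2N\tr(A_{ij}^2)=4N$, and a count of the basis elements against $d_1,d_2,d_3$; part 2 follows from the transfer rules $[Z,A_{ij}]=(d_i-d_j)B_{ij}$ and $[Z,B_{ij}]=-(d_i-d_j)A_{ij}$ for $Z=\sqrt{-1}\,\diag(d_1,\dots,d_N)$. Two remarks on the ``sign bookkeeping'' you rightly single out as the only delicate point. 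First, your own norm computation shows that the normalizing factor must be $\mu=1/(2\sqrt{N})$ rather than the $2\sqrt{N}$ printed in the statement. Second, you leave the coefficients as $\pm(\cdot)$; if you commit to the index convention actually stated in the lemma ($i$ in the higher block, $j$ in the lower), you obtain for instance $d_i-d_j=-\tfrac1n-\tfrac1{\ell+m}=-\tfrac{N}{n(\ell+m)}$ on $\fr{f}_2$ for $Z_4$ and $d_i-d_j=-\tfrac{\ell+m}{\ell m}$ on $\fr{f}_1$ for $Z_5$, i.e.\ the \emph{negatives} of every entry of the table; and contrary to what you suggest, this cannot be repaired by switching to $A_{ji},B_{ji}$, since the transfer rules are invariant under that swap ($A_{ji}=-A_{ij}$, $B_{ji}=B_{ij}$ cancel against $d_j-d_i=-(d_i-d_j)$). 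The overall sign discrepancy is harmless for the rest of the paper, because the lemma is only ever used through squares and through products of \emph{pairs} of such brackets (in the proofs of Propositions \ref{ijk}, \ref{mainijk} and \ref{ricci1}), which are insensitive to a global sign flip; but a complete proof of the lemma as literally stated would have to either identify the convention producing the printed signs or record the correction.
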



\section{The Ricci tensor on $M_{\ell, n, m}=G/H=\SU(\ell+m+n)/\SU(\ell)\times\SU(m)\times\SU(n)$}\label{section4}
In this section we  describe the Ricci tensor on the   torus bundle   \[M_{\ell, n, m}=G/H=\SU(\ell+m+n)/\SU(\ell)\times\SU(m)\times\SU(n)\,,
\]  for the    $G$-invariant metric   $g$ induced by the inner product $\langle \ , \ \rangle$ on $\fr{m}=\fr{f}_1\oplus\fr{f}_2\oplus\fr{f}_3\oplus\fr{f}_{0}$, given by (\ref{ggg}) (and from now on  we shall identify them).  As  we explained  in Section \ref{invoi}, except of the cases $\ell m=1$, $\ell n=1$, or $mn=1$, $g$ is  a general invariant metric on $M_{\ell, m, n}$.  Now, because $\fr{f}_{0}$   decomposes into two equivalent 1-dimensional submodules, we should  divide   the study into two parts, related with the abelian part  $\fr{f}_0$ and the diagonal part $\fr{f}_{1}\oplus\fr{f}_2\oplus\fr{f}_3$ of $\fr{m}$, respectively.


\subsection{The Ricci tensor for the abelian part}
For the computation of  the Ricci tensor $\Ric^{g}$ on the abelian part $\fr{f}_{0}$,  we shall apply directly the formula (\ref{genric}). However, for such a procedure first we need  to do some preparatory work.    We begin with the following result, whose proof  relies on   the  $g$-orthonormal bases $\mathscr{B}_{\z}^{g}=\{X_{j}^{\z}: j =1, \dots ,\dim\frak f_{\z}\}$ of the modules $\fr{f}_{\z}$  ($\z=1, 2, 3$), given in Section \ref{invoi} and the expression (\ref{ggg}) of $g=\langle \ , \ \rangle$.
\begin{prop}\label{prop2}  
 The  invariant metric on $M_{\ell, n, m}=G/H$ defined by  {\rm (\ref{ggg})}, satisfies the relation   
\begin{eqnarray}
\sum_{\z = 1, 2, 3}\sum_{j=1}^{\dim\fr{f}_{\z}}g\big([Z, X_{j}^{\z}],    [W, X_{j}^{\z}]\big) = B(Z, W)\,,
\end{eqnarray}
for any  $Z, W\in\fr{f}_0$. 
\end{prop}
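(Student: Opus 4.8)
The plan is to convert this weighted sum into a trace of $\ad$-operators and then invoke the defining property of the Killing form. \textbf{First} I would eliminate the metric parameters $x_{\z}$. Writing $X_{j}^{\z}=\tilde X_{j}^{\z}/\sqrt{x_{\z}}$ for the $B$-orthonormal basis $\{\tilde X_{j}^{\z}\}$ of $\fr f_{\z}$ fixed in Section \ref{invoi}, and recalling from Lemma \ref{brackets} that $[\fr f_{0},\fr f_{\z}]\subset\fr f_{\z}$, the brackets $[Z,X_{j}^{\z}]$ and $[W,X_{j}^{\z}]$ lie in $\fr f_{\z}$, where $g$ restricts to $x_{\z}B$. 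Consequently
\[
g([Z,X_{j}^{\z}],[W,X_{j}^{\z}])=x_{\z}\,B\Big(\tfrac{[Z,\tilde X_{j}^{\z}]}{\sqrt{x_{\z}}},\tfrac{[W,\tilde X_{j}^{\z}]}{\sqrt{x_{\z}}}\Big)=B([Z,\tilde X_{j}^{\z}],[W,\tilde X_{j}^{\z}]),
\]
so all the factors $x_{\z}$ cancel and the left-hand side of the asserted identity reduces to $\sum_{e}B([Z,e],[W,e])$, the sum running over the $B$-orthonormal basis $e\in\bigcup_{\z}\{\tilde X_{j}^{\z}\}$ of $\fr f=\fr f_{1}\oplus\fr f_{2}\oplus\fr f_{3}$.

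\textbf{Next} I would identify this partial sum with a trace. Since $B=-B_{\fr g}$ is $\Ad(G)$-invariant, each $\ad(Z)$ is $B$-skew-symmetric, whence $B([Z,e],[W,e])=-B([Z,[W,e]],e)=-B(\ad(Z)\ad(W)e,e)$ for every $e$. Summing over the $B$-orthonormal basis of $\fr f$ gives $\sum_{e\in\fr f}B([Z,e],[W,e])=-\tr_{\fr f}(\ad(Z)\ad(W))$. The crucial point is that this partial trace coincides with the full trace over $\fr g$: in the splitting $\fr g=\fr h\oplus\fr f_{0}\oplus\fr f$ the first two summands contribute nothing, because $\fr f_{0}$ is the centre of $\fr k$ and hence $[W,e]=0$ for all $e\in\fr h\oplus\fr f_{0}$ (indeed $[\fr f_{0},\fr h]=0$, and $\fr f_{0}$ is abelian by $[\fr f_4,\fr f_5]=0$ from Lemma \ref{brackets}). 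Therefore
\[
\sum_{\z=1,2,3}\sum_{j}g([Z,X_{j}^{\z}],[W,X_{j}^{\z}])=-\tr_{\fr f}(\ad(Z)\ad(W))=-\tr_{\fr g}(\ad(Z)\ad(W))=-B_{\fr g}(Z,W)=B(Z,W),
\]
which is exactly the claim.

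The computation itself is short, so \textbf{the main thing to verify carefully} is the reduction of the full trace to the partial trace over $\fr f$, namely that $\ad(W)$ annihilates $\fr h\oplus\fr f_{0}$ for $W\in\fr f_{0}$. This is precisely the statement that $\fr f_{0}$ is central in $\fr k$ (so it commutes with the semisimple part $\fr h=\fr k'$ and with itself), which was recorded when the decomposition $\fr k=\fr h\oplus\fr f_{0}$ was set up. Everything else is the standard Killing-form trace identity $B_{\fr g}(Z,W)=\tr(\ad(Z)\ad(W))$ combined with the $\Ad(G)$-invariance of $B$; note in particular that the final result is independent of the parameters $x_1,x_2,x_3,v_4,v_5,c$ of the metric, which is what makes this relation a convenient building block for the subsequent Ricci computation on $\fr f_{0}$.
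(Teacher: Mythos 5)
Your proof is correct and complete. The paper itself omits the argument, merely indicating that it ``relies on the $g$-orthonormal bases $\mathscr{B}_{\z}^{g}$ and the expression (\ref{ggg})'', so there is no written proof to compare against; your route via the Killing-form trace identity is a clean way to supply the missing details. All three delicate points are handled properly: the cancellation of the $x_{\z}$ uses exactly that $[\fr{f}_0,\fr{f}_{\z}]\subset\fr{f}_{\z}$ so that $g$ restricts to $x_{\z}B$ on the relevant brackets; the reduction of $\tr_{\fr{g}}(\ad(Z)\ad(W))$ to the partial trace over $\fr{f}=\fr{f}_1\oplus\fr{f}_2\oplus\fr{f}_3$ uses that $\fr{f}_0=Z(\fr{k})$ annihilates $\fr{h}\oplus\fr{f}_0=\fr{k}$ under $\ad$; and the sign bookkeeping with $B=-B_{\fr{g}}$ and the $B$-skew-symmetry of $\ad(Z)$ is consistent, yielding $-\tr_{\fr{g}}(\ad(Z)\ad(W))=-B_{\fr{g}}(Z,W)=B(Z,W)$ as claimed.
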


The non-zero $B$-structure constants of $M_{\ell, n, m}$ computed in Proposition \ref{ijk} do  not   provide all the  sufficient data for the computation of   $\Ric^{g}$.  Indeed, the invariant metric $g$ except of $B$, depends also on the inner product $( \ , \ )_{0}$ given in (\ref{prod}). Thus, we need     exactly the structure constants related with $g=\langle \ , \ \rangle$. For their computation  is useful to denote the  decomposition of $\fr{m}$ by
 \[
\fr{m}=\fr{f}_{1}\oplus\fr{f}_2\oplus\fr{f}_3\oplus\hat{\fr{f}}_4\oplus\hat{\fr{f}}_5=\fr{m}_1\oplus\fr{m}_2\oplus\fr{m}_3\oplus\fr{m}_4\oplus\fr{m}_5
\]
with $\fr{f}_{1}=\fr{m}_{1}$, $\fr{f}_{2}=\fr{m}_{2}$, $\fr{f}_{3}=\fr{m}_{3}$, $\hat{\fr{f}}_{4}=\fr{m}_{4}$, and $\hat{\fr{f}}_{5}=\fr{m}_{5}$. Moreover, with aim to maintain   the previous notation of the $g$-orthonormal bases  of $\fr{f}_1, \fr{f}_2, \fr{f}_3$, let us   denote by 
$\{X_{\al}^{\ii}\}$ the  $g$-orthonormal bases of $\fr{m}_{\ii}$,  with $\al=1, \ldots, \dim_{\R}\fr{m}_{\ii}$, for any $\ii=1,\ldots, 5$, i.e.
\[
X_{\al}^{\ii}=\frac{1}{\sqrt{x_{i}}}\tilde{X}_{\al}^{\ii},\quad \ii=1, 2, 3,\quad X_{1}^{4}=U_4=\frac{1}{\sqrt{v_4}}V_4, \quad X_{1}^{5}=U_5=\frac{1}{\sqrt{v_5}}V_5\,.
\]
  In these terms, the    {\it $\langle \ , \ \rangle$-structure constants} are defined by 
 \[
\bigg\lbrace
\begin{matrix}
\kk\\
\ii \ \jj
\end{matrix}
\bigg\rbrace := \sum_{\al, \beta, \gamma} \langle [\tilde{X}_{\al}^{\ii}, \tilde{X}_{\beta}^{\jj}]\, ,\, 
\tilde{X}_{\gamma}^{\kk}\rangle^2  \,,
\]
with $1\leq \ii, \jj, \kk\leq 5$.  
We mention that 
$
\bigg\lbrace
\begin{matrix}
\kk\\
\ii\ \jj
\end{matrix}
\bigg\rbrace = 
\bigg\lbrace
\begin{matrix}
\kk\\
\jj\ \ii
\end{matrix}
\bigg\rbrace
$, but due to the relation (\ref{nonnatred}) it follows that
$ 
\bigg\lbrace
\begin{matrix}
\kk\\
\ii\ \jj
\end{matrix}
\bigg\rbrace$ is not always equal to
$ 
\bigg\lbrace
\begin{matrix}
\jj\\
\ii\ \kk
\end{matrix}
\bigg\rbrace.
$
However, in view of (\ref{dec2}) we have that $
\bigg\lbrace
\begin{matrix}
3\\
1 \ 2
\end{matrix}
\bigg\rbrace = 
\displaystyle{3\brack 1 \ 2}$. 
 On the other hand, the triples
\begin{equation}\label{numbc}
\bigg\lbrace
\begin{matrix}
4\\
1 \ 1
\end{matrix}
\bigg\rbrace, \ \ 
\bigg\lbrace
\begin{matrix}
4\\
2 \   2
\end{matrix}
\bigg\rbrace, \ \ 
\bigg\lbrace
\begin{matrix}
4\\
3 \ 3
\end{matrix}
\bigg\rbrace, \ \ 
\bigg\lbrace
\begin{matrix}
5\\
1 \ 1
\end{matrix}
\bigg\rbrace, \ \ 
\bigg\lbrace
\begin{matrix}
5\\
2 \ 2
\end{matrix}
\bigg\rbrace, \ \ 
\bigg\lbrace
\begin{matrix}
5\\
3 \ 3
\end{matrix}
\bigg\rbrace
\end{equation}
need to be computed.  Now,  due to Proposition \ref{parametrize} and with aim to simplify the exposition, we may adopt the values  $a=d=1$, $b=0$ and $c=\gamma\in\R$, and  replace them in our formulas.  
\begin{prop}\label{mainijk} 
  The quantities appearing in  $(\ref{numbc})$ attain the following values:
\begin{equation}
\begin{tabular}{l l}
$\displaystyle\bigg\lbrace
\begin{matrix}
4\\
1 \ 1
\end{matrix}
\bigg\rbrace = 0$,  
\quad & $\displaystyle\bigg\lbrace
\begin{matrix}
5\\
1  \ 1
\end{matrix}
\bigg\rbrace 
=\frac{(\ell + m)}{N}\,, $ \\
$\displaystyle\bigg\lbrace
\begin{matrix}
4\\
2 \ 2
\end{matrix}
\bigg\rbrace = \frac{\ell}{\ell + m} $, 
\quad & $\displaystyle\bigg\lbrace
\begin{matrix}
5\\
2 \  2
\end{matrix}
\bigg\rbrace = \frac{c^{2}\ell}{\ell + m} + \frac{mn}{N(\ell + m)} + \frac{2c\sqrt{\ell m n}}{(\ell + m)\sqrt{N}}\,,$ \\
$\displaystyle\bigg\lbrace
\begin{matrix}
4\\
 3 \ 3 
\end{matrix}
\bigg\rbrace = \frac{m}{\ell + m} $, 
\quad & $\displaystyle\bigg\lbrace 
\begin{matrix}
5\\
3 \  3
\end{matrix}
\bigg\rbrace = \frac{c^{2}m}{\ell + m} + \frac{\ell n}{N(\ell + m)} - \frac{2c\sqrt{\ell m n}}{(\ell + m)\sqrt{N}}$\,.
\end{tabular}
\end{equation}
\end{prop}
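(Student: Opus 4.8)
The plan is to express each of the six numbers through the $B$-structure constants already evaluated in Proposition~\ref{ijk}, plus a single genuinely new ``cross term''. The bridge is Proposition~\ref{prop1}, which is precisely the device for passing from the \emph{tilted} one-dimensional modules $\hat{\fr f}_4,\hat{\fr f}_5$ (these carry the shear parameter $c=\gamma$) to the \emph{untilted} lines $\fr f_4=\Span\{\tilde{Z}_4\}$ and $\fr f_5=\Span\{\tilde{Z}_5\}$, for which Proposition~\ref{ijk} already supplies $\displaystyle{4\brack{\z\ \z}}$ and $\displaystyle{5\brack{\z\ \z}}$.

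First I would fix the normalisation $a=d=1$, $b=0$, $c=\gamma\in\R$ of Proposition~\ref{parametrize} and feed it into Proposition~\ref{prop1}. For $B$-orthonormal vectors $\tilde{X}_{\al}^{\z},\tilde{X}_{\beta}^{\z}\in\fr f_\z$ ($\z=1,2,3$) the bracket lies in $\fr f_0$ by Lemma~\ref{brackets}, and Proposition~\ref{prop1} expresses its components along $U_4,U_5$ through the $B$-projections along $\tilde{Z}_4,\tilde{Z}_5$; after cancelling the scaling factors $v_4,v_5$ and $x_\z$ against those hidden in the definition of the $\langle\ ,\ \rangle$-structure constants, this gives
\[
\bigg\lbrace\begin{matrix}4\\ \z\ \z\end{matrix}\bigg\rbrace=\sum_{\al,\beta}B\big([\tilde{X}_{\al}^{\z},\tilde{X}_{\beta}^{\z}],\tilde{Z}_4\big)^2,\qquad
\bigg\lbrace\begin{matrix}5\\ \z\ \z\end{matrix}\bigg\rbrace=\sum_{\al,\beta}\Big(\gamma\, B\big([\tilde{X}_{\al}^{\z},\tilde{X}_{\beta}^{\z}],\tilde{Z}_4\big)+B\big([\tilde{X}_{\al}^{\z},\tilde{X}_{\beta}^{\z}],\tilde{Z}_5\big)\Big)^2.
\]
By the total symmetry of the $B$-structure constants one has $\sum_{\al,\beta}B([\tilde{X}_{\al}^{\z},\tilde{X}_{\beta}^{\z}],\tilde{Z}_4)^2=\displaystyle{4\brack{\z\ \z}}$ and $\sum_{\al,\beta}B([\tilde{X}_{\al}^{\z},\tilde{X}_{\beta}^{\z}],\tilde{Z}_5)^2=\displaystyle{5\brack{\z\ \z}}$, which settles the left-hand column immediately. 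Expanding the square in the second identity yields $\left\{\begin{smallmatrix}5\\ \z\ \z\end{smallmatrix}\right\}=\gamma^2\,\displaystyle{4\brack{\z\ \z}}+2\gamma\,C_\z+\displaystyle{5\brack{\z\ \z}}$, where $C_\z:=\sum_{\al,\beta}B([\tilde{X}_{\al}^{\z},\tilde{X}_{\beta}^{\z}],\tilde{Z}_4)\,B([\tilde{X}_{\al}^{\z},\tilde{X}_{\beta}^{\z}],\tilde{Z}_5)$ is the only new quantity.

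It remains to evaluate $C_\z$, and here I would use the Weyl bases $\mathscr D_\z=\{\tilde A_{ij},\tilde B_{ij}\}$ of Lemma~\ref{lemma1} together with the $\Ad$-invariance of $B$, writing $B([\tilde A_{ij},\tilde B_{ij}],\tilde{Z}_k)=B(\tilde B_{ij},[\tilde{Z}_k,\tilde A_{ij}])$. Lemma~\ref{lemma1} shows that $\ad_{\tilde{Z}_4}$ and $\ad_{\tilde{Z}_5}$ each rotate every plane $\Span\{\tilde A_{ij},\tilde B_{ij}\}$ into itself, so all cross-pair contributions vanish and the double sum collapses to $C_\z=d_\z\,\kappa^{(\z)}_4\kappa^{(\z)}_5$, where $\kappa^{(\z)}_4,\kappa^{(\z)}_5$ are the scalar coefficients of $[\tilde{Z}_4,\tilde A_{ij}]$ and $[\tilde{Z}_5,\tilde A_{ij}]$ read off from Lemma~\ref{lemma1}. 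Substituting $c_4,c_5$ from (\ref{ci}) then finishes the computation: for $\z=1$ one has $\kappa^{(1)}_4=0$, so $\displaystyle{4\brack{1\ 1}}=0$ and $C_1=0$, whence $\left\{\begin{smallmatrix}5\\ 1\ 1\end{smallmatrix}\right\}=\displaystyle{5\brack{1\ 1}}=(\ell+m)/N$ with no $c$-dependence; for $\z=2,3$ the opposite signs of the $\tilde{Z}_5$-coefficients in Lemma~\ref{lemma1} give $C_2=-C_3=\sqrt{\ell m n}/((\ell+m)\sqrt N)$, which are exactly the mixed terms $\pm\,2\gamma\sqrt{\ell m n}/((\ell+m)\sqrt N)$ in the statement. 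Combining these with the values of $\displaystyle{4\brack{\z\ \z}}$ and $\displaystyle{5\brack{\z\ \z}}$ from Proposition~\ref{ijk} produces all six formulas.

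The main obstacle is the bookkeeping of the third step: one must verify that $\ad_{\tilde{Z}_4},\ad_{\tilde{Z}_5}$ really leave each two-dimensional block $\Span\{\tilde A_{ij},\tilde B_{ij}\}$ invariant (so that only paired indices survive in $C_\z$), track the multiplicity $d_\z$ correctly, and get the relative sign between $\fr f_2$ and $\fr f_3$ right, since it is exactly this sign that distinguishes $\left\{\begin{smallmatrix}5\\ 2\ 2\end{smallmatrix}\right\}$ from $\left\{\begin{smallmatrix}5\\ 3\ 3\end{smallmatrix}\right\}$.
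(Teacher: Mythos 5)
Your proposal is correct and follows essentially the same route as the paper: both pass through Proposition~\ref{parametrize} and Proposition~\ref{prop1} to identify $\bigl\{\begin{smallmatrix}4\\ \z\ \z\end{smallmatrix}\bigr\}$ with $\displaystyle{4\brack{\z\ \z}}$, expand the square for $\bigl\{\begin{smallmatrix}5\\ \z\ \z\end{smallmatrix}\bigr\}$ into $c^{2}\displaystyle{4\brack{\z\ \z}}+\displaystyle{5\brack{\z\ \z}}+2c\,C_{\z}$, and evaluate the cross term (the paper's $A$, your $C_{\z}$) via the bi-invariance of $B$ and the Weyl bases of Lemma~\ref{lemma1}. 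Your observation that only the paired $\{\tilde A_{ij},\tilde B_{ij}\}$ blocks survive, giving $C_{\z}=d_{\z}\kappa_{4}^{(\z)}\kappa_{5}^{(\z)}$ with $C_{1}=0$ and $C_{2}=-C_{3}$, is exactly the computation carried out in the paper's proof.
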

\begin{proof}
Since $a=1$ and $b=0$, by Proposition \ref{prop1} it is easy to see that the structure constants $\bigg\lbrace
\begin{matrix}
4\\
\ii \ \ii
\end{matrix}
\bigg\rbrace$ are equal to $\displaystyle{4\brack i \ i}$ for any $\ii=i=1, 2, 3$.  Indeed, based on the  the relation $( \ , \ )_{0}|_{\hat{\fr{f}}_4}=v_4 ( \ , \ )|_{\hat{\frak f}_4}$, we  compute
\begin{eqnarray*}
\bigg\lbrace
\begin{matrix}
4\\
\ii \ \ii
\end{matrix}
\bigg\rbrace &=& \sum_{j, l}([\tilde{X}_{j}^{\ii}, \tilde{X}_{l}^{\ii}], V_4)^2 =\sum_{j, l}\frac{1}{v_4^2}([\tilde{X}_{j}^{\ii}, \tilde{X}_{l}^{\ii}], V_4)^2_{0}\\
&=&\sum_{j, l}\frac{1}{v_4^2}([\tilde{X}_{j}^{\ii}, \tilde{X}_{l}^{\ii}], \sqrt{v_4}U_4)^2_{0}=\sum_{j, l}\frac{1}{v_4}([\tilde{X}_{j}^{\ii}, \tilde{X}_{l}^{\ii}], U_4)^2_{0}\\
&\overset{\text{Prop.} \ \ref{prop1}}{=}& \sum_{j, l} \left(aB([\tilde{X}_{j}^{\ii}, \tilde{X}_{l}^{\ii}]\, ,\, \tilde{Z}_4)\right)^{2}  \overset{a=1}{=} \sum_{j, l} B([\tilde{X}_{j}^{\ii}, \tilde{X}_{l}^{\ii}], \tilde{Z}_4)^{2}=\displaystyle{4\brack \ii \  \ii}\,. \\
\end{eqnarray*}
Let us now pass to the triples related with the component $\hat{\fr{f}}_{5}$ and  compute for example the unknown $\displaystyle\bigg\lbrace
\begin{matrix}
5\\
2  \ 2
\end{matrix}
\bigg\rbrace$ (and similarly are treated the other two cases).
By definition,  
\begin{eqnarray}
\bigg\lbrace
\begin{matrix}
5\\
2 \ 2
\end{matrix}\label{inijk}
\bigg\rbrace &=& \sum _{j,l} ([\tilde{X}_{j}^{2}, \tilde{X}_{l}^{2}],  V_{5})^{2} =\sum_{j, l}\frac{1}{v_5}([\tilde{X}_{j}^{2}, X_{l}^2], U_5)^2_{0}\nonumber\\
&\overset{\text{Prop.} \ \ref{prop1}}{=}& \sum \left(cB([\tilde{X}_{j}^{2}, \tilde{X}_{l}^{2}], \tilde{Z}_4)  + dB([\tilde{X}_{j}^{2}, \tilde{X}_{l}^{2}],  \tilde{Z}_5)\right)^{2} \nonumber\\
&=& \sum \left(c^{2}B([\tilde{X}_{j}^{2}, \tilde{X}_{l}^{2}], \tilde{Z}_4)^{2} + d^{2}B([\tilde{X}_{j}^{2}, \tilde{X}_{l}^{2}], \tilde{Z}_5)^{2}  
+ 2cdB([\tilde{X}_{j}^{2}, \tilde{X}_{l}^{2}], \tilde{Z}_4)B([\tilde{X}_{j}^{2}, \tilde{X}_{l}^{2}], \tilde{Z}_5) \right)\nonumber \\
&=&c^{2}\displaystyle{4\brack 2 \  2}+d^2\displaystyle{5\brack 2 \  2}+2cd A\,,
\end{eqnarray}
where by using   the bi-invariance of the Killing form we obtain
\[
A:=\sum_{j, l}B([\tilde{X}_{j}^{2}, \tilde{X}_{l}^{2}], \tilde{Z}_4)B([\tilde{X}_{j}^{2}, \tilde{X}_{l}^{2}], \tilde{Z}_5)\\
=\sum_{j, l}B(\tilde{X}_{j}^{2}, [\tilde{Z}_{4}, \tilde{X}_{l}^{2}])B(\tilde{X}_{j}^{2},  [\tilde{Z}_{5}, \tilde{X}_{l}^{2}])\,.
\]
  To compute A, and since the structure constants are independent of the adapted orthonormal bases, it is convenient to  change $\{\tilde{X}_{l}^{2} : 1\leq l\leq 2\ell n \}$ with the basis $\{\tilde{A}_{ij}, \tilde{B}_{ij} : i=\ell+m+1, \dots ,N; j=1,\dots ,\ell\}$ of Lemma \ref{lemma1}, and moreover set $N_1:=\ell+m+1$. Then,  
\begin{eqnarray*}
A&=&\sum_{\substack{N_1\le i,k\le N\\
     1\le j,l\le\ell}}\Big\{B(\tilde{A}_{ij}, [\tilde{Z}_4, \tilde{A}_{kl}])B(\tilde{A}_{ij}, [\tilde{Z}_5, \tilde{A}_{kl}])+B(\tilde{B}_{ij}, [\tilde{Z}_4, \tilde{B}_{kl}])B(\tilde{B}_{ij}, [\tilde{Z}_5, \tilde{B}_{kl}])\Big\}\\
     &&+\sum_{\substack{N_1\le i,k\le N\\
     1\le j,l\le\ell}}\Big\{B(\tilde{A}_{ij}, [\tilde{Z}_4, \tilde{B}_{kl}])B(\tilde{A}_{ij}, [\tilde{Z}_5, \tilde{B}_{kl}])+B(\tilde{B}_{ij}, [\tilde{Z}_4, \tilde{A}_{kl}])B(\tilde{B}_{ij}, [\tilde{Z}_5, \tilde{A}_{kl}])\Big\}
\end{eqnarray*}
Due to the stated  Lie brackets in  Lemma \ref{lemma1} and in combination with the relation $B(\tilde{A}_{ij}, \tilde{B}_{kl})=0$ we see that the first line vanishes and hence
\begin{eqnarray*}
A&=&\sum_{\substack{N_1\le i,k\le N\\
     1\le j,l\le\ell}}\Big\{B(\tilde{A}_{ij}, [\tilde{Z}_4, \tilde{B}_{kl}])B(\tilde{A}_{ij}, [\tilde{Z}_5, \tilde{B}_{kl}])+B(\tilde{B}_{ij}, [\tilde{Z}_4, \tilde{A}_{kl}])B(\tilde{B}_{ij}, [\tilde{Z}_5, \tilde{A}_{kl}])\Big\}\\
     &=&\frac{c_4c_5N}{\ell n(\ell+m)}\sum_{\substack{N_1\le i,k\le N\\
     1\le j,l\le\ell}}\Big(B(\tilde{A}_{ij}, \tilde{A}_{kl})B(\tilde{A}_{ij}, \tilde{A}_{kl})+B(\tilde{B}_{ij}, \tilde{B}_{kl})B(\tilde{B}_{ij}, \tilde{B}_{kl})\Big)\\
     &=&\frac{\sqrt{\ell m n}}{2 n\ell(\ell+m)\sqrt{N}}\cdot 2n\ell=\frac{\sqrt{\ell m n}}{(\ell+m)\sqrt{N}}\,,
     \end{eqnarray*}
where we  substitute the values $c_4, c_5$ by (\ref{ci}). Our assertion now follows by relation (\ref{inijk}), in combination with the results of Proposition \ref{ijk}. \end{proof}

\begin{prop}\label{ricci1}  
Consider the C-space  $(M_{\ell, m, m}=G/H, g)$,  where    $g$ is the $G$-invariant metric defined by $(\ref{ggg})$. Then, 
	the components of the Ricci tensor $\Ric^{g}$  on $\fr{f}_0$ are given by
\begin{eqnarray}
\Ric_4^g &=& \frac{v_4}{4}\left(\frac{1}{{x_{2}}^{2}}\bigg\lbrace
\begin{matrix}
4\\
2  \ 2
\end{matrix}
\bigg\rbrace + \frac{1}{{x_{3}}^{2}}\bigg\lbrace
\begin{matrix}
4\\
3 \ 3
\end{matrix}
\bigg\rbrace \right)\,,\label{r4}\\ 
\Ric_5^g &=& \frac{v_5}{4}\left(\frac{1}{{x_{1}}^{2}}\bigg\lbrace
\begin{matrix}
5\\
1 \ 1
\end{matrix}
\bigg\rbrace + \frac{1}{{x_{2}}^{2}}\bigg\lbrace
\begin{matrix}
5\\
2 \ 2
\end{matrix}
\bigg\rbrace + \frac{1}{{x_{3}}^{2}}\bigg\lbrace
\begin{matrix}
5\\
3 \ 3
\end{matrix}
\bigg\rbrace \right)\,.\label{r5}
\end{eqnarray}
Moreover, the mixed term $\Ric_0^g:=\Ric^{g}(U_4, U_5)$ is expressed by
\begin{eqnarray}
 \Ric_0^g &=& \frac{\sqrt{v_4 v_5}}{4} \Bigg\lbrace\frac{1}{{x_{2}}^{2} (\ell + m)}\left(\ell c + \frac{\sqrt{\ell m n}}{\sqrt{N}}\right)
+ \frac{1}{{x_{3}}^{2} (\ell + m)} \left( mc - \frac{\sqrt{\ell m n}}{\sqrt{N}} \right) \Bigg\rbrace\,.  \label{r0}
\end{eqnarray}
\end{prop}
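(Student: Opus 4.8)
The plan is to feed $X,Y\in\fr f_{0}$ directly into the general Ricci formula (\ref{genric}), using the $g$-orthonormal basis $\mathscr{B}=\mathscr{B}_1^{g}\cup\mathscr{B}_2^{g}\cup\mathscr{B}_3^{g}\cup\{U_4,U_5\}$ of $\fr m$, and first to decide which terms survive. By Lemma \ref{brackets} the only brackets of basis vectors of $\fr m$ that acquire a component in $\fr f_{0}\subset\fr k$ are those of the form $[\fr f_\z,\fr f_\z]$ with $\z\in\{1,2,3\}$ (since $[\fr f_4,\fr f_5]=0$ and $\fr f_4,\fr f_5$ are one-dimensional); hence in the curvature term only same-module pairs $\tilde X_i^{\z},\tilde X_j^{\z}$ contribute. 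In the first term $-\tfrac12\sum_i\langle[X,X_i],[Y,X_i]\rangle$ only the indices $X_i\in\fr f_\z$ $(\z=1,2,3)$ survive, because $[\fr f_0,\fr f_0]=0$. The crucial simplification is Proposition \ref{prop2}: for $Z,W\in\fr f_0$ it gives $\sum_{\z,j}g([Z,X_j^{\z}],[W,X_j^{\z}])=B(Z,W)$, so the first term equals $-\tfrac12 B(X,Y)$ and cancels exactly against the second term $+\tfrac12 B(X,Y)$ (this cancellation holds for the mixed case $X=U_4,Y=U_5$ as well, even though $U_4,U_5$ need not be $B$-orthogonal). What remains is
\[
\Ric^{g}(X,Y)=\frac14\sum_{i,j}\langle[X_i,X_j],X\rangle\,\langle[X_i,X_j],Y\rangle\,,\qquad X,Y\in\fr f_0\,.
\]

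Next I would evaluate this reduced expression. Writing $X_i^{\z}=\tilde X_i^{\z}/\sqrt{x_\z}$ extracts a factor $x_\z^{-2}$ from each same-module pair, so that $\Ric^{g}(X,Y)=\tfrac14\sum_{\z=1}^{3}x_\z^{-2}\sum_{i,j}\langle[\tilde X_i^{\z},\tilde X_j^{\z}],X\rangle\langle[\tilde X_i^{\z},\tilde X_j^{\z}],Y\rangle$. For the diagonal components I take $X=Y=U_4$ and $X=Y=U_5$ in turn. Using the normalisation $U_4=V_4/\sqrt{v_4}$ together with Proposition \ref{prop1} (and $a=d=1$, $b=0$, $c=\gamma$), one checks $\sum_{i,j}\langle[\tilde X_i^{\z},\tilde X_j^{\z}],U_4\rangle^{2}=v_4\,\bigl\{\begin{smallmatrix}4\\ \z\ \z\end{smallmatrix}\bigr\}$ and likewise $\sum_{i,j}\langle[\tilde X_i^{\z},\tilde X_j^{\z}],U_5\rangle^{2}=v_5\,\bigl\{\begin{smallmatrix}5\\ \z\ \z\end{smallmatrix}\bigr\}$, where these are exactly the $\langle\ ,\ \rangle$-structure constants evaluated in Proposition \ref{mainijk}. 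Substituting those values, and using $\bigl\{\begin{smallmatrix}4\\ 1\ 1\end{smallmatrix}\bigr\}=0$, yields (\ref{r4}) and (\ref{r5}).

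Finally I would treat the mixed component $\Ric_0^{g}=\Ric^{g}(U_4,U_5)$, where the two factors differ. Again by Proposition \ref{prop1} I write $\langle[\tilde X_i^{\z},\tilde X_j^{\z}],U_4\rangle=\sqrt{v_4}\,B([\tilde X_i^{\z},\tilde X_j^{\z}],\tilde Z_4)$ and $\langle[\tilde X_i^{\z},\tilde X_j^{\z}],U_5\rangle=\sqrt{v_5}\bigl(\gamma B([\tilde X_i^{\z},\tilde X_j^{\z}],\tilde Z_4)+B([\tilde X_i^{\z},\tilde X_j^{\z}],\tilde Z_5)\bigr)$. Multiplying and summing over $i,j$ splits the contribution of each $\fr f_\z$ into $\sqrt{v_4v_5}\bigl(\gamma\,\displaystyle{4\brack {\z\ \z}}+A_\z\bigr)$, where $A_\z:=\sum_{i,j}B([\tilde X_i^{\z},\tilde X_j^{\z}],\tilde Z_4)\,B([\tilde X_i^{\z},\tilde X_j^{\z}],\tilde Z_5)$ is precisely the cross term $A$ computed in the proof of Proposition \ref{mainijk}. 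The summand for $\z=1$ vanishes, since $\displaystyle{4\brack {1\ 1}}=0$ and $[\tilde Z_4,\fr f_1]=0$ forces $A_1=0$; for $\z=2,3$ the values $\displaystyle{4\brack {2\ 2}}$, $\displaystyle{4\brack {3\ 3}}$ are read off from Proposition \ref{ijk}, while $A_2,A_3$ are computed by passing to the Weyl basis of Lemma \ref{lemma1} and inserting $c_4,c_5$ from (\ref{ci}), giving $A_2=\tfrac{\sqrt{\ell m n}}{(\ell+m)\sqrt N}=-A_3$. Collecting the three contributions with the common factor $\tfrac14 x_\z^{-2}$ produces (\ref{r0}). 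I expect the main obstacle to be exactly this off-diagonal sum $A_\z$: unlike the diagonal sums it is not captured by a single structure constant, and its evaluation needs the explicit brackets of Lemma \ref{lemma1}, where the relative sign between the $\fr f_2$- and $\fr f_3$-actions of $\tilde Z_5$ produces the cancellation $A_3=-A_2$ that is responsible for the two competing terms in (\ref{r0}); the rest is bookkeeping of the normalising factors $\sqrt{v_4},\sqrt{v_5}$ and $1/\sqrt{x_\z}$.
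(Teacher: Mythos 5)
Your proposal is correct and follows essentially the same route as the paper: you apply the general formula (\ref{genric}) to the adapted $g$-orthonormal basis, use Proposition \ref{prop2} to cancel the first two terms against each other, invoke Lemma \ref{brackets} to discard all contributions except same-module pairs, convert via Proposition \ref{prop1} to the structure constants of Propositions \ref{ijk} and \ref{mainijk}, and evaluate the off-diagonal cross sums $A_2=-A_3$ with the Weyl bases of Lemma \ref{lemma1} — exactly as the paper does with its sums $\mathscr{M}_1,\mathscr{M}_2,\mathscr{M}_3$. The decomposition of each mixed-term contribution as $\gamma\,{4\brack \z\ \z}+A_\z$ is just a rewriting of the paper's quantity $\Gamma$, so there is nothing substantively different.
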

\begin{proof}
Let us consider  the $g$-orthonormal basis $\hat{\mathscr{B}}_{0}^{g}=\{U_i = \displaystyle{\frac{1}{\sqrt{v_i}}}V_i : i=4, 5\}$ of $\fr{f}_{0}$ introduced in Section \ref{invoi}.  We will apply   the formula (\ref{genric}) in combination with Proposition \ref{prop2}. This yields  
  \begin{eqnarray*}
  \Ric^{g}_4& =& \Ric^{g}(U_4, U_{4}) = -\frac{1}{2}\sum_{\ii=1, 2, 3} 
\sum_{j=1}^{\dim\fr{m}_{\ii}}g([U_4, X_{j}^{\ii}], [U_4, X_{j}^{\ii}]) - \frac{1}{2}\, g([U_4, U_5],  [U_4, U_5]) \\
&& +\frac{1}{2}\, B(U_4, U_4) 
 +\frac14 \sum_{\ii, \kk } 
\sum_{j=1}^{\dim\fr{m}_{\ii}} \sum_{l=1}^{\dim\fr{m}_{\kk}} g([X_{j}^{\ii}, X_{l}^{\kk}], U_4)\, g([X_{j}^{\ii}, X_{l}^{\kk}], U_{4})\\
&=& -\frac12\, B(U_4, U_4) + \frac12\, B(U_4, U_4) 
 + \frac{1}{4}\sum_{\ii, \kk }
\sum_{j=1}^{\dim\fr{m}_{\ii}}\sum_{l=1}^{\dim\fr{m}_{\kk}} g([X_{j}^{\ii}, X_{l}^{\kk}], U_{4})^{2}\\
&=& \frac{1}{4}\sum_{\ii, \kk } 
\sum_{j=1}^{\dim\fr{m}_{\ii}}\sum_{l=1}^{\dim\fr{m}_{\kk}}  g([X_{j}^{\ii}, X_{l}^{\kk}], U_{4})^{2}\\
& =& \frac{1}{4}\sum_{\ii} \sum_{j, l} g([X_{j}^{\ii}, X_{l}^{\ii}], U_4)^{2} + \frac{1}{4} \sum_{\substack{j, l \\ \ii\neq\kk}}g([X_{j}^{\ii}, X_{l}^{\kk}], U_{4})^{2}\\
&& +\frac{1}{2} \sum_{\ii}\sum_{l} g([U_4, X_{l}^{\ii}], U_4)^{2} + \frac{1}{2} \sum_{\ii}\sum_{l} g([U_5,  X_{l}^{\ii}]\, ,\, U_4)^{2}\,.
\end{eqnarray*}  
Since  $\fr{f}_{1}=\fr{m}_{1}$, $\fr{f}_{2}=\fr{m}_{2}$, $\fr{f}_{3}=\fr{m}_{3}$, $\hat{\fr{f}}_{4}=\fr{m}_{4}$, and $\hat{\fr{f}}_{5}=\fr{m}_{5}$,  by  Lemma \ref{brackets}
it follows that the  last three terms in the above sum are equal to zero.  
For the first term, and since  for any $\ii=1, 2, 3$ we have $[X_{j}^{\ii}, X_{l}^{\ii}]\subset \fr{h}_{\upa}\oplus\fr{h}_{\upb}\oplus\fr{h}_{\upg}\oplus\fr{m}_4\oplus\fr{m}_5$, we obtain that
\begin{eqnarray*}
\Ric^{g}_{4}&=& \frac14 \sum_{\ii}\sum_{j, l}g([X_{j}^{\ii}, X_{l}^{\ii}], U_4)^{2} = \frac14 \sum_{\ii}\sum_{j, l} ([X_{j}^{\ii}, X_{l}^{\ii}], U_4)^{2}_{0}  = \frac{1}{4v_4}\sum_{\ii=1, 2, 3} \sum_{j, l} ([X_{j}^{\ii}, X_{l}^{\ii}], V_4)^{2}_{0}  \\
  &=& \sum_{\ii=1, 2, 3} \frac{v_4}{4{x_{\ii}}^{2}} \sum_{j, l} ([\tilde{X}_{j}^{\ii}, \tilde{X}_{l}^{\ii}], V_{4})^{2}  = \frac{v_4}{4{x_{1}}^{2}}\bigg\lbrace
\begin{matrix}
4\\
1 \ 1
\end{matrix}
\bigg\rbrace + 
\frac{v_4}{4 {x_{2}}^{2}}\bigg\lbrace
\begin{matrix}
4\\
2 \ 2
\end{matrix}
\bigg\rbrace +
\frac{v_4}{4 {x_{3}}^{2}}\bigg\lbrace
\begin{matrix}
4\\
3 \ 3
\end{matrix}
\bigg\rbrace\,.
\end{eqnarray*}
Then, the given expression (\ref{r4}) follows by Proposition \ref{mainijk}.  Note that  the formula  (\ref{r5}) occurs  by an analogous way. 

\smallskip
Let us now examine  the  mixed term $\Ric^{g}_{0}=\Ric^{g}(U_4, U_5)$.   By applying  again   formula (\ref{genric})  and Proposition \ref{prop2}, we obtain 
\begin{eqnarray*}
\Ric^{g}_0 &=& \Ric^{g}(U_4, U_5)  = -\frac{1}{2}\sum_{\ii =1, 2, 3 }\sum_{j =1}^{\dim\fr{m}_{\ii}} g([U_4, X_{j}^{\ii}],  [U_{5}, X_{j}^{\ii}])   
 - \frac{1}{2} g([U_4, U_5],  [U_5, U_5]) + \frac{1}{2} g([U_4, U_4], [U_4, U_5]) \\
 && + \frac12 B(U_4, U_5) 
 +\frac{1}{4}\sum_{\ii, \kk}\sum_{j=1}^{\dim\fr{m}_{\ii}}\sum_{l=1}^{\dim\fr{m}_{\kk}} g([X_{j}^{\ii}, X_{l}^{\kk}],  U_4) 
 g([X_{j}^{\ii}, X_{l}^{\kk}], U_5) \\
&=& -\frac{1}{2} B(U_4, U_5) +\frac{1}{2} B(U_4, U_5)  
+ \frac{1}{4}\sum_{\ii, \kk}\sum_{j=1}^{\dim\fr{m}_{\ii}}\sum_{l=1}^{\dim\fr{m}_{\kk}}g([X_{j}^{\ii}, X_{l}^{\kk}], U_4) g([X_{j}^{\ii}, X_{l}^{\kk}], U_5)\,. 
\end{eqnarray*}
Consequently,
\begin{eqnarray*}
\Ric^{g}_{0}&=& \frac{1}{4}\sum_{\ii, \kk}\sum_{j=1}^{\dim\fr{m}_{\ii}}\sum_{l=1}^{\dim\fr{m}_{\kk}} g([X_{j}^{\ii}, X_{l}^{\kk}], U_4) g([X_{j}^{\ii}, X_{l}^{\kk}], U_5) \\
  &=& \frac{1}{4}  \sum_{\ii}\sum_{j,l=1}^{\dim\fr{m}_{\ii}} g([X_{j}^{\ii}, X_{l}^{\ii}], U_4) g([X_{j}^{\ii}, X_{l}^{\ii}], U_5)  
 + \frac{1}{4} \sum_{1\leq\ii \neq\kk\leq 3}\sum_{j=1}^{\dim\fr{m}_{\ii}}\sum_{l=1}^{\dim\fr{m}_{\kk}} g([X_{j}^{\ii}, X_{l}^{\kk}], U_4) g([X_{j}^{\ii}, X_{l}^{\kk}], U_5) \\
& & + \frac{1}{2} \sum_{\ii} \sum_{l=1}^{\dim\fr{m}_{\ii}} g([U_4, X_{l}^{\ii}], U_4) g([U_4, X_{l}^{\ii}], U_5)  + \frac{1}{2} \sum_{\ii}\sum_{l=1}^{\dim\fr{m}_{\ii}} g([U_5, X_{l}^{\ii}], U_4) g([U_5, X_{l}^{\ii}], U_5)\,.
\end{eqnarray*}
Based on the Lie brackets given in Lemma \ref{brackets},    one deduces that the last three terms in the previous relation  are equal to zero. For the first term, and  since $[X_{j}^{\ii}, X_{l}^{\ii}]\subset\fr{h}_{\upa}\oplus\cdots\oplus\fr{m}_4\oplus\fr{m}_5$, for any $\ii=1, 2, 3$, in view of Proposition \ref{prop1} we obtain that (we substitute $a=d=1$ and $b=0$)
 \begin{eqnarray*}
\Ric^{g}_{0}&=&  \frac{1}{4} \sum_{\ii=1, 2, 3}\sum_{j,l=1}^{\dim\fr{m}_{\ii}}g([X_{j}^{\ii}, X_{l}^{\ii}], U_4) g([X_{j}^{\ii}, X_{l}^{\ii}],  U_5)  = \frac{1}{4}\sum_{\ii=1, 2, 3}\sum_{j,l=1}^{\dim\fr{m}_{\ii}}( [X_{j}^{\ii}, X_{l}^{\ii}], U_4)_{0} ([X_{j}^{\ii}, X_{l}^{\ii}], U_5)_{0}  \\ 
 &=& \frac{\sqrt{v_4 v_5}}{4} \sum_{\ii=1, 2, 3}\sum_{j,l=1}^{\dim\fr{m}_{\ii}}  B([X_{j}^{\ii}, X_{l}^{\ii}], \tilde{Z}_4) \cdot 
 \Big(cB([X_{j}^{\ii}, X_{l}^{\ii}],  \tilde{Z}_4) + B([X_{j}^{\ii}, X_{l}^{\ii}],  \tilde{Z}_5)\Big) \\
&=&  \frac{\sqrt{v_4 v_5}}{4} \sum_{\ii=1, 2, 3}\sum_{j,l=1}^{\dim\fr{m}_{\ii}}  B(X_{j}^{\ii}, [\tilde{Z}_4, X_{l}^{\ii}]) \cdot 
 \Big(cB(X_{j}^{\ii}, [\tilde{Z}_4, X_{l}^{\ii}]) + B(X_{j}^{\ii}, [\tilde{Z}_5,  X_{l}^{\ii}])\Big)\\
 &=& \frac{\sqrt{v_4 v_5}}{4} \sum_{\ii=1, 2, 3}\sum_{j,l=1}^{\dim\fr{m}_{\ii}}\frac{1}{{x_{\ii}}^{2}} \Gamma(\tilde{X}_{j}^{\ii}, \tilde{X}_{l}^{\ii})\,,
 \end{eqnarray*}
 where, for simplicity,  given any two $B$-orthonormal vectors $X, Y\in\fr{m}$ we set   
 \[
 \Gamma(X, Y):=cB(X, [\tilde{Z}_4, Y])^{2}+B(X, [\tilde{Z}_4, Y])B(X, [\tilde{Z}_5, Y])\,.
 \]
To compute the above sum we shall use the $B$-orthonormal bases presented in Lemma \ref{lemma1}.  Consider the first case $\ii=1$, i.e.  $\fr{f}_1=\fr{m}_1$.  According to the notation of Lemma \ref{lemma1}, we have to compute the sum
\[
\mathscr{M}_1=\sum_{\substack{\ell+1\leq i, k\leq \ell+m,  \\ 1\leq j, l\leq \ell}}\Big(\Gamma(\tilde{A}_{ij}, \tilde{A}_{kl})+\Gamma(\tilde{A}_{ij}, \tilde{B}_{kl})+\Gamma(\tilde{B}_{ij}, \tilde{A}_{kl})+\Gamma(\tilde{B}_{ij}, \tilde{B}_{kl})\Big)\,.
\]
However, for  $\tilde{A}_{ij}, \tilde{A}_{kl}, \tilde{B}_{ij}, \tilde{B}_{kl}\in\fr{m}_1$, the Lie brackets given in Lemma \ref{lemma1} ensure that
\[
\Gamma(\tilde{A}_{ij}, \tilde{A}_{kl})=\Gamma(\tilde{A}_{ij}, \tilde{B}_{kl})=\Gamma(\tilde{B}_{ij}, \tilde{A}_{kl})=\Gamma(\tilde{B}_{ij}, \tilde{B}_{kl})=0\,,
\]
and hence $\mathscr{M}_1$ vanishes. Let us pass to the second module $\fr{f}_2=\fr{m}_2$ and assume that $\tilde{A}_{ij}, \tilde{A}_{kl}, \tilde{B}_{ij}, \tilde{B}_{kl}\in\fr{m}_2$. In this case we need to compute the sum 
\[
\mathscr{M}_2:=\sum_{\substack{N_1\leq i, k\leq N,  \\ 1\leq j, l\leq \ell}}\Big(\Gamma(\tilde{A}_{ij}, \tilde{A}_{kl})+\Gamma(\tilde{A}_{ij}, \tilde{B}_{kl})+\Gamma(\tilde{B}_{ij}, \tilde{A}_{kl})+\Gamma(\tilde{B}_{ij}, \tilde{B}_{kl})\Big)\,,
\]
where as before we set $N_1=\ell+m+1$. A direct computation yields that $\Gamma(\tilde{A}_{ij}, \tilde{A}_{kl})=0=\Gamma(\tilde{B}_{ij}, \tilde{B}_{kl})$, and
\begin{eqnarray*}
\Gamma(\tilde{A}_{ij}, \tilde{B}_{kl})&=&  \Big[cc_4^2\frac{N^2}{n^2(\ell+m)^2}+\frac{c_4c_5}{\ell}\frac{N}{n(\ell+m)}\Big]B(\tilde{A}_{ij}, \tilde{A}_{kl})^{2}\,,\\
\Gamma(\tilde{B}_{ij}, \tilde{A}_{kl})&=&\Big[cc_4^2\frac{N^2}{n^2(\ell+m)^2}+\frac{c_4c_5}{\ell}\frac{N}{n(\ell+m)}\Big]B(\tilde{B}_{ij}, \tilde{B}_{kl})^{2}\,.
\end{eqnarray*}
Thus
\[
\mathscr{M}_2=2 \ell n \Big[cc_4^2\frac{N^2}{n^2(\ell+m)^2}+\frac{c_4c_5}{\ell}\frac{N}{n(\ell+m)}\Big]=\frac{1}{(\ell + m)}\left(\ell c + \frac{\sqrt{\ell m n}}{\sqrt{N}}\right)\,,
\]
 where we replace $c_4, c_5$ by (\ref{ci}). Finally,  we have the sum
 \[
\mathscr{M}_3:=\sum_{\substack{N_1\leq i, k\leq N,  \\ \ell+1\leq j, l\leq \ell+m}}\Big(\Gamma(\tilde{A}_{ij}, \tilde{A}_{kl})+\Gamma(\tilde{A}_{ij}, \tilde{B}_{kl})+\Gamma(\tilde{B}_{ij}, \tilde{A}_{kl})+\Gamma(\tilde{B}_{ij}, \tilde{B}_{kl})\Big)\,,
\]
with $\tilde{A}_{ij}, \tilde{A}_{kl}, \tilde{B}_{ij}, \tilde{B}_{kl}\in\fr{m}_3$.  By Lemma \ref{lemma1} we see that $\Gamma(\tilde{A}_{ij}, \tilde{A}_{kl})=0=\Gamma(\tilde{B}_{ij}, \tilde{B}_{kl})$ and
 \begin{eqnarray*}
\Gamma(\tilde{A}_{ij}, \tilde{B}_{kl})&=&  \Big[cc_4^2\frac{N^2}{n^2(\ell+m)^2}-\frac{c_4c_5}{m}\frac{N}{n(\ell+m)}\Big]B(\tilde{A}_{ij}, \tilde{A}_{kl})^{2}\,, \\
\Gamma(\tilde{B}_{ij}, \tilde{A}_{kl})&=&\Big[cc_4^2\frac{N^2}{n^2(\ell+m)^2}-\frac{c_4c_5}{m}\frac{N}{n(\ell+m)}\Big]B(\tilde{B}_{ij}, \tilde{B}_{kl})^{2}\,.
\end{eqnarray*}
Consequently 
 \[
 \mathscr{M}_3=2 m n \Big[cc_4^2\frac{N^2}{n^2(\ell+m)^2}-\frac{c_4c_5}{m}\frac{N}{n(\ell+m)}\Big]=\frac{1}{(\ell + m)}\left(m c - \frac{\sqrt{\ell m n}}{\sqrt{N}}\right)
  \]
 and  this completes the proof.
  \end{proof}


\subsection{The Ricci tensor for the diagonal part}
The Ricci tensor for the diagonal part of the $G$-invariant metric $g=\langle \ , \ \rangle$  on $M=G/H$ given by (\ref{ggg}), can be computed by applying 
 Lemma \ref{ric2}, and replacing  the $B$-structure constants by the $\langle \ , \ \rangle$-structure constants.  Since $\bigg\lbrace
\begin{matrix}
3\\
1 \ 2 
\end{matrix}
\bigg\rbrace=\displaystyle{3\brack 1 2}$, in view of (\ref{numbc}) we obtain that
\begin{prop}\label{ricci2}  
The components of the Ricci tensor for the diagonal part of  the $G$-invariant metric on the C-space $M_{\ell, m, n}=G/H$ defined by {\rm (\ref{ggg})},    are given as follows:
\begin{eqnarray*}
\Ric^{g}_1 &=& \frac1{2x_{1}}-\frac1{2d_1 {x_{1}}^2}
\left(v_4\bigg\lbrace
\begin{matrix}
4\\
1 \ 1
\end{matrix}
\bigg\rbrace 
+v_5\bigg\lbrace
\begin{matrix}
5\\
1 \ 1
\end{matrix}
\bigg\rbrace\right)
+\frac1{2d_1}{3\brack {1 \ 2}}
\left(
\frac{{x_{1}}^{2}-{x_{2}}^{2}-{x_{3}}^{2}}{x_1x_2x_3}\right)\,,
\\ 
\Ric^{g}_2 &=& \frac1{2x_{2}}-\frac1{2d_2 {x_{2}}^2}
\left(v_4\bigg\lbrace
\begin{matrix}
4\\
2 \ 2
\end{matrix}
\bigg\rbrace 
+v_5\bigg\lbrace
\begin{matrix}
5\\
2 \ 2
\end{matrix}
\bigg\rbrace\right)
+\frac1{2d_2}{3\brack {1 \ 2}}
\left(
\frac{{x_{2}}^{2}-{x_{1}}^{2}-{x_{3}}^{2}}{x_1x_2x_3}\right)\,,
\\ 
\Ric^{g}_3 &=& \frac1{2x_{3}}-\frac1{2d_3 {x_{3}}^2}
\left(v_4\bigg\lbrace
\begin{matrix}
4\\
3 \ 3
\end{matrix}
\bigg\rbrace 
+v_5\bigg\lbrace
\begin{matrix}
5\\
3 \ 3
\end{matrix}
\bigg\rbrace\right)
+\frac1{2d_3}{3\brack {1 \ 2}} \left(
\frac{{x_{3}}^{2}-{x_{1}}^{2}-{x_{2}}^{2}}{x_1x_2x_3}\right)\,.
\end{eqnarray*}
\end{prop}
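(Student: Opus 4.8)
The plan is to read off the three diagonal components directly from the master formula of Lemma~\ref{ric2}, applied to the five--term decomposition $\fr m=\fr f_1\oplus\fr f_2\oplus\fr f_3\oplus\hat{\fr f}_4\oplus\hat{\fr f}_5$ of (\ref{dec2}), with the two fibre directions carrying the ``scaling parameters'' $v_4,v_5$ in place of $x_4,x_5$. The essential point is that, because of the equivalence $\hat{\fr f}_4\cong\hat{\fr f}_5$ and the failure of natural reductivity recorded in (\ref{nonnatred}), the structure constants entering the isotropy term must be the $\langle\ ,\ \rangle$--structure constants computed in Proposition~\ref{mainijk}, rather than the bare $B$--structure constants. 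Concretely, I would write for $k=1,2,3$
\[
\Ric^g_k=\frac{1}{2x_k}+\frac{1}{4d_k}\sum_{i,j}\frac{x_k}{x_i x_j}\bigg\lbrace\begin{matrix}k\\ i\ j\end{matrix}\bigg\rbrace-\frac{1}{2d_k}\sum_{i,j}\frac{x_i}{x_k x_j}\bigg\lbrace\begin{matrix}i\\ k\ j\end{matrix}\bigg\rbrace,
\]
with the convention $x_4=v_4$, $x_5=v_5$, and then prune the double sums using the bracket relations of Lemma~\ref{brackets}.

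Next I would sort the surviving triples for $k=1$ into three families. First, the term $\tfrac12 B$ contributes exactly $\tfrac{1}{2x_1}$, since $B(X,X)=1/x_1$ for a $g$--orthonormal $X\in\fr f_1$. Second, since $[\fr f_1,\fr f_1]\subset\fr k$ has non-trivial components only in $\hat{\fr f}_4\oplus\hat{\fr f}_5$, the pairs $(i,j)=(4,1),(5,1)$ in the \emph{negative} sum produce the fibre--valued contribution $-\tfrac{1}{2d_1 x_1^2}\big(v_4\big\lbrace{\scriptstyle 4\atop 1\ 1}\big\rbrace+v_5\big\lbrace{\scriptstyle 5\atop 1\ 1}\big\rbrace\big)$; crucially, no such triple occurs in the positive sum, so these weights survive unpaired. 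Third, the interactions $[\fr f_2,\fr f_3]\subset\fr f_1$, $[\fr f_1,\fr f_3]\subset\fr f_2$, $[\fr f_1,\fr f_2]\subset\fr f_3$, together with the coincidence $\big\lbrace{\scriptstyle 3\atop 1\ 2}\big\rbrace={3\brack {1\ 2}}$ and the full symmetry of the $B$--structure constant, combine into the standard flag--manifold expression
\[
\frac{1}{2d_1}\left(\frac{x_1}{x_2 x_3}-\frac{x_2}{x_1 x_3}-\frac{x_3}{x_1 x_2}\right){3\brack {1\ 2}}=\frac{1}{2d_1}{3\brack {1\ 2}}\,\frac{x_1^2-x_2^2-x_3^2}{x_1 x_2 x_3}.
\]
Finally, the fibre--action brackets $[\hat{\fr f}_4,\fr f_1],[\hat{\fr f}_5,\fr f_1]\subset\fr f_1$ generate triples $\big\lbrace{\scriptstyle 1\atop 1\ 4}\big\rbrace,\big\lbrace{\scriptstyle 1\atop 1\ 5}\big\rbrace$ that appear in \emph{both} sums, and a short check shows the positive contribution $\tfrac{1}{2d_1 v_4}\big\lbrace{\scriptstyle 1\atop 1\ 4}\big\rbrace$ is exactly annihilated by its negative counterpart (and likewise for the index $5$). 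Adding the three surviving families yields the stated formula for $\Ric^g_1$, and the cases $k=2,3$ follow by permuting the roles of $\fr f_1,\fr f_2,\fr f_3$ in Lemma~\ref{brackets}.

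The main obstacle I anticipate is the bookkeeping forced by the non-natural reductivity: one must keep track of the fact that $\big\lbrace{\scriptstyle \kk\atop \ii\ \jj}\big\rbrace$ is symmetric in $\ii,\jj$ but \emph{not} under lowering $\kk$, verify that the torus--on--$\fr f_i$ contributions cancel between the two sums, and confirm that the fibre--valued brackets enter weighted by $v_4,v_5$ through the correct $\langle\ ,\ \rangle$--structure constants of Proposition~\ref{mainijk} rather than the symmetric $B$--constants. Once these cancellations and weightings are handled, the assembly into the final three expressions is routine.
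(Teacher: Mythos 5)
Your proposal is correct and follows essentially the same route as the paper, which proves this proposition in a single line: apply Lemma~\ref{ric2} with the $B$-structure constants replaced by the $\langle\ ,\ \rangle$-structure constants, use $\bigl\lbrace{\scriptstyle 3\atop 1\ 2}\bigr\rbrace={3\brack 1\ 2}$, and prune via the bracket relations of Lemma~\ref{brackets} and the list (\ref{numbc}). Your explicit tracking of the unpaired fibre-valued triples $\bigl\lbrace{\scriptstyle 4\atop k\ k}\bigr\rbrace,\bigl\lbrace{\scriptstyle 5\atop k\ k}\bigr\rbrace$ versus the cancelling torus-action triples $\bigl\lbrace{\scriptstyle k\atop k\ 4}\bigr\rbrace,\bigl\lbrace{\scriptstyle k\atop k\ 5}\bigr\rbrace$ is exactly the bookkeeping the paper leaves implicit, and it checks out.
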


Based on the results of  Propositions \ref{ijk} and \ref{mainijk} respectively, we finally obtain  the following expressions.
\begin{corol}\label{ricci3} 
 The components of the Ricci tensor for the diagonal part of the  $G$-invariant metric on the C-space $M_{\ell, m , n}=G/H$ defined by {\rm (\ref{ggg})},    are explicitly given by
\begin{eqnarray*}
 \Ric_1^{g} &=& \frac1{2x_{1}} + \frac{n}{4N} \left(
\frac{{x_{1}}^{2}-{x_{2}}^{2}-{x_{3}}^{2}}{x_1x_2x_3}\right)
  - \frac{(\ell+m)}{4\ell mN}\frac{v_5}{{x_{1}}^2}\,,  \\
    \Ric_2^g&=& \frac1{2x_{2}} + \frac{m}{4N}\left(
\frac{{x_{2}}^{2}-{x_{1}}^{2}-{x_{3}}^{2}}{x_1x_2x_3}\right) \\
&&- \frac{1}{4 \ell n N(\ell+m)}\frac1{{x_{2}}^2}   \bigg( \ell Nv_4+(c^2\ell N+mn+2c\sqrt{\ell m n}\sqrt{N})v_5\bigg)\,, \\
            \Ric^g_3 &=& \frac1{2x_{3}} + \frac{\ell}{4N}\left(
\frac{{x_{3}}^{2}-{x_{1}}^{2}-{x_{2}}^{2}}{x_1x_2x_3}\right)
      \\ && - \frac{1}{4 m n N(\ell+m)}\frac1{{x_{3}}^2}\bigg(m N v_4+(c^{2}m N+ \ell n -2c\sqrt{\ell m n}\sqrt{N})v_5\bigg)\,. 
     \end{eqnarray*}
\end{corol}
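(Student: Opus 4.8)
The statement is a direct substitution, so the plan is purely computational: I would feed the explicit values of the structure constants into the three general formulas of Proposition~\ref{ricci2}. First I would record the data that enters every component, namely the dimensions $d_1=2\ell m$, $d_2=2\ell n$, $d_3=2mn$, together with the single non-zero base-space constant $\displaystyle{3\brack {1\ 2}}=\tfrac{\ell m n}{N}$ supplied by Proposition~\ref{ijk}.

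For $\Ric^g_1$ the substitution is immediate. Using $\bigg\lbrace\begin{matrix}4\\1\ 1\end{matrix}\bigg\rbrace=0$ and $\bigg\lbrace\begin{matrix}5\\1\ 1\end{matrix}\bigg\rbrace=\tfrac{\ell+m}{N}$ from Proposition~\ref{mainijk}, the $v_4$-contribution drops out and the middle bracket collapses to $-\tfrac{(\ell+m)v_5}{4\ell mN\,x_1^2}$, while the last term simplifies through $\tfrac{1}{2d_1}\cdot\tfrac{\ell m n}{N}=\tfrac{n}{4N}$. This reproduces the quoted expression for $\Ric^g_1$ with no further work.

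For $\Ric^g_2$ and $\Ric^g_3$ the same recipe applies, the only difference being that the nilpotent parameter $c$ now survives through $\bigg\lbrace\begin{matrix}5\\2\ 2\end{matrix}\bigg\rbrace$ and $\bigg\lbrace\begin{matrix}5\\3\ 3\end{matrix}\bigg\rbrace$. Here I would factor $\tfrac{1}{\ell+m}$ out of the parenthesis that carries the $v_4,v_5$ terms and then clear the denominator $N$ inside it, using the identity $\tfrac{2c\sqrt{\ell m n}\sqrt{N}}{N}=\tfrac{2c\sqrt{\ell m n}}{\sqrt{N}}$, so as to recast the bracket into the displayed polynomial $\ell N v_4+(c^2\ell N+mn+2c\sqrt{\ell m n}\sqrt{N})v_5$ for the $\fr f_2$-component, and into its sign-flipped analogue $m N v_4+(c^2 mN+\ell n-2c\sqrt{\ell m n}\sqrt{N})v_5$ for the $\fr f_3$-component. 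As before the coefficients of the last terms reduce cleanly to $\tfrac{m}{4N}$ and $\tfrac{\ell}{4N}$, respectively.

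The computation is routine and the only genuine pitfall is bookkeeping: one must keep the two mutually equivalent summands $\hat{\fr f}_4,\hat{\fr f}_5$ correctly separated, respect the opposite signs of the cross-term in $\bigg\lbrace\begin{matrix}5\\2\ 2\end{matrix}\bigg\rbrace$ and $\bigg\lbrace\begin{matrix}5\\3\ 3\end{matrix}\bigg\rbrace$, and carefully track the $\sqrt{N}$ and $(\ell+m)$ factors through the simplification, since a single misplaced factor would spoil the cancellations that yield the tidy coefficients $n/4N$, $m/4N$, $\ell/4N$. No structural input beyond Propositions~\ref{ricci2}, \ref{ijk} and \ref{mainijk} is required.
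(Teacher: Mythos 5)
Your proposal is correct and coincides with the paper's own (one-line) derivation: Corollary \ref{ricci3} is obtained precisely by substituting the $B$-structure constant $\displaystyle{3\brack{1\ 2}}=\frac{\ell m n}{N}$ from Proposition \ref{ijk} and the $\langle\ ,\ \rangle$-structure constants from Proposition \ref{mainijk} into the formulas of Proposition \ref{ricci2}, with $d_1=2\ell m$, $d_2=2\ell n$, $d_3=2mn$. The simplifications you indicate (the vanishing of the $v_4$-term in $\Ric^g_1$, the reduction of $\frac{1}{2d_k}\cdot\frac{\ell m n}{N}$ to $\frac{n}{4N}$, $\frac{m}{4N}$, $\frac{\ell}{4N}$, and the clearing of $(\ell+m)N$ from the bracketed $v_4,v_5$ terms) all check out.
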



\section{Homogeneous Einstein metrics}\label{section5}

\subsection{Existence of an invariant  Einstein metric}\label{existence section}
Consider the C-space $M_{\ell, m, n}=G/H=\SU(\ell+m+n)/\SU(\ell)\times\SU(m)\times\SU(n)$ and let $g$ be the  $G$-invariant metric, defined by (\ref{ggg}).  The homogeneous Einstein equation  $\Ric^{g}=\lambda g$, where $\lambda>0$ is  a positive real number, 
 is equivalent to the following system of equations
 \begin{equation}\label{EINSTEIN}
\left\{\Ric^{g}_1=\lambda\,,  \quad 
\Ric^{g}_2=\lambda \,, \quad 
\Ric^{g}_3=\lambda \,,\quad
\Ric^{g}_4=\lambda \,, \quad 
\Ric^{g}_5=\lambda\,, \quad 
\Ric^{g}_{0}=0
\right\}.
\end{equation}
This is a system of  6 equations and 7 unknowns, namely $x_1, x_2, x_3, v_4, v_5, c$ and the Einstein constant $\lambda$.   One may eliminate $\lambda$   and reduce this system to a system   5 equations and 6 unknowns.  However,  here we will apply  another approach.  As a first step, by Proposition \ref{ricci1} and  the last equation in (\ref{EINSTEIN}), we may express the parameter $c$ in terms of $x_2, x_3$:
\begin{equation}\label{newww}
c=\frac{\sqrt{lmn}}{\sqrt{N}}\frac{({x_{2}}^{2}-{x_{3}}^{2})}{(\ell{x_{3}}^{2}+m{x_{2}}^{2})}=\sqrt{{3\brack {1 \ 2}}}\frac{({x_{2}}^{2}-{x_{3}}^{2})}{(\ell{x_{3}}^{2}+m{x_{2}}^{2})}\,.
\end{equation}
Thus, by Proposition \ref{mainijk} we can specify explicitly the $\langle \ , \ \rangle$-structure constants, depending on $c$. In particular
 \begin{corol}\label{mainijk1} 
 The values of $\displaystyle\bigg\lbrace
\begin{matrix}
5\\
2 \  2
\end{matrix}
\bigg\rbrace$ and $\displaystyle\bigg\lbrace
\begin{matrix}
5\\
3 \  3
\end{matrix}
\bigg\rbrace$ are given by 
\begin{equation}
\displaystyle\bigg\lbrace
\begin{matrix}
5\\
2 \  2
\end{matrix}
\bigg\rbrace = \frac{m (\ell+ m) n\,{x_2}^4}{N(\ell \,{x_3}^2 + m\,{x_2}^2)^2}  \quad  \mbox{and } \quad 
 \displaystyle\bigg\lbrace 
\begin{matrix}
5\\
3 \  3
\end{matrix}
\bigg\rbrace = \frac{\ell(\ell+ m) n\,{x_3}^4}{N(\ell \,{x_3}^2 + m\,{x_2}^2)^2}\,,
\end{equation}
respectively. 
\end{corol}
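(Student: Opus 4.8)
The plan is to derive both values by substituting the expression for $c$ from (\ref{newww}) directly into the formulas of Proposition \ref{mainijk}, after which the numerators factor as perfect squares and collapse to the stated form. The argument is purely algebraic, so the work lies entirely in organizing the substitution cleanly.

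First I would abbreviate $D := \ell\,{x_3}^2 + m\,{x_2}^2$ and $S := \sqrt{\ell m n}/\sqrt{N}$, so that (\ref{newww}) becomes $c = S({x_2}^2 - {x_3}^2)/D$, giving $c^2 = S^2({x_2}^2-{x_3}^2)^2/D^2$, $cS = S^2({x_2}^2-{x_3}^2)/D$, and $S^2 = \ell m n/N$. Starting from the expression for $\bigg\lbrace \begin{matrix} 5\\ 2 \ 2 \end{matrix} \bigg\rbrace$ in Proposition \ref{mainijk}, I would place its three terms over the common denominator $N(\ell+m)D^2$; the numerator then equals $mn\,[\,\ell({x_2}^2-{x_3}^2) + D\,]^2$. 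The crucial observation is the linear identity $\ell({x_2}^2-{x_3}^2) + D = (\ell+m){x_2}^2$, which reduces the numerator to $mn(\ell+m)^2{x_2}^4$; cancelling one factor $(\ell+m)$ then yields the asserted value.

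The computation for $\bigg\lbrace \begin{matrix} 5\\ 3 \ 3 \end{matrix} \bigg\rbrace$ is entirely parallel, the only change being the sign of the cross term $2cS/(\ell+m)$ appearing in Proposition \ref{mainijk}. Over the same denominator $N(\ell+m)D^2$ the numerator becomes $\ell n\,[\,D - m({x_2}^2-{x_3}^2)\,]^2$, and the companion identity $D - m({x_2}^2-{x_3}^2) = (\ell+m){x_3}^2$ gives $\ell n(\ell+m)^2{x_3}^4$, hence the second formula.

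Since everything reduces to polynomial manipulation once $c$ is inserted, there is no genuine obstacle; the single point demanding care is recognizing the two numerators as perfect squares and verifying the two linear identities, where a sign slip in the cross term (\emph{positive} in the $\fr{f}_2$ case, \emph{negative} in the $\fr{f}_3$ case) is the only plausible source of error. I would verify both identities explicitly before cancelling to guard against this.
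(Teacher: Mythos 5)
Your computation is correct and is exactly what the paper does: the corollary is obtained by substituting the expression (\ref{newww}) for $c$ into the formulas of Proposition \ref{mainijk}, after which the numerators over the common denominator $N(\ell+m)(\ell x_3^2+m x_2^2)^2$ collapse to the perfect squares $mn[(\ell+m)x_2^2]^2$ and $\ell n[(\ell+m)x_3^2]^2$. Both of your linear identities and the sign of the cross term check out, so there is nothing to add.
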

Now we can present   the system corresponding to the homogeneous Einstein equation.
\begin{prop}\label{newricci1}  
The system of the homogeneous Einstein equation on the C-space $(M_{\ell, m, n}, g=\langle \ , \ \rangle)$ is given as follows:
\begin{eqnarray}
\Ric_4^g - \lambda &=& \frac{v_4}{4}\frac{\ell \,{x_3}^2 + m\,{x_2}^2}{(\ell+m){x_2}^2 \,{x_3}^2}- \lambda =0\,, \label{eq_r4}
\\ 
\Ric_5^g -\lambda &=& \frac{v_5}{4}\frac{(\ell+m)(n {x_1}^2+m {x_2}^2+ \ell {x_3}^2)}{N{x_1}^2 (\ell {x_3}^2+m {x_2}^2)}-\lambda =0\,,\label{eq_r5}
\\
 \Ric_1^{g}-\lambda &=& \frac1{2x_{1}} + \frac{n}{4N} \left(
\frac{{x_{1}}^{2}-{x_{2}}^{2}-{x_{3}}^{2}}{x_1x_2x_3}\right) 
  - \frac{(\ell+m)}{4\ell mN}\frac{v_5}{{x_{1}}^2}-\lambda=0\,,  \label{eq_ric1}\\
    \Ric_2^g -\lambda &=& \frac1{2x_{2}} + \frac{m}{4N}\left(
\frac{{x_{2}}^{2}-{x_{1}}^{2}-{x_{3}}^{2}}{x_1x_2x_3}\right) \nonumber \\
& &- \ v_4\frac{1}{4 (\ell+m) n \,{x_{2}}^2} - v_5\frac{m (\ell+ m)\,{x_2}^2}{4 \ell N(\ell \,{x_3}^2 + m\,{x_2}^2)^2}  -\lambda=0\,, \label{eq_ric2}\\
           \Ric^g_3 -\lambda &=& \frac1{2x_{3}} + \frac{\ell}{4N}\left(
\frac{{x_{3}}^{2}-{x_{1}}^{2}-{x_{2}}^{2}}{x_1x_2 x_3}\right) \nonumber\\
& & - \ v_4 \frac{1}{4 (\ell+m) n \,{x_{3}}^2} - v_5\frac{\ell (\ell+m) {x_3}^2}{4 m N(\ell \,{x_3}^2 + m\,{x_2}^2)^2}-\lambda =0\,. \label{eq_ric3}
    \end{eqnarray}
\end{prop}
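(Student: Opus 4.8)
The plan is to assemble the five equations (\ref{eq_r4})--(\ref{eq_ric3}) directly from the Ricci components already computed in Proposition \ref{ricci1} and Corollary \ref{ricci3}, invoking the vanishing constraint $\Ric_0^g=0$ to eliminate the parameter $c$ in favour of the closed-form $\langle\ ,\ \rangle$-structure constants of Corollary \ref{mainijk1}. No new geometry is needed: the content is the substitution of the explicit structure constants followed by algebraic simplification.

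First I would establish the two abelian equations. Substituting the values $\displaystyle{4 \brack {2 \ 2}}=\frac{\ell}{\ell+m}$ and $\displaystyle{4 \brack {3 \ 3}}=\frac{m}{\ell+m}$ of Proposition \ref{mainijk} into the expression (\ref{r4}) for $\Ric_4^g$ and placing the two summands over the common denominator $(\ell+m)\,{x_2}^2\,{x_3}^2$ gives (\ref{eq_r4}) at once. For (\ref{eq_r5}) I would insert $\displaystyle{5 \brack {1 \ 1}}=\frac{\ell+m}{N}$ together with the simplified values of $\bigl\lbrace\begin{smallmatrix}5\\2\ 2\end{smallmatrix}\bigr\rbrace$ and $\bigl\lbrace\begin{smallmatrix}5\\3\ 3\end{smallmatrix}\bigr\rbrace$ from Corollary \ref{mainijk1} into (\ref{r5}); the last two terms share the denominator $N(\ell\,{x_3}^2+m\,{x_2}^2)^2$ and their combined numerator $(\ell+m)n(m\,{x_2}^2+\ell\,{x_3}^2)$ cancels one factor of $\ell\,{x_3}^2+m\,{x_2}^2$, leaving $\frac{(\ell+m)n}{N(\ell\,{x_3}^2+m\,{x_2}^2)}$. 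Combining this with the first term over the denominator $N\,{x_1}^2(\ell\,{x_3}^2+m\,{x_2}^2)$ produces exactly (\ref{eq_r5}).

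Next I would treat the diagonal part, starting from Corollary \ref{ricci3}. Equation (\ref{eq_ric1}) is read off verbatim, since the only metric-dependent coefficient appearing there, $\displaystyle{5 \brack {1 \ 1}}$, is already in closed form and no $v_4$-term occurs because $\displaystyle{4 \brack {1 \ 1}}=0$. For (\ref{eq_ric2}) I would split the bracket in the $\Ric_2^g$ formula of Corollary \ref{ricci3} into its $v_4$- and $v_5$-parts: the $v_4$-part reduces immediately to $\frac{v_4}{4(\ell+m)n\,{x_2}^2}$, while for the $v_5$-part I would observe that its bracketed coefficient $c^2\ell N+mn+2c\sqrt{\ell m n}\sqrt{N}$ equals precisely $N(\ell+m)\bigl\lbrace\begin{smallmatrix}5\\2\ 2\end{smallmatrix}\bigr\rbrace$, so that the $v_5$-contribution collapses to $-\frac{v_5}{4\ell n\,{x_2}^2}\bigl\lbrace\begin{smallmatrix}5\\2\ 2\end{smallmatrix}\bigr\rbrace$; inserting the value of $\bigl\lbrace\begin{smallmatrix}5\\2\ 2\end{smallmatrix}\bigr\rbrace$ from Corollary \ref{mainijk1} yields the stated term. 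Equation (\ref{eq_ric3}) then follows by the entirely symmetric computation, with $\bigl\lbrace\begin{smallmatrix}5\\3\ 3\end{smallmatrix}\bigr\rbrace$ playing the role of $\bigl\lbrace\begin{smallmatrix}5\\2\ 2\end{smallmatrix}\bigr\rbrace$.

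The argument is essentially bookkeeping and presents no conceptual difficulty. The only point requiring care is the recognition that the bracketed $v_5$-coefficients in the $\Ric_2^g$ and $\Ric_3^g$ formulas of Corollary \ref{ricci3} are exactly $N(\ell+m)$ times the $\langle\ ,\ \rangle$-structure constants $\bigl\lbrace\begin{smallmatrix}5\\2\ 2\end{smallmatrix}\bigr\rbrace$ and $\bigl\lbrace\begin{smallmatrix}5\\3\ 3\end{smallmatrix}\bigr\rbrace$; this is what permits the passage to the compact rational forms of Corollary \ref{mainijk1} once $\Ric_0^g=0$ has been used to fix $c$ through (\ref{newww}).
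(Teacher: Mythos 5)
Your proposal is correct and follows the same route the paper intends: the proposition is stated without a separate proof precisely because it is the direct assembly of Proposition \ref{ricci1}, Proposition \ref{mainijk}, Corollary \ref{ricci3} and Corollary \ref{mainijk1} (the latter already encoding the elimination of $c$ via $\Ric^{g}_{0}=0$ and (\ref{newww})). Your key observation that the bracketed $v_5$-coefficients in Corollary \ref{ricci3} equal $N(\ell+m)$ times the constants $\bigl\lbrace\begin{smallmatrix}5\\2\ 2\end{smallmatrix}\bigr\rbrace$ and $\bigl\lbrace\begin{smallmatrix}5\\3\ 3\end{smallmatrix}\bigr\rbrace$ is exactly the step that makes the simplification go through, and all the resulting algebra checks out.
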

As an immediate consequence of the expressions of the above equations, we deduce  that
\begin{corol}\label{linearEIN}
The homogeneous Einstein equation on $(M_{\ell, m, n}, g=\langle \ , \ \rangle)$ is linear on the variables $v_4, v_5$.
\end{corol}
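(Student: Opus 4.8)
The plan is to read the assertion directly off the explicit system recorded in Proposition \ref{newricci1}, regarding $x_1,x_2,x_3$ (together with the fixed integers $\ell,m,n,N$, the eliminated parameter $c$, and the Einstein constant $\lambda$) as coefficients, and $v_4,v_5$ as the two distinguished unknowns. First I would inspect equations (\ref{eq_r4})--(\ref{eq_ric3}) one at a time: (\ref{eq_r4}) contains $v_4$ to the first power and no $v_5$; (\ref{eq_r5}) contains $v_5$ to the first power and no $v_4$; (\ref{eq_ric1}) contains only $v_5$, again linearly; and each of (\ref{eq_ric2}), (\ref{eq_ric3}) contains both $v_4$ and $v_5$, each appearing to the first power, with coefficient a rational function of $x_1,x_2,x_3$ alone. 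Since no monomial of degree $\ge 2$ in $(v_4,v_5)$ and no inverse power $v_4^{-1},v_5^{-1}$ occurs in any equation, the whole system is affine-linear in $(v_4,v_5)$, which is exactly the claim.

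The only point deserving a word of justification is the absence of inverse powers $v_4^{-1},v_5^{-1}$, since these are precisely what one expects generically from the Ricci curvature upon rescaling the fibre. In the formula (\ref{genric}), evaluated on a $g$-unit base vector $X\in\fr{f}_k$, both the first sum $-\tfrac12\sum_i\langle[X,X_i],[X,X_i]\rangle$ and the third sum $\tfrac14\sum_{i,j}\langle[X_i,X_j],X\rangle^2$ produce factors $1/v_4$ or $1/v_5$ whenever a fibre vector $U_4=V_4/\sqrt{v_4}$ or $U_5=V_5/\sqrt{v_5}$ occurs as an index. The decisive observation is that these two contributions cancel: writing $M=\ad(V_5)|_{\fr{f}_k}$ (which is $B$-skew, $M^{T}=-M$, because $\fr{g}$ is compact and $[\fr{f}_5,\fr{f}_k]\subset\fr{f}_k$ by Lemma \ref{brackets}), the fibre part of the first sum equals $-\tfrac{1}{2v_5}\sum_{\tilde Y}M_{\tilde X\tilde Y}^2=-\tfrac{1}{2v_5}(MM^{T})_{\tilde X\tilde X}$, while the corresponding part of the third sum equals $+\tfrac{1}{2v_5}\sum_{\tilde Y}M_{\tilde Y\tilde X}^2=+\tfrac{1}{2v_5}(M^{T}M)_{\tilde X\tilde X}$, and $MM^{T}=M^{T}M=-M^{2}$ forces the two to be opposite. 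The same computation applies verbatim to $v_4$. Hence no inverse power survives, and the surviving fibre contribution to $\Ric^g_k$ comes solely from the first sum with \emph{base} indices, where the bracket $[\fr{f}_k,\fr{f}_k]$ lands in $\fr{f}_0$ and the inner product $(\ ,\ )_0$ inserts $v_4,v_5$ linearly through the structure constants of Proposition \ref{mainijk}; this is what assembles into the single linear term already displayed in Propositions \ref{ricci1}, \ref{ricci2} and Corollary \ref{ricci3}.

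Accordingly, I would treat the genuine content as having been discharged in the derivation of the Ricci components, and present this corollary as the bookkeeping consequence that the outcome of that derivation is linear in the two fibre parameters. The mildly delicate step, and the only place I would pause, is the skew-symmetry cancellation of the $v_4^{-1},v_5^{-1}$ terms sketched above; if one preferred to avoid quoting Proposition \ref{newricci1}, this cancellation (using the bracket relations of Lemma \ref{brackets} and Lemma \ref{lemma1}, together with $[\fr{f}_4,\fr{f}_5]=0$) is exactly what must be verified, after which linearity in $(v_4,v_5)$ is immediate.
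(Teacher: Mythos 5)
Your proof is correct and takes essentially the same route as the paper: Corollary \ref{linearEIN} is obtained by inspection of the explicit system in Proposition \ref{newricci1}, in which $v_4$ and $v_5$ each occur only to the first power with coefficients that are rational functions of $x_1, x_2, x_3$ alone. Your supplementary skew-symmetry argument explaining why no inverse powers $v_4^{-1}, v_5^{-1}$ survive in the Ricci components is a correct and worthwhile clarification of a cancellation the paper leaves implicit in deriving Propositions \ref{ricci1} and \ref{ricci2}, though it is not needed once those propositions are granted.
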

  Due to this linearity, we may use  the  equations (\ref{eq_r4})  and (\ref{eq_r5})  and solve them with respect to $v_4$ and $v_5$, respectively.  We obtain
 \begin{eqnarray}
 v_4&=& 4 \lambda \frac{(\ell+m){x_2}^2 \,{x_3}^2}{\ell \,{x_3}^2 + m\,{x_2}^2}\,, \label{eq_v4}\\
 v_5&= & 4 \lambda \frac{N{x_1}^2 (\ell {x_3}^2+m {x_2}^2)}{(\ell+m)(n {x_1}^2+m {x_2}^2+ \ell {x_3}^2)}\,, \label{eq_v5}
 \end{eqnarray}
so knowing $x_1, x_2, x_3$ and the corresponding Einstein constant, we also know the values of $v_4$ and $v_5$. In the following, this observation will be applied several times.

By substituting (\ref{eq_v4}) and (\ref{eq_v5})  into (\ref{eq_ric1}),   (\ref{eq_ric2}) and  (\ref{eq_ric3}), we result with the equations
 \begin{eqnarray*} 
 \Ric_1^{g}-\lambda &=& \frac1{2x_{1}} + \frac{n}{4N} \left(
\frac{{x_{1}}^{2}-{x_{2}}^{2}-{x_{3}}^{2}}{x_1x_2x_3}\right) 
  -  \frac{\lambda \,(\ell {x_3}^2+m {x_2}^2)}{\ell m (n {x_1}^2+m {x_2}^2+ \ell {x_3}^2)}-\lambda=0\,,\\ 
    \Ric_2^g -\lambda &=& \frac1{2x_{2}} + \frac{m}{4N}\left(
\frac{{x_{2}}^{2}-{x_{1}}^{2}-{x_{3}}^{2}}{x_1x_2x_3}\right) \nonumber \\
&& -  \frac{\lambda \, {x_3}^2}{n(\ell {x_3}^2+m {x_2}^2)} - \frac{\lambda \, m {x_1}^2\,{x_2}^2}{\ell (\ell \,{x_3}^2 + m\,{x_2}^2) (n {x_1}^2+m {x_2}^2+ \ell {x_3}^2)}  -\lambda=0\,,\\ 
          \Ric^g_3 -\lambda &=& \frac1{2x_{3}} + \frac{\ell}{4N}\left(
\frac{{x_{3}}^{2}-{x_{1}}^{2}-{x_{2}}^{2}}{x_1x_2x_3}\right) \nonumber\\
&&  -   \frac{\lambda \, {x_3}^2}{n(\ell {x_3}^2+m {x_2}^2)} - \frac{\lambda \, \ell {x_1}^2\,{x_3}^2}{m (\ell \,{x_3}^2 + m\,{x_2}^2) (n {x_1}^2+m {x_2}^2+ \ell {x_3}^2)}-\lambda =0\,. 
 \end{eqnarray*}
We may solve any of these three equations with respect to $\lambda$, a procedure which gives 
{\small \[
\lambda = \frac{\displaystyle\frac1{2x_{1}} + \frac{n}{4N} \left(
\frac{{x_{1}}^{2}-{x_{2}}^{2}-{x_{3}}^{2}}{x_1x_2x_3}\right) }{\displaystyle 
  1+ \frac{m {x_2}^2+\ell {x_3}^2}{\ell m (n {x_1}^2+m {x_2}^2+ \ell {x_3}^2)}}\,,  \ 
\lambda = \frac{\displaystyle \frac1{2x_{2}} + \frac{m}{4N}\left(
\frac{{x_{2}}^{2}-{x_{1}}^{2}-{x_{3}}^{2}}{x_1x_2x_3}\right) }{\displaystyle 
 1+ \frac{n {x_1}^2+\ell {x_3}^2}{\ell n (n {x_1}^2+m {x_2}^2+ \ell {x_3}^2)}}\,, \
 \lambda = \frac{\displaystyle \frac1{2x_{3}} + \frac{\ell}{4N}\left(
\frac{{x_{3}}^{2}-{x_{1}}^{2}-{x_{2}}^{2}}{x_1x_2x_3}\right)}{\displaystyle 1+ \frac{n {x_1}^2+m{x_2}^2}{m n (n {x_1}^2+m {x_2}^2+ \ell {x_3}^2)}}\,,
\]}
respectively.

Now, consider  the flag manifold  $F_{\ell, m, n}=G/K=\SU(\ell+m+n)/\Ss(\U(\ell)\times\U(m)\times\U(n))$. Any $G$-invariant metric $\check g$ on $F_{\ell, m, n}$ 
is  given by an $\Ad(K)$-invariant inner product  $\langle \langle \ , \ \rangle\rangle$ on $\fr{f}=\fr{f}_1\oplus\fr{f}_2\oplus\fr{f}_3$  of  the form (see also \cite{Kim})
 \begin{equation}\label{flg}
\langle \langle \ , \ \rangle\rangle =x_{1}B|_{\fr{f}_1}+x_2B|_{\fr{f}_2}+x_3B|_{\fr{f}_3}\,,\quad (x_1, x_2, x_3)\in\bb{R}^{3}_{+}\,,
\end{equation}
where  ${\mathbb R}_+^{3}:= \{( x_1, x_2, x_3) \in {\mathbb R}^3\,  |\,   x_1 > 0, x_2 > 0, x_3 > 0 \}$ (and from now on we will identify $\check g$ and $\langle \langle \ , \ \rangle\rangle$).
It is easy to see that  the components $\Ric_i^{\check g}$ ($ i =1, 2, 3$) of the Ricci tensor  on $(F_{\ell, m, n}, {\check g})$ are given by 
 \begin{eqnarray*} 
\Ric_1^{\check g}&=& \displaystyle\frac1{2x_{1}} + \frac{n}{4N} \left(
\frac{{x_{1}}^{2}-{x_{2}}^{2}-{x_{3}}^{2}}{x_1x_2x_3}\right)\,,  \\
\Ric_2^{\check g} &=&  \displaystyle \frac1{2x_{2}} + \frac{m}{4N}\left(
\frac{{x_{2}}^{2}-{x_{1}}^{2}-{x_{3}}^{2}}{x_1x_2x_3}\right)\,, \\
\Ric_3^{\check g}&=&  \displaystyle \frac1{2x_{3}} + \frac{\ell}{4N}\left(
\frac{{x_{3}}^{2}-{x_{1}}^{2}-{x_{2}}^{2}}{x_1x_2x_3}\right)\,.
 \end{eqnarray*} 
Thus,  the rational polynomials of $x_1, x_2, x_3$ appearing in the right hand side of the above expressions of $\lambda$ can    be expressed by
\begin{eqnarray}
 \mathsf{t}_1(x_1, x_2, x_3)&=&\frac{\Ric^{\check g}_1(x_1, x_2, x_3)}{1+\mathsf{q}_{1}(x_1, x_2, x_3)}\,, \label{t_1(x_1, x_2, x_3)}\\
 \mathsf{t}_2(x_1, x_2, x_3)&=&\frac{\Ric^{\check g}_{2}(x_1, x_2, x_3)}{1+\mathsf{q}_{1}(x_1, x_2, x_3)}\,,\label{t_2(x_1, x_2, x_3)} \\
  \mathsf{t}_3(x_1, x_2, x_3)&=&\frac{\Ric^{\check g}_{3}(x_1, x_2, x_3)}{1+\mathsf{q}_{1}(x_1, x_2, x_3)}\,,\label{t_3(x_1, x_2, x_3)}
 \end{eqnarray}
 where $\mathsf{q}_{1}(x_1, x_2, x_3)=\displaystyle\frac{m {x_2}^2+\ell {x_3}^2}{\ell m (n {x_1}^2+m {x_2}^2+ \ell {x_3}^2)}$, $\mathsf{q}_2(x_1, x_2, x_3)=\displaystyle\frac{n {x_1}^2+\ell {x_3}^2}{\ell n (n {x_1}^2+m {x_2}^2+ \ell {x_3}^2)}$ and $\mathsf{q}(x_1,  x_2, x_3)=\displaystyle\frac{n {x_1}^2+m{x_2}^2}{m n (n {x_1}^2+m {x_2}^2+ \ell {x_3}^2)}$, respectively.
We also set
\[
T_i(t, x_1, x_2, x_3):=\frac{\Ric^{\check g}_i(x_1, x_2, x_3)}{1+t\cdot \mathsf{q}_{i}(x_1, x_2, x_3)}\,, \quad i=1, 2, 3\,,
\]
such that $T_i(0, x_1, x_2, x_3)=\Ric^{\check g}_i(x_1, _2, x_3)$ and $T_i(1, x_1, x_2, x_3)=\mathsf{t}_i(x_1, x_2,  x_3)$, for any $i=1, 2, 3$. Since the Ricci tensor of  $\check g$ is not zero and $\mathsf{q}_{i}(x_1, x_2, x_3)>0$ for  any $i$,  we see that 
 \[
 (T_1(t, x_1, x_2, x_3), T_2(t, x_1, x_2, x_3), T_3(t, x_1, x_2, x_3)) \neq (0, 0, 0)\,.
 \] 
Hence, for each $t \in  [0, \, 1]$ we may introduce a map    $F_t : {\mathbb R}_+^3  \to \Ss^{2}$, where $\Ss^2$ is the 2-sphere, given by
 \[
 F_t(x_1, x_2, x_3):= \frac{1}{\sqrt{{T_1}^2+{T_2}^2+{T_3}^2}} (T_1(t, x_1, x_2, x_3), T_2(t, x_1, x_2, x_3), T_3(t, x_1, x_2, x_3))\,.
 \] 
 This is a well defined homotopy, with 
 \begin{eqnarray*}
 F_{0}&=&\frac{1}{\sqrt{(\Ric^{\check g}_1)^{2}+(\Ric^{\check g}_2)^{2}+(\Ric^{\check g}_3)^{2}}}(\Ric^{\check g}_1, \Ric^{\check g}_2, \Ric^{\check g}_3)\,,\\
 F_{1}&=&\frac{1}{\sqrt{(\mathsf{t}_1)^{2}+(\mathsf{t}_2)^{2}+(\mathsf{t}_3)^{2}}}(\mathsf{t}_1, \mathsf{t}_2, \mathsf{t}_3)\,.
 \end{eqnarray*}

\begin{prop} \label{characterEIN}
Put $p_0:=(1/\sqrt{3},  1/\sqrt{3}, 1/\sqrt{3})$. Then \\ 
 1) A point  $({x_1}, {x_2}, {x_3})\in\bb{R}^{3}_{+}$ corresponds to a $G$-invariant Einstein metric on the flag manifold  $F_{\ell, m, n}=G/K$, if and only if $({x_1}, {x_2}, {x_3}) \in {F_0}^{-1}(p_0)$. \\
 2) A point $({x_1}, {x_2}, {x_3})\in\bb{R}^{3}_{+}$ corresponds to a $G$-invariant Einstein metric on the $C$-space $M_{\ell, m, n}=G/H$, if and only if $({x_1}, {x_2}, {x_3}) \in {F_1}^{-1}(p_0)$. 
 \end{prop}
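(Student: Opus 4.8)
The plan is to derive both equivalences directly from the normalization built into the maps $F_0, F_1$, after reducing the Einstein system to three equations via the hierarchical elimination already carried out in the preceding results. The one observation that drives everything is that, since $p_0=\tfrac1{\sqrt3}(1,1,1)$ is the unit vector along $(1,1,1)$ and $F_t(x_1,x_2,x_3)=(T_1,T_2,T_3)/\sqrt{T_1^2+T_2^2+T_3^2}$ (well defined on $\bb{R}^3_+$ because $(T_1,T_2,T_3)\neq(0,0,0)$, as noted above), the equation $F_t(x_1,x_2,x_3)=p_0$ holds \emph{if and only if} $T_1=T_2=T_3$ \emph{and} this common value is strictly positive. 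So in each case I only need to match ``the three coordinate functions are equal and positive'' with ``the corresponding Einstein system holds''.

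For part (1) I would use $T_i(0,x_1,x_2,x_3)=\Ric^{\check g}_i(x_1,x_2,x_3)$, so that $F_0(x_1,x_2,x_3)=p_0$ is equivalent to $\Ric^{\check g}_1=\Ric^{\check g}_2=\Ric^{\check g}_3>0$. This is exactly the homogeneous Einstein condition for the diagonal metric $\check g$ of \eqref{flg} on $F_{\ell,m,n}$: a common positive value is an Einstein metric with $\lambda>0$, and conversely every invariant Einstein metric on the flag manifold satisfies $\Ric^{\check g}_1=\Ric^{\check g}_2=\Ric^{\check g}_3=\lambda$ with $\lambda>0$. The positivity of $\lambda$ here is forced by $F_{\ell,m,n}$ being compact and simply connected, since $\lambda=0$ would imply flatness by \cite{AK75} and $\lambda<0$ is excluded by \cite[Thm.~7.56]{Bes}; this secures that $F_0$ reaches $p_0$ and not $-p_0$, and hence the biconditional in (1).

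For part (2) I would first recall from Proposition~\ref{newricci1} and Corollary~\ref{linearEIN} that the six equations \eqref{EINSTEIN} solve in a triangular fashion once $(x_1,x_2,x_3)$ is fixed: the off–diagonal equation $\Ric^g_0=0$ in \eqref{r0} determines $c$ uniquely via \eqref{newww} (the coefficient $\tfrac{\ell}{x_2^2}+\tfrac{m}{x_3^2}$ of $c$ never vanishes), while \eqref{eq_r4}, \eqref{eq_r5} are linear in $v_4,v_5$ and give \eqref{eq_v4}, \eqref{eq_v5}. Substituting these back into \eqref{eq_ric1}--\eqref{eq_ric3} collapses each diagonal equation to the single linear relation $\Ric^{\check g}_i-\lambda(1+\mathsf{q}_i)=0$, i.e. $\lambda=\mathsf{t}_i(x_1,x_2,x_3)=T_i(1,x_1,x_2,x_3)$. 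Thus the whole system \eqref{EINSTEIN} is solvable at $(x_1,x_2,x_3)$ with admissible $v_4,v_5>0,\ c\in\bb{R}$ precisely when $\mathsf{t}_1=\mathsf{t}_2=\mathsf{t}_3$ with positive common value, which by the opening observation is exactly $F_1(x_1,x_2,x_3)=p_0$.

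Assembling the two directions: if $(x_1,x_2,x_3)$ carries an invariant Einstein metric $g$ of the form \eqref{ggg}, its parameters must obey the reduced relations, so $\mathsf{t}_1=\mathsf{t}_2=\mathsf{t}_3=\lambda$, and positivity is automatic because $v_4>0$ forces $\lambda>0$ through \eqref{eq_v4}; hence $F_1(x_1,x_2,x_3)=p_0$. Conversely, $F_1(x_1,x_2,x_3)=p_0$ already encodes $\lambda:=\mathsf{t}_1=\mathsf{t}_2=\mathsf{t}_3>0$, so defining $c$ by \eqref{newww} and $v_4,v_5$ by \eqref{eq_v4}, \eqref{eq_v5} yields $v_4,v_5>0$ and, upon reversing the elimination, an honest $G$-invariant metric satisfying all of \eqref{EINSTEIN}. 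I expect the only genuinely delicate step to be this sign bookkeeping---ensuring the common Ricci value is \emph{positive} rather than merely that the three components coincide, because $F_t$ meets $p_0$ only along the positive ray; the heavy algebra (the formulas for $c,v_4,v_5$ and the passage to $\mathsf{t}_i$) is already prepared in Propositions~\ref{ricci1}--\ref{newricci1}, so the remainder is assembly.
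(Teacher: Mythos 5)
Your argument is correct and follows essentially the same route as the paper: reduce the Einstein system to $\mathsf{t}_1=\mathsf{t}_2=\mathsf{t}_3$ by solving $\Ric^g_0=0$ for $c$ and the linear equations for $v_4,v_5$, then identify $F_t^{-1}(p_0)$ with this condition, using the non-negativity of the homogeneous Ricci curvature to rule out the antipodal value $-p_0$. Your explicit bookkeeping of the positivity of the common value (and hence of $v_4,v_5$ via \eqref{eq_v4}--\eqref{eq_v5}) is a welcome clarification of a point the paper treats more tersely, but it is not a different proof.
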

\begin{proof}
  Note  that the Ricci tensor of the $G$-invariant metric $\check{g}$  on $F_{\ell, m, n}=G/K$ cannot be negative definite, since $G/K$ is simply connected and compact. This implies that the inequalities $T_i(t, x_1, x_2, x_3) < 0$  are impossible, for any $i=1, 2, 3$ and $t\in[0, 1]$.  In particular,   
\[
(T_1(t, x_1, x_2, x_3),  T_2(t, x_1, x_2, x_3), T_3(t, x_1, x_2, x_3)) \neq  -p_0 = (-1/\sqrt{3},  -1/\sqrt{3}, -1/\sqrt{3})\,.
\]
Now, a point $({x_1}, {x_2}, {x_3})$ belongs to  the level set   $F_0^{-1}(p_{0})$, if and only if
\[
\Ric_1^{\check g}({x_1}, {x_2}, {x_3}) = \Ric_2^{\check g}({x_1}, {x_2}, {x_3})=\Ric_3^{\check g}({x_1}, {x_2}, {x_3})\,.
\]
If a point  $({x_1}, {x_2}, {x_3})\in\bb{R}^{3}_{+}$ corresponds  to a $G$-invariant Einstein metric on the C-space $(M_{\ell, m, n}=G/H,  g=\langle \ , \ \rangle)$, then we obtain $\mathsf{t}_1(x_1, x_2, x_3)=\mathsf{t}_2(x_1, x_2, x_3)=\mathsf{t}_3(x_1, x_2, x_3)$ and thus $F_{1}(x_1, x_2, x_3)=p_{0}$. Conversely, if a point $({x_1}, {x_2}, {x_3})$  belongs to the level set $F^{-1}(p_{0})$, then we have $\mathsf{t}_1(x_1, x_2, x_3)=\mathsf{t}_2(x_1, x_2, x_3)=\mathsf{t}_3(x_1, x_2, x_3)=\lambda$, and by (\ref{eq_v4}) and (\ref{eq_v5}) we see that (\ref{eq_ric1}), (\ref{eq_ric2}) and (\ref{eq_ric3}) hold. Therefore,  the point  $({x_1}, {x_2}, {x_3})\in\bb{R}^{3}_{+}$ corresponds to a $G$-invariant Einstein metric on  $M_{\ell, m, n}$.
\end{proof}

\begin{remark}\label{homogpolyn}
  The rational polynomials $\mathsf{t}_1, \mathsf{t}_2, \mathsf{t}_3$ defined by (\ref{t_1(x_1, x_2, x_3)}), (\ref{t_2(x_1, x_2, x_3)}) and (\ref{t_3(x_1, x_2, x_3)}), respectively, are homogeneous of degree -1. Thus, for some $\mu > 0$ and for any $i=1, 2, 3$ we obtain  the relation 
\[
T_i(t, \mu x_1, \mu x_2, \mu x_3) = \mu^{-1}T_i(t, x_1, x_2, x_3)\,.
\]
  This means that along the procedure of describing  $G$-invariant Einstein metrics on the $C$-space $M_{\ell, m, n}=G/H$ and the flag manifold  $F_{\ell, m, n}=G/K$, one may assume (without loss of generality) the normalization  $x_3 = 1$.
\end{remark}
An application of this method on the base space $F_{\ell, m, n}$   gives that    (see also \cite{Kim} for the Einstein metrics)
\begin{prop}\label{flag_einstein} 
There are four $G$-invariant Einstein metrics on the flag manifold  $F_{\ell, m, n}=G/K = \SU(\ell+m+n)/\Ss(\U(\ell)\times\U(m)\times\U(n))$, given by
{\small
\[
 \left(\frac{\ell+m}{m+n}, \frac{\ell+n}{m+n}, 1\right)\,, \, \left(\frac{\ell+m}{m+n},\frac{\ell+2 m+n}{m+n},1\right)\,, \,  \left(\frac{\ell+m+2 n}{m+n}, \frac{\ell+n}{m+n}, 1\right)\,, \, \left(\frac{l+m}{2 \ell+m+n}, \frac{\ell+n}{2 \ell+m+n}, 1\right).
\]}
Moreover, the Einstein constants $\lambda$ for these Einstein metrics are given by 
 \begin{eqnarray*} \ \ 
\lambda = \frac{m \ell^2+n \ell^2+m^2 \ell+n^2 \ell+4 m n \ell+m n^2+m^2 n}{2 (\ell+m) (\ell+n) (\ell+m+n)}\,,\,\,  \frac{m+n}{2 (\ell+m+n)}\,, \,\,  \frac{2 \ell+m+n}{2 (\ell+m+n)},\, \,  \frac{m+n}{2 (\ell+m+n)}\,,
  \end{eqnarray*} 
  respectively.
\end{prop}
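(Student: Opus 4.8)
The plan is to solve the Einstein system $\Ric_1^{\check g}=\Ric_2^{\check g}=\Ric_3^{\check g}$ directly, using the explicit Ricci components displayed just above the statement together with the homogeneity recorded in Remark~\ref{homogpolyn}. First I would normalize $x_3=1$, so that a $G$-invariant Einstein metric on $F_{\ell,m,n}$ corresponds to a point $(x_1,x_2)\in\bb{R}^2_+$ satisfying the two difference equations obtained by eliminating $\lambda$. Clearing denominators in $\Ric_i^{\check g}=\lambda$ gives the three cleared relations $2Nx_2x_3+n(x_1^2-x_2^2-x_3^2)=\Lambda$ (and its two analogues), with $\Lambda=4N\lambda\,x_1x_2x_3$; taking pairwise differences to cancel $\Lambda$ and setting $x_3=1$ produces two conics $P(x_1,x_2)=0$ and $Q(x_1,x_2)=0$, whose common positive zeros are exactly the Einstein metrics by Proposition~\ref{characterEIN}.

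The structural part, which explains the explicit shape of the solutions, is to treat the three invariant complex structures separately. On a flag manifold whose $T$-root system is of type $A_2$, the K\"ahler condition for the complex structure that singles out one summand as the ``sum'' $T$-root is the linear relation $x_1=x_2+x_3$, or $x_2=x_1+x_3$, or $x_3=x_1+x_2$. Substituting each of these linear relations into the two conics collapses the system to one with a single positive solution, and I expect these three reductions to yield precisely the metrics $\bigl(\tfrac{\ell+m+2n}{m+n},\tfrac{\ell+n}{m+n},1\bigr)$, $\bigl(\tfrac{\ell+m}{m+n},\tfrac{\ell+2m+n}{m+n},1\bigr)$ and $\bigl(\tfrac{\ell+m}{2\ell+m+n},\tfrac{\ell+n}{2\ell+m+n},1\bigr)$, which indeed satisfy $x_1=x_2+x_3$, $x_2=x_1+x_3$ and $x_3=x_1+x_2$ respectively. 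These are the three K\"ahler--Einstein metrics, and checking that each solves $\Ric_1^{\check g}=\Ric_2^{\check g}=\Ric_3^{\check g}$ is a direct substitution (this also cross-checks against the list in \cite{Kim}).

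It then remains to produce the fourth, non-K\"ahler solution and to prove completeness. For the fourth metric I would solve $P=Q=0$ in the complementary regime where none of the three linear K\"ahler relations holds; this should isolate the ``balanced'' point $\bigl(\tfrac{\ell+m}{m+n},\tfrac{\ell+n}{m+n},1\bigr)$, which one verifies is \emph{not} K\"ahler. For completeness I would invoke B\'ezout: $P=0$ and $Q=0$ are two conics in $(x_1,x_2)$ whose degree-two parts do not coincide (for instance one difference has no $x_1x_2$-term while another does), so they share no common component and meet in at most four points; having exhibited four distinct positive solutions, these exhaust the level set ${F_0}^{-1}(p_0)$. Finally, the Einstein constants follow by substituting each of the four points back into $\Ric_3^{\check g}$ and simplifying, producing the four stated values.

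I expect the main obstacle to be the exhaustion step rather than the verification: confirming the four candidate metrics is mechanical substitution, but ruling out further positive solutions requires care — excluding a common component of $P$ and $Q$, checking that the four points are genuinely distinct (they can merge in degenerate cases such as $\ell=m=n$, where the three K\"ahler solutions become mutually isometric), and verifying positivity of all coordinates across the whole parameter range. As a fallback making exhaustion transparent, I would eliminate $x_2$ via a resultant to obtain a quartic in $x_1$; since the three ``K\"ahler roots'' are already known from the linear reductions, dividing them out isolates the fourth root and shows no other solution can occur.
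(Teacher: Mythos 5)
Your proposal is correct in substance, and it takes a genuinely different (and more self-contained) route than the paper: the paper does not actually prove this proposition in detail, but obtains it as ``an application of this method'' (i.e.\ computing the level set $F_0^{-1}(p_0)$ of the homotopy introduced in Section \ref{existence section}) together with a citation of Kimura \cite{Kim}, where these Einstein metrics are classified. Your route --- clearing denominators to two conics $P=Q=0$ in $(x_1,x_2)$, recognizing the three K\"ahler--Einstein solutions via the linear relations $x_i=x_j+x_k$ coming from the $A_2$ $T$-root system, exhibiting the fourth (normal-type) solution $x_i\propto(\text{sum of the two block sizes})$, and bounding the total count by B\'ezout --- is a clean independent verification, and the completeness statement you supply is precisely what Lemma \ref{degree_flag_einstein} tacitly uses when it sums signs over all of $f_0^{-1}((0,0))$. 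I checked your cleared relation $2Nx_2x_3+n(x_1^2-x_2^2-x_3^2)=\Lambda$ and the resulting Einstein constant for the first metric; both agree with the stated values.

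Two small points in the exhaustion step deserve tightening. First, observing that the degree-two parts of $P$ and $Q$ differ only rules out the two conics being proportional; it does not by itself exclude their sharing a common \emph{linear} component (a reducible conic can share a line with another conic whose quadratic part is different). Your fallback via the resultant $\mathrm{Res}_{x_2}(P,Q)$ is the right fix: showing this quartic in $x_1$ is not identically zero excludes a common component, and dividing out the three known roots isolates the fourth. One should also note that B\'ezout counts intersections at infinity; here the leading forms $(m+n)(x_1^2-x_2^2)$ and $(\ell-m)x_1^2+(\ell+m)x_2^2-2Nx_1x_2$ have no common projective zero, so all four intersection points are affine and the four exhibited solutions exhaust them. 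Second, your worry that the four points might merge when $\ell=m=n$ is unfounded: for all positive $\ell,m,n$ the four parameter vectors are pairwise distinct (coincidence of any two forces one of $\ell,m,n$ to vanish); in the symmetric case the three K\"ahler--Einstein metrics become mutually \emph{isometric} but remain distinct points of $\bb{R}^3_+$, which is exactly what the counting argument needs.
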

From now on let us use the notation $D_{+}\subset\R^3$ for the domain  $ \{ ( x_1, x_2, 1) \in {\mathbb R}^3\,  |\,   x_1 > 0, \, x_2 > 0 \}$. 
Consider the rotation matrix  $(\det P=1)$
\[
P := \begin{pmatrix}
1/\sqrt{2} & -1/\sqrt{2} & 0\\
1/\sqrt{6} & 1/\sqrt{6} & -2/\sqrt{6}\\
1/\sqrt{3} & 1/\sqrt{3} & 1/\sqrt{3}
\end{pmatrix}
\]
which maps the point $p_0=(1/\sqrt{3},  1/\sqrt{3}, 1/\sqrt{3})$ to $(0, 0, 1)$. Consider also  the
stereographic projection $\psi : \Ss^2-{(0,0,-1)} \to {\mathbb R}^2$ defined by $\psi(X, Y, Z) = \left(\frac{X}{1+Z}, \frac{Y}{1+Z}\right)$. 
By using $\psi$ we  define a map $f_t^{} : D_{+} \to {\mathbb R}^2$, by
\[
f_t^{}(x_1, x_2, 1):= \psi\circ P \circ F_t(x_1, x_2, 1)\,. 
\]
By Proposition \ref{characterEIN} and the definition of $f_{t}$ it follows that
\begin{corol}
1) A point $ ({x_1}, {x_2}, 1)\in D_{+}$  corresponds to a $G$-invariant Einstein metric  on the flag manifold $F_{\ell, m ,n}=G/K$,  if and only if $f_0^{}({x_1}, {x_2}, 1) =(0, 0)$.  \\
2)  A point $ ({x_1}, {x_2}, 1)\in D_{+}$ corresponds to a $G$-invariant Einstein metric  on the C-space  $M_{\ell, m, n}=G/H$, if and only if  $f_1^{}({x_1}, {x_2}, 1) =(0, 0)$.
 \end{corol}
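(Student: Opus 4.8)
The plan is to deduce the statement from Proposition \ref{characterEIN} by simply unwinding the definition $f_t = \psi\circ P\circ F_t$ and keeping track of which of the maps involved are bijective. First I would record the behaviour of the two auxiliary maps at the relevant points. The rotation $P$ satisfies $P(p_0)=(0,0,1)$ by construction, and being linear with $\det P=1$ it is a bijection of $\Ss^2$; in particular $P(-p_0)=-P(p_0)=(0,0,-1)$ and $P^{-1}(0,0,1)=p_0$. The stereographic projection $\psi$ is a homeomorphism of $\Ss^2-\{(0,0,-1)\}$ onto $\mathbb{R}^2$, hence injective, and from its defining formula $\psi(0,0,1)=(0,0)$, so $(0,0,1)$ is the unique point of the sphere sent to $(0,0)$.

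Second I would check that $f_t$ is genuinely defined on all of $D_{+}$, i.e. that $P\circ F_t(x_1,x_2,1)$ never hits the south pole $(0,0,-1)$. This is the only step requiring care, and I would settle it by reusing the observation already exploited in the proof of Proposition \ref{characterEIN}: since $F_{\ell,m,n}=G/K$ is compact and simply connected, its Ricci tensor is never negative definite, so $(T_1,T_2,T_3)\neq -p_0$ for every $(x_1,x_2,x_3)\in\bb{R}^{3}_{+}$ and every $t\in[0,1]$, whence $F_t(x_1,x_2,1)\neq -p_0$. Applying $P$ gives $P\circ F_t(x_1,x_2,1)\neq P(-p_0)=(0,0,-1)$, which is exactly the domain condition for $\psi$, so $f_t(x_1,x_2,1)$ makes sense.

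With well-definedness in hand, the equivalences chain together without further work: $f_t(x_1,x_2,1)=(0,0)$ iff $P\circ F_t(x_1,x_2,1)=(0,0,1)$ (injectivity of $\psi$ together with $\psi^{-1}(0,0)=(0,0,1)$), iff $F_t(x_1,x_2,1)=P^{-1}(0,0,1)=p_0$ (bijectivity of $P$ with $P(p_0)=(0,0,1)$). By Proposition \ref{characterEIN}, for $t=0$ the condition $F_0(x_1,x_2,1)=p_0$ is equivalent to $(x_1,x_2,1)$ defining a $G$-invariant Einstein metric on the flag manifold $F_{\ell,m,n}$, which is assertion (1); for $t=1$ the condition $F_1(x_1,x_2,1)=p_0$ is equivalent to $(x_1,x_2,1)$ defining a $G$-invariant Einstein metric on the C-space $M_{\ell,m,n}$, which is assertion (2). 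The normalization $x_3=1$ built into $D_{+}$ is harmless by the degree $-1$ homogeneity recorded in Remark \ref{homogpolyn}. I do not expect any real obstacle here beyond the well-definedness verified above; the rest is a formal composition of bijections.
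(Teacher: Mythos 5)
Your argument is correct and is exactly the route the paper takes: the paper states this corollary as an immediate consequence of Proposition \ref{characterEIN} and the definition $f_t=\psi\circ P\circ F_t$, with no further proof given. Your careful verification that $P\circ F_t$ avoids the south pole (via the non-negative-definiteness of the Ricci tensor, already noted inside the proof of Proposition \ref{characterEIN}) just makes explicit the well-definedness the paper leaves implicit.
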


We can now apply mapping degree theory with respect to $f_{t}$, to show existence of Einstein metrics on the C-space $M_{\ell, m, n}=G/H=\SU(\ell+m+n)/\SU(\ell)\times\SU(m)\times\SU(n)$.
But first,  let  us  recall by \cite{Ruiz}   necessary material from  mapping degree theory.

Let $D$ be a bounded open subset in ${\mathbb R}^n$,  $f : {\overline D}  \to {\mathbb R}^n$ a continuous map and $ y \in {\mathbb R}^n - f(\partial D)$. Such a triple ($f, D, y$) is called {\it admissible} and we may  consider a   map  $\dg : \{(f, D, y) :  \text{admissible}\} \to {\mathbb Z}$, 
such that:

(1) {\it Homotopy invariance}: For every bounded open set $D \subset  {\mathbb R}^n $ and all continuous mappings $F: [0, \, 1]\times {\overline D} \to {\mathbb R}^n$ and $\gamma : [0, \, 1] \to {\mathbb R}^n$ satisfying 
$$\gamma(t) \in {\mathbb R}^n - F(\{t\}\times \partial D) \quad \mbox{for} \quad  0 \leq t \leq 1\,,$$ 
the following formula holds:
$$\dg(F(t, \cdot), D, \gamma(t)) = \dg(F(0, \cdot), D, \gamma(0))\,,\quad \mbox{for \ any} \quad  0 \leq t \leq 1 \,.$$

(2) {\it Normality}: For every bounded open set $D \subset  {\mathbb R}^n $ and every point $p \in D$, 
$$\dg( \mbox{Id}_{\overline D}, D, p) = 1\,. $$

(3) {\it Additivity}: For every bounded open set $D \subset  {\mathbb R}^n$,  every pair of two disjoint open sets $D_1$,  $D_2 \subset D$, every continuous mapping $f : {\overline D}  \to {\mathbb R}^n$ , and every point $p \not\in {\mathbb R}^n - f(\overline D -  D_1 \cup D_2)$, 
$$\dg(f, D, p) = \dg(f, D_1, p) + \dg(f, D_2, p)\,.$$

(4) {\it Existence of solutions}: For every bounded open set $D \subset  {\mathbb R}^n$,  every continuous mapping $f : {\overline D}  \to {\mathbb R}^n$ , and every point $p \in {\mathbb R}^n - f(\partial D)$, such that $\dg(f, D, p) \neq 0$, the equation $f(x)= p$ has a solution  in $D$. 

Moreover, let $f : {\overline D}  \to {\mathbb R}^n$ be a smooth mapping,  and let $a \in   {\mathbb R}^n - f(\partial D)$ be a regular value of $f\vert_D$. Then $f^{-1}(a)$ is a finite set (possibly empty), and  we have 
\[
\dg(f, D, a) =
\left\{
\begin{tabular}{lr}
$\sum_{x \in f^{-1}(a)} {\rm sign}_x(f)$\,, &  if  \ $f^{-1}(a)\neq\emptyset$\,,\\
0\,,  &  if \ $f^{-1}(a)=\emptyset$\,,
\end{tabular}\right.
\]
where ${\rm sign}_x(f)$ is the  sign of $f$ at $x$,  which by definition is the sign $\pm  1$ of the Jacobian $\det \left( \frac{\partial  f_i}{\partial x_j}(x)\right)$.

\begin{lemma}\label{degree_flag_einstein} 
The point $(0, 0) \in \mathbb{R}^2$ is a regular value of the mapping $f_0 : D_{+} \to {\mathbb R}^2$. 
The Jacobians of $f_0$ at points  ${f_0}^{-1}((0, 0))$ are given as follows: 
{\small \begin{eqnarray*} \frac{\ell m n (m+n)^3}{6 \sqrt{3} \left(\ell^2 (m+n)+\ell \left(m^2+4 m n+n^2\right)+m n (m+n)\right)^2}\,,&\text{for}&  (x_1,x_2) = \left(\frac{\ell+m}{m+n}, \frac{\ell+n}{m+n}\right)\,,\\
-\frac{m (m+n)^2}{6 \sqrt{3} (\ell+m) (\ell+2 m+n)^2}  \,,&\text{for}& (x_1,x_2) = \left(\frac{\ell+m}{m+n}, \frac{\ell+2 m +n}{m+n}\right)\,,\\
-\frac{\ell (2 \ell+m+n)}{6 \sqrt{3} (\ell+m) (\ell+n)} \,,&\text{for}&  (x_1,x_2) = \left(\frac{\ell+m}{2 \ell+m+n}, \frac{\ell +n}{2 \ell+m+n}\right)\,,\\ 
-\frac{n (m+n)^2}{6 \sqrt{3} (\ell+n) (\ell+m+2 n)^2}  \,,&\text{for}&  (x_1,x_2) = \left(\frac{\ell+m+2 n}{m+n}, \frac{\ell +n}{m+n}\right)\,. 
 \end{eqnarray*}   }
\end{lemma}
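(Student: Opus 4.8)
The plan is to first pin down the level set $f_0^{-1}((0,0))$ and then compute the Jacobian of $f_0$ at each of its points by a chain-rule argument tailored to the north pole of the stereographic chart. Because $\psi\circ P$ is a diffeomorphism (onto its image) carrying $p_0$ to $(0,0)$, Proposition \ref{characterEIN} shows that a point $(x_1,x_2,1)\in D_+$ lies in $f_0^{-1}((0,0))$ precisely when $F_0(x_1,x_2,1)=p_0$, i.e. when it defines a $G$-invariant Einstein metric on the flag manifold $F_{\ell,m,n}=G/K$. Proposition \ref{flag_einstein} classifies these metrics: normalized by $x_3=1$, they are exactly the four points appearing in the statement. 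Hence $f_0^{-1}((0,0))$ consists of those four points, and proving that $(0,0)$ is a regular value amounts to checking that the Jacobian determinant of $f_0$ is nonzero at each of them; the four displayed expressions will be these determinants.

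Next I would linearize. Write $\Phi:=(\Ric^{\check g}_1,\Ric^{\check g}_2,\Ric^{\check g}_3):D_+\to\mathbb{R}^3$, so that $F_0=\Phi/|\Phi|$ and $f_0=\psi\circ P\circ F_0$. At an Einstein point $p$ one has $\Ric^{\check g}_1=\Ric^{\check g}_2=\Ric^{\check g}_3=\lambda$, whence $F_0(p)=p_0$ and $Pp_0=(0,0,1)$; thus $\psi$ is differentiated at the north pole, where a direct computation gives $D\psi|_{(0,0,1)}=\tfrac12\,\Pi$ with $\Pi:\mathbb{R}^3\to\mathbb{R}^2$ the projection onto the first two coordinates. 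Differentiating the normalization $\Phi\mapsto\Phi/|\Phi|$ yields
\[
DF_0(p)=\frac{1}{\lambda\sqrt{3}}\,\bigl(I-p_0p_0^{T}\bigr)\,D\Phi(p).
\]
Since $P(I-p_0p_0^{T})=P-(Pp_0)p_0^{T}$ and $\Pi$ annihilates the third coordinate (so $\Pi(Pp_0)=\Pi(0,0,1)^{T}=0$), we get $\Pi P(I-p_0p_0^{T})=\Pi P$. Combining these facts,
\[
Df_0(p)=\frac{1}{2\lambda\sqrt{3}}\,\Pi P\,D\Phi(p),
\]
where $\Pi P$ is the $2\times3$ matrix formed by the first two rows of the rotation $P$, and $D\Phi(p)$ is the $3\times2$ matrix $(\partial\Ric^{\check g}_i/\partial x_j)$, $j\in\{1,2\}$, evaluated at $p$ with $x_3=1$. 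Consequently $\det Df_0(p)=\tfrac{1}{2\lambda\sqrt{3}}\det\!\bigl(\Pi P\,D\Phi(p)\bigr)$, a single $2\times2$ determinant, and since $\lambda>0$ this vanishes if and only if $\det(\Pi P\,D\Phi)$ does.

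It then remains to substitute the explicit rational expressions for $\Ric^{\check g}_1,\Ric^{\check g}_2,\Ric^{\check g}_3$ recorded just before Proposition \ref{flag_einstein}, differentiate them in $x_1,x_2$, form $\det(\Pi P\,D\Phi)$, and evaluate at each of the four Einstein points using the corresponding Einstein constant $\lambda$ from Proposition \ref{flag_einstein}. The \textbf{main obstacle} is purely computational: differentiating the rational Ricci components and simplifying the resulting determinant at each point is lengthy, and one must carry the algebra carefully so that the prefactor $1/(2\lambda\sqrt{3})$ combines with $\det(\Pi P\,D\Phi)$ to collapse into the compact rational forms displayed. Finally, since $\ell,m,n\in\mathbb{Z}_+$, each of the four resulting expressions is visibly nonzero; therefore $f_0$ is a submersion at every point of $f_0^{-1}((0,0))$, so $(0,0)$ is a regular value, as claimed.
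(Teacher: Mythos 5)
Your proposal follows essentially the same route as the paper: identify $f_0^{-1}((0,0))$ with the four normalized Einstein metrics of Proposition \ref{flag_einstein}, observe that at such a point $h:=P\circ F_0$ equals $(0,0,1)$ so that only the first two components of $Dh$ survive the stereographic projection, and reduce everything to a $2\times 2$ determinant built from $\partial \Ric^{\check g}_i/\partial x_j$. The paper phrases this by differentiating $f_0=(h_1/(1+h_3),\,h_2/(1+h_3))$ directly and extracting the factor $\tfrac14$, while you package the same cancellation through $D\psi|_{(0,0,1)}=\tfrac12\Pi$ and $DF_0(p)=\tfrac{1}{\lambda\sqrt3}(I-p_0p_0^{T})D\Phi(p)$; the two are equivalent, and your version makes the vanishing of the radial direction more transparent.

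One slip to correct: from the (correct) matrix identity $Df_0(p)=\tfrac{1}{2\lambda\sqrt3}\,\Pi P\,D\Phi(p)$ you infer $\det Df_0(p)=\tfrac{1}{2\lambda\sqrt3}\det\bigl(\Pi P\,D\Phi(p)\bigr)$, but for a $2\times 2$ matrix a scalar factor comes out squared, so the prefactor must be $\tfrac{1}{12\lambda^{2}}$ (matching the paper's $\tfrac14$ times the two factors of $\tfrac{1}{\lambda\sqrt3}$ hidden in $\partial h_1,\partial h_2$). This does not affect the regularity conclusion, since the determinant still vanishes if and only if $\det(\Pi P\,D\Phi)$ does, but with the unsquared prefactor the final evaluation would not reproduce the four displayed Jacobian values. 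The remaining work, as you note, is the explicit differentiation and simplification at the four points, which both you and the paper leave to direct computation.
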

\begin{proof} 
Put $h(x_1, x_2) =P \circ F_0(x_1, x_2, 1)$.  Then, a direct computations shows  that $h(x_1, x_2)$ is expressed by
\begin{eqnarray*}
h(x_1, x_2)&=& \frac{1}{\sqrt{\sum_{i=1}^3{\Ric_i^{\check g}}^2}}\left(\frac{1}{\sqrt{2}}(\Ric_1^{\check g}-\Ric_2^{\check g})\,, \frac{1}{\sqrt{6}}(\Ric_1^{\check g}+\Ric_2^{\check g}-2 \Ric_3^{\check g})\,,  \frac{1}{\sqrt{3}}(\Ric_1^{\check g}+\Ric_2^{\check g}+\Ric_3^{\check g})\right)\\
&=&\big(h_{1}(x_1, x_2)\,, h_{2}(x_1, x_2)\,, h_{3}(x_1, x_2)\big)\,.
\end{eqnarray*}
  Moreover, by the definition of $f_{t}$ we have $f_{0}:=\psi\circ P\circ F_0(x_1, x_2, 1)=\psi\circ h(x_1, x_2)$, and hence 
\[
f_0^{}(x_1, x_2, 1)  = \left( \frac{h_1(x_1, x_2)}{1+ h_3(x_1, x_2)}, \frac{h_2(x_1, x_2)}{1+ h_3(x_1, x_2)}\right).
\]
For a point $(x_1, x_2, 1) \in {f_0}^{-1}((0, 0))$,  it  holds $\Ric_1^{\check g}(x_1, x_2, 1) = \Ric_2^{\check g}(x_1, x_2, 1)=\Ric_3^{\check g}(x_1, x_2, 1)$ and $h(x_1, x_2) = (0,  0, 1)$.  Thus, the Jacobian 
{\small
\[
\mathscr{J}:=\frac{\partial}{\partial x_1}\Big(\frac{h_1(x_1, x_2)}{1+h_3(x_1, x_2)}\Big)\frac{\partial}{\partial x_2}\Big(\frac{h_2(x_1, x_2)}{1+h_3(x_1, x_2)}\Big)-\frac{\partial}{\partial x_2}\Big(\frac{h_1(x_1, x_2)}{1+h_3(x_1, x_2)}\Big)\frac{\partial}{\partial x_1}\Big(\frac{h_2(x_1, x_2)}{1+h_3(x_1, x_2)}\Big)
\]}
of $f_{0}$ at the point   $\displaystyle (x_1,x_2) = \left(\frac{\ell+m}{m+n}, \frac{\ell+n}{m+n}\right)$   is given by
{\small
\[
\mathscr{J}=\frac{1}{4} \left(\frac{\partial h_1}{\partial x_1}\frac{\partial h_2}{\partial x_2}-\frac{\partial h_1}{\partial x_2}\frac{\partial h_2}{\partial x_1}\right)
    =\frac{\ell m n (m+n)^3}{6 \sqrt{3} \left(\ell^2 (m+n)+\ell \left(m^2+4 m n+n^2\right)+m n (m+n)\right)^2}\,.
  \]}
In a  similar way we obtain the values of the Jacobian of $f_0$ at the rest points. 
\end{proof}

Now, for $\varepsilon > 0$ and $L > 0$,  we now define a subdomain $D_{\varepsilon, L}$ of $D_+$ by
\[
D_{\varepsilon, L}:= \{ ( x_1, x_2, 1) \in {\mathbb R}^3\,  |\,  \varepsilon <  x_1 <L,  \, \,  \varepsilon <  x_2 < L \}\,.
\]
The boundary  $\partial D_{\varepsilon, L}$  of the domain  $D_{\varepsilon, L}$  has the form
\begin{eqnarray*}
 \partial D_{\varepsilon, L}&=&\{ ( \varepsilon, x_2, 1) \in {\mathbb R}^3\,  |\,  \varepsilon \leq x_2 \leq L \}\cup 
\{ ( L, x_2, 1) \in {\mathbb R}^3\,  |\, \varepsilon \leq  x_2 \leq L \}\cup \\&& \{ ( x_1, \varepsilon, 1) \in {\mathbb R}^3\,  |\, \varepsilon \leq  x_1 \leq L \}\cup \{ ( x_1, L, 1) \in {\mathbb R}^3\,  |\, \varepsilon \leq  x_1 \leq L \}\,.
\end{eqnarray*}
Based on this description of $\partial D_{\varepsilon, L}$, and by  combining basic  elimination theory (e.g. resultants of algebraic equations, see \cite[Ch.~3]{Cox}), we obtain the following 

\begin{prop}\label{exist_varepsilon_L} 
  For positive integers $\ell, m, n$, 
there exist $\varepsilon = \varepsilon(\ell, m, n) > 0$ and $L= L(\ell, m, n) > 0$ such that 
smooth mappings 
$f_t^{}: [0, \, 1]\times {\overline D_{\varepsilon, L}} \to {\mathbb R}^2$ satisfies 
\[
(0, 0) \not\in f_t^{}(\partial D_{\varepsilon, L}) \quad \mbox{for} \quad  0 \leq t \leq 1\,.
\] 
\end{prop}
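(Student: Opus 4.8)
The plan is to reduce the statement to a non-vanishing assertion for the normalized map $F_t$, and then to control its values along $\partial D_{\varepsilon, L}$ by an asymptotic (dominant-balance) analysis of the three flag Ricci components, made rigorous at the corners by elimination theory. First I would record the reformulation: by the definitions of $\psi$, $P$ and $F_t$ we have $f_t(x_1, x_2, 1) = (0,0)$ if and only if $F_t(x_1, x_2, 1) = p_0$, and, since $p_0 = (1/\sqrt 3, 1/\sqrt 3, 1/\sqrt 3)$ has strictly positive entries, this forces $T_1 = T_2 = T_3 > 0$ for $T_i = T_i(t, x_1, x_2, 1)$. Because each denominator $1 + t\,\mathsf{q}_i$ is positive, a necessary condition is that all three components $\Ric^{\check g}_i(x_1, x_2, 1)$ be strictly positive, together with the balance $T_1 = T_2 = T_3$. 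Thus it suffices to produce $\varepsilon, L$ such that no point of $\partial D_{\varepsilon, L}$ satisfies these conditions for any $t \in [0,1]$.

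The key uniformity comes from the functions $\mathsf{q}_i$: each $\mathsf{q}_i(x_1, x_2, 1)$ is a bounded positive rational function on $\overline{D_+}$ (with finite limits as a coordinate tends to $0$ or $\infty$), so there is a constant $M = M(\ell, m, n) \ge 1$ with $1 \le 1 + t\,\mathsf{q}_i \le M$ for all $t \in [0,1]$. Consequently $T_i / T_j$ differs from $\Ric^{\check g}_i / \Ric^{\check g}_j$ by a factor lying in $[1/M, M]$, so the \emph{sign} and the relative \emph{growth rate} of the $T_i$ are governed by the $\Ric^{\check g}_i$ uniformly in $t$. In particular the whole problem reduces to showing that the solution set $\{(x_1,x_2,1): F_t = p_0 \text{ for some } t\}$ is relatively compact in $D_+$, i.e.\ that along any approach to $\partial D_+$ the normalized direction of $(\Ric^{\check g}_1, \Ric^{\check g}_2, \Ric^{\check g}_3)$ stays bounded away from $p_0$.

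I would then perform a dominant-term analysis of the three expressions for $\Ric^{\check g}_i$ (with $x_3 = 1$, as allowed by Remark \ref{homogpolyn}) along the four edges and corners. The pattern to establish is that the limiting direction is always $\neq p_0$, for one of three reasons: some $\Ric^{\check g}_i$ becomes negative (then $T_i < 0$ and $F_t \neq p_0$); exactly one $\Ric^{\check g}_i$ blows up while the others stay bounded (then $T_i/T_j \to \infty$ and the balance fails); or two of them become negligible relative to the third (then the normalized direction tends to a vertex of the simplex, which is not $p_0$). For instance, on $\{x_1 = \varepsilon\}$ with $x_2$ away from $x_3 = 1$ one of $\Ric^{\check g}_2, \Ric^{\check g}_3$ tends to $-\infty$, whereas along the degenerate line $x_2 = x_3$ one has $\Ric^{\check g}_1 \to +\infty$ with $\Ric^{\check g}_2, \Ric^{\check g}_3$ bounded; at the corner $x_1 = x_2 = L$ one finds $\Ric^{\check g}_1, \Ric^{\check g}_2 \to 0$ and $\Ric^{\check g}_3 \to (m+n)/2N > 0$, again excluding $p_0$.

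The main obstacle is to make these conclusions \emph{uniform} over the whole free range of the remaining coordinate on each edge (e.g.\ over all $x_2 \in [\varepsilon, L]$) and over all $t \in [0,1]$, and especially to treat the four corners, where two coordinates approach extreme values at once and the leading balances of the $\Ric^{\check g}_i$ degenerate. Here I would clear the positive denominators in the two balance equations $T_1 = T_2$ and $T_2 = T_3$, in which $t$ appears affinely, to obtain a polynomial system in $x_1, x_2$ with coefficients polynomial in $t$ and in $\ell, m, n$; eliminating $t$ and then taking the resultant with respect to the free variable (as in \cite[Ch.~3]{Cox}) reduces the non-existence of admissible boundary solutions to finitely many single-variable sign checks, each a polynomial in $\varepsilon$ (respectively in $1/L$) with controlled leading behaviour. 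Since only finitely many resultants occur, one can fix $\varepsilon = \varepsilon(\ell, m, n) > 0$ small enough and $L = L(\ell, m, n) > 0$ large enough to rule out all boundary solutions simultaneously for every $t \in [0,1]$, which gives $(0,0) \notin f_t(\partial D_{\varepsilon, L})$ and completes the proof.
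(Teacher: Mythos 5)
Your overall architecture matches the paper's: both reduce $(0,0)\in f_t(\partial D_{\varepsilon,L})$ to the simultaneous vanishing of the two cleared-denominator polynomials $T_1-T_2$ and $T_2-T_3$ (the paper's $T_{12}$ and $T_{23}$, which are polynomial in $x_1,x_2$ and \emph{affine} in $t$), and both invoke resultants to control the common zeros. Your preliminary observations (the uniform bounds $1\le 1+t\,\mathsf{q}_i\le M$, the reduction to relative compactness of the solution set in $D_+$, the sign/dominant-balance behaviour of the $\Ric^{\check g}_i$ on the edges) are correct and give useful intuition, but the paper does not use them; it goes directly to the algebra.

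The concrete problem is your elimination order. You propose to eliminate $t$ first and then take a resultant in the free spatial variable on each edge. Eliminating $t$ discards the constraint $t\in[0,1]$ (a vanishing resultant in $t$ only certifies a common root somewhere in $\mathbb{C}$), and what it produces is a plane curve in $(x_1,x_2)$ whose boundedness in $D_+$ is exactly what remains to be proved; moreover, working edge-by-edge makes the coefficients of your one-variable polynomial depend on the very $\varepsilon$ (or $L$) you are trying to choose, which is delicate to close. The paper instead keeps $t$ as a parameter and computes $R(T_{12},T_{23},x_2)=t^2\sum_{k=0}^{16}a_k(\ell,m,n,t)\,x_1^k$ and $R(T_{12},T_{23},x_1)=t^2\sum_{k=0}^{16}b_k(\ell,m,n,t)\,x_2^k$: the extreme coefficients $a_0,a_{16},b_0,b_{16}$ are explicit nonvanishing (fixed-sign) polynomials for all $t\in[0,1]$, so \emph{every} common root of $T_{12}=T_{23}=0$ has both coordinates bounded away from $0$ and $\infty$, uniformly in $t\in(0,1]$; the factor $t^2$ kills this argument at $t=0$, which is handled separately by the explicit list of four flag Einstein metrics (Proposition \ref{flag_einstein}), all interior. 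If you reorganize your last paragraph along these lines — resultants in $x_2$ and in $x_1$ with $t$ as a parameter, nonvanishing of the top and bottom coefficients, plus the separate $t=0$ check — your argument closes; as written, the "eliminate $t$" step is the gap.
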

\begin{proof}
Note that $(0, 0) \in f_t^{}(\partial D_{\varepsilon, L})$ if and only if there exist a point $(x_1,x_2,1) \in \partial D_{\varepsilon, L}$ such that $T_1(t, x_1, x_2, 1) = T_2(t, x_1, x_2, 1)= T_3(t, x_1, x_2, 1)$. We consider the equations 
\begin{eqnarray}\label{T1-T2_T2-T3} 
T_1(t, x_1, x_2, 1) - T_2(t, x_1, x_2, 1)=0, \quad T_2(t, x_1, x_2, 1)-T_3(t, x_1, x_2, 1)=0. 
\end{eqnarray}
Then, a direct computation shows that the equations (\ref{T1-T2_T2-T3}) are equivalent to the equations 
{\small
\begin{eqnarray*}
T_{12}&:=& -m n {x_2}^2 \left(\ell^2 m+\ell^2 n-\ell m^2+\ell m
   n+(2 \ell -m) t\right)+m n {x_1}^2 \left(\ell^2 m+\ell^2
   n+\ell m n-\ell n^2+(2 \ell -n) t\right)\\
 &&  +\ell^2 m n (m-n)-m^2 n
   {x_2}^4 (\ell m+\ell n+t)+2 \ell m^2 n {x_2}^3
   (\ell+m+n)+m n^2 {x_1}^4 (\ell m+\ell n+t)
   \\ 
   && -2 \ell m n^2
   {x_1}^3 (\ell+m+n)+m n {x_1}^2 {x_2}^2
   (m-n) (\ell m+\ell n+t)+2 m n {x_1}^2 {x_2}
   (\ell+m+n) (\ell n+t) 
   \\ && -2 m n {x_1} {x_2}^2
   (\ell+m+n) (\ell m+t)-2 \ell n {x_1} (\ell+m+n) (\ell m+t)+2
   \ell m {x_2} (\ell+m+n) (\ell n+t)=0\,,\\
     T_{23}&:=&   \ell m {x_1}^2 \left(\ell^2 n-\ell m n-\ell n^2-m n^2+(\ell -2 n) t\right)-\ell^2
   m (\ell n+m n+t)+2 \ell^2 m n {x_1} (\ell+m+n)
   \\
    &&+\ell m
   {x_1}^2 {x_2}^2 \left(\ell m n+\ell n^2-m^2 n+m n^2+(2 n -m) t\right)+\ell m^2 {x_2}^4 (\ell
   n+m n+t)\\
   &&-2 \ell m^2 n {x_1} {x_2}^3 (\ell+m+n)+\ell
   m n^2 {x_1}^4 (\ell-m)-2 m n {x_1}^3
   {x_2} (\ell+m+n) (\ell n+t)
   \\ &&+2 \ell n {x_1}^3
   (\ell+m+n) (m n+t)+2 \ell m {x_1} {x_2}^2
   (\ell+m+n) (m n+t)
   \\&&-2 \ell m {x_1} {x_2} (\ell+m+n)
   (\ell n+t)+\ell m {x_2}^2 (\ell-m) (\ell n+m n+t)=0\,,
   \end{eqnarray*}}
respectively. We will show that there exist (small) $\varepsilon=\varepsilon(\ell, m, n) > 0$ and (large) $L=L(\ell, m, n) > 0$, such that  $T_{12}=0$ and $T_{23}=0$ have no common roots on the boundary  $\partial D_{\varepsilon, L}$. 
   For this, let us denote by $R(T_{12},T_{23}, x_2)$ the resultant  of $T_{12}$ and $T_{23}$, with respect to  $x_2$.  Note that if $T_{12}=0$ and  $T_{23}=0$ have a common root $(x_1, x_2)=(a_1, a_2)$, then $R(T_{12},T_{23}, x_2)=0$ at  $x_1=a_1$ (see \cite{Cox}). With the aid of Mathematica, we compute 
\[
R(T_{12},T_{23}, x_2) = t^2\sum_{k=0}^{16}a_k(\ell, m, n, t){x_1}^k\,,
\]
   where $a_k(\ell, m, n, t)$ are polynomials of $\ell, m, n$ and  $t$, with
{\small \begin{eqnarray*}
a_0(\ell, m, n, t) &=& -4 \ell^8 m^{10} (\ell+m)^2 (\ell+m+2 n) \left(4 \ell m (\ell+m+2n)+(\ell +m) n^2\right) ((\ell+m) n+t)^6\,,\\
a_{16}(\ell, m, n, t) &=&  -4 m^{10} n^8 (m+n)^2 (2 \ell+m+n) \left(\ell^2 (m+n)+4 m n   (2 \ell+m+n)\right) (\ell (m+n)+t)^6\,.
   \end{eqnarray*}}
\noindent  Since $a_0(\ell, m, n, t) \neq 0$ and $a_{16}(\ell, m, n, t) \neq 0$, there exist (small) $\varepsilon'(\ell, m, n) > 0$ and (large) $L'(\ell, m, n) > 0$ such that $R(T_{12},T_{23}, x_2) \neq 0$ for $0 < t \leq1$,  $0< x_1 \leq \varepsilon'(\ell, m, n)$ and $L'(\ell, m, n)  \leq x_1$.  
  Let us also  denote by $R(T_{12},T_{23}, x_1)$ the resultant  of $T_{12}$ and $T_{23}$,   with respect to $x_1$. Then we compute
$$R(T_{12},T_{23}, x_1) = t^2\sum_{k=0}^{16}b_k(\ell, m, n, t){x_2}^k $$
   where $b_k(\ell, m, n, t)$ are polynomials of $\ell, m, n$ and  $t$, with
  {\small
  \begin{eqnarray*}
  b_0(\ell, m, n, t) &=& -4 \ell^6 m^2 n^6 (\ell+n)^3 (\ell+2 m+n) \left(m^2
   (\ell+n)+4 \ell n (\ell+2 m+n)\right)^2 (\ell n+t)^2 (m (\ell+n)+t)^4\,, \\
  b_{16}(\ell, m, n, t) &=&  -4 m^{10} n^8 (m+n)^2 (2 \ell+m+n) \left(\ell^2 (m+n)+4
   m n (2 \ell+m+n)\right) (\ell (m+n)+t)^6\,.
   \end{eqnarray*}}
   Since $b_0(\ell, m, n, t) \neq 0$ and $b_{16}(\ell, m, n, t) \neq 0$, there exist (small) $\varepsilon''(\ell, m, n) > 0$ and (large) $L''(\ell, m, n) > 0$ such that $R(T_{12},T_{23}, x_1) \neq 0$ for $0 < t \leq1$,  $0< x_2 \leq \varepsilon''(\ell, m, n)$ and $L''(\ell, m, n)  \leq x_2$.  
   For $t=0$ we have four solutions by Proposition \ref{flag_einstein}. 
Thus we see that there exist $\varepsilon=\varepsilon(\ell, m, n) > 0$ and $L=L(\ell, m, n) > 0 $ such that  the equations 
$T_1(t, \varepsilon, x_2, 1) = T_2(t, \varepsilon, x_2, 1)= T_3(t, \varepsilon, x_2, 1)$, 
$T_1(t, L, x_2, 1) = T_2(t, L, x_2, 1)= T_3(t, L, x_2, 1)$,
$T_1(t, x_1,\varepsilon, 1) = T_2(t,x_1, \varepsilon, 1)= T_3(t, x_1,\varepsilon, 1)$ and 
$T_1(t, x_1, L,  1) = T_2(t, x_1, L, 1)= T_3(t, x_1, L, 1)$ 
have no solutions for any $0 \leq t\leq 1$.  
\end{proof}

\begin{theorem}\label{MAINTHEM}
There exists at least one $\SU(\ell+m+n)$-invariant Einstein metric on the indecomposable C-space $\SU(\ell+m+n)/\SU(\ell)\times\SU(m)\times\SU(n)$. 
\end{theorem}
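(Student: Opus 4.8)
The plan is to run a mapping-degree argument along the homotopy $f_t$, transporting information from the base flag manifold (the slice $t=0$) to the total C-space (the slice $t=1$). The entry point is Proposition \ref{exist_varepsilon_L}, which furnishes constants $\varepsilon=\varepsilon(\ell,m,n)>0$ and $L=L(\ell,m,n)>0$ with $(0,0)\notin f_t(\partial D_{\varepsilon,L})$ for every $t\in[0,1]$. This is exactly the admissibility hypothesis required to invoke the homotopy-invariance property of the degree along the constant path $\gamma(t)\equiv(0,0)$. Hence I would first record
\[
\dg(f_1, D_{\varepsilon,L}, (0,0)) = \dg(f_0, D_{\varepsilon,L}, (0,0)).
\]

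Next I would evaluate the right-hand side on the flag side, where everything is explicit. By Lemma \ref{degree_flag_einstein} the point $(0,0)$ is a regular value of $f_0$, and its preimage $f_0^{-1}((0,0))$ consists precisely of the four flag Einstein metrics of Proposition \ref{flag_einstein}; for $\varepsilon$ small and $L$ large these four interior points lie inside $D_{\varepsilon,L}$. The degree-via-signs formula then gives $\dg(f_0, D_{\varepsilon,L}, (0,0))=\sum_{x\in f_0^{-1}((0,0))}{\rm sign}_x(f_0)$, i.e. the sum of the signs of the four Jacobians listed in Lemma \ref{degree_flag_einstein}. For all positive integers $\ell,m,n$ the first Jacobian is strictly positive while the other three carry an overall minus sign and are strictly negative, so the signs are $+1,-1,-1,-1$ and the degree equals $-2$.

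Finally, since $-2\neq 0$, the existence-of-solutions property of the degree (property (4)) applied to $f_1$ guarantees a solution $(x_1,x_2,1)\in D_{\varepsilon,L}\subset D_+$ of $f_1(x_1,x_2,1)=(0,0)$. By the characterization of invariant Einstein metrics in Proposition \ref{characterEIN} (equivalently, the statement that $f_1(x)=(0,0)$ detects Einstein metrics on the total space), this solution corresponds to an $\SU(\ell+m+n)$-invariant Einstein metric on $M_{\ell,m,n}=G/H$, which is the claim.

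I expect no serious obstacle to remain at this level: the two genuinely delicate points are already secured upstream. The boundary behaviour needed for admissibility is the content of Proposition \ref{exist_varepsilon_L} (whose proof does the heavy lifting via resultants $R(T_{12},T_{23},x_2)$ and $R(T_{12},T_{23},x_1)$ with nonvanishing extremal coefficients), and the distinctness and regularity of the four preimage points is implicit in Lemma \ref{degree_flag_einstein}. Thus the theorem reduces to assembling these pieces through homotopy invariance and the existence property of the mapping degree, the only substantive verification being the elementary sign count $+1-1-1-1=-2\neq 0$.
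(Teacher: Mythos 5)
Your argument is correct and coincides with the paper's own proof: both use Proposition \ref{exist_varepsilon_L} for admissibility of the homotopy $f_t$ on $\overline{D_{\varepsilon,L}}$, Lemma \ref{degree_flag_einstein} to compute $\dg(f_0,D_{\varepsilon,L},(0,0))=+1-1-1-1=-2$, homotopy invariance to transfer this to $f_1$, and the existence property of the degree together with Proposition \ref{characterEIN} to produce the invariant Einstein metric. Your write-up merely makes explicit the sign count that the paper states without elaboration.
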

\begin{proof}
 By Proposition \ref{exist_varepsilon_L}, we conclude that there exist numbers $\varepsilon = \varepsilon(\ell, m, n) > 0$ and $L= L(\ell, m, n) > 0$ for which we can apply mapping degree theory to $f_t : {\overline D_{\varepsilon, L}} \to {\mathbb R}^2$.  By Lemma  \ref{degree_flag_einstein} we have that $\dg(f_0, D_{\varepsilon, L}, (0, 0)) = -2$  and hence by homotopy invariance we also obtain  $\dg(f_1, D_{\varepsilon, L}, (0, 0)) = -2$. Therefore, there exists a solution $(x_1, x_2) \in  D_{\varepsilon, L}$ of the equation $f_1(x_1, x_2, 1) = (0, 0)$. 
\end{proof}
\begin{remark}
 Note that  $(0, 0)$ might be a singular value,  hence the condition $d(f_1, D_{\varepsilon, L}, (0, 0)) = -2$ does not provide the existence  of a second invariant Einstein metric on $M_{\ell, m, n}$.  If $(0, 0)$ is a regular value for $f_1$, then there exist at least two  solutions. In fact, for $\ell = m = n =1$, we see that $(0, 0)$ is  a singular value for $f_1$ and the  equation $f_1(x_1, x_2, 1) = (0, 0)$ has only one solution.  This means that on the C-space $M_{1, 1, 1}=\SU(3)$, among invariant metrics $g$ defined by {\rm (\ref{ggg})},  only the bi-invariant metric is an Einstein metric, a fact  which we will prove also below.
\end{remark}


\subsection{More invariant Einstein metrics on  C-spaces}

\subsubsection{Case A}\label{caseaaa}
Assume that $\ell, m, n$ are different. For small values of $\ell,  m, n$ it is possible to obtain the  numerical form of all non-isometric homogeneous Einstein metrics on the corresponding C-space \[
M_{\ell, m, n}=G/H=\SU(\ell+m+n)/\SU(\ell)\times\SU(m)\times\SU(n)\,.
\]
 In particular, we see that   there exist  two  or four non-isometric $G$-invariant Einstein metrics which we shall denote by $g_\al, g_\be$ and   $g_\al, g_\be, g_\gamma, g_\delta$, respectively. Their numerical values are listed in Table \ref{Table1}, where  we write $g=(x_1, x_2, x_3, v_4, v_{5})$ for an invariant Einstein metric, with $x_3=1$.
\begin{table}[ht]
\centering
{\small \caption{The case of two non-isometric  invariant Einstein metrics on $M_{\ell, m, n}$  for small  different $\ell,  m, n$}\label{Table1}
\vspace{0.1cm}
\renewcommand\arraystretch{1.4}
\begin{tabular}{ c | c | l | l }
\hline
$M_{\ell, m, n}$	& $\dim_{\R}$ &	 $g_\al$								& $g_\be$								 	\\ 
\hline 
$M_{1, 2, 3}$ & $24$ & $(0.472295, 1.19781, 1, 1.77808, 0.60798)$    &  $(1.49887, 0.714536, 1, 1.14012,  1.55945)$\\
$M_{1, 2, 4}$ & $30$ & $(1.5978,   0.76303, 1, 1.26653, 1.63504)$ & $(0.379311,   1.13315, 1, 1.83194, 0.490535)$ \\
$M_{1, 2, 5}$ & $36$ & $(1.66213,  0.796466, 1, 1.36024, 1.6853)$ & $(0.31734, 1.09462, 1, 1.86425,  0.411959)$\\
$M_{1, 3, 4}$ & $40$ & $(0.48286, 1.30095, 1, 1.88783,  0.685127)$ & $(1.48800, 0.636510, 1,  1.21459,   1.47125)$ \\
$M_{1, 3, 5}$ & $ 48$ & $(1.5613, 0.681659, 1, 1.3168, 1.5272)$   &    $(0.417584,  1.24436, 1, 1.91656,  0.593683)$\\
$M_{2, 3, 4}$ &  $54$ & $(0.676785, 1.49686, 1,  1.9581,  1.03866)$ & $(1.70003,  0.833603, 1, 1.26452, 
   2.01911)$ \\
$M_{2, 3, 5}$ & $64$ &  $(1.75345,  0.855002, 1, 1.33712, 2.02138)$ & $(0.586034, 1.41566, 1, 1.98963, 
0.899876)$ 
 \end{tabular}}
\end{table}
 In Table \ref{Table21},  for simplicity we pose only 
the numerical values of $x_1, x_2$.
\begin{table}[ht]
\centering
{\small \caption{The case of four non-isometric  invariant Einstein metrics}\label{Table21}
\vspace{0.1cm}
\renewcommand\arraystretch{1.4}
\begin{tabular}{ c | c | l | l| l | l }
\hline
$M_{\ell,m,n}$	& $\dim_{\R}$ & $g_\al$& $g_\be$ &	 $g_\gamma$ &  $g_\delta$	\\ 
\hline 
$M_{3, 4, 5}$ & $96$ & $(0.514582, 0.594076)$    &  $(0.727423, 0.847601)$ & $(0.761962,  1.65282)$ &  $(1.79298,  0.879305)$ \\
$M_{3, 4, 6}$ & $110$ & $(0.480679,   0.628472)$    &  $(0.646952,  0.857152)$ & $(0.682517,   1.57948)$ &  $(1.82385,  0.891611)$ \\
$M_{4, 5, 6}$ & $150$ & $(0.499825,   0.558034)$    &  $(0.793644,   0.891443)$ & $(0.809993,  1.73784)$ &  $(1.84458, 0.904275)$ \\
$M_{5, 6, 7}$ & $216$ & $(0.495154, 0.541631)$    &  $(0.832054,   0.913651)$ & $(0.841356,  1.79029)$ &  $(1.87675,  0.92032)$ \\
 \end{tabular}}
\end{table}
 \begin{remark}
  In order to examine the isometric problem for  $g_{\al}$ and  $g_{\be}$ in Table \ref{Table1}, we apply the following method.  First we  use the numerical values of $g_{\al}$ and $g_{\be}$ to compute the corresponding Einstein constants.  Then, we normalize the    Einstein metrics such that the associated Einstein constants become equal to one. By comparing the obtained  normalized Einstein metrics one can easily deduce that they are not isometric.
  \end{remark}
  \begin{remark}
Consider the spaces $M_{1, 2, 3}$, $M_{1, 3, 2}$, $M_{2, 1, 3}$, $M_{2, 3, 1}$, $M_{3, 1, 2}$ and $M_{3, 2, 1}$. All of them define the homogeneous space $\SU(6)/\SU(2)\times\SU(3)\simeq \SU(6)/\SU(3)\times\SU(2)$. By normalizing the corresponding Einstein metrics such that the Einstein constant is equal to one, it is not hard to see that the Einstein metrics on these manifolds are isometric each other. Indeed, the action of the Weyl group of $\SU(6)$ interchanges  the  metric parameters of the diagonal part, while its action on   the abelian part $\fr{f}_{0}$ gives us  invariant metrics $( \ , \ )_{0}$ whose matrices   have the same eigenvalues (for a more explicit  description of the Weyl action see for instance the proof of Theorem \ref{them3}).  For example, consider $M_{1, 2, 3}$ and $M_{1, 3, 2}$, endowed with the invariant Einstein metrics  
\begin{eqnarray*}
g_{\al}^{1}&=&(x_1, x_2, 1, v_4, v_5)=(0.472295, 1.19781, 1, 1.77808, 0.60798)\,,\\
g_{\al}^{2}&=&(x_1, x_2, 1, v_4, v_5)=(1.19781,  0.472295, 1, 0.854456, 1.26518)\,,
\end{eqnarray*}
 respectively.  Note that for $M_{1, 2, 3}$, the real number $c$ is given by  $\displaystyle c =\frac{-1 + {x_2}^2}{1 + 2 {x_2}^2}\approx 0.112353$, 
  while for $M_{1, 3, 2}$ we obtain 
  $\displaystyle  c=\frac{-1 +{x_2}^2}{1 + 3{x_2}^2}\approx -0.465459$. 
 Then, for both the pairs $(M_{1, 2, 3}, g_{\al}^{1})$ and  $(M_{1, 3, 2}, g_{\al}^{2})$ we compute  the same Einstein constant, $\lambda=0.399622$. Thus, the corresponding normalized Einstein metrics  satisfying $\lambda=1$,  are given by
 \begin{equation}\label{gagb}
\left. \begin{tabular}{l}
 $(g_{\al}^{1})'=(x_1', x_2', x_3', v_4', v_5')=(0.188739, 0.478671, 0.399622, 0.710559, 0.242962)$ \\
 $(g_{\al}^{2})'=(x_1', x_2', x_3', v_4', v_5')=(0.478671, 0.188739, 0.399622, 0.341459, 0.505592)$
 \end{tabular}\right\}.
 \end{equation}
 Note   that for $M_{1, 2, 3}$  we have $d_1=\dim\fr{f}_1=2$, $d_{2}=\dim\fr{f}_2=3$ and $d_3=\dim\fr{f}_3=6$, while for $M_{1, 3, 2}$ we have $d_1=3$, $d_2=2$ and $d_3=6$, hence we have interchanging of the first two paremeters of the diagonal part of these  metrics, as one can see also by (\ref{gagb}).  For the abelian part, recall by Proposition \ref{parametrize}, that the matrix of the inner product $( \ , \ )_{0}$ is given  by
\[
\begin{pmatrix}
1 & 0\\
\gamma & 1
\end{pmatrix}^{T}
\begin{pmatrix}
v_4' & 0\\
0 &  v_5'
\end{pmatrix}
\begin{pmatrix}
1 & 0\\
\gamma & 1
\end{pmatrix}=\begin{pmatrix}
v_4'+c^2v_5'   & c v_{5}' \\
c v_5'  & v_5'
\end{pmatrix}\,.
\]
By using the above values of $(g_{\al}^{1})'$ and $(g_{\al}^{2})'$, we see that the corresponding matrices have the same eigenvalues, namely, $\lambda_1=0.715204$, $\lambda_2=0.241384$.
Since the corresponding eigenvector matrices belong to $\Oo(2)$, this allows us to  conclude that the invariant Einstein metrics on $M_{1, 2, 3}$ and $M_{1, 3, 2}$, are isometric. Similarly for the other cases,   while in an  analogous way can be  treated the rest manifolds appearing in Table \ref{Table1} (or in Table \ref{Table21}).  \end{remark}

  
  \subsubsection{Case B}\label{casebbb}
  Let us now assume that two of the parameters $\ell, m, n$ coincide, i.e. $\ell=m$. 
   Then, the rational polynomials $\mathsf{t}_{i}$ $(i=1, 2, 3)$ introduced in Section \ref{existence section}, take the form
   {\small
 \[  \mathsf{t}_1(x_1, x_2, x_3) = \frac{\displaystyle\frac{1}{2{x_1}}+\frac{n}{4 (2 m+n)}\frac{{x_1}^2-{x_2}^2-{x_3}^2}{ {x_1} {x_2} {x_3}}}{\displaystyle 1+\frac{{x_2}^2+{x_3}^2}{m \left(n {x_1}^2+m {x_2}^2+m {x_3}^2\right)}}\,,  \quad \mathsf{t}_2(x_1, x_2, x_3) = \frac{\displaystyle \frac1{2x_{2}} + \frac{m}{4 (2 m+n)}
\frac{{x_{2}}^{2}-{x_{1}}^{2}-{x_{3}}^{2}}{x_1x_2x_3}}{\displaystyle 
 1+ \frac{n {x_1}^2+m{x_3}^2}{m n \left(n {x_1}^2+m {x_2}^2+m {x_3}^2\right)}}\,, \]}
 and 
 {\small 
 \[
 \  \mathsf{t}_3(x_1, x_2, x_3) = \frac{\displaystyle \frac1{2x_{3}} + 
+ \frac{m}{4 (2 m+n)}
\frac{{x_{3}}^{2}-{x_{1}}^{2}-{x_{2}}^{2}}{x_1x_2x_3}}{\displaystyle 1+
 \frac{n{x_1}^2+m{x_2}^2}{m n \left(n {x_1}^2+m {x_2}^2+m {x_3}^2\right)}}\,,
\]}
  respectively. By  normalizing the equations $\mathsf{t}_1(x_1, x_2, 1) =\mathsf{t}_2(x_1, x_2, 1) =\mathsf{t}_3(x_1, x_2, 1)$ with $x_3=1$, 
 we deduce  that  the  homogeneous Einstein equation is equivalent to the following system of equations 
{\small    \begin{eqnarray*} 
h_1(x_1, x_2)&=& -n^2 {x_1}^4 \left(m^2+m n+1\right)-n
   {x_1}^2 {x_2}^2 (m-n) \left(m^2+m
   n+1\right)\\ 
   &&+2 \left(m^2+1\right) n {x_1}
   {x_2}^2 (2 m+n)+2 \left(m^2+1\right) n
   {x_1} (2 m+n)+m n {x_2}^4 \left(m^2+m
   n+1\right)\\ 
   &&-2 m^2 n {x_2}^3 (2 m+n)-m^2 n
   (m-n)-n {x_1}^2 \left(m^3+2 m^2 n-m n^2+2
   m-n\right)\\
    &&
  +2 m n^2 {x_1}^3 (2 m+n) -2 n {x_1}^2 {x_2} (2 m+n) (m n+1)+m n
   {x_2}^2 (2 m n+1)\\
   &&-2 m {x_2} (2 m+n) (mn+1)=0\,,\\ 
  ({x_2}-1)  h_2(x_1, x_2)&=&({x_2}-1) \big(-2 m^2 n {x_1}
   {x_2}^2 (2 m+n) -2 m^2 n {x_1} (2 m+n)+m^2 {x_2}^3 (2 m n+1)  \\
    & &+m^2 {x_2}^2
   (2 m n+1)+m^2 {x_2} (2 m n+1)+m^2 (2 m n+1)+m {x_1}^2 {x_2} \left(2 m n^2-m+2 n\right)
   \\ 
   &&+m {x_1}^2 \left(2 m n^2-m+2 n\right)-2 n {x_1}^3 (2 m+n) (m n+1) +2 m {x_1} {x_2} (2 m+n)\big)=0\,.
\end{eqnarray*} }
Let us treat the case   ${x_2}=1$. Then, we obtain the following polynomial $H_1(x_1)$ from the polynomial  $h_1(x_1, x_2)$: 
 $H_1(x_1)= -n^2 (m^2+m n+1){x_1}^4+2 m n^2(2 m+n) {x_1}^3 -m n (2 m^2+6 m n+7) {x_1}^2   +4(m^2+1) n(2 m+n) {x_1} -4 m^2 (2 m n+1)$. 
 We compute $H_1(0) = -4 m^2 (2 m n+1)  < 0$, $H_1(2) =-(4 m^2+ 12 m n+8 n^2)  < 0$ and 
$H_1(1) =(n-m)(2 m^2 n+m n^2+4 m+3 n) > 0$ for $n > m$. Thus, for $n > m$,   there exist at least two solutions $x_1=\al$ and $x_1=\be$ of $H_1(x_1)=0$ with $0 < \al < 1$ and $1< \be <2$.

Let  us now consider the polynomial ring $R= {\mathbb Q}[m, n][ x_{1}, x_{2}] $ and the ideal $I$  generated by the polynomials  $\{h_1(x_1,x_2), \, h_2(x_1,x_2) \}$.   We take a lexicographic ordering $>$   with $ x_{1} > x_{2}$ for a monomial ordering on $R$. Then, with the aid of computer we deduce  that a  Gr\"obner basis for the ideal $I$ contains   polynomials $G_1(x_2)$ and  $G_2(x_2)$ of  $x_{2}$, which are given by 
{\small \begin{eqnarray*}
   G_1(x_2) &=& m n^2 {x_2}^4 (m^2+m n+1)^2+(m+n) (m n+1)^2(m^2+13 m n+4 n^2)\\ 
   &&   +2 m n {x_2}^2 (m n+1)(m^3+8 m^2 n+7 m n^2+m+2 n^3-n)\,,\\
   G_2(x_2)&=& n^2 (m+n) (3 m+n) (m^2+n m+1)^3 (m^2+13 n m+4 n^2)
   {x_2}^8
   -2 n (m+n)\times  \\ 
   &&(m^2+n m+1)^2 (2 n m^5+38 n^2
   m^4+2 m^4+42 n^3 m^3+29 n m^3+22 n^4 m^2  -15 n^2 m^2 +4 n^5 m\\
   &&
    -14 n^3 m-2
   n^4) {x_2}^7+(-m^2-n m-1) (8 n^2 m^8+40 n^3
   m^7-168 n^4 m^6  -72 n^2 m^6+4 m^6 \\&&
   -484 n^5 m^5-620 n^3 m^5+48 n m^5-400
   n^6 m^4-1352 n^4 m^4+77 n^2 m^4 
   -132 n^7 m^3 \\ 
   &&
   -1092 n^5 m^3-289 n^3
   m^3-16 n^8 m^2-352 n^6 m^2-456 n^4 m^2-40 n^7 m-188 n^5 m-24
   n^6) {x_2}^6  \\
    &&
   -2 (6 n^2 m^{10}+132 n^3 m^9-2 n m^9+558
   n^4 m^8-7 n^2 m^8+8 m^8+956 n^5 m^7+442 n^3 m^7 +140 n m^7 \\ &&
   +838 n^6
   m^6+1054 n^4 m^6+798 n^2 m^6+8 m^6+412 n^7 m^5+934 n^5 m^5+1320 n^3
   m^5+230 n m^5 
\\ 
&&
   +110 n^8 m^4+449 n^6 m^4+706 n^4 m^4+579 n^2 m^4+12 n^9
   m^3+136 n^7 m^3+66 n^5 m^3+366 n^3 m^3 \\
    &&
   +18 n^8 m^2-14 n^6 m^2-3 n^4
   m^2-40 n^5 m-6 n^6) {x_2}^5-2 (11 n^2 m^{10}+100 n^3
   m^9+235 n^4 m^8 \\
    &&
   +9 n^2 m^8+12 m^8-8 n^5 m^7+215 n^3 m^7+236 n m^7-355
   n^6 m^6-574 n^4 m^6+1176 n^2 m^6 \\
    &&
   +12 m^6-304 n^7 m^5-1630 n^5 m^5+798
   n^3 m^5+416 n m^5-99 n^8 m^4-1259 n^6 m^4-931 n^4 m^4 \\
    &&
   +634 n^2 m^4-12
   n^9 m^3-389 n^7 m^3-1230 n^5 m^3+68 n^3 m^3-44 n^8 m^2-441 n^6 m^2-311
   n^4 m^2  \\
    &&
   -52 n^7 m-151 n^5 m-20 n^6) {x_2}^4-2 (6 n^2
   m^{10}+132 n^3 m^9-2 n m^9+558 n^4 m^8-7 n^2 m^8+8 m^8 \\ &&
   +956 n^5 m^7+442
   n^3 m^7+140 n m^7+838 n^6 m^6+1054 n^4 m^6+798 n^2 m^6+8 m^6+412 n^7
   m^5 \\ 
   &&
   +934 n^5 m^5+1320 n^3 m^5+230 n m^5+110 n^8 m^4+449 n^6 m^4+706 n^4
   m^4+579 n^2 m^4  +12 n^9 m^3 
\\
&&+136 n^7 m^3+66 n^5 m^3+366 n^3 m^3+18 n^8
   m^2-14 n^6 m^2-3 n^4 m^2-40 n^5 m-6 n^6)
   {x_2}^3 \\ 
   &&
   +(-m^2-n m-1) (8 n^2 m^8+40 n^3 m^7-168 n^4
   m^6-72 n^2 m^6+4 m^6-484 n^5 m^5-620 n^3 m^5\\
   &&
   +48 n m^5-400 n^6 m^4-1352
   n^4 m^4+77 n^2 m^4-132 n^7 m^3-1092 n^5 m^3-289 n^3 m^3\\
   &&
   -16 n^8 m^2-352
   n^6 m^2-456 n^4 m^2-40 n^7 m-188 n^5 m-24 n^6) {x_2}^2
 -2 n (m+n)\times \\
 && (m^2+n m+1)^2
    (2 n m^5+38 n^2 m^4+2 m^4+42 n^3
   m^3+29 n m^3+22 n^4 m^2
   -15 n^2 m^2\\&&+4 n^5 m  -14 n^3 m-2 n^4)
   {x_2}+n^2 (m+n) (3 m+n) (m^2+n m+1)^3 (m^2+13 n
   m+4 n^2)\,,
  \end{eqnarray*}}
  respectively.  Since $G_1(x_2) > 0$,    there are no real solutions  of the equation $G_1(x_2)=0$. 
  
  For  $G_2(x_2)$  we see that $G_2(0) = n^2 (m+n) (3 m+n) (m^2+m n+1)^3(m^2+13 m n+4 n^2) > 0$ and for $m > n$, based on a computation by   Mathematica, we  see that $G_2(1)<0$ and $G_2(5)>0$.   Thus, for $m>n$, we have at least two solutions $x_2=\gamma$ and $x_2=\delta$ of  $G_2(x_2) =0$, with $0 <  \gamma <1$ and $1 < \delta < 5$, respectively.
 Note that the coefficients $a_j(p, q)$ of the polynomial $G_{2}(x_2)$ satisfy $a_j(p, q) = a_{8-j}(p, q)$ for  any $j = 0,1, \ldots, 8$,  and hence if     $x_2=\gamma$ is a solution of $G_2(x_2) =0$, then so is $x_2=1/\gamma$.
 When $G_2(x_2)=0$, consider the  ideal $J$  generated by the polynomials  $\{h_1(x_1,x_2),  h_2(x_1,x_2),  G_2(x_2) \}$.  By computing the  Gr\"obner basis for the ideal $J$ with lexicographic ordering $ >$   with $  x_{1} > x_{2}$    in the above ring $R$,  we  see that the ideal $J$  contains    a polynomial of the form  $ p(m, n)x_1 = \sum_{k=0}^{7}q_k(m, n){x_2^{}}^k$, 
   where $p(m, n)$  is the following polynomial of $m, n$:  
    {\small  
    \begin{eqnarray*} 
            p(m, n)&=&  -8 n^2 (2 m+n)^3 \left(m^2+m n+1\right)^2 \left(m^2+2 m n+2\right)
   \big(10 m^{10} n^3+2 m^{10} n+159 m^9 n^4\\&&+54 m^9 n^2+m^9+607 m^8
   n^5+517 m^8 n^3+36 m^8 n+983 m^7 n^6+2201 m^7 n^4+489 m^7 n^2
   +5 m^7
   \\&&+811 m^6 n^7+3617 m^6 n^5+2670 m^6 n^3+141 m^6 n+362 m^5 n^8+2891
   m^5 n^6+4444 m^5 n^4\\&&
   +1093 m^5 n^2+84 m^4 n^9+1226 m^4 n^7+3420 m^4
   n^5+1821 m^4 n^3+8 m^3 n^{10}+268 m^3 n^8\\&&
   +1376 m^3 n^6+1342 m^3 n^4+24
   m^2 n^9+284 m^2 n^7+512 m^2 n^5+24 m n^8+100 m n^6+8 n^7\big)\,.
      \end{eqnarray*}}
 Moreover, $q_k(m, n)$  ($k = 0, 1, \ldots, 7$) are polynomials of $m, n$ with integer coefficients. 
Since $p(m, n)\neq 0$, we deduce that  if $x_2$ is a real solution of $G_{2}(x_2)=0$, then the solution $x_1$ of the equations $h_1(x_1, x_2)=0$ and $h_2(x_1, x_2)=0$ defined above, is also real. 
Take a lexicographic ordering $>$   with $  x_{2} > x_{1}$ for a monomial ordering on the ring $R$. Then,  a  Gr\"obner basis for the ideal $J$ contains a  polynomial $H_2(x_1)$ of  $x_{1}$,  which is given by 
{\small  \begin{eqnarray*} 
   H_2(x_1)&=& n^4 (m+n) (3 m+n) (m^2+n m+1)^3 (m^2+13 n m+4 n^2)
   {x_1}^8 \\ && 
   -2 m n^3 (m+n) (2 m+n) (m^2+n m+1)^2 (2 n
   m^3+44 n^2 m^2+2 m^2+14 n^3 m+35 n m+11 n^2) {x_1}^7\\ &&
   +m^2 n^2
   (m^2+n m+1) (6 n m^7+54 n^2 m^6+352 n^3 m^5 +11 n m^5 \\&&+596
   n^4 m^4+462 n^2 m^4
   +4 m^4+362 n^5 m^3+902 n^3 m^3+149 n m^3 \\&&+70 n^6
   m^2+594 n^4 m^2+278 n^2 m^2+119 n^5 m+208 n^3 m+45 n^4)
   {x_1}^6 \\ &&
   -2 m^2 n^2 (2 m+n) (10 n m^8+88 n^2 m^7+4 m^7+188 n^3
   m^6+63 n m^6+172 n^4 m^5 \\ && 
   +263 n^2 m^5+2 m^5+66 n^5 m^4+313 n^3 m^4+122 n
   m^4+4 n^6 m^3+143 n^4 m^3 \\ &&
   +179 n^2 m^3+10 m^3+6 n^5 m^2+93 n^3 m^2+39 n
   m^2+19 n^2 m-2 n^3) {x_1}^5\\&&
   +m^2 n (120 n^2 m^9+896 n^3
   m^8+68 n m^8+1516 n^4 m^7+1152 n^2 m^7+8 m^7\\&&
   +1092 n^5 m^6+2488 n^3
   m^6+292 n m^6+364 n^6 m^5+1996 n^4 m^5+1294 n^2 m^5 \\&&
   -4 m^5+44 n^7 m^4+696
   n^5 m^4+1281 n^3 m^4+200 n m^4+80 n^6 m^3+506 n^4 m^3\\&&
   +251 n^2 m^3+71 n^5
   m^2+130 n^3 m^2+4 n^6 m+31 n^4 m+4 n^5) {x_1}^4 \\&&
   -2 m^3 n (2 m+n)
   (2 m n+1) (32 n m^6+118 n^2 m^5+92 n^3 m^4+152 n m^4+22 n^4 m^3\\ &&
   +167n^2 m^3+20 m^3+51 n^3 m^2+62 n m^2+2 n^4 m+33 n^2 m+5 n^3)
   {x_1}^3\\ &&
   +m^4 (2 m+n)^2 (2 m n+1) (52 n^2 m^4+56 n^3 m^3+6 n m^3+4
   n^4 m^2 \\&&
   +90 n^2 m^2-4 m^2+16 n^3 m+25 n m+7 n^2) {x_1}^2 \\ &&
   -4 m^5 n
   (2 m+n)^2 (2 m n+1)^2 (6 m^2+2 n m+1) {x_1}+4 m^6 (2 m+n)^2 (2 m n+1)^3. 
  \end{eqnarray*}}
Note that the coefficients of even degree of $H_2(x_1)$ are positive and the coefficients of odd degree of $H_2(x_1)$ are negative. Thus,  if $x_1$ is a real solution of $H_{2}(x_1)=0$, then it must be positive. Note that the solutions $x_1= \nu$ and $x_1= \mu$  obtained above, from the solutions $\gamma$ and $1/\gamma$ of  $G_2(x_2) =0$, respectively, are also the solutions of  $H_2(x_1) =0$.  Hence, we have proved  the following
 \begin{theorem}\label{theorem4.9} For $m < n$
there exist at least two $\SU(m+m+n)$-invariant Einstein metrics on the C-space $M_{m, m, n}=\SU(m+m+n)/\SU(m)\times\SU(m)\times\SU(n)$,   given by   $(x_1, x_2, x_3) = (\al, 1, 1)$ and  $(x_1, x_2, x_3) = (\be, 1, 1)$,   with $0 < \al < 1$ and $1< \be <2$, respectively. For $m > n$
there also exist at least two $\SU(m+m+n)$-invariant Einstein metrics on  $M_{m, m, n}$, which are of the form   $
 (x_1, x_2, x_3) = (\nu, \gamma,  1)$ and $(x_1, x_2, x_3) = (\mu, 1/\gamma, 1)$,   with $0 <  \gamma <1$.
\end{theorem}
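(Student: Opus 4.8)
The plan is to reduce everything to the characterization of invariant Einstein metrics in Proposition \ref{characterEIN}: after the normalization $x_3=1$ legitimized by Remark \ref{homogpolyn}, a point $(x_1,x_2,1)\in D_+$ determines a $\SU(2m+n)$-invariant Einstein metric on $M_{m,m,n}$ exactly when the three rational functions $\mathsf{t}_1,\mathsf{t}_2,\mathsf{t}_3$ coincide. First I would specialize the expressions for $\mathsf{t}_i$ to $\ell=m$, clear denominators, and rewrite the Einstein condition $\mathsf{t}_1=\mathsf{t}_2=\mathsf{t}_3$ as two polynomial equations in $(x_1,x_2)$ with coefficients in $\mathbb{Z}[m,n]$. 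The decisive structural feature is that the second of these equations factors as $(x_2-1)\,h_2(x_1,x_2)=0$, which splits the problem into the branch $x_2=1$ and the branch $h_2=0$; the regimes $m<n$ and $m>n$ of the theorem are then handled on these two branches, respectively, and in each regime it suffices to produce two distinct admissible points.

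For $m<n$ I would work on the branch $x_2=1$. Substituting $x_2=1$ into the first equation collapses it to a single quartic $H_1(x_1)$ with coefficients polynomial in $m,n$, and the case rests on a sign count: a direct evaluation gives $H_1(0)=-4m^2(2mn+1)<0$, $H_1(2)=-(4m^2+12mn+8n^2)<0$, and $H_1(1)=(n-m)(2m^2n+mn^2+4m+3n)$, which is strictly positive precisely when $n>m$. The intermediate value theorem then yields a root $\alpha\in(0,1)$ from the sign change on $[0,1]$ and a root $\beta\in(1,2)$ from the sign change on $[1,2]$; these lie in disjoint intervals and so are distinct. Each of $(\alpha,1,1)$ and $(\beta,1,1)$ lies in $\mathbb{R}^3_+$, and feeding it into (\ref{newww}), (\ref{eq_v4}) and (\ref{eq_v5}) recovers the remaining parameters (here $c=0$, since $x_2=x_3=1$, and $v_4,v_5>0$ because the Einstein constant $\lambda$ of a compact simply connected homogeneous Einstein manifold is positive); by Proposition \ref{characterEIN} both are genuine Einstein metrics.

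For $m>n$ I would instead work on the branch $h_2=0$, the computationally heavier case, and eliminate $x_1$ from the pair $\{h_1,h_2\}$ by computing a Gr\"obner basis of the ideal $I=\langle h_1,h_2\rangle$ in $\mathbb{Q}[m,n][x_1,x_2]$ under the lexicographic order $x_1>x_2$. The elimination produces univariate constraints in $x_2$; the factor $G_1$ is a sum of manifestly positive terms and so has no real zeros, leaving the degree-eight factor $G_2$ as the carrier of the relevant real roots. I would then exploit two features of $G_2$: its palindromic coefficient symmetry $a_j=a_{8-j}$, which forces real roots to occur in reciprocal pairs, together with the sign pattern $G_2(0)>0$, $G_2(1)<0$, $G_2(5)>0$ valid for $m>n$, giving by the intermediate value theorem a root $\gamma\in(0,1)$ and its partner $1/\gamma\in(1,5)$. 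To promote these to honest metrics I would enlarge to $J=\langle h_1,h_2,G_2\rangle$ and extract from its basis a relation $p(m,n)\,x_1=\sum_{k}q_k(m,n)\,x_2^{k}$ with $p(m,n)\neq 0$, exhibiting $x_1$ as a rational, hence real, function of the real quantity $x_2$; a second basis computed under $x_2>x_1$ produces a univariate $H_2(x_1)$ whose coefficients are positive in even degree and negative in odd degree, forcing every real root to be positive. This yields the two distinct points $(\nu,\gamma,1)$ and $(\mu,1/\gamma,1)$ with $0<\gamma<1$, again certified as Einstein metrics by Proposition \ref{characterEIN}.

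The main obstacle is the elimination step in the case $m>n$: one must extract from the Gr\"obner computation univariate polynomials ($G_2$ in $x_2$ and $H_2$ in $x_1$) whose coefficients are explicit polynomials in the parameters $m,n$, and then prove the required sign facts \emph{uniformly} in $m,n$ rather than merely numerically. The palindromic symmetry of $G_2$ and the alternating-sign pattern of $H_2$ are exactly the structural inputs that make these uniform arguments tractable and that guarantee the reality and positivity of the recovered $(x_1,x_2)$; the delicate technical points are the nonvanishing of $p(m,n)$, the positivity of $G_2(0)$, and the persistence of the sign changes of $G_2$ for all $m>n$, all of which rest on computer-algebra certificates.
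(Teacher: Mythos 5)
Your proposal is correct and follows essentially the same route as the paper: the same reduction of $\mathsf{t}_1=\mathsf{t}_2=\mathsf{t}_3$ (with $\ell=m$, $x_3=1$) to the system $h_1=0$, $(x_2-1)h_2=0$, the same sign count on the quartic $H_1$ at $0,1,2$ for the branch $x_2=1$ when $m<n$, and the same Gr\"obner-basis elimination producing the positive factor $G_1$, the palindromic degree-eight $G_2$ with sign changes at $0,1,5$, the relation $p(m,n)x_1=\sum_k q_k(m,n)x_2^k$, and the alternating-sign polynomial $H_2$ for the case $m>n$. No substantive difference from the paper's argument.
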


\begin{remark}
 For  the invariant   Einstein metrics given above it is hard to solve the isometric problem. However, motivated by particular cases (see below), we conjecture that  the invariant Einstein metrics corresponding to   $(x_1, x_2, x_3) = (\nu, \gamma,  1), (x_1, x_2, x_3) = (\mu, 1/\gamma, 1)$  are isometric, where $x_1=\nu > 0$, $x_1=\mu >0$ are the real solutions of $H_2(x_1)=0$, which are also obtained from the equation $p(m, n)x_1 = \sum_{k=0}^{7}q_k(m, n){x_2^{}}^k$ by substituting $x_2 = \gamma$, 
 $x_2 = 1/\gamma$, respectively.
 \end{remark}
For small $m, n$ it is  again   possible to present the  numerical forms of all  invariant Einstein metrics on the C-space $M_{m, m, n}=G/H$, and improve Theorem \ref{theorem4.9}.   In particular, the conditions $n > m$ or $m > n$ given in  Theorem \ref{theorem4.9} are not optimum. 
For example, we have members of $M_{m, m, n}$ admitting   two, one, or three  $G$-invariant Einstein metrics, up to isometry.   These cosets and the numerical values of the corresponding Einstein metrics are  given in   Tables \ref{Table31},    \ref{Table32} and  \ref{Table41}, respectively. 

\begin{table}[ht]
\centering
{\small \caption{The case of two non-isometric  invariant Einstein metrics}\label{Table31}
\vspace{0.1cm}
\renewcommand\arraystretch{1.4}
\begin{tabular}{ c | c | l | l }
\hline
$M_{m, m, n}$	& $\dim_{\R}$ &	 $g_\al$								& $g_\be$								 	\\ 
\hline 
$M_{1, 1, 2}$ & $12$  & $(1.61237, 1, 1, 1.11629, 1.61237)$ & $(0.387628,  1, 1,  1.48371,  0.387628)$ \\
$M_{1, 1, 3}$ & $16$  & $(1.7303,  1, 1, 1.26935,  1.7303)$ & $(0.269703, 1, 1, 1.64493,  0.269703)$\\
$M_{1, 1, 4}$ & $20$  & $(1.79057, 1, 1,  1.37987, 1.79057)$ & $(0.209431,  1,  1, 1.73124,  0.209431)$\\
 \end{tabular}}
\end{table}
\begin{table}[ht]
\centering
{\small \caption{The case of only one  invariant Einstein metric (up to isometry)}\label{Table32}
\vspace{0.1cm}
\renewcommand\arraystretch{1.4}
\begin{tabular}{ c | c | l | l }
\hline
$M_{m, m, n}$	& $\dim_{\R}$ &	 $g_\al$								& $g_\be$								 	\\ 
\hline 
$M_{2, 2, 1}$ & $18$ & $(1.586, \gamma\approx 2.089, 1, 1.473,  2.307)$ & $(0.7589, 1/\gamma \approx 0.4785, 1,  0.7052,  1.1037)$ \\ &&
 $(1, 1.31775, 0.630577, 0.929305, 1.45443)$ & $( 1, 0.630577, 1.31775, 0.929305, 1.45443)$ 
\\
$M_{3, 3, 2}$ & $44$ &  $(1.244, \gamma\approx 2.001, 1, 1.847, 1.975)$&  $(0.6219, 1/\gamma \approx 0.4997,  1, 0.923,   0.9871)$   \\ &&
 $(1, 1.60802, 0.803557, 1.4839, 1.58721)$& $(1, 0.803557, 1.60802, 1.4839, 1.58721)$ 
 \end{tabular}}
\end{table}

\begin{table}[ht]
\centering
{\small \caption{The case of three non-isometric  invariant Einstein metrics }\label{Table41}
\vspace{0.1cm}
\renewcommand\arraystretch{1.4}
\begin{tabular}{ c | c | l | l  }
\hline
$M_{m, m, n}$	& $\dim_{\R}$ & &  	\\ 
\hline 
$M_{2, 2, 3}$ & $34$ & $g_\al \approx (0.70564, 1, 1, 1.6260, 1.0316)$ &   $g_\be \approx (1.7749, 1, 1, 1.3074, 2.1434)$ 
 \\ & & $g_\gamma \approx (0.5547, 0.7405, 1, 1.3726, 0.8039)$ & $g_\delta \approx (0.7491, 1.3504, 1, 1.8535, 1.0856)$ 
 \\ && $(g_\gamma)' \approx ( 1, 1.3349, 1.8027, 2.4743, 1.4492)$ & $(g_\delta)' \approx (1, 1.8027, 1.3349, 2.4743, 1.4492)$ \\
$M_{3, 3, 4}$ & $68$ & $g_\al \approx (0.8206, 1, 1, 1.6631, 1.2882)$    &  $g_\be \approx (1.8673, 1, 1, 1.34463, 2.3504)$ 
 \\ & & $g_\gamma \approx (0.5086 , 0.6038, 1, 1.2232, 0.7877)$ & $g_\delta \approx (0.8423, 1.6561, 1, 2.0256, 1.3046)$ 
 \\ && $(g_\gamma)'  \approx (1, 1.1872, 1.9661, 2.4047, 1.5488)$ & $(g_\delta)' \approx (1, 1.9661, 1.1872, 2.4047, 1.5488)$ \\
$M_{4, 4, 3}$ & $82$ &$g_\al \approx (1.1969, 1, 1, 1.4815, 1.898)$ &   $g_\be \approx (1.8027, 1, 1, 1.2629, 2.544)$ 
 \\ & & $g_\gamma \approx (0.57292, 0.49478, 1, 0.97531, 0.9344)$ & $g_\delta \approx (1.1579, 2.0211, 1, 1.9712, 1.8884)$ 
 \\ && $(g_\gamma)'  \approx ( 1, 0.8636, 1.74545, 1.7024, 1.6309)$ & $(g_\delta)' \approx (1, 1.7455, 0.8636, 1.7024, 1.6309)$ 
 \end{tabular}}
\end{table}

\begin{remark}
Note that the metrics $g_{\al}, g_{\be}$ in  Table \ref{Table31} do not necessarily exhaust all invariant Einstein metrics on  $M_{1, 1, 2}, M_{1, 1, 3}$ and  $M_{1, 1, 4}$, since for these C-spaces we have $\ell m=1$ (see Remark \ref{remarkinv}).
In  Table \ref{Table32}, we have two solutions of the Einstein equation, which are isometric. To see this, in  the second line of each   $M_{m, m, n}$  we  include the  corresponding values after applying the  normalization  $x_1=1$.
 In  Table \ref{Table41},  the metrics $g_{\gamma}$ and $g_{\delta}$ are isometric. For this, again in the  third line of each C-space $M_{m, m, n}$ we  present the  corresponding values by  applying the normalization $x_1=1$. 
\end{remark}


   \subsubsection{Case C} \label{casecccc}
   Let as assume now that $\ell=m=n$. 
 In this case the rational polynomials $\mathsf{t}_{i}$ $(i=1, 2, 3)$ of    Section \ref{existence section}, are given by 
  \begin{eqnarray} 
\mathsf{t}_1(x_1, x_2, x_3) &=& \frac{\displaystyle\frac1{2x_{1}} + \frac{1}{12} \left(
\frac{{x_{1}}^{2}-{x_{2}}^{2}-{x_{3}}^{2}}{x_1x_2x_3}\right) }{\displaystyle 
  1+ \frac{ {x_2}^2+ {x_3}^2}{ n^2( {x_1}^2+{x_2}^2+  {x_3}^2)}}\,, \label{ttt1} \\
\mathsf{t}_2(x_1, x_2, x_3) &=& \frac{\displaystyle \frac1{2x_{2}} + \frac{1}{12}\left(
\frac{{x_{2}}^{2}-{x_{1}}^{2}-{x_{3}}^{2}}{x_1x_2x_3}\right) }{\displaystyle 
 1+ \frac{{x_1}^2+ {x_3}^2}{n^2 ( {x_1}^2+{x_2}^2+ {x_3}^2)}}\,, \label{ttt2}\\
\mathsf{t}_3(x_1, x_2, x_3) &=& \frac{\displaystyle \frac1{2x_{3}} + \frac{1}{12}\left(
\frac{{x_{3}}^{2}-{x_{1}}^{2}-{x_{2}}^{2}}{x_1x_2x_3}\right)}{\displaystyle 1+ \frac{  {x_1}^2+ {x_2}^2}{n^2 ( {x_1}^2+  {x_2}^2+ {x_3}^2)}}\,.\label{ttt3}
 \end{eqnarray} 
 By  normalizing the equations $\mathsf{t}_1(x_1, x_2, 1) = \mathsf{t}_2(x_1, x_2, 1) =\mathsf{t}_3(x_1, x_2, 1)$ with $x_3=1$, 
 we deduce that the Einstein equation is equivalent to the following system of equations 
  \begin{eqnarray*} 
g_1(x_1,x_2)&=&({x_1}-{x_2}) \Big((2
   n^2+1) {x_1}^3+{x_2}
   ((2 n^2+1) {x_1}^2+6 {x_1}+(2 n^2+1) ) -6 n^2 {x_1}^2\\ 
   & &    +{x_2}^2  ( (2  n^2+1 ) {x_1}-6 n^2 )+ (2
   n^2+1) {x_1}+ (2 n^2+1)
   {x_2}^3-6
   (n^2+1)\Big)=0\,,\\   
 g_2(x_1,x_2)&=&   ({x_2}-1)
  \Big(6 (n^2+1)   {x_1}^3-{x_2} ((2
   n^2+1) {x_1}^2+(2 n^2+1)+6 {x_1})  - (2 n^2+1){x_1}^2  \\
   & &     +{x_2}^2  (6 n^2
   {x_1}-(2 n^2+1) )+6 n^2
   {x_1}- (2 n^2+1 ) {x_2}^3-(2 n^2+1)\Big)=0.
    \end{eqnarray*} 
 Let us consider the polynomial ring $R= {\mathbb Q}[n][ x_{1}, x_{2}] $ and the ideal $I$  generated by the polynomials  $\{g_1(x_1,x_2), \, g_2(x_1,x_2) \}$.   We take a lexicographic ordering $>$   with $  x_{1} > x_{2}$ for a monomial ordering on $R$. Then, by the help of Mathematica, we see that a  Gr\"obner basis for the ideal $I$ contains    a  polynomial $({x_2}-1) h_1(x_2) h_2(x_2)$ of   $x_{2}$, where  
    \[
    h_1(x_2)=(2 n^2+1){x_2}^3-(2 n-1)(2 n+1){x_2}^2+4(n^2+2){x_2}-4(2 n^2+1)
    \]
    and
    \[
    h_2(x_2) = 4  (2 n^2+1 ) {x_2}^3-4 (n^2-2) {x_2}^2+(2 n-1) (2 n+1) {x_2}- (2 n^2+1)\,.
    \]  
  For $ {x_2}=1$,  we compute $0= g_1(x_1, 1) =(x_1-1)\big((2 n^2+1){x_1}^3-(2 n -1)(2 n+1){x_1}^2+4(n^2+2){x_1}-4(2 n^2+1)\big)=(x_1-1) h_1(x_1)$.  Since $h_1(1) = -6( n^2-1) < 0 $ for $n \geq 2$, $h_1(2) = 24$ and  $h_1(x_1)$ is a  monotone increasing function,  there exist only one solution $x_1= \al$  of $h_1(x_1)=0$,  with $1 < \al < 2$.

 For  the case of $h_1(x_2)=(2 n^2+1){x_2}^3-(2 n-1) (2 n+1) {x_2}^2+4 (n^2+2) {x_2}-4 (2 n^2+1)=0$,  by computing a  Gr\"obner basis for the ideal $I_1$ generated by the polynomials  $\{g_1(x_1,x_2), \, g_2(x_1,x_2), h_1(x_2) \}$, we see that the Gr\"obner basis contains the polynomial $(n-1)(x_1-1)$. Hence,  for $n \geq 2$, we obtain only one solution $(x_1, x_2) = (1, \al)$  with $h_1(\al )=0$ and $1 < \al < 2$. 

   For  the case of $h_2(x_2) = 4  (2 n^2+1 ) {x_2}^3-4 (n^2-2) {x_2}^2+(2 n-1) (2 n+1) {x_2}- (2 n^2+1)=0$, by computing a  Gr\"obner basis, we see that the Gr\"obner basis contains the polynomial $(n-1)(x_1-x_2)$. Since $h_2(x_2) = -{x_2}^3 h_1(1/x_2)$, we see that 
there exist only one solution $x_2= \be$  of $h_2(x_2)=0$, with $1/2 < \be < 1$. Note that $\be = 1/\al$. 
Hence, for $n \geq 2$, we obtain only one solution $(x_1, x_2) = (\be, \be)$  with $h_2(\be)=0$ and $1/2 < \be < 1$.  Hence we have proved that
    
     \begin{theorem} \label{them3}
 1) For $n \geq 2$,
there exist four $\SU(3n)$-invariant Einstein metrics on the C-space $M_{n, n, n}=\SU(3n)/\SU(n)\times\SU(n)\times\SU(n)$, given by 
  \begin{eqnarray*}  & & (x_1, x_2, x_3) = (1, 1, 1), \quad (x_1, x_2, x_3) = (1, \al, 1), \\ & &  (x_1, x_2, x_3) = (\al, 1, 1), \quad (x_1, x_2, x_3) = (1/\al, 1/\al, 1),
  \end{eqnarray*} 
where $\al$ is the solution of $h_1(\al)=(2 n^2+1){\al}^3-(2 n-1) (2 n+1) {\al}^2+4 (n^2+2) {\al}-4 (2 n^2+1)=0$. 
In particular, the invariant Einstein metrics defined by $(x_1, x_2, x_3) = (1, \al, 1)$,    $(x_1, x_2, x_3) = (\al, 1, 1)$ and $(x_1, x_2, x_3) = (1, 1, \al)$ are isometric each other, and  thus, up to isometry and scale, $M_{n, n, n}$ admits exactly two $\SU(3n)$-invariant Einstein metrics. 

 2) For  $n=1$ and the C-space $M_{1, 1, 1}=\SU(3)$,  among the $\SU(3)$-invariant metrics $g$ defined by {\rm (\ref{ggg})}, only the bi-invariant metric is  Einstein.
\end{theorem}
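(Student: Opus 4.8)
The plan is to read off the four Einstein metrics from the elimination carried out above, to settle the isometries among them by a Weyl-group argument, and to dispose of the degenerate value $n=1$ by a direct substitution.

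First I would organize the enumeration for $n\geq 2$. The Gr\"obner basis of the ideal $I=\langle g_1,g_2\rangle\subset {\mathbb Q}[n][x_1,x_2]$ contains the one-variable polynomial $(x_2-1)h_1(x_2)h_2(x_2)$, so the $x_2$-coordinate of every positive solution lies in the real zero set of this product. I would record that $h_1$ is monotone increasing with $h_1(1)<0<h_1(2)$, hence has a unique real root $\al\in(1,2)$, and that $h_2(x_2)=-x_2^3 h_1(1/x_2)$ therefore has the unique real root $1/\al\in(1/2,1)$. Branching: on $x_2=1$ the equation $g_2(x_1,1)=0$ holds identically while $g_1(x_1,1)=(x_1-1)h_1(x_1)$ yields $(1,1)$ and $(\al,1)$; on $h_1(x_2)=0$ the relation $(n-1)(x_1-1)$ in the Gr\"obner basis of $I_1$ forces (using $n\ge2$) $x_1=1$, giving $(1,\al)$; on $h_2(x_2)=0$ the relation $(n-1)(x_1-x_2)$ forces $x_1=x_2$, giving $(1/\al,1/\al)$. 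Since $\al>1$ these four points are distinct, and feeding each into (\ref{eq_v4})--(\ref{eq_v5}) with $\lambda>0$ returns $v_4,v_5>0$, so (with $x_3=1$) each is a genuine invariant Einstein metric; this is exactly the list $(1,1,1),(\al,1,1),(1,\al,1),(1/\al,1/\al,1)$.

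Next I would establish the isometries. Because $\ell=m=n$, every $\sigma\in S_3$ is represented by a block-permutation matrix $w_\sigma\in\SU(3n)$ (after a sign adjustment so that $\det w_\sigma=1$) which normalizes $H=\SU(n)^3$ by permuting its factors; these represent elements of the Weyl group of $\SU(3n)$. Conjugation $\Ad(w_\sigma)$ is $B$-orthogonal, permutes the three isotropy modules $\fr f_1=\fr f_{\ell m}$, $\fr f_2=\fr f_{\ell n}$, $\fr f_3=\fr f_{mn}$ through the induced action of $S_3$ on the three block-pairs (which realizes the full symmetric group on $\{\fr f_1,\fr f_2,\fr f_3\}$), and preserves the fibre $\fr f_0=Z(\fr k)$, acting on it orthogonally. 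Hence $\Ad(w_\sigma)$ carries a metric of the form (\ref{ggg}) to another one whose diagonal parameters $(x_1,x_2,x_3)$ are permuted and whose fibre product $(\ ,\ )_0$ is $B$-orthogonally conjugate to the original --- in particular its matrix $[\rho_0]$ keeps the same eigenvalues. The induced diffeomorphism of $M_{n,n,n}$ is then an isometry, so $(\al,1,1)$, $(1,\al,1)$ and $(1,1,\al)$ are mutually isometric; and by the degree $-1$ homogeneity of Remark \ref{homogpolyn} the metric $(1,1,\al)$ agrees up to scale with $(1/\al,1/\al,1)$. Together with $(1,1,1)$ this leaves exactly two classes up to isometry and scale.

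For part 2 I would substitute $n=1$ at the outset, since then $M_{1,1,1}=\SU(3)$, the factors $(n-1)$ in the Gr\"obner bases of $I_1$ and of the analogous ideal $\langle g_1,g_2,h_2\rangle$ vanish identically, and the $n\geq2$ branching collapses. Directly, $h_1(x_2)=3(x_2-1)(x_2^2+4)$ and $h_2(x_2)=-x_2^3h_1(1/x_2)$ have $x_2=1$ as their only real root, so the elimination polynomial forces $x_2=1$; then $g_1(x_1,1)=3(x_1-1)^2(x_1^2+4)$ forces $x_1=1$. Thus $(x_1,x_2,x_3)=(1,1,1)$ is the unique solution, with $c=0$ by (\ref{newww}) and $v_4=v_5$ by (\ref{eq_v4})--(\ref{eq_v5}), i.e. the bi-invariant metric; I would emphasize (cf. Remark \ref{remarkinv}) that since $\ell m=\ell n=mn=1$ the family (\ref{ggg}) is \emph{not} the most general invariant metric on $\SU(3)$, so the assertion is precisely that within this family only the bi-invariant metric is Einstein.

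The hard part will be the isometry step of the third paragraph: rigorously verifying that $\Ad(w_\sigma)$ maps the six-parameter family (\ref{ggg}) into itself and, most delicately, that it acts on the abelian fibre $\fr f_0$ by a $B$-orthogonal transformation preserving the eigenvalues of $[\rho_0]$ --- equivalently, tracking how the nilpotent datum $\gamma$ (the parameter $c$) of Proposition \ref{parametrize} transforms under the block permutations and checking compatibility with the scaling identification $(1,1,\al)\sim(1/\al,1/\al,1)$. This eigenvalue-preservation is exactly what collapses the three $\al$-type solutions into a single isometry class.
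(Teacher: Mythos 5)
Your proposal is correct and follows essentially the same route as the paper: the same Gr\"obner-basis enumeration with the factor $(x_2-1)h_1(x_2)h_2(x_2)$ and the relations $(n-1)(x_1-1)$, $(n-1)(x_1-x_2)$, the same Weyl-group (block permutation) argument for the isometries, and a direct collapse of the branching at $n=1$. The only difference is in the isometry step, where the paper verifies the eigenvalue coincidence of the fibre inner products by explicitly computing $\lambda$, $v_4$, $v_5$ and $c$ for each of the three $\al$-type solutions, whereas you deduce it abstractly from the $B$-orthogonality of $\Ad(w_\sigma)$ on $\fr f_0$ together with the fact that the Einstein equations determine the fibre data from the diagonal part --- which is exactly the verification you correctly flag as the point requiring care.
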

\begin{proof}
The existence part was completed above, so it remains  to examine the isometric problem  and prove the given assertion in 1).  Note that the action of the  Weyl group $\mc{W}$  of $\SU(3n)$ interchanges  the  metric parameters of the diagonal part, while on  the abelian part $\fr{f}_{0}$ induces an isometry of invariant metrics $( \ , \ )_{0}$. 
In particular, for the invariant Einstein metrics  $(x_1, x_2, x_3) = (1, \al, 1)$  and   $(x_1, x_2, x_3) = (\al, 1, 1)$,
we can take the element $\sigma\in\mc{W}$ given by the permutation 
$$\sigma = \left(
\begin{array}{ccccccccccc}
  1& 2 &\cdots & n & n+1 &\cdots & 2 n-1& 2n &  2n+1 & \cdots & 3 n \\
  1 & 2 &\cdots & n &  3 n  & \cdots & 2 n +2 & 2 n+1 &  2n &\cdots  & n+1
  \end{array}
  \right).
$$ 
This also satisfies the relation $\sigma(\fr{f}_{0}) = \fr{f}_{0}$. 

For the invariant Einstein metrics  $(x_1, x_2, x_3) = (1, \al, 1)$  and   $(x_1, x_2, x_3) = ( 1, 1, \al,)$,
we consider the element  $\sigma\in\mc{W}$ given by the permutation 
$$\sigma = \left(
 \begin{array}{ccccccccccc}
  1& 2 &\cdots & n & n+1 &\cdots & 2 n-1& 2n &  2n+1 & \cdots & 3 n \\
  2n & 2n-1 &\cdots & n+1 &   n  & \cdots &  2 & 1 &  2n+1 &\cdots  & 3 n
  \end{array}
   \right),
$$  
and 
for the invariant Einstein metrics  $(x_1, x_2, x_3) = (1, 1, \al)$  and   $(x_1, x_2, x_3) = (\al, 1, 1)$,
we can take  $\sigma\in\mc{W}$ as the permutation 
$$\sigma = \left(
\begin{array}{ccccccccccc}
  1& 2 &\cdots & n & n+1 &\cdots & 2 n-1& 2n &  2n+1 & \cdots & 3 n \\
  3n & 3n-1 &\cdots & 2n+1 &   2 n  & \cdots & n+ 2 & n+1 &  n &\cdots  & 1
  \end{array}
  \right).
$$

For the invariant Einstein metrics  $(x_1, x_2, x_3) = (1, \al, 1)$,    $(x_1, x_2, x_3) = (\al, 1, 1)$ and $(x_1, x_2, x_3) = (1, 1, \al)$,  the Einstein constants $\lambda$ are equal each other and can be  computed by applying each of (\ref{ttt1}), (\ref{ttt2}) or (\ref{ttt3}). We obtain  (note  that the given values of $\lambda$ coincide after  replacing $\al$, where $\al$ is  the solution of $h_1(\al)=0$)
\[
\lambda = \frac{(2 + \al^2) (4 + \al^2) n^2}{12 \al (2 + 2 n^2 + \al^2 n^2)}\,,  \quad (\text{or} \ \lambda= \frac{(6-\al) (2 + \al^2) n^2}{12 (1 + \al^2 + 2 n^2 + \al^2 n^2)})\,.
\]
Now, by the relations (\ref{eq_v4}), (\ref{eq_v5}) and (\ref{newww}),  the values for $v_4$, $v_5$  and $c$ are given as follows:

\noindent for  $(x_1, x_2, x_3) = (1, \al, 1)$,
 \begin{eqnarray*} 
  v_4 =  \frac{2 \al (2 + \al^2) (4 + \al^2) n^2}{3 (1 + \al^2) (2 + 2 n^2 + \al^2 n^2)}\,, \,\,\,
   \displaystyle v_5 =  \frac{(1 + \al^2) (4 + \al^2) n^2}{2 \al (2 + 2 n^2 + \al^2 n^2)}\,,   \,\,\, c=\frac{\al^2-1}{\sqrt{3} (\al^2+1)}\,, 
   \end{eqnarray*} 
 for  $(x_1, x_2, x_3) = (\al, 1, 1)$,  
\begin{eqnarray*} 
 \hspace{-50pt} v_4 = \frac{(2 + \al^2) (4 + \al^2) n^2}{3 \al (2 + 2 n^2 + \al^2 n^2)}\,, \,\,\,
   \displaystyle v_5 = \frac{\al (4 + \al^2) n^2}{2 + 2 n^2 + \al^2 n^2}\,,  \,\,\, c= 0\,, 
    \end{eqnarray*}
 for  $(x_1, x_2, x_3) = ( 1, 1, \al)$,  
    \begin{eqnarray*}  
  v_4 = \frac{2 \al (2 + \al^2) (4 + \al^2) n^2}{3 (1 + \al^2) (2 + 2 n^2 + \al^2 n^2)}\,, \,\,\,
 \displaystyle   v_5 = \frac{(1 + \al^2) (4 + \al^2) n^2}{2 \al (2 + 2 n^2 + \al^2 n^2)}\,, \,\,\, c=\frac{\al^2-1}{\sqrt{3} (\al^2+1)}\,.  
  \end{eqnarray*} 
  For the abelian part, recall by Proposition \ref{parametrize} that the matrix of the inner product $( \ , \ )_{0}$ defined by (\ref{prod}), is given  by
\[
\begin{pmatrix}
1 & 0\\
c & 1
\end{pmatrix}^{T}
\begin{pmatrix}
v_4 & 0\\
0 &  v_5
\end{pmatrix}
\begin{pmatrix}
1 & 0\\
c & 1
\end{pmatrix}=\begin{pmatrix}
v_4+c^2v_5   & c v_{5} \\
c v_5  & v_5
\end{pmatrix}.
\]
 Then, by replacing  the previous values of $v_4, v_5$ and $c$, we see that the corresponding matrices have always  the same eigenvalues:
 \[
 \lambda_1 =  \frac{(2 + \al^2) (4 + \al^2) n^2}{3 \al (2 + 2 n^2 + \al^2 n^2)}\,, \,\,\,
 \lambda_2 = \frac{\al (4 + \al^2) n^2}{2 + 2 n^2 + \al^2 n^2}\,.
 \]
Since the corresponding eigenvector matrices belong to  $\Oo(2)$,  this allows us to  conclude that the invariant Einstein metrics  are isometric each other. 
\end{proof}

\appendix
\section{Non-Kahler C-spaces $M=G/H$ of an exceptional Lie group $G$}\label{sectionapen}
In this appendix we present the classification of all  indecomposable non-K\"ahlerian C-spaces $M=G/H$ corresponding to an exceptional Lie group $G$.
So, let $G$ be one of the Lie groups  $G\in\{\G_2, \F_4, \E_6, \E_7, E_8\}$, with root system $R$ and a basis of simple roots $\Pi=\{\al_1, \ldots, \al_{\ell}\}$. Next, in Table \ref{Table6}  we shall denote by $G(\al_{{j_{1}}}, \ldots, \al_{{j_{u}}})\equiv G({j_{1}}, \ldots, {j_{u}})$ $(1\leq j_{1}<\ldots < j_{u}\leq \ell)$ the exceptional flag manifold $F=G/K$ for which $\Pi_{K}:=\{\al_{{j_{1}}}, \ldots, \al_{{j_{u}}}\}$ are the {\it white simple roots} in terms of painted Dynkin diagrams (these simple roots correspond to the semisimple part of $K$, while the rest simple roots which are painted black, generate the center of $K$, see \cite{Forger, Chry2, AC1}).  In such terms, $G(0)$ denotes the full flag manifold corresponding to $G$. Also, we write  ${\rm T}^{n}=\U(1)^{n}$ for the $n$-torus.  

A list with the explicit presentations of all non-isomorphic exceptional flag manifolds $F=G/K$ is given in \cite{Forger, Graev, AC1}.  In the following table we present all indecomposable non-K\"ahlerian C-spaces associated to an exceptional flag manifold. To indicate the type we use the notation: ``ss'' for semistrict, ``s''  for strict  and ``a''  for abelian.  Note that there is a misprint in \cite{AC1} about $\E_7(1, 2, 5, 6, 7)$, while the flag 
$\E_6(1,2,4,5,6)$ is missing from the published version (although it appears in the versions of arXiv). This is  flag manifold with $b_2=1$, so it does not play some role for  the given classification below.

\smallskip
{\bf A note on inequivalent C-spaces.}  For the exceptional Lie group $\F_4$,   the groups    $A_{\ell}^{l}$ (resp. $A_{\ell}^{s}$)  appearing in Table \ref{Table6} are the   groups of type $A_{\ell}$ defined by the long (resp. short) simple root(s) of $\F_4$.  Note that these groups are non-conjugate in $\F_4$, and  hence the associated flag manifolds are {\it inequivalent} as homogeneous spaces. As a consequence, the isotropy representations of such exceptional flags are always different. For example, $\F_{4}(1, 2)=\F_4/A_2^{l}\cdot{\rm T}^{2}$ has 9 isotropy summands, while $\F_{4}(3, 4)=\F_4/A_2^{s}\cdot{\rm T}^{2}$ has 6. Moreover, $\F_{4}(1, 2)$ and $\F_{4}(3, 4)$ are not biholomorphic as homogeneous complex manifolds (since the invariant complex structure is encoded by the PDD, see \cite{AP}).  The non-K\"ahlerian C-spaces fibered over  such flag spaces,  e.g. $\F_4/A_2^{l}$ and $\F_4/A_2^{s}$,  are also non-biholomorphic complex homogeneous spaces with different isotropy representations. Finally we use the notation ``type A'' and  ``type B''  to emphasize on  some non-isomorphic flag  manifolds of $\E_7$ (see also \cite{Forger, Graev, AC1}). The same type we assert to  the non-K\"ahlerian C-spaces associated to these cosets, which  are also inequivalent.
 \begin{table}[ht]
\centering
{\small \caption{The classification of exceptional indecomposable non-K\"ahlerian C-spaces}\label{Table6}}
\vspace{0.1cm}
\end{table}
\[
\begin{tabular}{c |  l  |  l | l | l | c | c }
$G$ & C-space $M=G/H$  & type & flag manifold $F=G/K$  & fiber & $b_{2}(F)$ & $b_{2}(M)$ \\
\thickline
$\G_2$  &  $\G_2$  & ss &  $\G_{2}(0)=\G_2/{\rm T}^{2}$  &  ${\rm T}^{2}$ & 2 &  0 \\
\hline
$\F_4$  &   $\F_4$                                                    & ss            &   $\F_4(0)=\F_4/{\rm T}^{4}$                                                     & ${\rm T}^{4}$ & 4 & 0\\
             &   $\F_4/{\rm T}^{2}$                                  & a        & $\F_4(0)$                                                                                  & ${\rm T}^{2}$ & 4 & 2 \\
             &   $\F_4/A_{1}^{l}\cdot{\rm  T}^{1}$          & s           & $\F_4(1)=\F_4/A_{1}^{l}\cdot{\rm T}^{3}$                                & ${\rm T}^{2}$ & 3 & 1 \\
             &   $\F_4/A_{1}^{s}\cdot{\rm  T}^{1}$         & s            & $\F_4(4)=\F_4/A_{1}^{s}\cdot{\rm T}^{3}$                              & ${\rm T}^{2}$ & 3 & 1 \\
             &  $\F_4/A_{2}^{l}$                                       & ss           &  $\F_{4}(1, 2)=\F_4/A_2^{l}\cdot{\rm T}^{2}$                          & ${\rm T}^{2}$ & 2 & 0 \\
             &  $\F_4/A_{2}^{s}$                                      & ss          &  $\F_{4}(3, 4)=\F_4/A_2^{s}\cdot{\rm T}^{2}$                         & ${\rm T}^{2}$ & 2 & 0 \\
             &   $\F_4/A_1\times A_1$                           & ss           &  $\F_{4}(1, 4)=\F_4/(A_1\times A_1)\cdot{\rm T}^{2}$               & ${\rm T}^{2}$ & 2 & 0 \\
             &  $\F_4/B_2$                                             & ss            &  $\F_{4}(2, 3)=\F_4/B_2\cdot{\rm T}^{2}$                                & ${\rm T}^{2}$ & 2 & 0 \\
\hline             
$\E_6$ &   $\E_6$                                                    & ss           &  $\E_{6}(0)=\E_6/{\rm T}^{6}$                                                   & ${\rm T}^{6}$ &  6 & 0 \\
             &   $\E_6/{\rm  T}^{2}$                                 & a        &  $\E_{6}(0)$                                                                              & ${\rm T}^{4}$ &  6 & 2 \\
             &   $\E_6/{\rm  T}^{4}$                                 & a        &  $\E_{6}(0)$                                                                              & ${\rm T}^{2}$ &  6 & 4 \\
             &   $\E_6/A_1\cdot{\rm T}^{1}$                  &  s           & $\E_6(1)=\E_6/A_1\cdot{\rm T}^{5}$                                       & ${\rm T}^{4}$ &  5 & 1 \\
              &   $\E_6/A_1\cdot{\rm T}^{3}$                  & s            & $\E_6(1)$                                                                                 & ${\rm T}^{2}$ &  5 & 3 \\
             &   $\E_6/(A_1)^{2}$                                     & ss           & $\E_6(3, 5)=\E_6/(A_1)^{2}\cdot{\rm T}^{4}$                          & ${\rm T}^{4}$ &  4 & 0 \\
             &   $\E_6/(A_1)^{2}\cdot{\rm T}^{2}$          & s         & $\E_6(3, 5)$                                                                               & ${\rm T}^{2}$ &  4 & 2 \\
             &   $\E_6/A_2$                                             & ss          & $\E_6(4, 5)=\E_6/A_2\cdot{\rm T}^{4}$                                   & ${\rm T}^{4}$ &  4 & 0 \\
             &   $\E_6/A_2\cdot{\rm T}^{2}$                   & s          & $\E_6(4, 5)$                                                                                & ${\rm T}^{2}$ &  4 & 2 \\
             &   $\E_6/(A_1)^{3}\cdot{\rm T}^{1}$           & s       & $\E_6(1, 3, 5)=\E_6/(A_1)^{3}\cdot{\rm T}^{3}$                         & ${\rm T}^{2}$ &  3 & 1 \\
             &   $\E_6/(A_2\times A_1)\cdot{\rm T}^{1}$   & s         & $\E_6(2, 4, 5)=\E_6/(A_2\times A_1)\cdot{\rm T}^{3}$               & ${\rm T}^{2}$ &  3 & 1 \\
             &   $\E_6/A_3\cdot{\rm T}^{1}$                   & s        & $\E_6(3, 4, 5)=\E_6/A_3\cdot{\rm T}^{3}$                                  & ${\rm T}^{2}$ &  3 & 1 \\
             &   $\E_6/A_4$  					       & ss         & $\E_6(2, 3, 4, 5)=\E_6/A_4\cdot{\rm T}^{2}$                              & ${\rm T}^{2}$   &  2 & 0 \\
              &   $\E_6/A_3\times A_1$  		               & ss        & $\E_6(1, 3, 4, 5)=\E_6/(A_3\times A_1)\cdot{\rm T}^{2}$              & ${\rm T}^{2}$   &  2 & 0 \\
              &   $\E_6/A_2\times A_2$  		               & ss       & $\E_6(1, 2, 4, 5)=\E_6/(A_2\times A_2)\cdot{\rm T}^{2}$              & ${\rm T}^{2}$   &  2 & 0 \\
              &   $\E_6/A_2\times (A_1)^{2}$  		       & ss      & $\E_6(2, 4, 5, 6)=\E_6/(A_2\times (A_1)^{2})\cdot{\rm T}^{2}$     & ${\rm T}^{2}$   &  2 & 0 \\
              &   $\E_6/D_4$  		                                & ss       & $\E_6(2, 3, 4, 6)=\E_6/D_4\cdot{\rm T}^{2}$                               & ${\rm T}^{2}$   &  2 & 0 \\
              \hline
$\E_7$    &    $\E_7/{\rm T}^{1}$                             & a          &  $\E_{7}(0)=\E_7/{\rm T}^{7}$                                                   & ${\rm T}^{6}$ &  7 & 1 \\
               &    $\E_7/{\rm T}^{3}$                             & a          &  $\E_{7}(0)$                                                                               & ${\rm T}^{4}$ &  7 & 3 \\
               &    $\E_7/{\rm T}^{5}$                             & a            &  $\E_{7}(0)$                                                                               & ${\rm T}^{2}$ &  7 & 5 \\
               &    $\E_7/A_1$                                       & ss               &  $\E_{7}(1)=\E_7/A_1\cdot{\rm T}^{6}$                                    & ${\rm T}^{6}$ &  6 & 0 \\
               &    $\E_7/A_1\cdot{\rm T}^{2}$            & s             &  $\E_{7}(1)$                                                                                & ${\rm T}^{4}$ &  6 & 2 \\
               &    $\E_7/A_1\cdot{\rm T}^{4}$            & s               &  $\E_{7}(1)$                                                                                & ${\rm T}^{2}$ &  6 & 4 \\
               &    $\E_7/(A_1)^{2}\cdot{\rm T}^{1}$     &s               &  $\E_{7}(4,6)=\E_7/(A_1)^{2}\cdot{\rm T}^{5}$                       & ${\rm T}^{4}$ &  5 & 1 \\
               &    $\E_7/(A_1)^{2}\cdot{\rm T}^{3}$     &s                &  $\E_{7}(4,6)$                                                                          & ${\rm T}^{2}$ &  5 & 3 \\
               &    $\E_7/A_2\cdot{\rm T}^{1}$             & s              &  $\E_{7}(5,6)=\E_7/A_2\cdot{\rm T}^{5}$                               & ${\rm T}^{4}$ &  5 & 1 \\
               &    $\E_7/A_2\cdot{\rm T}^{3}$             & s              &  $\E_{7}(5,6)$                                                                          & ${\rm T}^{2}$ &  5 & 3 \\
               &    $\E_7/(A_1)^{3}$  (type A)                 & ss               &  $\E_{7}(1, 3, 5)=\E_7/(A_1)^{3}\cdot{\rm T}^{4}$                   & ${\rm T}^{4}$ &  4 & 0 \\
              &    $\E_7/(A_1)^{3}\cdot{\rm T}^{2}$ (type A)   & s       &  $\E_{7}(1, 3, 5)$ \ \  \ (type A)                                             & ${\rm T}^{2}$ &  4 & 2 \\
              &    $\E_7/(A_1)^{3}$   (type B)                   & ss               &  $\E_{7}(1, 3, 7)=\E_7/(A_1)^{3}\cdot{\rm T}^{4}$                    & ${\rm T}^{4}$ &  4 & 0 \\
              &    $\E_7/(A_1)^{3}\cdot{\rm T}^{2}$ (type B) & s       &  $\E_{7}(1, 3, 7) $ \ \ \  (type B)                                              & ${\rm T}^{2}$ &  4 & 2 \\
              &    $\E_7/A_2\times A_1$                      & ss               &  $\E_{7}(3, 5,6)=\E_7/(A_2\times A_1)\cdot{\rm T}^{4}$                 & ${\rm T}^{4}$ &  4 & 0 \\
       &    $\E_7/(A_2\times A_1)\cdot{\rm T}^{2}$   & s              &  $\E_{7}(3, 5,6)$                                                                            & ${\rm T}^{2}$ &  4 & 2 \\
       &    $\E_7/A_3$                                             & ss                &  $\E_{7}(4, 5,6)=\E_7/A_3\cdot{\rm T}^{4}$                                & ${\rm T}^{4}$ &  4 & 0 \\
      &    $\E_7/A_3\cdot{\rm T}^{2}$                     & s               &  $\E_{7}(4, 5,6)$                                                                            & ${\rm T}^{2}$ &  4 & 2 \\
      \hline
                     \end{tabular}
\]
(continued)
\[
\begin{tabular}{c |  l  |  l | l | l | c | c }
$G$ & C-space $M=G/H$  & type & flag manifold $F=G/K$  & fiber & $b_{2}(F)$ & $b_{2}(M)$ \\
\thickline
$\E_7$    &  $\E_7/A_4\cdot{\rm T}^{1}$      & s                  & $\E_7(1, 2, 3, 4)=\E_7/A_4\cdot{\rm T}^{3}$                                & ${\rm T}^{2}$  & 3 & 1 \\  
 &  $\E_7/(A_3\times A_1)\cdot{\rm T}^{1}$ (type A)      & s              & $\E_7(1, 2, 3, 5)=\E_7/(A_3\times A_1)\cdot{\rm T}^{3}$                & ${\rm T}^{2}$  & 3 & 1 \\                                                      
 &  $\E_7/(A_3\times A_1)\cdot{\rm T}^{1}$  (type B)     & s               & $\E_7(1, 2, 3, 7)=\E_7/(A_3\times A_1)\cdot{\rm T}^{3}$                & ${\rm T}^{2}$  & 3 & 1 \\
 &  $\E_7/(A_2)^{2}\cdot{\rm T}^{1}$            & s               & $\E_7(1, 2, 4, 5)=\E_7/(A_2)^{2}\cdot{\rm T}^{3}$                         & ${\rm T}^{2}$  & 3 & 1 \\                                                      
&  $\E_7/(A_2\times (A_1)^{2})\cdot{\rm T}^{1}$      & s               & $\E_7(1, 2, 4, 6)=\E_7/(A_2\times (A_1)^{2})\cdot{\rm T}^{3}$             & ${\rm T}^{2}$  & 3 & 1 \\
&  $\E_7/(A_1)^{4}\cdot{\rm T}^{1}$            & s                & $\E_7(1, 3, 5, 7)=\E_7/(A_1)^{4}\cdot{\rm T}^{3}$                         & ${\rm T}^{2}$  & 3 & 1 \\  
&  $\E_7/D_4\cdot{\rm T}^{1}$            & s                 & $\E_7(3, 4, 5, 7)=\E_7/D_{4}\cdot{\rm T}^{3}$                         & ${\rm T}^{2}$  & 3 & 1 \\  
&  $\E_7/A_5$     (type A)                   & ss                &  $\E_{7}(1, 2, 3, 4, 5)=\E_7/A_5\cdot{\rm T}^{2}$                                & ${\rm T}^{2}$ &  2 & 0 \\
&  $\E_7/A_5$     (type B)                   & ss                &  $\E_{7}(1, 2, 3, 4, 7)=\E_7/A_5\cdot{\rm T}^{2}$                                & ${\rm T}^{2}$ &  2 & 0 \\
&  $\E_7/A_4\times A_1$       & ss                 & $\E_7(1, 2, 3, 4, 6)=\E_7/(A_{4}\times A_1)\cdot{\rm T}^{2}$                         & ${\rm T}^{2}$  & 2 & 0 \\ 
&  $\E_7/A_3\times A_2$       & ss                 & $\E_7(1, 2, 3, 5, 6)=\E_7/(A_{3}\times A_2)\cdot{\rm T}^{2}$                         & ${\rm T}^{2}$  & 2 & 0 \\ 
&  $\E_7/A_3\times (A_1)^{2}$       & ss                 & $\E_7(1, 2, 3, 5, 7)=\E_7/(A_3\times (A_1)^{2})\cdot{\rm T}^{2}$                         & ${\rm T}^{2}$  & 2 & 0 \\ 
&  $\E_7/D_4\times A_1$       & ss                 & $\E_7(1, 3, 4, 5, 7)=\E_7/(D_4\times A_1)\cdot{\rm T}^{2}$                         & ${\rm T}^{2}$  & 2 & 0 \\ 
&  $\E_7/(A_2)^{2}\times A_1$       & ss                 & $\E_7(1, 2, 5, 6, 7)=\E_7/((A_2)^{2}\times A_1)\cdot{\rm T}^{2}$                         & ${\rm T}^{2}$  & 2 & 0 \\ 
&  $\E_7/A_2\times (A_1)^{3}$       & ss                 & $\E_7(1, 3, 5, 6, 7)=\E_7/(A_2\times (A_1)^{3})\cdot{\rm T}^{2}$                         & ${\rm T}^{2}$  & 2 & 0 \\ 
&  $\E_7/D_5$       & ss                 & $\E_7(3, 4, 5, 6, 7)=\E_7/(D_5\times A_1)\cdot{\rm T}^{2}$                                          & ${\rm T}^{2}$  & 2 & 0 \\ 
\hline
$\E_8$ &   $\E_8$                                                    & ss           &  $\E_{8}(0)=\E_8/{\rm T}^{8}$                                               & ${\rm T}^{8}$ &  8 & 0 \\
             &   $\E_8/{\rm  T}^{2}$                                 & a        &  $\E_{8}(0)$                                                                               & ${\rm T}^{6}$ &  8 & 2 \\
             &   $\E_8/{\rm  T}^{4}$                                 & a        &  $\E_{8}(0)$                                                                               & ${\rm T}^{4}$ &  8 & 4 \\    
             &   $\E_8/{\rm  T}^{6}$                                 & a        &  $\E_{8}(0)$                                                                               & ${\rm T}^{2}$ &  8 & 6 \\    
              &   $\E_8/A_1\cdot{\rm T}^{1}$                  &  s           & $\E_8(1)=\E_8/A_1\cdot{\rm T}^{7}$                                   & ${\rm T}^{6}$ &  7 & 1 \\
              &   $\E_8/A_1\cdot{\rm T}^{3}$                  & s            & $\E_8(1)$                                                                               & ${\rm T}^{4}$ &  7 & 3 \\  
               &   $\E_8/A_1\cdot{\rm T}^{5}$                  & s            & $\E_8(1)$                                                                              & ${\rm T}^{2}$ &  7 & 5 \\   
               &   $\E_8/A_2$                                          &  ss          & $\E_8(1, 2)=\E_8/A_2\cdot{\rm T}^{6}$                                & ${\rm T}^{6}$ &  6 & 0 \\   
               &   $\E_8/A_2\cdot{\rm T}^{2}$                &  s          & $\E_8(1, 2)$                                                                            & ${\rm T}^{4}$ &  6 & 2 \\ 
                &   $\E_8/A_2\cdot{\rm T}^{4}$                &  s          & $\E_8(1, 2)$                                                                            & ${\rm T}^{2}$ &  6 & 4 \\   
                  &   $\E_8/A_1\times A_1$                      &  ss          & $\E_8(1, 3)=\E_8/(A_1)^{2}\cdot{\rm T}^{6}$                        & ${\rm T}^{6}$ &  6 & 0 \\ 
    &   $\E_8/(A_1\times A_1)\cdot{\rm T}^{2}$           &  s         & $\E_8(1, 3)$                                                                           & ${\rm T}^{4}$ &  6 & 2 \\      
    &   $\E_8/(A_1\times A_1)\cdot{\rm T}^{4}$           &  s          & $\E_8(1, 3)$                                                                           & ${\rm T}^{2}$ &  6 & 4 \\ 
     &   $\E_8/A_3\cdot{\rm T}^{1}$                           &  s           & $\E_8(1, 2, 3)=\E_8/A_3\cdot{\rm T}^{5}$                            & ${\rm T}^{4}$ &  5 & 1 \\
     &   $\E_8/A_3\cdot{\rm T}^{3}$                           &  s           & $\E_8(1, 2, 3)$                                                                        & ${\rm T}^{2}$ &  5 & 3 \\
     &   $\E_8/(A_2\times A_1)\cdot{\rm T}^{1}$           &  s         & $\E_8(1, 2, 4)=\E_8/(A_2\times A_1)\cdot{\rm T}^{5}$              & ${\rm T}^{4}$ &  5 & 1 \\   
     &   $\E_8/(A_2\times A_1)\cdot{\rm T}^{3}$           &  s         & $\E_8(1, 2, 4)$                                                                         & ${\rm T}^{2}$ &  5 & 3 \\  
     &   $\E_8/(A_1)^{3}\cdot{\rm T}^{1}$                   &  s         & $\E_8(1, 3, 5)=\E_8/(A_1)^{3}\cdot{\rm T}^{5}$                              & ${\rm T}^{4}$ &  5 & 1 \\  
     &   $\E_8/(A_1)^{3}\cdot{\rm T}^{3}$                   &  s         & $\E_8(1, 3, 5)$                                                                         & ${\rm T}^{2}$ &  5 & 3 \\  
     &   $\E_8/A_4$                                                     &  ss          & $\E_8(1, 2, 3, 4)=\E_8/A_4\cdot{\rm T}^{4}$                         & ${\rm T}^{4}$ &  4 & 0 \\
     &   $\E_8/A_4\cdot{\rm T}^{2}$                           &  s           & $\E_8(1, 2, 3, 4)$                                                                      & ${\rm T}^{2}$ &  4 & 2 \\
     &   $\E_8/A_3\times A_1$                                     & ss         & $\E_8(1, 2, 3, 5)=\E_8/(A_3\times A_1)\cdot{\rm T}^{4}$           & ${\rm T}^{4}$ &  4 & 0 \\  
     &   $\E_8/(A_3\times A_1)\cdot{\rm T}^{2}$         & s         & $\E_8(1, 2, 3, 5)$                                                                         & ${\rm T}^{2}$ &  4 & 2 \\    
     &   $\E_8/A_2\times A_2$        			   &  ss         & $\E_8(1, 2, 4,  5)=\E_8/(A_2\times A_2)\cdot{\rm T}^{4}$              & ${\rm T}^{4}$ &  4 & 0 \\   
   &   $\E_8/(A_2\times A_2)\cdot{\rm T}^{2}$        &  s             & $\E_8(1, 2, 4,  5)$            								  & ${\rm T}^{2}$ &  4 & 2 \\  
    &   $\E_8/A_2\times(A_1)^{2}$      			  &  ss             & $\E_8(1, 2, 4,  6)=\E_8/(A_2\times(A_1)^{2})\cdot{\rm T}^{4}$   & ${\rm T}^{4}$ &  4 & 0 \\  
     &   $\E_8/(A_2\times(A_1)^{2})\cdot{\rm T}^{2}$  & s            & $\E_8(1, 2, 4,  6)$  									 & ${\rm T}^{2}$ &  4 & 2 \\  
     &   $\E_8/(A_1)^{4}$                        		&  ss                 & $\E_8(1, 3, 5, 7)=\E_8/(A_1)^{4}\cdot{\rm T}^{4}$                       & ${\rm T}^{4}$ &  4 & 0 \\ 
          &   $\E_8/(A_1)^{4}\cdot{\rm T}^{2}$     &  s                      & $\E_8(1, 3, 5, 7)$						                           & ${\rm T}^{2}$ &  4 & 2 \\ 
          &   $\E_8/D_4$                                             &  ss             & $\E_8(4, 5, 6, 8)=\E_8/D_4\cdot{\rm T}^{4}$                         & ${\rm T}^{4}$ &  4 & 0 \\
            &   $\E_8/D_4\cdot{\rm T}^{2}$              &  s                  & $\E_8(4, 5, 6, 8)$           					              & ${\rm T}^{2}$ &  4 & 2     \\
            \hline  
                     \end{tabular}
\]
(continued)
\[
\begin{tabular}{c |  l  |  l | l | l | c | c }
$G$ & C-space $M=G/H$  & type & flag manifold $F=G/K$  & fiber & $b_{2}(F)$ & $b_{2}(M)$ \\
\thickline
$\E_8$       &   $\E_8/A_5\cdot{\rm T}^{1}$                   &  s          & $\E_8(1, 2, 3, 4, 5)=\E_8/A_5\cdot{\rm T}^{3}$                            & ${\rm T}^{2}$ &  3 & 1 \\
                 &   $\E_8/(A_4\times A_1)\cdot{\rm T}^{1}$  &  s         & $\E_8(1, 2, 3, 4, 6)=\E_8/(A_4\times A_1)\cdot{\rm T}^{3}$         & ${\rm T}^{2}$ &  3 & 1 \\
                 &   $\E_8/(A_3\times A_2)\cdot{\rm T}^{1}$  &  s         & $\E_8(1, 2, 3, 5, 6)=\E_8/(A_3\times A_2)\cdot{\rm T}^{3}$         & ${\rm T}^{2}$ &  3 & 1 \\
                &   $\E_8/(A_3\times (A_1)^{2})\cdot{\rm T}^{1}$  &  s         & $\E_8(1, 2, 3, 5, 7)=\E_8/(A_3\times (A_1)^{2})\cdot{\rm T}^{3}$    & ${\rm T}^{2}$ &  3 & 1      \\
             & $\E_8/((A_2)^{2}\times A_1)\cdot{\rm T}^{1}$     & s   &$\E_8(1, 2, 4, 5, 7)=\E_8/((A_2)^{2}\times A_1)\cdot{\rm T}^{3}$  & ${\rm T}^{2}$ &  3 & 1\\
             & $\E_8/(A_2\times (A_1)^{3})\cdot{\rm T}^{1}$     & s   &$\E_8(1, 2, 4, 6, 8)=\E_8/(A_2\times (A_1)^{3})\cdot{\rm T}^{3}$  & ${\rm T}^{2}$ &  3 & 1\\
             & $\E_8/(D_4\times A_1)\cdot{\rm T}^{1}$            & s   &$\E_8(1, 4, 5, 6, 8)=\E_8/(D_4\times A_1)\cdot{\rm T}^{3}$              & ${\rm T}^{2}$ &  3 & 1\\
              & $\E_8/D_5\cdot{\rm T}^{1}$              & s   &$\E_8(4, 5, 6, 7, 8)=\E_8/D_5\cdot{\rm T}^{3}$                                               & ${\rm T}^{2}$ &  3 & 1\\
               & $\E_8/A_6$                                      & ss   &$\E_8(1,  2, 3, 4, 5, 6)=\E_8/A_6\cdot{\rm T}^{2}$                            & ${\rm T}^{2}$ &  2 & 0\\
                & $\E_8/A_5\times A_1$                   & ss   &$\E_8(1, 2, 3,  4, 5, 7)=\E_8/(A_5\times A_1)\cdot{\rm T}^{2}$              & ${\rm T}^{2}$ &  2 & 0\\
                & $\E_8/A_4\times A_2$                  & ss   &$\E_8(1, 2, 3,  4, 6, 7)=\E_8/(A_4\times A_2)\cdot{\rm T}^{2}$              & ${\rm T}^{2}$ &  2 & 0\\
                   & $\E_8/A_3\times A_3$               & ss   &$\E_8(1, 2, 3,  5, 6, 7)=\E_8/(A_3\times A_3)\cdot{\rm T}^{2}$              & ${\rm T}^{2}$ &  2 & 0\\
                   & $\E_8/A_4\times (A_1)^{2}$        & ss   &$\E_8(1, 2, 3,  4, 6, 8)=\E_8/(A_4\times (A_1)^{2})\cdot{\rm T}^{2}$              & ${\rm T}^{2}$ &  2 & 0\\
              & $\E_8/D_4\times A_2$                     & ss   &$\E_8(1, 2, 4,  5, 6, 8)=\E_8/(D_4\times A_2)\cdot{\rm T}^{2}$              & ${\rm T}^{2}$ &  2 & 0\\
              & $\E_8/D_5\times A_1$                      & ss   &$\E_8(1, 4, 5,  6, 7, 8)=\E_8/(D_5\times A_1)\cdot{\rm T}^{2}$              & ${\rm T}^{2}$ &  2 & 0\\
              & $\E_8/D_6$                                       & ss   &$\E_8(2, 3, 4,  5, 6, 8)=\E_8/D_6\cdot{\rm T}^{2}$                                     & ${\rm T}^{2}$ &  2 & 0\\
              & $\E_8/A_3\times A_2\times A_1$ & ss   &$\E_8(1, 2, 3,  6, 7, 8)=\E_8/(A_3\times A_2\times A_1)\cdot{\rm T}^{2}$              & ${\rm T}^{2}$ &  2 & 0\\
              & $\E_8/(A_2)^{2}\times(A_1)^{2}$ & ss   &$\E_8(1, 2, 4,  6, 7, 8)=\E_8/((A_2)^{2}\times(A_1)^{2})\cdot{\rm T}^{2}$              & ${\rm T}^{2}$ &  2 & 0\\
               & $\E_8/\E_6$                                       & ss   &$\E_8( 3, 4,  5, 6, 7, 8)=\E_8/\E_6\cdot{\rm T}^{2}$                                     & ${\rm T}^{2}$ &  2 & 0\\
               \hline              
     \end{tabular}
\]

\end{document}